\newtheorem*{rep@theorem}{\rep@title}
\definecolor{orange}{rgb}{1,0.5,0}
\newcommand{\To}{\longrightarrow}
\newcommand{\newreptheorem}[2]{%
\newenvironment{rep#1}[1]{%
 \def\rep@title{#2 \ref{##1}}%
 \begin{rep@theorem}}%
 {\end{rep@theorem}}}
\newtheorem{lemma}{Lemma}[section]
\newtheorem{proposition}[lemma]{Proposition}
\newtheorem{remark}[lemma]{Remark}
\newtheorem{example}[lemma]{Example}
\newtheorem{theorem}[lemma]{Theorem}
\newtheorem{definition}[lemma]{Definition}
\newtheorem{corollary}[lemma]{Corollary}
\newtheorem*{theorem*}{Theorem}
\newcommand{\PP}{{\mathcal P}}
\newcommand{\PPP}{{\mathsf{Q}}}
\newcommand{\Fl}{{\mathsf{Fl}}}
\newcommand{\C}{{\mathbb C}}
\newcommand{\F}{{\mathbb F}}
\newcommand{\N}{{\mathbb N}}
\newcommand{\R}{{\mathbb R}}
\newcommand{\Z}{{\mathbb Z}}
\newcommand{\ttt}{{\mathfrak t}}
\newcommand{\B}{\mathcal{B}}
\DeclareMathOperator{\SL}{SL}
\DeclareMathOperator{\Sl}{SL}
\DeclareMathOperator{\Int}{int}
\DeclareMathOperator{\convex}{Conv}
\DeclareMathOperator{\conv}{Conv}
\DeclareMathOperator{\cone}{Cone}
\DeclareMathOperator{\Perm}{Perm}
\DeclareMathOperator{\Patt}{Patt}
\DeclareMathOperator{\M}{\mathcal M}
\def\v{\mathbf{v}}
\def\u{\mathbf{u}}
\begin{document}

\title{Bruhat interval polytopes}

\author{E. Tsukerman and L. Williams}
\date{\today}
\thanks{The first author was supported by 
a NSF Graduate Research Fellowship under Grant No. DGE 1106400.  The
second author was
partially supported by an NSF CAREER award DMS-1049513.}
\address{Department of Mathematics, University of California,
Berkeley, CA 94720-3840}
\email{e.tsukerman@berkeley.edu}
\address{Department of Mathematics, University of California,
Berkeley, CA 94720-3840}
\email{williams@math.berkeley.edu}
\dedicatory{Dedicated to Richard Stanley on the occasion of his 70th birthday}
\subjclass[2000]{}

\begin{abstract}

Let $u$ and $v$ be permutations on $n$ letters, with 
$u \leq v$ in Bruhat order.  
A \emph{Bruhat interval polytope} $\PPP_{u,v}$ is the convex
hull of all permutation vectors $z = (z(1), z(2),\dots,z(n))$
with $u \leq z \leq v$.  Note that when $u=e$ and $v=w_0$ are the shortest
and longest elements of the symmetric group, 
$\PPP_{e,w_0}$ is the classical permutohedron.
Bruhat interval polytopes were studied recently
in \cite{KW3} by Kodama and the second author, in the context
of the Toda lattice and 
the moment map on the flag variety.

In this paper we study combinatorial aspects of Bruhat interval polytopes.
For example, we give an inequality description and a dimension formula
for Bruhat interval polytopes,
and prove that every face of a Bruhat interval polytope is a Bruhat 
interval polytope.  A key tool in the proof of the latter statement
is a generalization of the well-known lifting property for Coxeter groups.
Motivated by the relationship between the lifting property and 
$R$-polynomials, we also give a generalization of the standard recurrence
for $R$-polynomials.  Finally, 
we define a more general class of polytopes called
\emph{Bruhat interval polytopes for $G/P$}, which are 
moment map images of (closures of) totally positive cells in $(G/P)_{\geq 0}$,
and are a special class of Coxeter matroid polytopes.
Using tools from total positivity and the Gelfand-Serganova stratification,
we show that the face of any Bruhat interval polytope for $G/P$
is again a Bruhat interval polytope for $G/P$.
\end{abstract}

\maketitle
\setcounter{tocdepth}{1}
\tableofcontents

\section{Introduction}

The classical \emph{permutohedron} is the convex hull of all 
permutation vectors $(z(1),z(2),\dots,z(n)) \in \R^n$ where $z$ is an element
of the symmetric group $S_n$.  It has many beautiful properties:
its edges are in bijection with cover relations in the 
weak Bruhat order; its faces can be described 
explicitly; it is the Minkowski sum of matroid polytopes;
it is the moment map image of the complete flag variety.

The main subject of this paper is a natural generalization of the permutohedron
called a Bruhat interval polytope.  Let $u$ and $v$ be 
permutations in $S_n$, with $u \leq v$ in (strong) Bruhat order.
The \emph{Bruhat interval polytope} 
(or \emph{pairmutohedron}\footnote{
While the name ``Bruhat interval polytope" is descriptive, it is 
unfortunately a bit cumbersome.  At the Stanley 70 conference,
the second author asked the audience for suggestions for 
alternative names.  Russ Woodroofe suggested the name
``pairmutohedron"; additionally, Tricia Hersh suggested the name
``mutohedron" (because a Bruhat interval polytope is a subset
of the permutohedron).})
$\PPP_{u,v}$ is the convex
hull of all permutation vectors $z = (z(1), z(2),\dots,z(n))$
with $u \leq z \leq v$.  Note that when $u=e$ and $v=w_0$ are the shortest
and longest elements of the symmetric group, 
$\PPP_{e,w_0}$ is the classical permutohedron.
Bruhat interval polytopes were recently  
studied in \cite{KW3} by Kodama and the second author, in the context
of the Toda lattice and 
the moment map on the flag variety $\Fl_n$.
A basic fact is that $\PPP_{u,v}$ is the 
moment map image of the Richardson variety
$\mathcal{R}_{u,v}  \subset \Fl_n$.
Moreover, 
$\PPP_{u,v}$ is a Minkowski sum of matroid polytopes
(in fact of \emph{positroid polytopes} \cite{ARW}) \cite{KW3}, 
which implies that 
$\PPP_{u,v}$ is a \emph{generalized permutohedron} (in the sense of 
Postnikov \cite{Postnikov2}).

The goal of this paper is to study combinatorial aspects of Bruhat 
interval polytopes.  We give a dimension formula for Bruhat interval polytopes,
an inequality description of Bruhat interval polytopes,
and prove that every face of a 
Bruhat interval polytope is again a Bruhat interval polytope.  In particular,
each edge corresponds to some edge in the (strong) Bruhat order.  The proof
of our result on faces uses the classical result (due to Edelman \cite{Edelman}
in the case of the symmetric group, and subsequently generalized by 
Proctor \cite{Proctor} and then Bjorner-Wachs \cite{BW}) that 
the order complex of an interval in Bruhat order is homeomorphic to a sphere.  Our proof also 
uses a generalization of the lifting property, which appears to be new and may be of interest
in its own right.  
This Generalized lifting property says that if $u<v$ in $S_n$, then
there exists an \emph{inversion-minimal} transposition $(ik)$ (see Definition \ref{def:inversion-minimal})
such that 
$u \leq v(ik) \lessdot v$ and $u \lessdot u(ik) \leq v$.  
One may compare this with 
the usual lifting property, which  says that if $u<v$ and the simple reflection 
$s_i \in D_r(v) \setminus D_r(u)$ is a right-descent of $v$ but not a right-descent of $u$, then 
$u \leq vs_i \lessdot v$ and $u \lessdot us_i \leq v$. Note  that in general
such a simple reflection $s_i$ need not exist.

The usual lifting property is closely related to the $R$-polynomials $R_{u,v}(q)$.  
Recall that the $R$-polynomials
are used to define Kazhdan-Lusztig polynomials \cite{KL}, and also have an interesting 
geometric interpretation:
the Richardson variety 
$\mathcal{R}_{u,v}$ may be defined over a finite field $\F_q$, and the number of points
it contains is given by the $R$-polynomial $R_{u,v}(q) = \#\mathcal{R}_{u,v}(\F_q)$.
A basic result about the $R$-polynomials is that if $s_i \in D_r(v) \setminus D_r(u)$,
then $R_{u,v}(q) = q R_{us, vs}(q) + (q-1) R_{u,vs}(q)$.  We generalize this result, showing that if 
$t=(ik)$ is inversion-minimal, then 
$R_{u,v}(q) = q R_{ut, vt}(q) + (q-1) R_{u,vt}(q)$.  

Finally we give a generalization of Bruhat interval polytopes in the setting of partial flag varieties
$G/P$.  More specifically, let $G$ be a semisimple simply connected linear algebraic group with torus
$T$, and let $P=P_J$ be a parabolic subgroup of $G$. 
Let $W$ be the Weyl group, $W_J$  the corresponding
parabolic subgroup of $W$, 
and let $\mathfrak{t}$ denote the Lie algebra of $T$.  Let $\rho_J$ be the sum of fundamental weights
corresponding to $J$, so that $G/P$ embeds into $\mathbb{P}(V_{\rho_J})$.  Then given $u \leq v$ in $W$,
where $v \in W^J$ is a minimal-length coset representative in $W/W_J$,
we define the corresponding \emph{Bruhat interval polytope for $G/P$} to be
$$\PPP_{u,v}^J:= \conv\{z \cdot \rho_J \ \vert \ u \leq z \leq v\} \subset \mathfrak{t}^*_{\R}.$$
In the $\Fl_n$ case -- i.e. the case that $G = \SL_n$ and $P$ is the Borel subgroup of upper-triangular matrices --
the polytope $\PPP_{u,v}^J$ is a Bruhat interval polytope as defined earlier.
In the Grassmannian case -- i.e. the case that $G = \SL_n$ and $P$ is a maximal parabolic subgroup --
the Bruhat interval polytopes for $G/P$ are precisely the \emph{positroid polytopes}, which 
were studied recently in \cite{ARW}.
As in the $\Fl_n$ case, Bruhat interval polytopes for $G/P$ have an interpretation in terms of the moment map:
we show that $\PPP_{u,v}^J$ is the moment-map image of the closure of a cell
in Rietsch's cell decomposition of $(G/P)_{\geq 0}$.  It is also the moment-map image of 
the projection to $G/P$ of a Richardson variety.
We also show that the face of a Bruhat interval polytope for $G/P$ is a 
Bruhat interval polytope for $G/P$.  Along the way, we 
build on work of Marsh-Rietsch \cite{MR3} to give an interpretation of 
Rietsch's cell decomposition of $(G/P)_{\geq 0}$ in terms of the Gelfand-Serganova stratification
of $G/P$.  In particular, each cell of $(G/P)_{\geq 0}$ is contained in a Gelfand-Serganova stratum.
In nice cases (for example $G/B$ and the Grassmannian $Gr_{k,n}$)
it follows that Rietsch's cell decomposition is the restriction of 
the Gelfand-Serganova stratification to $(G/P)_{\geq 0}$.

The structure of this paper is as follows.  In Section 
\ref{sec:background} we provide background and terminology for posets, Coxeter groups, permutohedra,
matroid polytopes, and Bruhat interval polytopes.
In Section \ref{sec:lifting} we state and prove the Generalized 
lifting property for the symmetric group.
We then use this result in Section \ref{sec:faces} to prove 
that the face of a Bruhat interval polytope is a Bruhat interval 
polytope.  Section \ref{sec:faces} also provides a dimension formula
for Bruhat interval polytopes, and an inequality description
for Bruhat interval polytopes.
In Section \ref{sec:R} we give a generalization of the usual
recurrence for $R$-polynomials, using the notion of an 
inversion-minimal transposition on the interval $(u,v)$.  
The goal of the remainder of the paper is to discuss 
Bruhat interval polytopes for $G/P$.  In Section \ref{sec:flag} we provide
background on generalized partial flag varieties $G/P$, including
generalized Pl\"ucker coordinates, the Gelfand-Servanova stratification
of $G/P$, total positivity, and the moment map.  Finally in Section
\ref{sec:genBIP}, we show that each cell in Rietsch's 
cell decomposition of $(G/P)_{\geq 0}$ lies in a Gelfand-Serganova
stratum, and we use this result to prove that the face of 
a Bruhat interval polytope for $G/P$ is again a Bruhat interval polytope
for $G/P$.

\textsc{Acknowledgments:}
E.T. is grateful to Francesco Brenti 
for sending his Maple code for computing 
R-polynomials.   L.W. would like to thank Yuji Kodama for 
their joint work on the Toda lattice, which provided motivation for 
this project.
She is also grateful to Konni Rietsch and Robert Marsh for generously sharing
their ideas about 
the Gelfand-Serganova stratification.  Finally, we would like to thank
Fabrizio Caselli, Allen Knutson, Thomas Lam, Robert
Proctor, and Alex Woo for helpful
conversations.

This material is based upon work supported by the National Science Foundation Graduate Research Fellowship under Grant No. DGE 1106400,
and the National Science Foundation 
CAREER Award DMS-1049513. Any opinion, findings, and conclusions or recommendations expressed in this material are those of the authors(s) and do not necessarily reflect the views of the National Science Foundation.

\section{Background}\label{sec:background}

In this section we will quickly review some notation and background 
for posets and Coxeter groups.  We will also review some basic facts about
permutohedra, matroid polytopes, and Bruhat interval polytopes.
We will assume knowledge of the basic definitions
of Coxeter systems and Bruhat order; we refer the reader to
\cite{BB}
for details.  Note that throughout
this paper, Bruhat order will refer to the strong Bruhat order.

Let $P$ be a poset with order relation $<$.  We will use the symbol
$\lessdot$ to denote a covering relation in the poset: $u \lessdot
v$ means that $u < v$ and there is no $z$ such that $u < z < v$.
Additionally, if $u < v$ then $[u,v]$ denotes the {\it (closed) interval}
from $u$ to $v$; that is, $[u,v]=\{z \in P \ \vert \ u \leq z \leq
v \}$.  Similarly, $(u,v)$ denotes the {\it (open) interval}, that is,
$(u,v) = \{z\in P \ \vert \ u < z < v\}$.

The natural geometric object that one associates to a poset $P$ is
the geometric realization of its {\it order complex} (or {\it nerve}). The
order complex $\Delta(P)$ is defined to be the simplicial complex
whose vertices are the elements of $P$ and whose simplices are the
chains $x_0 < x_1 < \dots < x_k$ in $P$. Abusing notation,
we will also use the notation $\Delta(P)$ to denote the geometric 
realization of the order complex.

Let $(W,S)$ be a Coxeter group generated by a set of simple reflections
$S=\{ s_i \ \vert \ i \in I \}$.
We denote the set of all reflections by $T = \{wsw^{-1} \ \vert \ w\in W\}$.
Recall that a \emph{reduced word} for an element $w\in W$
is a minimal length expression for $w$ as a product of 
elements of $S$, and the \emph{length} $\ell(w)$ of $w$
is the length of a reduced word.
For $w\in W$, we let $D_R(w)= \{s\in S\ \vert \ ws \lessdot w\}$
be the \emph{right descent set} of $w$ and $D_L(w)= \{s\in S\ \vert \ sw \lessdot w\}$
the \emph{left descent set} of $w$. We also let $T_R(w)=\{t\in T\ \vert \ \ell(wt)<\ell(w) \}$ and $T_L(w)=\{t\in T\ \vert \ \ell(tw)<\ell(w) \}$ be the \emph{right associated reflections} and \emph{left associated reflections} of $w$, respectively.

The \emph{(strong) Bruhat order} on $W$ is defined by 
$u \leq v$ if some substring of some (equivalently, every)
reduced word for $v$ is a reduced word for $u$.
The Bruhat order on a Coxeter group is a graded poset, with rank
function given by length.

When $W$ is the symmetric group $S_n$, the reflections 
are the transpositions $T = \{(ij) \ \vert \ 1 \leq i<j \leq n\}$, the set of 
permutations which act on $\{1,\dots,n\}$ by swapping $i$ and $j$.
The simple reflections are the reflections of the form
$(ij)$ where $j=i+1$.  We also denote this simple reflection by $s_i$.
An \emph{inversion} of a permutation $z = (z(1),\dots,z(n))\in S_n$
is a pair $(ij)$ with $1 \leq i <j \leq n$ such that 
$z(i)>z(j)$.  It is well-known that $\ell(z)$ is equal to the number of 
inversions of the permutation $z$.

Note that we will often use the notation $(z_1,\dots, z_n)$ instead
of $(z(1),\dots, z(n))$.

We now review some facts about  permutohedra, matroid polytopes, and Bruhat interval polytopes.

\begin{definition}
The \emph{usual permutohedron} $\Perm_n$ in $\R^n$ is the convex
hull of the $n!$ points obtained by permuting the coordinates
of the vector $(1,2,\dots,n)$.
\end{definition}

Bruhat interval polytopes, as defined below, 
were introduced and studied by 
Kodama and the second author in \cite{KW3}, 
in connection with the full Kostant-Toda lattice on the 
flag variety.

\begin{definition}\label{def:BIP}
Let $u,v \in S_n$ such that $ u \leq v $ in (strong) Bruhat
order.  We identify each permutation $z\in S_n$ with
the corresponding vector $(z(1),\dots, z(n)) \in \R^n.$
Then the \emph{Bruhat interval polytope}
$\mathsf{Q}_{u,v}$ is defined as the convex hull of all
vectors $(z(1),\dots,z(n))$ for $z$ such that $u \leq z \leq v$.
\end{definition}

See Figure \ref{fig:Mpoly} for some examples of Bruhat interval
polytopes.

\begin{figure}[h]
\centering
\includegraphics[height=5.3cm]{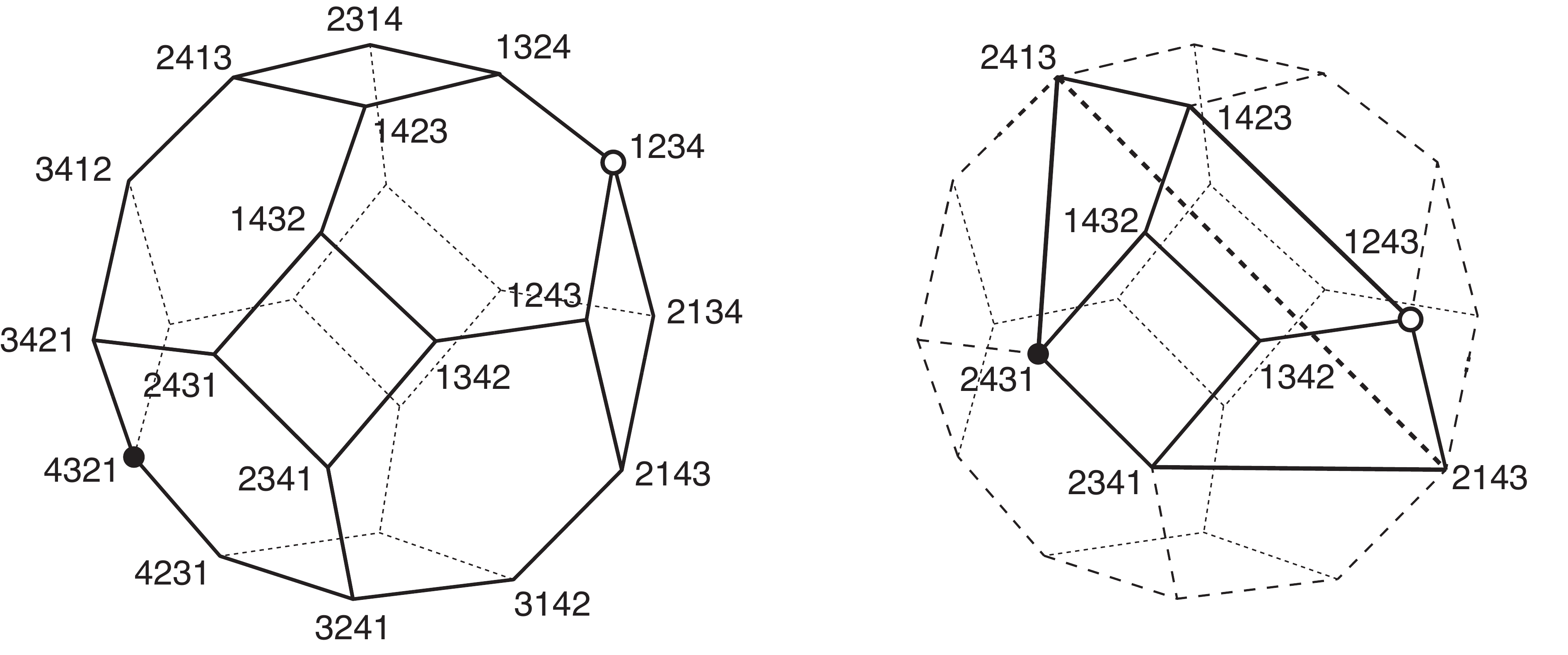}
\caption{The two polytopes are the permutohedron
 $\mathsf{Q}_{e,w_0}=\Perm_4$, and the 
Bruhat interval polytope $\mathsf{Q}_{u,v}$ with $v=(2,4,3,1)$ and $u=(1,2,4,3)$.
\label{fig:Mpoly}}
\end{figure}

We next explain how Bruhat interval polytopes are related to matroid polytopes,
generalized permutohedra, and flag matroid polytopes.

\begin{definition}
Let $\M$ be a nonempty
collection
of $k$-element subsets of $[n]$ such that:
if $I$ and $J$ are distinct members of $\M$ and
$i \in I \setminus J$, then there exists an element $j \in J \setminus I$
such that $(I \setminus \{i\}) \cup \{j\} \in \M$.
Then $\M$ is called the \emph{set of bases of a matroid} of \emph{rank $k$}
on the \emph{ground set} $[n]$;
or simply a \emph{matroid}.
\end{definition}

\begin{definition}
Given the set of bases $\M \subset {[n] \choose k}$
of a matroid, the \emph{matroid polytope}
$\Gamma_{\M}$ of $\M$ is the convex hull of the indicator vectors of the bases of $\M$:
\[
\Gamma_{\M} := \convex\{e_I \mid I \in \M\} \subset \R^n,
\]
where $e_I := \sum_{i \in I} e_i$, and $\{e_1, \dotsc, e_n\}$ is the standard basis of $\R^n$.
\end{definition}

Note that ``a matroid polytope" refers to the polytope of a specific matroid in its specific position in $\R^n$.

\begin{definition}
The \emph{flag variety} $\Fl_n$ is the variety of all flags
$$\Fl_n = \{V_{\bullet} = V_1 \subset V_2 \subset \dots \subset V_n = \R^n \ \vert \ \dim V_i = i\}$$
of vector subspaces of $\R^n$.
\end{definition}

\begin{definition}
The \emph{Grassmannian} $Gr_{k,n}$ is the variety of $k$-dimensional subspaces of $\R^n$
$$Gr_{k,n} = \{V \subset \R^n \ \vert \ \dim V = k\}.$$
\end{definition}

Note that there is a natural projection $\pi_k: Fl_n \to Gr_{k,n}$ taking 
$V_{\bullet} = V_1 \subset \dots \subset V_n$ to $V_k$.

Note also that any element $V \in Gr_{k,n}$ gives rise to a matroid
$\M(V)$ of rank $k$ on the ground set $[n]$.  
First represent $V$ as the row-span of a full rank $k \times n$ matrix $A$.
Given a $k$-element subset $I$ of $\{1,2,\dots,n\}$,
let $\Delta_I(A)$ denote the determinant of the $k \times k$ submatrix of $A$
located in columns $I$.  This is called a \emph{Pl\"ucker coordinate}.
Then $V$ gives rise to a matroid $\M(V)$ whose bases
are precisely the $k$-element subsets $I$ such that
$\Delta_I(A) \neq 0$.

One result of \cite[Section 6]{KW3} (see also \cite[Appendix]{KW3}) is the following.
See Section \ref{sec:flag} for the definition of $\mathcal{R}_{u,v;>0}$.

\begin{proposition}\label{prop:Mink-sum}
Choose $u \leq v \in S_n$.
Let $V_{\bullet}=V_1 \subset \dots \subset V_n$ be any element in the positive part of the Richardson
variety
$\mathcal{R}_{u,v;>0}$.  
Then the Bruhat interval polytope
$\mathsf{Q}_{u,v}$ is the Minkowski sum of $n-1$ matroid polytopes:
$$\mathsf{Q}_{u,v} = \sum_{k=1}^{n-1} \Gamma_{\M(V_k)}.$$
In fact each of the polytopes $\Gamma_{\M(V_k)}$ is a \emph{positroid polytope},
in the sense of \cite{ARW}, and 
$\mathsf{Q}_{u,v}$ is a \emph{generalized permutohedron}, in the sense of 
Postnikov \cite{Postnikov2}.
\end{proposition}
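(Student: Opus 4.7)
The plan is to work via the moment map and the decomposition of the flag variety as a product of Grassmannians. Under the diagonal embedding $\Fl_n \hookrightarrow \prod_{k=1}^{n-1} Gr_{k,n}$ sending $V_\bullet \mapsto (V_1,\dots,V_{n-1})$, the moment map $\mu$ on $\Fl_n$ (with respect to the torus $T$ and an appropriate character) decomposes as the sum $\sum_{k=1}^{n-1}\mu_k$, where $\mu_k$ is the moment map on the $k$-th Grassmannian factor. For any point $V_\bullet \in \Fl_n$, the projection sends the torus orbit closure to $\overline{T\cdot V_k}$, and by the classical Gelfand--Goresky--MacPherson--Serganova theorem $\mu_k(\overline{T\cdot V_k}) = \Gamma_{\M(V_k)}$. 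Summing over $k$ yields $\mu(\overline{T\cdot V_\bullet}) = \sum_{k=1}^{n-1}\Gamma_{\M(V_k)}$.

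Next, I would identify $\mathsf{Q}_{u,v}$ with $\mu(\overline{T\cdot V_\bullet})$ for $V_\bullet \in \mathcal{R}_{u,v;>0}$. On one hand, the moment image of $\overline{\mathcal{R}_{u,v}}$ is the convex hull of the moment images of its $T$-fixed points, which are the permutation flags $z\cdot F_\bullet$ for $z \in [u,v]$; under the appropriate normalization these map to the permutation vectors $(z(1),\dots,z(n))$, giving exactly $\mathsf{Q}_{u,v}$. On the other hand, for a totally positive $V_\bullet$ the torus orbit closure $\overline{T\cdot V_\bullet}$ already realizes the full moment polytope of $\overline{\mathcal{R}_{u,v}}$, which is the content of \cite[Section 6]{KW3}.

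The main obstacle is the positroid identification: one must show that for $V_\bullet \in \mathcal{R}_{u,v;>0}$ the matroid $\M(V_k)$ depends only on $(u,v,k)$ and is a positroid, with basis set equal to the projection $\{z([k]) : z \in [u,v]\}$ of the Bruhat interval onto the Grassmannian. This follows because the projection $\pi_k(\mathcal{R}_{u,v}) \subset Gr_{k,n}$ is an open positroid variety whose totally positive part forms a single cell on which the matroid is constant; the combinatorial identification of the basis set then comes from the standard correspondence between positroid cells and Bruhat intervals, together with the description of $\pi_k(\mathcal{R}_{u,v})$ via the parabolic projection of $[u,v]$ to $W/W_J$, where $W_J$ is the stabilizer of the subset $[k]$.

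Finally, the two remaining assertions are immediate corollaries. Each $V_k$ lies in the totally nonnegative Grassmannian, so by construction $\M(V_k)$ is a positroid and $\Gamma_{\M(V_k)}$ is a positroid polytope in the sense of \cite{ARW}. A Minkowski sum of matroid polytopes is a generalized permutohedron by Postnikov \cite{Postnikov2}, so $\mathsf{Q}_{u,v}$ is a generalized permutohedron.
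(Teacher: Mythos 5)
The paper offers no proof of Proposition \ref{prop:Mink-sum} at all: it is quoted from \cite[Section 6]{KW3}. So your sketch should be measured against the argument there and against Section \ref{sec:genBIP} of this paper, where the same statement is proved for general $G/P$ (Theorem \ref{th:MR}, Lemma \ref{lem:torusorbit}, Proposition \ref{prop:closure}); your route --- moment map, torus orbit closures, and the Atiyah/Gelfand--Serganova description of their images --- is the same one. Two steps need attention. First, ``summing over $k$'' only yields the inclusion $\mu(\overline{T\cdot V_\bullet})\subseteq\sum_k\Gamma_{\M(V_k)}$ for free, since a point of the Minkowski sum need not a priori be realized by a single point of $\overline{T\cdot V_\bullet}$. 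Equality does hold, but the reason has to be supplied: under the Segre embedding $\prod_k\mathbb{P}(\wedge^k\C^n)\hookrightarrow\mathbb{P}(\bigotimes_k\wedge^k\C^n)$ the Pl\"ucker coordinates of $V_\bullet$ multiply, so the set of weights with nonvanishing coordinate at $V_\bullet$ is the Minkowski sum (as a set) of the corresponding sets for the $V_k$, and one applies Gelfand--Serganova to the product; equivalently, the $\M(V_k)$ are concordant and a flag matroid polytope is the Minkowski sum of its constituent matroid polytopes, which is how Proposition \ref{prop:flagmatroid} argues via \cite[Theorems 1.7.3 and 1.13.5]{CoxeterMatroids}. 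Second, the assertion that for totally positive $V_\bullet$ the orbit closure $\overline{T\cdot V_\bullet}$ realizes the full moment polytope of $\overline{\mathcal{R}_{u,v}}$ --- equivalently, that the nonvanishing extremal Pl\"ucker coordinates of $V_\bullet$, i.e.\ the $T$-fixed points of $\overline{T\cdot V_\bullet}$, are indexed by exactly the $z\in[u,v]$ --- is the entire content of the proposition, and citing \cite[Section 6]{KW3} for it is circular in the present context, since that is precisely the result being proved. A self-contained proof of this step is what Theorem \ref{th:MR} supplies (Rietsch's closure relations give one inclusion of lists, canonical bases the other); specializing it to $G/B$ is what actually closes your argument. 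The remaining assertions (positroid, generalized permutohedron) are handled correctly.
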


We can compute the bases $\M(V_k)$ 
from the permutations $u$ and $v$
as follows.
\begin{equation}\label{eq:projection}
\M(V_k) = \{I \in {[n] \choose k} \ \vert \ \text{ there exists }
z \in [u,v] \text{ such that }I=\{z^{-1}(n), z^{-1}(n-1),
\dots, z^{-1}(n-k+1)\}\}.
\end{equation}

Therefore we have the following.
\begin{proposition}\label{prop:Mink-sum2}
For any $u \leq v \in S_n$, the Bruhat interval polytope 
$\mathsf{Q}_{u,v}$ is the Minkowski sum of $n-1$ matroid polytopes
$$\mathsf{Q}_{u,v} = \sum_{k=1}^{n-1} \Gamma_{\M_k},$$ where
\begin{equation*}
\M_k = \{I \in {[n] \choose k} \ \vert \ \text{ there exists }
z \in [u,v] \text{ such that }I=\{z^{-1}(n), z^{-1}(n-1),
\dots, z^{-1}(n-k+1)\}\}.
\end{equation*}
\end{proposition}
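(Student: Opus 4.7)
The plan is to deduce Proposition \ref{prop:Mink-sum2} directly from Proposition \ref{prop:Mink-sum} by substituting the combinatorial description \eqref{eq:projection} of the matroids $\M(V_k)$. Fix any totally positive flag $V_\bullet = V_1 \subset \cdots \subset V_n$ in $\mathcal{R}_{u,v;>0}$. Proposition \ref{prop:Mink-sum} gives $\mathsf{Q}_{u,v} = \sum_{k=1}^{n-1} \Gamma_{\M(V_k)}$, so it suffices to verify that each matroid $\M(V_k)$ coincides with the combinatorial matroid $\M_k$ defined in the statement. This is the assertion of \eqref{eq:projection}, and it has the pleasant consequence that the right-hand side of the Minkowski sum is in fact independent of the choice of $V_\bullet$ (even though Proposition \ref{prop:Mink-sum} a priori allowed it to depend on this choice).

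To justify \eqref{eq:projection}, I would argue as follows. By definition, the bases of $\M(V_k)$ are the $k$-subsets $I \in \binom{[n]}{k}$ with $\Delta_I(V_k) \neq 0$. The projection $\pi_k: \Fl_n \to Gr_{k,n}$ sends the Richardson variety $\mathcal{R}_{u,v}$ into a single Gelfand--Serganova matroid stratum of $Gr_{k,n}$, so the matroid $\M(V_k)$ is constant as $V_\bullet$ varies over $\mathcal{R}_{u,v;>0}$. The bases of this matroid are then determined by the Schubert cell structure of $\Fl_n$: for $z \in [u,v]$, any flag in the Schubert cell $X_z^\circ$ projects to a point of $Gr_{k,n}$ whose minimal (in Gale order) nonvanishing Pl\"ucker coordinate is $\Delta_I$ with $I = \{z^{-1}(n), \dots, z^{-1}(n-k+1)\}$ — i.e. the positions of the top $k$ values of $z$. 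Taking the union over $z$ in the interval $[u,v]$ then recovers precisely the full collection $\M_k$ of bases.

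The main obstacle is verifying carefully that the matroid of $V_k$ is constant on $\mathcal{R}_{u,v;>0}$ and that its bases are exhausted by the combinatorial recipe above. This can be done either by direct linear algebra on the totally positive representatives constructed in \cite{KW3} (which is the route taken there, in the appendix), or by invoking the general theory of projected Richardson varieties and the identification of their matroid strata with positroid cells. Granted \eqref{eq:projection}, Proposition \ref{prop:Mink-sum2} is immediate from Proposition \ref{prop:Mink-sum}.
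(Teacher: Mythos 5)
Your proposal is correct and follows exactly the paper's route: the paper derives Proposition \ref{prop:Mink-sum2} by combining Proposition \ref{prop:Mink-sum} with the combinatorial description \eqref{eq:projection} of the matroids $\M(V_k)$, both of which are imported from \cite{KW3}. Your additional sketch of why \eqref{eq:projection} holds (constancy of the matroid on $\mathcal{R}_{u,v;>0}$ and the identification of bases via the Schubert cells of $z\in[u,v]$) goes beyond what the paper writes but matches the cited source.
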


\emph{Positroid polytopes} are a particularly nice class of matroid polytopes coming
from positively oriented matroids.  A \emph{generalized 
permutohedron} is a polytope which is
obtained by moving the vertices
of the usual permutohedron in such a way that directions of
edges are preserved, but some edges (and higher dimensional faces)
may degenerate.  See \cite{ARW} and \cite{Postnikov2} for more details
on positroid polytopes and generalized permutohedra.

There is a generalization of matroid called \emph{flag matroid}, due to 
Gelfand and Serganova \cite{GelfSerg}, \cite[Section 1.7]{CoxeterMatroids},
and a corresponding 
notion of \emph{flag matroid polytope}.  A convex polytope $\Delta$ in the real vector space $\R^n$
is called a (type $A_{n-1}$) \emph{flag matroid polytope} if the edges of $\Delta$ are parallel to the roots 
of type $A_{n-1}$ and there exists a point equidistant from all of its vertices.

The following result follows easily from Proposition \ref{prop:Mink-sum}.

\begin{proposition}\label{prop:flagmatroid}
Choose $u \leq v \in S_n$.
Then the Bruhat interval polytope
$\mathsf{Q}_{u,v}$ is a flag matroid polytope.
\end{proposition}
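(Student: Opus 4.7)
The plan is to check the two defining properties of a (type $A_{n-1}$) flag matroid polytope in turn: edges parallel to type $A_{n-1}$ roots, and the existence of a point equidistant from all vertices. The second property is essentially free, and the first will be reduced immediately to Proposition \ref{prop:Mink-sum}.

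First I would identify the vertex set of $\mathsf{Q}_{u,v}$. The vertices of the usual permutohedron $\Perm_n$ are precisely the $n!$ permutation vectors, which are all extreme in $\R^n$ because they are distinct points lying on a common sphere. Since $\mathsf{Q}_{u,v}$ is the convex hull of a subset of these extreme points, every $(z(1),\dots,z(n))$ with $z \in [u,v]$ is itself a vertex of $\mathsf{Q}_{u,v}$. Now for each such vertex, the squared distance to the origin equals $\sum_{i=1}^n i^2$, which does not depend on $z$. Equivalently, every vertex has distance $\sqrt{\sum_{i=1}^n (i - \tfrac{n+1}{2})^2}$ from the centroid $\tfrac{n+1}{2}(1,1,\dots,1)$. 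This verifies the equidistance condition.

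For the edge-direction condition, I would apply Proposition \ref{prop:Mink-sum}, which expresses
\[
\mathsf{Q}_{u,v} = \sum_{k=1}^{n-1} \Gamma_{\M(V_k)}
\]
as a Minkowski sum of matroid polytopes (indeed positroid polytopes), and which further asserts that $\mathsf{Q}_{u,v}$ is a generalized permutohedron in the sense of Postnikov. It is a standard fact that the edges of any matroid polytope $\Gamma_{\M}$ are parallel to roots $e_i - e_j$ of type $A_{n-1}$ (this is in fact equivalent to the matroid exchange axiom). Since an edge direction of a Minkowski sum $A+B$ is always an edge direction of $A$ or of $B$, it follows inductively that every edge of $\mathsf{Q}_{u,v}$ is parallel to some $e_i - e_j$. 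Alternatively, one may simply cite Postnikov's definition of generalized permutohedra, whose edges are parallel to roots of type $A_{n-1}$ by construction.

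I do not anticipate any real obstacle: once Proposition \ref{prop:Mink-sum} is in hand, both conditions in the definition of a flag matroid polytope are essentially immediate. The only slightly delicate point is confirming that all the permutation vectors actually appear as vertices (rather than getting absorbed into the interior or into a face of lower dimension), and this is handled by the observation that they are already vertices of the ambient permutohedron.
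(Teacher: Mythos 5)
Your proof is correct, but it takes a genuinely different route from the paper's. The paper does not verify the stated geometric definition directly; instead it picks a point $V_{\bullet}\in\mathcal{R}_{u,v;>0}$, invokes \cite[Theorem 1.7.3]{CoxeterMatroids} to conclude that the matroids $\M(V_1),\dots,\M(V_{n-1})$ form a flag matroid, and then \cite[Theorem 1.13.5]{CoxeterMatroids} to identify the Minkowski sum $\sum_k\Gamma_{\M(V_k)}$ of Proposition \ref{prop:Mink-sum} with the polytope of that flag matroid. You instead check the two conditions in the paper's definition of a type $A_{n-1}$ flag matroid polytope: equidistance is immediate since all permutation vectors lie on a common sphere centered at $\tfrac{n+1}{2}(1,\dots,1)$ (which also correctly certifies that each $z\in[u,v]$ is genuinely a vertex), and the root-direction condition follows from Proposition \ref{prop:Mink-sum} combined with the Gelfand--Goresky--MacPherson--Serganova fact that matroid polytope edges are parallel to $e_i-e_j$ and the standard fact that every edge direction of a Minkowski sum is an edge direction of one of the summands. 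Both arguments are sound. What the paper's route buys is the identification of the underlying \emph{flag matroid} as a combinatorial object, not merely the polytope; that is what is exploited in Proposition \ref{minmax}, where the Maximality Property for Coxeter matroids is applied to the vertex set of a face. Your route is more elementary and essentially self-contained, but taken literally it only establishes the geometric characterization, so any later appeal to the combinatorial side of the theory would still require the Gelfand--Serganova equivalence between the polytopal and matroidal descriptions.
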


\begin{proof}
Let $V_{\bullet}=V_1 \subset \dots \subset V_n$ be any element in the positive part of the Richardson
variety
$\mathcal{R}_{u,v;>0}$.  
By Proposition \ref{prop:Mink-sum},
$\mathsf{Q}_{u,v} = \sum_{k=1}^{n-1} \Gamma_{\M(V_k)}.$
Then \cite[Theorem 1.7.3]{CoxeterMatroids}
implies that
the collection of matroids $\M_{\bullet} = \{\M(V_1),\dots, \M(V_{n-1})\}$
forms a flag matroid.
By \cite[Theorem 1.13.5]{CoxeterMatroids},
it follows that the flag matroid polytope associated to 
$\M_{\bullet}$ is the Minkowski sum of the matroid polytopes
$\Gamma_{\M(V_1)},\dots, \Gamma_{\M(V_{n-1})}$.
Therefore $\mathsf{Q}_{u,v}$ is a flag matroid polytope.
\end{proof}

We can use Proposition \ref{prop:flagmatroid} to prove the following useful result.

\begin{proposition} \label{minmax}
Let $\PPP_{u,v}$ be a Bruhat interval polytope. Consider a face $F$ of $\PPP_{u,v}$. Let $\mathcal{N}$ be the set of permutations which label vertices of $F$. Then $\mathcal{N}$ contains an element $x$ and an element $y$ such that 
\[
x \leq z \leq y \quad \forall z \in \mathcal{N}.
\]
\end{proposition}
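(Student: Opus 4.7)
My plan is to derive this proposition from Proposition~\ref{prop:flagmatroid} using two standard results from the theory of Coxeter matroids (equivalently, flag matroids of type $A_{n-1}$); see \cite[Chapter~1]{CoxeterMatroids}. The first preparatory step is to recall the bijection between full flags $B_1\subset B_2\subset\cdots\subset B_{n-1}\subset [n]$ (with $|B_k|=k$) and permutations $z\in S_n$, given by $B_k=\{z^{-1}(n),z^{-1}(n-1),\ldots,z^{-1}(n-k+1)\}$. Under this bijection, the vertices of $\PPP_{u,v}$ are labeled by the bases of a flag matroid $\M_{\bullet}$, and, crucially, the Bruhat order on permutations corresponds to the natural partial order on flags used in Coxeter matroid theory.

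Next I would apply the well-known fact that any face of a Coxeter matroid polytope is itself a Coxeter matroid polytope. Applied to the face $F\subseteq\PPP_{u,v}$, this identifies the label set $\mathcal{N}$ with the bases of a sub-flag-matroid $\M'_{\bullet}\subseteq\M_{\bullet}$. To finish, I would invoke the defining \emph{Maximality Property} of Coxeter matroids: for every $w\in S_n$, the set of bases of a Coxeter matroid admits a unique element maximal in the $w$-shifted Bruhat order $\leq_w$, defined by $a\leq_w b$ iff $w^{-1}a\leq w^{-1}b$ in the standard Bruhat order. Applied to $\M'_{\bullet}$ with $w=e$, this produces an element $y\in\mathcal{N}$ with $z\leq y$ for every $z\in\mathcal{N}$; applied with $w=w_0$, which reverses Bruhat order, it produces the required minimum $x$.

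The main obstacle, and the only step that requires genuine bookkeeping, is verifying the compatibility between the Bruhat order on $S_n$ (in which the statement of the proposition is phrased) and the partial order on flags of subsets used to formulate the Maximality Property in \cite{CoxeterMatroids}. Once this dictionary is set up, the two applications of the Maximality Property yield $x$ and $y$ essentially for free, and no further analysis of the hyperplane defining the face $F$ is required.
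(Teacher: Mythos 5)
Your proposal is correct and follows essentially the same route as the paper: pass from Proposition \ref{prop:flagmatroid} to the fact that a face of a flag (Coxeter) matroid polytope is again one, and then apply the Maximality Property of Coxeter matroids to extract the maximum and minimum of $\mathcal{N}$. The only cosmetic difference is that you spell out the two shifts $w=e$ and $w=w_0$ explicitly, whereas the paper cites the Maximality Property once for both the minimal and maximal element.
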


\begin{proof}
By Proposition \ref{prop:flagmatroid}, $\mathsf{Q}_{u,v}$ is a flag matroid polytope.
It follows from the definition that every face of a flag matroid polytope is again a flag matroid polytope,
and therefore the face $F$ is a flag matroid polytope.
By \cite[Section 6.1.3]{CoxeterMatroids}, every flag matroid is a Coxeter matroid, and hence
the permutations $\mathcal{N}$ labeling the vertices of $F$ are the elements of a Coxeter matroid
(for $S_n$, with parabolic subgroup the trivial group).  But now by the Maximality Property for 
Coxeter matroids \cite[Section 6.1.1]{CoxeterMatroids}, 
$\mathcal{N}$ must contain a minimal element $x$ such that $x \leq z$ for all $z\in \mathcal{N}$,
and $\mathcal{N}$ must contain a maximal element $y$ such that $y \geq z$ for all $z\in \mathcal{N}$.
\end{proof}

\section{The generalized lifting property for the symmetric group}\label{sec:lifting}

The main result of this section is Theorem \ref{GeneralizedLiftingProperty},
which is a generalization
(for the symmetric group) of the classical lifting property
for Coxeter groups.  This result will be a main tool for
proving 
that every face of a Bruhat interval polytope is a Bruhat interval polytope.

We start by recalling the usual lifting property.

\begin{proposition}[Lifting property] \label{lifting1} Suppose $u<v$ 
and $s \in D_R(v) \setminus D_R(u)$. Then $u \leq vs \lessdot v$ and $u \lessdot us \leq v$.
\end{proposition}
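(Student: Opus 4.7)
The plan is to prove this classical lifting property via the subword characterization of Bruhat order: $u \leq w$ if and only if some reduced word for $w$ admits a reduced subword equal to some reduced word for $u$ (equivalently, every reduced word for $w$ admits such a subword).

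Since $s \in D_R(v)$, we can fix a reduced expression $v = s_{i_1} \cdots s_{i_{k-1}} s$, so that $s_{i_1}\cdots s_{i_{k-1}}$ is a reduced word for $vs$ of length $\ell(v)-1$. Because $u \leq v$, this reduced word for $v$ contains a reduced subword spelling $u$. I would split into two cases according to whether this subword uses the final letter $s$. If it does not, the same subword lives inside $s_{i_1}\cdots s_{i_{k-1}}$, giving $u \leq vs$ directly. If it does, then deleting the final $s$ from the subword produces a reduced word of length $\ell(u)-1$ for some element $w$ with $u = ws$; this would force $\ell(us) = \ell(w) = \ell(u)-1$, i.e. $s \in D_R(u)$, contradicting the hypothesis. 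Hence $u \leq vs$, and combined with $\ell(v)-\ell(vs)=1$ this yields $u \leq vs \lessdot v$.

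For the second half, $s \notin D_R(u)$ immediately gives $\ell(us)=\ell(u)+1$, so $u \lessdot us$. To show $us \leq v$, I would take the reduced subword for $u$ inside $s_{i_1}\cdots s_{i_{k-1}}$ obtained in the first case and append the final letter $s$. This produces a subword of the reduced expression $s_{i_1}\cdots s_{i_{k-1}} s$ for $v$ whose product is $us$ and whose length is $\ell(u)+1 = \ell(us)$; hence it is a reduced subword for $us$, and the subword property gives $us \leq v$.

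The argument is short and essentially bookkeeping; the only subtle step is the case where the chosen subword for $u$ uses the final $s$, and the key observation there is that one must use the descent hypothesis $s \notin D_R(u)$ to extract a contradiction via $\ell(us) = \ell(u)-1$. I would note that this proof is valid for an arbitrary Coxeter system, not just $S_n$, since it uses only the subword property and the length characterization of descents; this contrasts with the generalized lifting property of Theorem \ref{GeneralizedLiftingProperty}, which is stated only for the symmetric group and will require genuinely new ideas.
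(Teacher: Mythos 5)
Your proof is correct. Note that the paper itself offers no proof of this proposition --- it is recalled as the classical lifting property (the standard reference is \cite[Proposition 2.2.7]{BB}) --- so there is no argument in the text to compare against; your subword-property argument is the standard one, it is complete (in particular, your Case B correctly rules out any reduced subword for $u$ using the final letter, so the subword needed in the second half really does exist), and, as you observe, it is valid for an arbitrary Coxeter system.
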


\begin{definition}\label{def:inversion-minimal}
 Let $u,v \in S_n$. A transposition $(i k)$ is \emph{inversion-minimal on $(u,v)$} if the interval $[i,k]$ is the 
minimal interval (with respect to inclusion)  which has the property
\[
v_i > v_k, \quad u_i < u_k.
\]
\end{definition}
   
\begin{theorem}[Generalized lifting property]\label{GeneralizedLiftingProperty}
Suppose $u<v$ in $S_n$. 
Choose a transposition $(ij)$ which is inversion-minimal on $(u,v)$. Then $u \leq v(i j)
\lessdot v$ and $u \lessdot u (i j) \leq v$.
\end{theorem}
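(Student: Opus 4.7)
The plan is to split the statement into four sub-claims: the cover relations $v(ij) \lessdot v$ and $u \lessdot u(ij)$, and the Bruhat comparisons $u \leq v(ij)$ and $u(ij) \leq v$. The cover relations are short combinatorial consequences of inversion-minimality, while the comparisons require a more delicate argument using the Ehresmann rank-matrix criterion for Bruhat order.

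For $v(ij) \lessdot v$, the standard criterion says this is equivalent to $v_i > v_j$ together with the absence of any $k \in (i, j)$ satisfying $v_i > v_k > v_j$. If such a $k$ existed, inversion-minimality of $(i, j)$ applied to the proper subintervals $[i, k]$ and $[k, j]$ (each of which has $v_{i'} > v_{j'}$) would force $u_i > u_k$ and $u_k > u_j$, hence $u_i > u_j$, contradicting $u_i < u_j$. The cover relation $u \lessdot u(ij)$ follows by the symmetric argument, swapping the roles of $u$ and $v$ and reversing the relevant inequalities. For the Bruhat comparison $u \leq v(ij)$, I would invoke the Ehresmann criterion: $w \leq w'$ if and only if $r_w(p,q) \leq r_{w'}(p,q)$ for all $(p,q)$, where $r_w(p,q) := |\{a \leq p : w(a) \geq q\}|$. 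A direct computation shows that $r_{v(ij)}(p,q) = r_v(p,q)$ outside the ``decrease region'' $D := \{(p, q) : i \leq p < j,\ v_j < q \leq v_i\}$, while $r_{v(ij)} = r_v - 1$ on $D$. Since $u \leq v$ already gives $r_u \leq r_v$ everywhere, $u \leq v(ij)$ reduces to the strict inequality $r_u(p,q) < r_v(p,q)$ at every $(p, q) \in D$.

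This strict inequality is the main obstacle. I plan to argue by contradiction: assuming $r_u(p,q) = r_v(p,q)$ at some $(p,q) \in D$, I will produce a pair $(i', j')$ with $[i', j'] \subsetneq [i, j]$, $v_{i'} > v_{j'}$, and $u_{i'} < u_{j'}$, violating inversion-minimality of $(i, j)$. Setting $P := \{a \leq p : v_a \geq q,\ u_a < q\}$ and $Q := \{a \leq p : u_a \geq q,\ v_a < q\}$, the rank equality at $(p, q)$ forces $|P| = |Q|$, and combining with the inequality $r_u(i-1, q) \leq r_v(i-1, q)$ yields $|Q \cap [i, p]| \geq |P \cap [i, p]|$, with an analogous inequality on the region $(p, j]$. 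I then split on the value of $u_i$. If $u_i < q$, then $i \in P$, so $|P \cap [i, p]| \geq 1$, which forces some $a \in Q$ with $i < a \leq p$; the pair $(i, a)$ is then the required smaller violation, since $v_i \geq q > v_a$ and $u_i < q \leq u_a$. If instead $u_i \geq q$, then $u_j > u_i \geq q$, so $j$ lies in the analogue of $Q$ on $(p, j]$, which forces some $a$ with $p < a < j$ in the analogue of $P$, and the pair $(a, j)$ is the required violation, since $v_a \geq q > v_j$ and $u_a < q \leq u_j$. The remaining comparison $u(ij) \leq v$ is proved by the symmetric analysis applied to the ``increase region'' $\{(p, q) : i \leq p < j,\ u_i < q \leq u_j\}$ for the transition $u \to u(ij)$.
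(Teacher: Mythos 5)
Your proposal is correct, and the main part of it takes a genuinely different route from the paper. For the two cover relations your argument is essentially the one the paper uses (its Lemma \ref{minimalEqualsII}, Cases 1 and 2): an intermediate value $v_k \in (v_j, v_i)$ or $u_k \in (u_i, u_j)$ would, by minimality applied to $[i,k]$ and $[k,j]$, force the opposite comparison in the other permutation on both subintervals, yielding a contradiction. For the comparisons $u \leq v(ij)$ and $u(ij) \leq v$, however, the paper argues by induction on $j-i$, taking the classical lifting property as the base case and, in the inductive step, conjugating by adjacent transpositions $s_m$ (checking via Lemmas \ref{INI} and \ref{lem:dot-bijection} that inversion-minimality and the relevant descents are preserved, and handling a separate terminating Case 3 when no suitable adjacent descent is available at either end). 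You instead work directly with the Ehresmann rank-matrix criterion: you correctly identify the region $D$ where $r_{v(ij)} = r_v - 1$, reduce the claim to strictness of $r_u < r_v$ on $D$, and derive a smaller inversion-minimal violation from any equality $r_u(p,q)=r_v(p,q)$ via the counting sets $P$ and $Q$ and the two boundary inequalities at $i-1$ and $j$. I checked the details (in particular that $i \notin Q$ and $j \notin \tilde{P}$, so the elements $a$ you extract give \emph{proper} subintervals $[i,a]$ and $[a,j]$ of $[i,j]$), and the case split on $u_i \gtrless q$ (resp.\ $v_i \gtrless q$ for the symmetric statement) is exhaustive. Your argument is non-inductive, avoids the paper's somewhat delicate Case 3 termination step, and does not rely on the classical lifting property at all; the trade-off is that it is tied to the tableau criterion and hence to type $A$, whereas the paper's inductive scheme is the one the authors indicate they intend to pursue for other Coxeter groups.
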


We note that there are pairs $u<v$ where $D_R(v) \setminus D_R(u)$ is empty,
and hence one cannot apply the Lifting property.  In contrast, Lemma \ref{iklemma}
below shows that 
for any pair $u<v$ in $S_n$, there exists an inversion-minimal transposition $(i j)$.
Hence it is always possible to apply the Generalized lifting property.

\begin{lemma}  \label{iklemma}
Let $(W,S)$ be a Coxeter group. Take $u,v \in W$ distinct. If $\ell(v) \geq \ell(u)$ then there exists a reflection $t \in T$ such that 
\[
v > vt, \quad u < ut.
\]
\end{lemma}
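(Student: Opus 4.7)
The plan is to reformulate the desired conclusion in terms of the right associated reflection sets $T_R(u)$ and $T_R(v)$ and then argue by contradiction using two standard facts about these sets.

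First, I would observe that the statement we want is precisely: $T_R(v) \setminus T_R(u) \neq \emptyset$. Indeed, a reflection $t$ satisfies $v > vt$ iff $t \in T_R(v)$, and $u < ut$ iff $t \notin T_R(u)$. So the task reduces to showing that $T_R(v)$ is not contained in $T_R(u)$.

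Suppose, for contradiction, that $T_R(v) \subseteq T_R(u)$. The first standard fact I would invoke is that for any Coxeter group element $w$, we have $|T_R(w)| = \ell(w)$ (this follows, for example, from the bijection between $T_R(w)$ and the inversion set of $w$ inside the positive root system). Combined with the inclusion, this gives $\ell(v) = |T_R(v)| \leq |T_R(u)| = \ell(u)$. Together with the hypothesis $\ell(v) \geq \ell(u)$, we conclude $\ell(v) = \ell(u)$ and therefore $T_R(v) = T_R(u)$.

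The second standard fact I would invoke is that an element of a Coxeter group is uniquely determined by its right associated reflection set (equivalently, by its inversion set); this is a classical consequence of the strong exchange property and can be found in Bj\"orner--Brenti \cite{BB}. Applied to our situation, $T_R(u) = T_R(v)$ forces $u = v$, contradicting the hypothesis that $u$ and $v$ are distinct. The step I expect to require the most care is the clean citation or verification of this uniqueness fact, since the rest of the argument is essentially bookkeeping; but once it is in hand, the proof is immediate.
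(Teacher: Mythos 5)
Your proof is correct and follows essentially the same route as the paper's: reduce to showing $T_R(v) \not\subseteq T_R(u)$, use $|T_R(w)| = \ell(w)$ to force $T_R(v) = T_R(u)$ under the contrary assumption, and then invoke the fact that an element is determined by its set of right associated reflections (the paper cites \cite[Corollary 1.4.5]{BB} and \cite[Chapter 1, Exercise 11]{BB} for these two facts). No gaps.
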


\begin{proof}
Recall that $T_R(w)= \{t \in T\ \vert \ wt < w\}$.  
The lemma will follow if we show that $T_R(v) \not \subset T_R(u)$. Assume by contradiction that $T_R(v) \subset T_R(u)$. By \cite[Corollary 1.4.5]{BB}, for any $x \in W$, $|T_R(x)|=\ell(x)$. Since $\ell(v) \leq \ell(u)$, we must 
have $T_R(v)=T_R(u)$.  By  \cite[Chapter 1 Exercise 11]{BB}, this contradicts $v \neq u$.
\end{proof}

Lemma \ref{iklemma} directly implies the following corollary.

\begin{corollary}
Let $v,u \in S_n$ be two distinct permutations. If $\ell(v) \geq \ell(u)$ then there exists an inversion-minimal transposition on $(u,v)$.
\end{corollary}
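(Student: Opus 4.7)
The plan is to apply Lemma \ref{iklemma} directly to produce a single transposition satisfying the two inequalities appearing in Definition \ref{def:inversion-minimal}, and then to pass to an inversion-minimal one by a simple finiteness argument.

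First, since $u \neq v$ and $\ell(v) \geq \ell(u)$, Lemma \ref{iklemma} provides a reflection $t \in T$ with $vt < v$ and $ut > u$. In $S_n$ every reflection is a transposition, so I would write $t = (a\,b)$ with $a < b$. The standard criterion in the symmetric group, namely that for $a<b$ one has $\ell(wt) < \ell(w)$ if and only if $w_a > w_b$, translates the two conclusions of the lemma into exactly the inversion inequalities $v_a > v_b$ and $u_a < u_b$. In particular the set
$$\mathcal{S} := \{(i,k) \ \vert \ 1 \leq i < k \leq n,\ v_i > v_k,\ u_i < u_k\}$$
is nonempty.

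Now $\mathcal{S}$ is finite, so the family of intervals $\{[i,k] \ \vert \ (i,k) \in \mathcal{S}\}$, ordered by inclusion, contains at least one minimal element $[i^*,k^*]$. By Definition \ref{def:inversion-minimal}, the transposition $(i^*\,k^*)$ is then inversion-minimal on $(u,v)$, completing the proof.

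There is essentially no obstacle here: the entire content of the corollary is packaged in Lemma \ref{iklemma} together with the dictionary between the abstract reflection-length statement for Coxeter groups and the one-line-notation inversion condition used to define inversion-minimality in $S_n$.
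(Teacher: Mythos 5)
Your proof is correct and matches the paper's intent: the paper simply states that Lemma \ref{iklemma} directly implies the corollary, and your argument fills in exactly the implicit steps (translating $vt<v$, $ut>u$ into the one-line inversion conditions via the standard length criterion for transpositions, then choosing an inclusion-minimal interval from the resulting nonempty finite family).
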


In preparation for the proof of 
Theorem \ref{GeneralizedLiftingProperty},
it will be convenient to make the following definition.

\begin{definition}  A \emph{pattern} of length $n$ is an equivalence class of sequences $x_1 x_2 \cdots x_n$ of distinct integers. Two such sequences $x_1 x_2 \cdots x_n$, $y_1 y_2 \cdots y_n$ are in the same equivalence class (``have the same pattern'') if 
\[
x_i > x_j \iff y_i > y_j \hspace{.2cm}\text{ for all $i$, $j$ such that } 1\leq i,j \leq n.
\]
 Denote by $\Patt_n$ the set of patterns of length $n$.
\end{definition}

There is a canonical representative for each pattern $x \in \Patt_n$ obtained by replacing each $x_i$ with
\[
\bar{x}_i:=\#\{j \in [n]: x_j \leq x_i\}.
\]
For example, the canonical representative of $523$ is $312$.

\begin{definition}
Let $x,y \in \Patt_n$ for some $n$. Call $(x,y)$ an Inversion-Inversion pair if the following condition holds:
\[
\text{for all } i < j , \quad x_i>x_j \implies y_i>y_j.
\]
\end{definition}

Notice that this statement is independent of the choice of representatives. 

It is easy to see that if $(x,y)$ is an Inversion-Inversion pair, then so is $(x_1 \cdots \hat{x}_k \cdots x_n, y_1 \cdots \hat{y}_k \cdots y_n)$ for any $k$.

In preparation for the proof of Theorem \ref{GeneralizedLiftingProperty},
we first state and prove Lemmas \ref{minimalEqualsII},
\ref{INI}, and 
\ref{lem:dot-bijection}.

\begin{lemma} \label{minimalEqualsII}

Let $u,v \in S_n$. The following are equivalent:
\begin{enumerate}
\item[(i).] The transposition $(i k)$ is inversion-minimal on $(u,v)$
\item[(ii).] The patterns $x=x_i \dots x_k := v_i \cdots v_k$ and 
$y=y_i \dots y_k:=u_k u_{i+1} u_{i} \cdots u_{k-2} u_{k-1} u_i$ form an Inversion-Inversion pair $(x,y)$ with $\bar{x}_k=\bar{x}_i+1$ and $\bar{y}_k=\bar{y}_i+1$.
\end{enumerate}
\end{lemma}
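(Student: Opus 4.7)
The plan is to establish the two implications of this biconditional by a case analysis driven by whether the relevant coordinates are endpoints of $[i,k]$ or interior points. Throughout, interpret $y$ as the sequence obtained from $u_i u_{i+1} \cdots u_k$ by swapping the first and last entries (so $y_i = u_k$, $y_k = u_i$, and $y_j = u_j$ for $i < j < k$), and note that since the values $v_i > v_k$ force $\bar{x}_i > \bar{x}_k$ and $y_i > y_k$ forces $\bar{y}_i > \bar{y}_k$, the rank conditions are to be read as asserting that $\bar{x}_i$ and $\bar{x}_k$ (respectively $\bar{y}_i$ and $\bar{y}_k$) are consecutive integers.

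For the direction (i)$\Rightarrow$(ii), I would first deduce the rank conditions. Suppose some $j \in (i,k)$ satisfied $v_k < v_j < v_i$; then both pairs $(i,j)$ and $(j,k)$ are inversions in $v$, so inversion-minimality of $(i,k)$ forces $u_i > u_j$ and $u_j > u_k$, which combines with $u_i < u_k$ to yield the contradiction $u_i > u_k$. Hence no value of $v$ in the subword lies strictly between $v_k$ and $v_i$, giving $\bar{x}_i = \bar{x}_k + 1$. The argument for $y$ is symmetric: if $u_i < u_j < u_k$ for some interior $j$, then both $(i,j)$ and $(j,k)$ would have $u$-ascents, forcing $v_i < v_j$ and $v_j < v_k$, contradicting $v_i > v_k$. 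Next I verify the Inversion-Inversion property by considering every inversion $(p,q)$ in $x$ with $i \le p < q \le k$ and splitting into four cases: $(p,q)=(i,k)$ is immediate from $u_i<u_k$; when $i<p<q<k$ the hypothesis $v_p>v_q$ together with inversion-minimality (which forbids $u_p<u_q$ on the proper subinterval $[p,q]$) gives $u_p>u_q$, i.e.\ $y_p>y_q$; when $p=i<q<k$, inversion-minimality applied to $[i,q]$ gives $u_q<u_i<u_k$, whence $y_i=u_k>u_q=y_q$; and the case $i<p<q=k$ is entirely symmetric using $[p,k]$.

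For the converse (ii)$\Rightarrow$(i), the strict inequalities $v_i > v_k$ and $u_i < u_k$ fall out from the Inversion-Inversion condition applied to $(i,k)$ together with the rank conditions. To rule out a proper subinterval $[i',k'] \subsetneq [i,k]$ with $v_{i'}>v_{k'}$ and $u_{i'}<u_{k'}$, I would split by whether $i'=i$ or $k'=k$. If both are strict interior inequalities, then $(x,y)$ being an Inversion-Inversion pair turns $v_{i'}>v_{k'}$ into $y_{i'}>y_{k'}$, i.e.\ $u_{i'}>u_{k'}$, directly contradicting the assumption. If $i'=i$ and $k'<k$, the Inversion-Inversion property yields $u_k = y_i > y_{k'} = u_{k'}$; combined with $u_i < u_{k'}$ this places $\bar{y}_{k'}$ strictly between $\bar{y}_k$ and $\bar{y}_i$, contradicting the rank condition $\bar{y}_i=\bar{y}_k+1$. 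The case $i'>i$, $k'=k$ is dispatched symmetrically via $\bar{y}_{i'}$.

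The main conceptual obstacle is the Inversion-Inversion portion of (i)$\Rightarrow$(ii) at the boundary pairs $(p,q)$ with $p=i$ or $q=k$: here the two entries of $y$ that differ from $u$ make the correspondence between inversions in $x$ and inversions in $y$ nontrivial, and one must invoke inversion-minimality on a carefully chosen \emph{proper} subinterval ($[i,q]$ or $[p,k]$) rather than on $[p,q]$ itself. Once the correct subinterval is chosen, everything reduces to mechanical inequalities, but getting the bookkeeping right — in particular, keeping track of when $y$-values coincide with $u$-values and when they are swapped — is the place where the argument requires the most care.
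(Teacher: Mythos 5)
Your argument follows the same route as the paper's, and in the direction (i)$\Rightarrow$(ii) it is actually more complete: the paper only verifies the two ``no intermediate value'' conditions (your rank conditions) and leaves the Inversion--Inversion verification implicit, whereas you correctly isolate the boundary pairs $(i,q)$ and $(p,k)$ as the nontrivial cases and handle them by applying minimality to the proper subintervals $[i,q]$ and $[p,k]$. Your converse direction also matches the paper's: rule out a violating proper subinterval by cases according to whether its endpoints are interior, using the Inversion--Inversion property for the interior case and the adjacency of $\bar y_i,\bar y_k$ for the boundary cases.

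One caveat. You read the rank conditions as the directionless assertion that $\bar x_i,\bar x_k$ (resp.\ $\bar y_i,\bar y_k$) are consecutive, and then in (ii)$\Rightarrow$(i) you claim that $v_i>v_k$ and $u_i<u_k$ ``fall out'' from this together with the Inversion--Inversion condition applied to $(i,k)$. Under the directionless reading this step fails: take $v_i<v_k$ adjacent and $u_i>u_k$ adjacent (e.g.\ $n=2$, $v=12$, $u=21$); then $x$ has no inversions, so $(x,y)$ is vacuously an Inversion--Inversion pair and both consecutiveness conditions hold, yet $(ik)$ is not inversion-minimal, since $v_i>v_k$ is violated. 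The hypothesis in (ii) must be read with an orientation --- $\bar x_i=\bar x_k+1$ and $\bar y_i=\bar y_k+1$, i.e.\ $v_i$ sits just above $v_k$ among $v_i,\dots,v_k$ and $u_k$ just above $u_i$ among $u_i,\dots,u_k$ --- which is exactly what your direction (i)$\Rightarrow$(ii) proves, and which makes the strict inequalities in the converse immediate. (The paper's printed statement of (ii) has the indices the wrong way around, so some repair of the statement is unavoidable; but the repair must retain the orientation rather than discard it.) With that adjustment the rest of your converse argument goes through as written.
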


\begin{proof}
First note that (ii) obviously implies (i).  We now prove that 
(i) implies (ii). Assume transposition $(i k)$ is inversion-minimal.
 We show that the following two cases cannot hold:\\
\textbf{Case 1}: there is some $i<j<k$ with $v_j \in [v_k,v_i]$.

Looking at intervals $[i,j]$ and $[j,k]$, we have $v_i>v_j$ and $v_j>v_k$. By minimality of $[i,k]$, this implies that $u_i > u_j$ and $u_j > u_k$, contradicting $u_i < u_k$. \\
\textbf{Case 2}: there is some $i<j<k$ with $u_j  \in [u_i,u_k]$.

Looking at intervals $[i,j]$ and $[j,k]$ again, we have $u_i<u_j$ and $u_j<u_k$. By minimality of $[i,k]$, this implies that $v_i < v_j$ and $v_j < v_k$, contradicting $v_i > v_k$. \\
(ii) $\implies$ (i). Since $\bar{x}_k>\bar{x}_1$ and $\bar{y}_k>\bar{y}_1$, we see that $v_i>v_k$ and $u_i<u_k$. Assume by contradiction that $[p,q]$ is a strict subset of $[i,k]$ such that $v_p>v_q$ and $u_p<u_q$. Since $\bar{x}_k=\bar{x}_i+1$ and $\bar{y}_k=\bar{y}_i+1$, for any $j \in (i,k)$, 
\[
x_j > x_k \iff x_j > x_i
\] 
\[
y_j > y_k \iff y_j > y_i. 
\]
Equivalently, for any $j \in (i,k)$,
\[
v_j > v_k \iff v_j > v_i
\] 
\[
u_j > u_k \iff u_j > u_i. 
\]
If $\{p,q\} \cap \{i,k\} = \emptyset$, then we clearly obtain a contradiction.
If $p=i$, then 
\[
v_i>v_q, u_i<u_q \implies x_i>x_q, y_k<y_q \implies x_i>x_q, y_i<y_q,
\]
which is a contradiction. A similar argument shows that $q=k$ leads to a contradiction.
\end{proof}

Lemma \ref{minimalEqualsII}
implies the following result.
\begin{corollary}\label{cor:cover}
Let $u,v \in S_n$ and let $(ik)$ be inversion-minimal on $(u,v)$. Then
\[
v (ik) \lessdot v \quad \text{ and } \quad u \lessdot u(ik).
\]

\end{corollary}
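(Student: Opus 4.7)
\textbf{Proof plan for Corollary \ref{cor:cover}.} The plan is to combine the structural information extracted in Lemma \ref{minimalEqualsII} with the standard combinatorial characterization of Bruhat covers in the symmetric group. Recall that for $w \in S_n$ and a transposition $(ik)$ with $i<k$, one has $w(ik) \lessdot w$ in the Bruhat order precisely when $w_i > w_k$ and there is no index $\ell$ with $i<\ell<k$ satisfying $w_k < w_\ell < w_i$ (equivalently, the pattern $w_i w_{i+1}\cdots w_k$ has no entry strictly between the first and last). So it suffices to verify this ``no intermediate value'' condition for both $v$ and $u$.

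First I would handle $v(ik) \lessdot v$. Inversion-minimality of $(ik)$ on $(u,v)$ already gives $v_i > v_k$ and $u_i < u_k$. Suppose for contradiction that there exists $i < \ell < k$ with $v_k < v_\ell < v_i$. Then on the strict subintervals $[i,\ell]$ and $[\ell,k]$ of $[i,k]$ we have $v_i > v_\ell$ and $v_\ell > v_k$. By the minimality of $[i,k]$, neither subinterval can exhibit the inversion-minimal pattern, so we must have $u_i > u_\ell$ and $u_\ell > u_k$. Chaining these gives $u_i > u_k$, contradicting inversion-minimality. This is precisely Case 1 in the proof of Lemma \ref{minimalEqualsII}, so the argument is already in hand.

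Symmetrically, for $u \lessdot u(ik)$, I would suppose there exists $i < \ell < k$ with $u_i < u_\ell < u_k$, apply minimality on $[i,\ell]$ and $[\ell,k]$ to conclude $v_i < v_\ell$ and $v_\ell < v_k$, contradicting $v_i > v_k$. This is exactly Case 2 of Lemma \ref{minimalEqualsII}.

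There isn't really a ``hard part'' here: the only substantive content is the Bruhat-cover criterion in $S_n$, and the two ``no intermediate entry'' conditions have been extracted already in Lemma \ref{minimalEqualsII}. So the proof is essentially a two-line invocation: by the cover criterion together with Cases 1 and 2 of Lemma \ref{minimalEqualsII}, the transposition $(ik)$ realizes a Bruhat cover both below $v$ and above $u$.
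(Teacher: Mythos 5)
Your proof is correct and follows the same route as the paper: the paper derives the corollary directly from Lemma \ref{minimalEqualsII}, whose condition $\bar{x}_k=\bar{x}_i+1$, $\bar{y}_k=\bar{y}_i+1$ is exactly the ``no intermediate entry'' condition you verify via Cases 1 and 2, combined with the standard Bruhat cover criterion in $S_n$.
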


\begin{center}
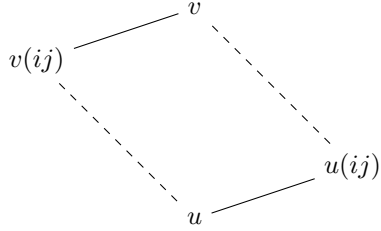

\begin{tikzpicture}[scale=.7] 
  \node (v) at (0,2) {$v$};
  \node (vij) at (-3,1) {$v(ij)$};
  \node (uij) at (3,-1) {$u(ij)$};
  \node (u) at (0,-2) {$u$};
  \draw  (u) - - (uij);
  \draw (v) -- (vij); 
 \draw [dashed] (vij) -- (u);
 \draw [dashed] (uij) -- (v);
\end{tikzpicture}
\captionof{figure}{Generalized lifting property}
   \label{liftv2}
   \end{center}

\begin{lemma} \label{INI}
Let $x,y \in \Patt_n$ with $\bar{x}_n=\bar{x}_1+1$ and $\bar{y}_n=\bar{y}_1+1$. If
$(x,y)$ is an Inversion-Inversion pair, then $\bar{x}_1=\bar{y}_1$.
\end{lemma}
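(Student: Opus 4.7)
The plan is a short double-counting argument. Write $a := \bar{x}_1$ and $b := \bar{y}_1$, so by hypothesis $\bar{x}_n = a+1$ and $\bar{y}_n = b+1$; the goal is to show $a = b$.

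The key observation is that the hypothesis $\bar{x}_n = \bar{x}_1 + 1$ forces no entry of the pattern $x$ to have rank strictly between $\bar{x}_1$ and $\bar{x}_n$; equivalently, no $x_j$ lies strictly between $x_1$ and $x_n$. Consequently, for every $j \in \{2, \ldots, n-1\}$, either $x_j < x_1$ or $x_j > x_n$. By the definition of $\bar{x}_1 = a$, exactly $a - 1$ entries of $x$ are smaller than $x_1$, and none of them sits at position $1$ or position $n$ (since $x_n > x_1$); so exactly $a - 1$ of the middle positions $\{2, \ldots, n-1\}$ satisfy $x_j < x_1$, and the remaining $n - a - 1$ satisfy $x_j > x_n$. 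The analogous count for $y$ gives $b - 1$ middle positions with $y_j < y_1$ and $n - b - 1$ with $y_j > y_n$.

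Now I would apply the Inversion-Inversion hypothesis. For any $j \in \{2, \ldots, n-1\}$ with $x_j < x_1$, the pair $(1, j)$ is an inversion of $x$ (since $1 < j$ and $x_1 > x_j$), so the II property yields $y_1 > y_j$. Similarly, if $x_j > x_n$, then $(j, n)$ is an inversion of $x$, forcing $y_j > y_n$. Hence the $a-1$ middle positions where $x_j < x_1$ are contained in the set of $b-1$ middle positions where $y_j < y_1$, giving $a - 1 \leq b - 1$; and the $n-a-1$ middle positions where $x_j > x_n$ are contained in the set of $n-b-1$ middle positions where $y_j > y_n$, giving $n - a - 1 \leq n - b - 1$. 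Together these yield $a = b$, as required.

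I do not expect a real obstacle: everything reduces to recognizing that the rank-gap assumption $\bar{x}_n = \bar{x}_1 + 1$ partitions the middle positions into precisely the two classes (``small'' and ``large'') that the Inversion-Inversion condition transports from $x$ to $y$, after which the conclusion is immediate by pigeonhole.
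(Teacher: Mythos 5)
Your argument is correct. Every step checks out: the hypothesis $\bar{x}_n=\bar{x}_1+1$ does force each middle position $j$ to satisfy $x_j<x_1$ or $x_j>x_n$, the counts $a-1$ and $n-a-1$ are right, and the Inversion-Inversion condition applied to the pairs $(1,j)$ and $(j,n)$ gives exactly the two containments you claim, whence $a-1\le b-1$ and $n-a-1\le n-b-1$ and so $a=b$.

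Your route is genuinely different from, and more elementary than, the paper's. The paper introduces the indicator $I_{i,j}$ and the function $f(x,y)=\sum_{i<j}I_{i,j}(x)(1-I_{i,j}(y))$, notes that the two sub-patterns obtained by deleting the last (respectively first) entry are again Inversion-Inversion pairs, and extracts from $f=0$ on each sub-pattern two identities which, after substituting the relation $I_{1,j}(x)=1-I_{j,n}(x)$ (the same dichotomy you use), are compared to yield $\sum_{1<j<n}I_{1,j}(x)=\sum_{1<j<n}I_{1,j}(y)$ directly as an equality. You instead read off the same dichotomy as a partition of the middle positions into a ``small'' class of size $a-1$ and a ``large'' class of size $n-a-1$ for each of $x$ and $y$, and observe that the II condition maps each class for $x$ into the corresponding class for $y$; the two resulting inequalities squeeze to equality. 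What your approach buys is transparency: you only invoke the II hypothesis on pairs involving positions $1$ and $n$, and you avoid the detour through sub-patterns and the cancellation of the interior terms $\sum_{1<i<j<n}I_{i,j}(x)I_{i,j}(y)$, which is implicitly what makes the paper's comparison of its equations (a) and (b) work. The paper's bookkeeping is heavier but arrives at the equality in one algebraic step rather than via two opposite inequalities; both are complete proofs.
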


\begin{proof}
Define
\[
I_{i,j}(x):=\begin{cases} 1\quad \text{ if }x_i>x_j,\\
                                               
        0 \quad\text{ if }x_i<x_j.
\end{cases}
\]
This function is well-defined on patterns.
Let
\[
f(x,y):= \sum_{1 \leq i < j \leq n} I_{i,j}(x)(1-I_{i,j}(y)).
\]
With this notation, $(x,y)$ is an Inversion-Inversion pair if and only if $f(x,y)=0$.

Note that 
\[
f(x,y)=\ell(x)-\sum_{1 \leq i <j \leq n} I_{i,j}(x)I_{i,j}(y).
\]

The pairs
\[
(a):(x_1 \cdots x_{n-1},y_1\cdots y_{n-1})
\]
 and 
\[ 
 (b):(x_2\cdots x_n,y_2 \cdots y_n)
\] 
  are Inversion-Inversion pairs.
The conditions on $\bar{x}_1,\bar{x}_n$ imply that 
\begin{align} \label{x1xn}
I_{1,j}(x)=1-I_{j,n}(x), \quad \forall 1<j<n
\end{align}
and similarly for $y$. Since $f(x_1 \cdots x_{n-1},y_1\cdots y_{n-1})=0$, and using $I_{1,n}(x)=0$,
\begin{align} \label{a}
(a): \ell(x)-\sum_{1<i<n} I_{i,n}(x)= \sum_{1\leq i<j<n} I_{i,j}(x)I_{i,j}(y)
\end{align}
Applying condition (\ref{x1xn}) to (\ref{a}), and simplifying, we get
\begin{align} \label{afinal}
(a):\ell(x)-(n-2)+\sum_{1<j<n} I_{1,j}(x)=\sum_{1<j<n} I_{1,j}(x)I_{1,j}(y)+\sum_{1<i <j<n} I_{i,j}(x)I_{i,j}(y)
\end{align}
Similarly, since $f(x_2\cdots x_n,y_2 \cdots y_n)=0$ and $I_{1,n}(x)=0$,
\begin{align} \label{b}
(b):\ell(x)-\sum_{1<j<n} I_{1,j}(x)= \sum_{1<i \leq j  \leq n} I_{i,j}(x)I_{i,j}(y)
\end{align}
Using condition \eqref{x1xn} with $x$ replaced with $y$, equation (\ref{b}) reduces to 
\[
\ell(x)-\sum_{1<j<n} I_{1,j}(x)=\sum_{1<i<n}I_{i,n}(x)I_{i,n}(y)+ \sum_{1<i < j  < n} I_{i,j}(x)I_{i,j}(y)
\]
\[
=\sum_{1<j<n}(1-I_{1,j}(x))(1-I_{1,j}(y))+ \sum_{1<i < j  < n} I_{i,j}(x)I_{i,j}(y)
\]
\begin{align} \label{bfinal}
=(n-2)-\sum_{1<j<n}(I_{1,j}(x)+I_{1,j}(y))+\sum_{1<j<n} I_{1,j}(x)I_{1,j}(y)+\sum_{1<i < j  < n} I_{i,j}(x)I_{i,j}(y).
\end{align}
Comparing (\ref{afinal})  and (\ref{bfinal}) we see that
\[
 \sum_{1<j<n}I_{1,j}(x)=  \sum_{1<j<n}I_{1,j}(y)
\]
which can only happen if $\bar{x}_1=\bar{y}_1$.
\end{proof}

\begin{lemma}\label{lem:dot-bijection}
Suppose that $(i k)$ is inversion-minimal on $(u,v)$.  Then
for every $i<j<k$, we have
$$u_j > u_i \iff u_j > u_k \iff v_j > v_k \iff v_j > v_i.$$
\end{lemma}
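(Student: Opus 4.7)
The plan is to work directly from the definition of inversion-minimal rather than reformulating via the Inversion-Inversion pair from Lemma \ref{minimalEqualsII}. The single principle I will use repeatedly is a \emph{propagation property}: on any strict sub-interval $[p,q] \subsetneq [i,k]$ with $p<q$, one cannot have both $v_p > v_q$ and $u_p < u_q$, since otherwise $[p,q]$ would itself satisfy the inversion-minimality condition, contradicting the minimality of $[i,k]$. Because $u$ and $v$ are permutations (so their entries are distinct), this rewrites as: on every strict sub-interval of $[i,k]$, the inequality $v_p > v_q$ forces $u_p > u_q$.

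First I would establish the two intra-sequence equivalences $v_j > v_i \iff v_j > v_k$ and $u_j > u_i \iff u_j > u_k$. For the $v$-version, suppose for contradiction that $v_k < v_j < v_i$; then propagation applied to the strict sub-intervals $[i,j]$ and $[j,k]$ yields $u_i > u_j > u_k$, contradicting $u_i < u_k$. The symmetric argument with the roles of $u$ and $v$ interchanged (and using $v_i > v_k$ for the contradiction) gives the $u$-version. Both facts are essentially already contained in the Case 1/Case 2 analysis in the proof of Lemma \ref{minimalEqualsII}, so one could alternatively simply cite them from there.

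It remains to link the $v$-side and the $u$-side by a single cross-equivalence, for which I take $v_j > v_k \iff u_j > u_k$. The forward direction is immediate from propagation applied to the strict sub-interval $[j,k]$. For the converse, assume $u_j > u_k$ and suppose $v_j < v_k$ for a contradiction; then by the intra-$v$ equivalence also $v_j < v_i$, so $v_i > v_j$ on the sub-interval $[i,j]$, and propagation yields $u_i > u_j$, contradicting $u_j > u_k > u_i$. Splicing these three equivalences produces the full four-term chain stated in the lemma.

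The only real obstacle is bookkeeping: one must check in each application of propagation that the sub-interval is \emph{strictly} contained in $[i,k]$, which here amounts to the hypothesis $i<j<k$. No deeper combinatorial input is needed.
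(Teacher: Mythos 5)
Your proof is correct, but it takes a genuinely different route from the paper's. The paper deduces the lemma from Lemma \ref{minimalEqualsII} together with the counting identity of Lemma \ref{INI}: the latter gives $\bar{x}_1=\bar{y}_1$, hence $\#\{j : i<j<k,\ v_j>v_k\}=\#\{j : i<j<k,\ u_j>u_k\}$, and since minimality supplies the containment $\{j : v_j>v_k\}\subseteq\{j : u_j>u_k\}$, the two sets must coincide. You avoid the cardinality argument entirely by obtaining the backward implication $u_j>u_k\Rightarrow v_j>v_k$ directly: routing through the \emph{left} sub-interval $[i,j]$, you note that $v_j<v_k$ would force $v_j<v_i$ by the intra-$v$ equivalence, whence propagation on $[i,j]$ gives $u_i>u_j$, contradicting $u_j>u_k>u_i$. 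Every application of your propagation principle is to $[i,j]$ or $[j,k]$ with $i<j<k$, so strict containment in $[i,k]$ always holds, and your two intra-sequence equivalences are exactly the Case 1/Case 2 analysis already present in the proof of Lemma \ref{minimalEqualsII}. Your argument is more elementary and self-contained --- it renders Lemma \ref{INI} unnecessary for this purpose --- whereas the paper's route isolates the quantitative fact $\bar{x}_1=\bar{y}_1$ as a standalone statement about Inversion-Inversion pairs.
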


\begin{proof}
By Lemma \ref{minimalEqualsII}, the patterns $x=v_i \cdots v_k$ and $y=u_k u_{i+1} \cdots u_{k-1} u_i$ form an Inversion-Inversion pair $(x,y)$ with $\bar{x}_k=\bar{x}_1+1$ and $\bar{y}_k=\bar{y}_1+1$. By Lemma \ref{INI}, $\bar{x}_1=\bar{y}_1$.
It follows that
\[
\#\{j : i < j < k, v_j > v_k\}=\#\{j : i < j < k, u_j > u_i\}.
\]
We also see that $\#\{j : i < j < k, v_j > v_k\}=\#\{j : i < j < k, v_j > v_i\}$ and $\#\{j : i < j < k, u_j > u_i\}=\#\{j : i < j < k, u_j > u_k\}$.
By minimality of $[i,k]$, for every $i < j <k$, 
\[
v_j > v_k \implies u_j > u_k.
\]
Consequently, for every $i < j < k$,
\begin{align} \label{iffs}
u_j > u_i \iff u_j > u_k \iff v_j > v_k \iff v_j > v_i.
\end{align}
\end{proof}

Finally we are ready to prove Theorem \ref{GeneralizedLiftingProperty}.

\begin{proof} [Proof of Theorem \ref{GeneralizedLiftingProperty}].
Choose $u<v$ in $S_n$, and a transposition $(i j)$ which is inversion-minimal on $(u,v)$.
By Corollary \ref{cor:cover}, to prove Theorem \ref{GeneralizedLiftingProperty}, it suffices
to show that $u \leq v(ij)$ and $u(ij) \leq v$.

We use induction on $k=j-i$. The base case $k=1$ holds by the lifting property (Proposition \ref{lifting1}). 

Now consider $k>1$. Since $(i j)$ is inversion-minimal on $(u,v)$, we have $v_i>v_j$ and $u_i<u_j$.

\textbf{Case 1}: Suppose that (a) $v_i>v_{i+1}$ and $u_i>u_{i+1}$, or (b) $v_i<v_{i+1}$ and $u_i<u_{i+1}$.

We have (a) $u \gtrdot us_i$ and $v \gtrdot vs_i$ or (b) $u \lessdot us_i$ and $v \lessdot vs_i$. Clearly $((i+1) j)$ is inversion-minimal on $(vs_i,us_i)$, and since $u<v$, we have $us_i < vs_i$. By induction,
\[
us_i \leq vs_i ((i+1)j) \text{ and } us_i ((i+1)j) \leq vs_i.
\]
Notice that $s_i((i+1)j)s_i=(ij)=t$. 

In case (a), we 
claim that $s_i \notin D_R(us_i) \cup D_R(v s_i ((i+1)j))$.
To see this, note first that 
$v_{i+1}<v_j$; otherwise we'd have
$v_{i+1}>v_j$ and also  
$u_{i+1}<u_j$, which would contradict our 
assumption that the interval $[i,j]$ is inversion-minimal on $(u,v)$.
Therefore $s_i \notin D_R(v s_i ((i+1)j))$, and the claim follows.
But now the claim together with 
$us_i \leq vs_i ((i+1)j)$ implies that 
$us_i^2 \leq vs_i ((i+1)j) s_i$ and hence 
$u \leq vt$.

In case (b), we claim that 
$s_i \in D_R(us_i ((i+1)j)) \cap D_R(vs_i)$.
To see this, note first that 
$u_{i+1}>u_j$; otherwise we'd have
$u_{i+1}<u_j$ and also $v_{i+1}>v_j$, which would contradict our
assumption that transposition $(i j)$ is inversion-minimal on $(u,v)$.
Therefore $s_i \in D_R(u s_i ((i+1)j))$, and the claim follows.
But now the claim together with 
$us_i ((i+1)j) \leq vs_i$ implies that 
$u s_i ((i+1)j) s_i \leq vs_i^2$, and hence
$u t \leq v$.

\textbf{Case 2}: Suppose that 
$v_{j-1}>v_{j}$ and $u_{j-1}>u_j$, or $v_{j-1}<v_j$ and $u_{j-1}<u_j$.

This case is analogous to Case 1.

\textbf{Case 3}: Suppose that neither of the above two cases holds.

Since $(i j)$ is inversion-minimal on $(u,v)$, we must have $v_i<v_{i+1}$ and $v_{j-1}<v_j$. Since $v_i>v_j$, there exists some $m_1 \in (i,j-1)$ such that $v_{m_1}>v_{m_1+1}$. By minimality, $u_{m_1}>u_{m_1+1}$. By Lemma \ref{minimalIntermediate}, $(i j)$ is inversion-minimal on $(vs_{m_1},us_{m_1})$. If $us_{m_1}$ and $vs_{m_1}$ do not satisfy the conditions of Cases 1 or 2, then we may find $m_2 \in (i,j-1)$ and then $(i j)$ is inversion-minimal on $(vs_{m_1} s_{m_2},us_{m_1} s_{m_2})$. Such a sequence $m_1,m_2,\ldots$ clearly terminates. Assume that it terminates at $k$, so that $(i j)$ is inversion-minimal on $(vs_{m_1} s_{m_2} \cdots s_{m_k},u s_{m_1}s_{m_2} \cdots s_{m_k})$ and the hypotheses of Case 1 or 2 are satisfied for $vs_{m_1} s_{m_2} \cdots s_{m_k}$ and $u s_{m_1}s_{m_2} \cdots s_{m_k}$. Set $\Pi_k:=s_{m_1}s_{m_2} \cdots s_{m_k}$. We then have

\[
u\Pi_k \lessdot u\Pi_k t, \quad v\Pi_k t \lessdot v \Pi_k, \quad u \Pi_k t \leq v \Pi_k , \quad u \Pi_k \leq v \Pi_k t.
\]
We show now that for $1 \leq p \leq k$, if 
\[
u\Pi_p \lessdot u\Pi_p t, \quad v\Pi_p t \lessdot v \Pi_p, \quad u \Pi_p t \leq v \Pi_p , \quad u \Pi_p \leq v \Pi_p t
\]
then
\[
u\Pi_{p-1} \lessdot u\Pi_{p-1} t, \quad v\Pi_{p-1} t \lessdot v \Pi_{p-1}, \quad u \Pi_{p-1} t \leq v \Pi_{p-1} , \quad u \Pi_{p-1} \leq v \Pi_{p-1} t.
\]

Note that for any $m$, $ts_m=s_mt$. Therefore $u\Pi_{p}t=u\Pi_{p-1}t s_{m_p}$ and $v\Pi_{p}t=v\Pi_{p-1}t s_{m_p}$. This implies that $u\Pi_{p}t=u\Pi_{p-1}t s_{m_p} \gtrdot u\Pi_{p-1}t$ and $v\Pi_{p}t=v\Pi_{p-1}t s_{m_p} \lessdot v\Pi_{p-1}t$.
\end{proof}

\begin{center}
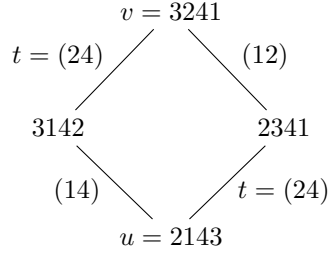

\begin{tikzpicture}[scale=.5] 
\node (u) at (0,-3) {{$u=2143$}};
  \node (a) at (-3,-0.1) {{$3142$}};
  \node (b) at (3,-0.1) {{$2341$}};
  \node (v) at (0,3) {{$v=3241$}};
  \draw  (u) -- (a);
  \draw (u) -- (b); 
  \draw (a) -- (v);
  \draw (b) -- (v);
  \node (x1) at (3,-1.8) {{$t=(24)$}};
  \node (x2) at (-2.5,-1.8) {{$(14)$}};
  \node (x3) at (-3,1.8) {{$t=(24)$}};
  \node (x4) at (2.5,1.8) {{$(12)$}};
\end{tikzpicture}
\captionof{figure}{Example of Theorem \ref{GeneralizedLiftingProperty}}
   \label{exliftv2}
    \end{center}

\begin{example}
The following example shows that the converse to Theorem  \ref{GeneralizedLiftingProperty} does not hold: it is not necessarily the case that if the Bruhat relations \[v(ik) \lessdot v \quad
u \lessdot u(ik) \quad
u \leq v(ik) \quad
u(ik) \leq v\]
hold, then $(i k)$ is inversion-minimal on $(u,v)$. 
Take $v=4312$, $u=1243$ and $(ik)=(24)$. Then 
\[v(ik) \lessdot v \quad
u \lessdot u(ik) \quad
u \leq v(ik) \quad
u(ik) \leq v\]
but also $v_2>v_3$ and $u_2<u_3$.
\end{example}

As a corollary of Generalized lifting, we have the following result, which says that in an interval of the symmetric group we may find a maximal chain such that each transposition connecting two consecutive elements of the chain is a transposition that comes from the atoms, and similarly, for the coatoms.
\begin{corollary} \label{atomCoatomChains}
Let $[u,v]= \subset S_n$ and let $\underline{T}(v):=\{t  \in T : v \gtrdot vt \geq u\}$ and $\overline{T}(u):=\{t  \in T : u \lessdot ut \leq v\}$. There exist maximal chains $\mathcal{C}_v:u=x_{(0)} \lessdot x_{(1)} \lessdot x_{(2)} \lessdot \ldots \lessdot x_{(l)}=v$ and $\mathcal{C}_u:u=y_{(0)} \lessdot y_{(1)} \lessdot y_{(2)} \lessdot \ldots \lessdot y_{(l)}=v$ in $I$ such that $x_{(i)}^{-1}x_{(i+1)} \in \underline{T}(v)$ and $y_{(i)}^{-1}y_{(i+1)}\in \overline{T}(u)$ for each $i$.
\end{corollary}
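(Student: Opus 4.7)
The plan is to construct both chains by iteratively applying the Generalized Lifting Property (Theorem \ref{GeneralizedLiftingProperty}), using the following key observation: when GLP is applied to a pair $(u',v')$, the inversion-minimal transposition $t$ it produces simultaneously witnesses $u' \lessdot u't \leq v'$ and $u' \leq v't \lessdot v'$. In other words, the same $t$ serves both as an ``atom transposition'' at the lower endpoint and a ``coatom transposition'' at the upper endpoint of the interval being lifted. Corollary \ref{iklemma} (applied at each stage) guarantees that such a $t$ exists whenever the running endpoints are distinct.

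To build $\mathcal{C}_v$ I would work bottom-up. Set $x_{(0)} = u$ and suppose inductively that $x_{(0)} \lessdot x_{(1)} \lessdot \cdots \lessdot x_{(k)}$ has been constructed with $x_{(k)} \in [u,v)$ and each $x_{(i)}^{-1} x_{(i+1)} \in \underline{T}(v)$. Applying Theorem \ref{GeneralizedLiftingProperty} to the pair $(x_{(k)}, v)$ yields a transposition $t$ satisfying
\[
x_{(k)} \lessdot x_{(k)} t \leq v \quad \text{and} \quad u \leq x_{(k)} \leq v t \lessdot v.
\]
Setting $x_{(k+1)} := x_{(k)} t$ extends the chain by one cover in $[u,v]$, and the second inequality shows $vt \lessdot v$ with $vt \geq u$, so $t = x_{(k)}^{-1} x_{(k+1)} \in \underline{T}(v)$. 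Since each step strictly increases length while staying inside $[u,v]$, the process terminates at $x_{(l)} = v$ after exactly $l = \ell(v) - \ell(u)$ steps.

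The construction of $\mathcal{C}_u$ is dual: build the chain top-down from $y_{(l)} = v$. Given $y_{(k)} \in (u,v]$ already placed, apply Theorem \ref{GeneralizedLiftingProperty} to the pair $(u, y_{(k)})$ to obtain a transposition $t$ with
\[
u \leq y_{(k)} t \lessdot y_{(k)} \quad \text{and} \quad u \lessdot ut \leq y_{(k)} \leq v.
\]
Set $y_{(k-1)} := y_{(k)} t$; then $ut \gtrdot u$ and $ut \leq v$ certify $t = y_{(k-1)}^{-1} y_{(k)} \in \overline{T}(u)$, and the chain descends one step. Iterating terminates at $y_{(0)} = u$.

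There is no substantial obstacle beyond recognizing the dual role played by the inversion-minimal transposition in GLP; the hypotheses needed to reapply the theorem at each stage (namely, a strict Bruhat inequality between the running endpoints) are preserved automatically throughout both iterations.
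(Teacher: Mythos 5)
Your proposal is correct and follows essentially the same route as the paper: iterate the Generalized Lifting Property with one endpoint pinned at $v$ (building the chain upward from $u$, so each transposition is certified as a coatom transposition of $[u,v]$) and dually with the endpoint pinned at $u$ for the other chain. The paper's proof is just a terser version of the same induction.
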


\begin{proof}
By the Generalized lifting property,  
there exists a transposition $t=(ij)$ such that $u \leq vt \lessdot v$ and $u \lessdot ut \leq v$. 
But now since $u \lessdot ut \leq v$, 
we can apply the Generalized lifting property to the pair $ut \leq v$,
and inductively construct the maximal chain $\mathcal{C}_v$.  The 
construction of $\mathcal{C}_u$ is analogous.
\end{proof}

We plan to study the Generalized lifting property for other Coxeter groups in a separate paper.

\section{Results on Bruhat interval polytopes}\label{sec:faces}

In this section we give some results on Bruhat interval polytopes.  
We show that the face of a Bruhat interval polytope is a Bruhat
interval polytope; we give a dimension formula;  we give 
an inequality description; and we give a criterion for when
one Bruhat interval polytope is a face of another.

\subsection{Faces of Bruhat interval polytopes are Bruhat interval polytopes}
The main result of this section is the following.

\begin{theorem} \label{bipFaceisBIP}
Every face of a Bruhat interval polytope is itself a Bruhat interval polytope.
\end{theorem}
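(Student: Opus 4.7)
The plan is to show, for any face $F$ of $\PPP_{u,v}$, that the set $\mathcal{N} \subset S_n$ of permutations labeling vertices of $F$ equals a Bruhat interval $[x,y]$. By Proposition \ref{minmax}, $\mathcal{N}$ already has a Bruhat-minimum $x$ and Bruhat-maximum $y$, so $\mathcal{N} \subseteq [x,y]$ and $F \subseteq \PPP_{x,y}$; the content is the reverse containment. I would fix a linear functional $\phi$ on $\R^n$ such that $F = \{p \in \PPP_{u,v} : \phi(p) = M\}$ with $M = \max_{\PPP_{u,v}} \phi$, so that $\mathcal{N} = \{z \in [u,v] : \phi(z) = M\}$. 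The goal then reduces to showing $\phi(z) = M$ for every $z \in [x,y]$, which I plan to prove by induction on $d = \ell(y) - \ell(x)$, the base cases $d \leq 1$ being immediate.

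For the inductive step, apply the Generalized Lifting Property (Theorem \ref{GeneralizedLiftingProperty}) to obtain an inversion-minimal transposition $t = (ij)$ on $(x,y)$ with $x \lessdot xt \leq y$ and $x \leq yt \lessdot y$. By inversion-minimality $x_i < x_j$ and $y_i > y_j$, so viewed as vectors
\[
xt - x = (x_j - x_i)(e_i - e_j), \qquad y - yt = (y_i - y_j)(e_i - e_j),
\]
with both scalar coefficients strictly positive. Setting $\alpha = \phi(e_i - e_j)$, the constraints $\phi(xt) \leq M = \phi(x)$ and $\phi(yt) \leq M = \phi(y)$ force $\alpha \leq 0$ and $\alpha \geq 0$ respectively, so $\alpha = 0$. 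This gives $\phi(xt) = \phi(yt) = M$, hence $xt, yt \in \mathcal{N}$. Applying the inductive hypothesis to the two shorter intervals $[x, yt]$ and $[xt, y]$ (each of length $d - 1$, with Bruhat-min and max in $\mathcal{N}$ so the same linear-functional hypothesis holds) yields $\phi \equiv M$ on $[x, yt] \cup [xt, y]$.

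The step I expect to be the main obstacle is the set-theoretic identity
\[
[x,y] \;=\; [x, yt] \cup [xt, y]
\]
for the inversion-minimal $t$ produced above, which is needed to conclude $\phi \equiv M$ on all of $[x,y]$. Given $z \in [x,y]$, examining $z_i$ versus $z_j$ splits the argument into two symmetric cases: if $z_i < z_j$ I would show $z \leq yt$, and if $z_i > z_j$ that $z \geq xt$. I plan to deduce these from Lemma \ref{lem:dot-bijection}, which forces the positions strictly between $i$ and $j$ to split into ``high'' and ``low'' classes consistently for $x$, $y$, and any intermediate $z$, combined with the subword/tableau characterization of Bruhat order---in effect a generalized exchange principle for the non-simple reflection $t$. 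If a direct combinatorial argument proves elusive, the Edelman--Proctor--Bj\"orner--Wachs theorem (cited in the introduction) that the order complex of an open Bruhat interval is a sphere provides a topological fallback: by induction on dimension each facet of $F$ is a Bruhat interval polytope, so the boundary sphere of $F$ carries a Bruhat-interval cell structure that can be compared to the order-complex sphere of $(x,y)$ to rule out any missing $z \in [x,y] \setminus \mathcal{N}$.
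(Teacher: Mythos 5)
Your setup (reduce to showing a linear functional maximized at the Bruhat-min $x$ and Bruhat-max $y$ of the face is constant on $[x,y]$) and your computation that $\phi_i=\phi_j$, hence $\phi(xt)=\phi(yt)=M$, coincide with the paper's Corollary \ref{max}. But the step you flag as the main obstacle is not just an obstacle --- the identity $[x,y]=[x,yt]\cup[xt,y]$ is \emph{false} for inversion-minimal transpositions, so the induction does not close. Concretely, take $x=1324$, $y=4312$, and $t=(24)$, which is inversion-minimal on $(x,y)$ (this is exactly the interval in the paper's special-matching counterexample). Then $yt=4213$ and $xt=1423$, and the element $z=3142\in[x,y]$ satisfies neither $z\leq 4213$ (the sorted $3$-element prefixes are $\{1,3,4\}\not\leq\{1,2,4\}$) nor $1423\leq z$ (the sorted $2$-element prefixes are $\{1,4\}\not\leq\{1,3\}$). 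The underlying problem is that $t$ is inversion-minimal only for the specific pair $(x,y)$; for an intermediate $z$ one cannot invoke any exchange/lifting statement relating $z$ to $yt$ or $xt$, and Lemma \ref{lem:dot-bijection} constrains only the entries of $x$ and $y$ between positions $i$ and $j$, not those of $z$.

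The paper routes around this differently, and your ``fallback'' is in fact the load-bearing part of the argument rather than a backup: one first iterates the Generalized Lifting Property to produce a \emph{single} maximal chain from $x$ to $y$ on which $\phi$ is constant (Corollary \ref{max}) --- each step only requires the computation $\phi_i=\phi_j$ that you already have --- and then one proves separately (Proposition \ref{prop:allchains}) that constancy on one maximal chain forces constancy on all of $[x,y]$. That propagation step is where the Edelman--Proctor--Bj\"orner--Wachs sphere theorem enters: $\Delta(x,y)$ is a PL sphere, hence connected in codimension one, so any two maximal chains are joined by a sequence of chains differing in one element, and each single swap happens inside a rank-$2$ diamond where the Generalized Lifting Property pins down the fourth vertex's $\phi$-value. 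As written, your proposal neither carries out this chain-to-chain propagation nor supplies a substitute for the false union identity, so there is a genuine gap.
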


Our proof of this result uses the following theorem.  It was 
first proved for the symmetric group by Edelman \cite{Edelman},
then generalized to classical types by Proctor \cite{Proctor}, and then 
proved for arbitrary Coxeter groups by Bjorner and Wachs \cite{BW}.
\begin{theorem}\cite{BW}\label{th:BW}
Let $(W,S)$ be a Coxeter group. Then for any $u \leq v$ in $W$,
the order complex $\Delta(u,v)$ of the 
interval $(u,v)$ is PL-homeomorphic to a sphere $\mathbb{S}^{\ell(u,v)-2}$.
In particular, the Bruhat order is \emph{thin}, that is,
every rank $2$ interval is a \emph{diamond}.  In other words,
whenever $u \leq v$ with $\ell(v)-\ell(u)=2$, 
there are precisely two elements
$z_{(1)}, z_{(2)}$ such that $u < z_{(i)} < v$.
\end{theorem}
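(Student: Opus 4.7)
The plan is to separate the statement into two essentially independent assertions: (a) Bruhat order is \emph{thin}, i.e., every rank-$2$ interval is a diamond, and (b) the order complex $\Delta(u,v)$ admits a shelling-type structure with good enough properties that, combined with thinness, it is a PL-sphere of dimension $\ell(u,v)-2$. I would handle these in this order, since (a) is purely combinatorial and is also the input needed to upgrade the shellability conclusion in (b) from a ball to a sphere.

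For (a), suppose $u < v$ with $\ell(v)-\ell(u)=2$. Fix a reduced expression $v=s_{i_1}\cdots s_{i_\ell}$. By the Subword Property, $u$ is obtained from this word by deleting some subset of letters so as to land on a reduced expression, and any $z$ with $u<z<v$ is obtained by deleting exactly one letter. Using the Strong Exchange Condition to identify when two different deletions yield the same element, one can show that there are exactly two distinct intermediate $z$'s. Equivalently, and perhaps more cleanly, one applies Deodhar's Z-lemma: given two distinct covers $u\lessdot x_1,x_2$ lying below $v$, a single application of the Exchange Condition to a fixed reduced word for $v$ produces a unique common cover, forcing the rank-$2$ interval to be a four-element diamond. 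This argument is uniform across all Coxeter systems.

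For (b), the plan is to construct a CL-labeling of every interval $[u,v]$, following Bj\"orner--Wachs. First, one establishes the existence of a reflection ordering $\prec$ on $T$: a total order such that for every dihedral reflection subgroup $\langle s,t\rangle$ the reflections $s,\,sts,\,ststs,\ldots,t$ appear in monotone order under $\prec$. Then label each covering relation $x\lessdot xt$ by the reflection $t\in T$, restricted to a chosen root of the subinterval. One must verify that on every rooted subinterval of $[u,v]$ there is a unique maximal chain whose edge labels strictly increase under $\prec$. This verification is the combinatorial heart of the argument and uses the Strong Exchange Condition repeatedly, together with properties of reflection orderings on dihedral reflection subgroups. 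Once the CL-labeling is in hand, general shellability theory gives that $\Delta(u,v)$ is a shellable pure simplicial complex, and hence a PL-manifold with (possibly empty) boundary. Thinness from (a) shows that every codimension-one face lies in exactly two maximal simplices, which rules out boundary faces, so $\Delta(u,v)$ is a PL-sphere of dimension $\ell(u,v)-2$.

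The main obstacle is the construction and verification of the CL-labeling in full Coxeter-group generality. Existence of reflection orderings is straightforward in finite type (they biject with reduced expressions for $w_0$), but in infinite type requires a direct construction via initial sections of root systems. The more delicate step is proving the unique-rising-chain property on every subinterval, which does not follow formally from the labeling but requires a careful induction on length, using a ``lifting''-style argument to promote a locally rising chain on $[u',v]$ to one on $[u,v]$ when $u\lessdot u'$. In contrast, the thinness step and the final extraction of the sphere conclusion from CL-shellability plus thinness are comparatively routine.
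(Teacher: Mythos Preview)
The paper does not prove this theorem. It is stated with a citation to Bj\"orner--Wachs \cite{BW} (with earlier special cases attributed to Edelman and Proctor) and then used as a black box in the proof of Proposition~\ref{prop:allchains}. There is therefore no ``paper's own proof'' to compare your proposal against.

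That said, your outline is an accurate high-level summary of the Bj\"orner--Wachs argument: reflection orderings give a CL-labeling of every Bruhat interval, CL-shellability makes $\Delta(u,v)$ a PL ball or sphere, and thinness (proved independently via the exchange/lifting machinery) forces the sphere case. Your identification of the delicate step---verifying the unique-rising-chain condition on arbitrary rooted subintervals in infinite type---is also correct. One small caveat: your sketch of (a) via ``Deodhar's Z-lemma producing a unique common cover'' is a bit loose; the cleanest uniform argument for thinness goes through the lifting property (if $s\in D_R(v)$, split into the cases $s\in D_R(u)$ and $s\notin D_R(u)$ and track the two intermediate elements explicitly), rather than an exchange-condition count of deletions, which can be awkward to make precise when the same element arises from multiple deletions.
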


We will identify a linear functional $\omega$ with 
a vector $(\omega_1,\ldots,\omega_n) \in \R^n$,
where $\omega:\R^n \rightarrow \R$ 
is defined by $\omega(e_i)=\omega_i$ (and extended linearly).

\begin{proposition}\label{prop:allchains}
Choose $u \leq v$ in $S_n$, and 
let $\omega:\R^n \to \R$ be a linear functional which is constant
on a maximal chain $\mathcal{C}$ from $u$ to $v$.  Then $\omega$ 
is constant on all permutations $z$ where $u \leq z \leq v$.
\end{proposition}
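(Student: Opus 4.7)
The plan is to show that $\omega$ is constant on \emph{every} maximal chain of $[u,v]$, from which the proposition follows since each $z \in [u,v]$ lies on some maximal chain. I would accomplish this by showing that one can transform $\mathcal{C}$ into any other maximal chain through a sequence of \emph{diamond flips}: local moves that replace one interior element $y$ of the chain by the unique other element $y'$ in the rank-$2$ subinterval $[x,z]$ containing $y$, whose existence and uniqueness are guaranteed by the thinness part of Theorem \ref{th:BW}.

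The first key step is to show that a diamond flip preserves the constancy of $\omega$ on the chain. The crucial observation is that for any rank-$2$ Bruhat interval $[x,z]$ in $S_n$, the polytope $\PPP_{x,z}$ is $2$-dimensional. This is verified by a small case analysis: a rank-$2$ interval in the Bruhat order on $S_n$ is either of ``disjoint-transposition type,'' in which case the four vertices satisfy $x + z = y + y'$ and $\PPP_{x,z}$ is a parallelogram, or of ``$3$-cycle type,'' in which case the four vertex vectors differ from $x$ only in three positions whose entries are permuted so as to preserve their sum, so all four lie in a $2$-dimensional affine subspace of $\R^n$. In either case $\PPP_{x,z}$ is $2$-dimensional. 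Since $x,y,z$ are three distinct vertices of a convex polytope, no three are collinear, so they are affinely independent inside this $2$-dimensional affine hull. The restriction of $\omega$ to this hull is affine and takes the value $c$ at three affinely independent points, hence it equals $c$ on the entire hull and in particular $\omega(y') = c$.

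The second key step is to show that any two maximal chains of $[u,v]$ are connected by a sequence of diamond flips. Maximal chains of $[u,v]$ correspond bijectively to top-dimensional simplices of the order complex $\Delta(u,v)$, and a diamond flip corresponds exactly to exchanging one top simplex for an adjacent one sharing a codimension-$1$ face. By Theorem \ref{th:BW}, $\Delta(u,v)$ is PL-homeomorphic to $\mathbb{S}^{\ell(u,v)-2}$. For $\ell(u,v) \geq 3$, this is a connected pseudomanifold and the dual graph on its top simplices is therefore connected; for $\ell(u,v) = 2$, there are exactly two maximal chains and they differ by a single diamond flip.

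Combining these two steps, any maximal chain of $[u,v]$ can be reached from $\mathcal{C}$ by diamond flips while preserving $\omega \equiv c$ along the chain, so $\omega(z) = c$ for every $z \in [u,v]$. The main technical obstacle is the $2$-dimensionality claim for rank-$2$ interval polytopes (the case analysis above); the rest is essentially formal, given Theorem \ref{th:BW}.
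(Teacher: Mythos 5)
Your proof is correct, and its global architecture is the same as the paper's: both arguments propagate constancy of $\omega$ from one maximal chain to all others via ``diamond flips,'' using thinness of Bruhat order and the fact that $\Delta(u,v)$ is a PL sphere (Theorem \ref{th:BW}), hence connected in codimension one. Where you genuinely diverge is in the local step, the rank-$2$ case. The paper handles a diamond $[a,b]$ by invoking the Generalized lifting property (Theorem \ref{GeneralizedLiftingProperty}): there is a single transposition $t=(ij)$ with $a\lessdot at\lessdot b$ and $a\lessdot bt\lessdot b$, so constancy of $\omega$ on one chain forces $\omega_i=\omega_j$, which in turn forces constancy on the other chain. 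You instead classify rank-$2$ intervals of $S_n$ directly ($x^{-1}z$ is either a product of two disjoint transpositions or a $3$-cycle --- note a chain of two covers cannot repeat a transposition, and $\ell(x(ik))-\ell(x)$ is always odd, so $x^{-1}z$ cannot be a single transposition), observe that in either case all four elements lie in a common $2$-dimensional affine subspace, and finish by elementary affine geometry. Both arguments are specific to $S_n$. Yours is more elementary and self-contained, since it avoids the Generalized lifting property, which is one of the paper's main technical results; the paper's version isolates the combinatorial content (one transposition serving both chains of the diamond) in a form it reuses elsewhere, e.g.\ in Corollary \ref{max} and Lemma \ref{diamondineq}. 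One small remark: your justification of the affine independence of $x,y,z$ via ``three distinct vertices of a convex polytope are never collinear'' is valid (all four points are indeed extreme in both of your cases), but it is even more immediate to note that $y-x$ and $z-y$ are nonzero multiples of two distinct roots $e_a-e_b$ and $e_c-e_d$, which are linearly independent.
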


\begin{proof}
We will use the topology of $\Delta(u,v)$  to prove that $\omega$ is
constant on any maximal chain from $u$ to $v$.
If $\ell(v)-\ell(u)=1$, there is nothing to prove.  
If $\ell(v)-\ell(u)=2$, then the interval $[u,v]$ is a diamond.
By the Generalized lifting property
(Theorem \ref{GeneralizedLiftingProperty}),
there exists a transposition $t=(ij)$ such that $u \lessdot vt \lessdot v$ and $u \lessdot ut \lessdot v$.
Without loss of generality, $\mathcal{C}$ is the chain $u \lessdot vt \lessdot v$.  But then since $\omega(vt) = \omega(v)$,
we must have $\omega_i = \omega_j$.  It follows that $\omega(ut) =\omega(u)$,
and hence $\omega$ is constant on both maximal chains from $u$ to $v$.

If $\ell(v)-\ell(u) \geq 3$, then the order complex
$\Delta(u,v)$ is a PL sphere of dimension at least $1$, and hence
it is connected in codimension one.  
Therefore 
we can find a path of maximal chains $\mathcal{C} = \mathcal{C}_0, 
\mathcal{C}_1, \dots, \mathcal{C}_N$ in $(u,v)$ starting with 
$\mathcal{C}$, which contains \emph{all} maximal chains of $(u,v)$
(possibly some occur more than once),
and which has the property that for each adjacent pair
$\mathcal{C}_i$ and $\mathcal{C}_{i+1}$, the two chains 
differ in precisely one element.  
Since the Bruhat order is thin, 
$\mathcal{C}_i$ must contain three consecutive 
elements $a \lessdot z_{(1)} \lessdot b$, 
and $\mathcal{C}_{i+1}$ is obtained from $\mathcal{C}_i$
by replacing $z_{(1)}$ by $z_{(2)}$, the unique element other than $z_{(1)}$
in the interval $(a,b)$.
Suppose by induction that
$\omega$ is constant on $\mathcal{C}_0$,
$\mathcal{C}_1,\dots, \mathcal{C}_i$.  
Since 
$\omega(a)=\omega(z_{(1)}) = \omega(b)$ and $\ell(b)-\ell(a) = 2$,
we have observed in the previous paragraph
that $\omega$ must be constant on $[a,b]$.
Therefore $\omega$ attains the same value on $z_{(2)}$
and hence on all of $\mathcal{C}_{i+1}$.
\end{proof}

\begin{corollary} \label{max}
If a linear functional $\omega:\R^n \to \R$, when restricted to 
$[u,v]$, attains its maximum value on $u$ and $v$, then it is constant on 
$[u,v]$.
\end{corollary}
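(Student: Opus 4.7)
The plan is to produce a maximal chain from $u$ to $v$ on which $\omega$ is constant, and then apply Proposition \ref{prop:allchains} to conclude constancy on the whole interval. I will proceed by induction on $\ell(v) - \ell(u)$, writing $M := \omega(u) = \omega(v)$ for the maximum value. The cases $\ell(v) - \ell(u) \in \{0, 1\}$ are immediate.

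For the inductive step, I will invoke the Generalized Lifting Property (Theorem \ref{GeneralizedLiftingProperty}) to produce an inversion-minimal transposition $t = (ik)$ on $(u,v)$, so that $v_i > v_k$, $u_i < u_k$, $u \leq vt \lessdot v$, and $u \lessdot ut \leq v$. A direct computation gives
\[
\omega(v) - \omega(vt) = (\omega_i - \omega_k)(v_i - v_k), \qquad \omega(ut) - \omega(u) = (\omega_i - \omega_k)(u_k - u_i).
\]
Since $vt$ and $ut$ both lie in $[u,v]$, both $\omega(vt)$ and $\omega(ut)$ are at most $M$. The first identity, combined with $v_i - v_k > 0$, forces $\omega_i \geq \omega_k$; the second, combined with $u_k - u_i > 0$, forces $\omega_i \leq \omega_k$. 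Hence $\omega_i = \omega_k$, and consequently $\omega(vt) = \omega(ut) = M$.

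The sub-interval $[u, vt]$ has length $\ell(vt) - \ell(u) = \ell(v) - \ell(u) - 1$. The maximum of $\omega$ on it is at most $M$ and is attained at both endpoints $u$ and $vt$, so the inductive hypothesis applies and $\omega \equiv M$ on $[u, vt]$. Prepending any maximal chain in $[u, vt]$ to the cover $vt \lessdot v$ produces a maximal chain from $u$ to $v$ in $[u,v]$ on which $\omega$ is constantly $M$, and Proposition \ref{prop:allchains} then yields constancy on all of $[u,v]$.

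The crux of the argument is the ``squeeze'' forcing $\omega_i = \omega_k$, and this is where the full strength of Theorem \ref{GeneralizedLiftingProperty} is needed: one requires \emph{both} a down-cover from $v$ and an up-cover from $u$ via the \emph{same} reflection $t$, and the sign pattern $v_i > v_k$, $u_i < u_k$ built into inversion-minimality is precisely what flips the sign between the two identities. The classical lifting property would not suffice here, since no appropriate simple descent of $v$ outside $D_R(u)$ need exist.
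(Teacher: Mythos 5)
Your proof is correct and follows essentially the same route as the paper's: apply the Generalized Lifting Property to get an inversion-minimal transposition $t$, use the sign pattern $v_i>v_k$, $u_i<u_k$ together with $\omega(vt)\leq M$ and $\omega(ut)\leq M$ to squeeze $\omega_i=\omega_k$, iterate to produce a maximal chain on which $\omega$ is constant, and finish with Proposition \ref{prop:allchains}. The only cosmetic difference is that you recurse downward through $[u,vt]$ while the paper iterates upward through the pairs $ut\leq v$; this changes nothing of substance.
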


\begin{proof}
By Proposition \ref{prop:allchains}, it suffices
to show that there is a maximal chain
$\mathcal{C}_0 = \{u = z_{(0)} \lessdot z_{(1)} \lessdot \dots \lessdot z_{(\ell)} = v\}$
on which $\omega$ is constant.
By the Generalized lifting property,  
there exists a transposition $t=(ij)$ such that $u \leq vt \lessdot v$ and $u \lessdot ut \leq v$. Since $u \lessdot ut$ and $vt \lessdot v$,
we have $u_i < u_j$ and $v_i > v_j$.  Since $\omega(ut) \leq \omega(u)$,
it follows that $\omega_i \leq \omega_j$.  Similarly, 
$\omega(vt) \leq \omega(v)$ implies that $\omega_i \geq \omega_j$. Therefore $\omega_i=\omega_j$, and hence $\omega(ut)=w(u)=w(v)$. 
But now since $u \lessdot ut \leq v$, with $\omega(ut) = \omega(v)$,
we can apply the Generalized lifting property to the pair $ut \leq v$,
and inductively construct the desired maximal chain.
\end{proof}

We now prove the main result of this section.

\begin{proof}[Proof of Theorem \ref{bipFaceisBIP}].
Consider a face $F$ of a Bruhat interval polytope 
$\mathsf{Q}_{x,y}$ for $x,y\in S_n$.
Then there is a linear functional $\omega:\R^n \to \R$ which 
attains its maximum value $M$ precisely on the face $F$. 
By Proposition \ref{minmax}, there exist vertices 
$u,v \in F$ such that $u \leq z \leq v$ for each vertex $z\in F$. 
We want to show that $F = \mathsf{Q}_{u,v}$.  To complete
the proof, it suffices to show that every permutation $z$ such that $u \leq z \leq v$
lies in $F$, in other words, $\omega(z)=M$. But now since $\omega$ attains
its maximum value on $[u,v]$ on the permutations $u$ and $v$,
Corollary \ref{max} implies that $\omega$ is constant on $[u,v]$.
Therefore the vertices of $F$ are precisely the permutations in $[u,v]$.
\end{proof}

\subsection{The dimension of Bruhat interval polytopes}

In this section we will give a dimension formula for Bruhat interval polytopes.  We will
then use it to determine which Richardson varieties in $\Fl_n$ are toric varieties,
with respect to the usual torus action on $\Fl_n$.
Recall that a Richardson variety $\mathcal{R}_{u,v}$ is the intersection of opposite
Schubert (sometimes called Bruhat) cells; 
see Section \ref{sec:preliminaries} for background on Richardson varieties.

\begin{definition}
Let $u \leq v$ be permutations in $S_n$, and let $\mathcal{C}:u=x_{(0)} \lessdot x_{(1)} \lessdot x_{(2)} \lessdot \ldots \lessdot x_{(l)}=v$ be any maximal chain from $u$ to $v$.   Define a labeled graph $G^{\mathcal{C}}$ on $[n]$ having an edge between vertices $a$ and $b$ if and only if 
the transposition $(a b)$ equals
$x_{(i)}^{-1} x_{(i+1)}$ for some
$0 \leq i \leq l-1$. Define $B_{\mathcal{C}}=\{B^1,B^2,\ldots,B^r\}$ 
to be the partition of $[n]=\{1,2,\dots,n\}$ whose
blocks $B^j$ are the connected components of $G^{\mathcal{C}}$. Let $\#B_{\mathcal{C}}$ denote $r$, the number of blocks in the partition.
\end{definition}

We will show in Corollary \ref{B} that the partition $B_C$
is independent of $C$; and so we will denote this partition by $B_{u,v}$.

\begin{theorem}\label{th:dim}
The dimension $\dim \PPP_{u,v}$ of the Bruhat interval polytope $\PPP_{u,v}$ is 
\[
 \dim \PPP_{u,v} = n-\#B_{u,v}.
\]
The equations defining the affine span of $\PPP_{u,v}$ are
\begin{align} \label{affineSpaces}
\sum_{i \in B^j} x_i = \sum_{i \in B^j} u_i(=\sum_{i \in B^j} v_i), \quad j=1,2,\ldots,\#B_{u,v}.
\end{align}
\end{theorem}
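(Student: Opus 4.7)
The plan has three steps: first, establish that the partition $B_{\mathcal{C}}$ is independent of the maximal chain $\mathcal{C}$ (i.e.\ justify Corollary \ref{B}); second, show that every point of $\PPP_{u,v}$ satisfies the equations (\ref{affineSpaces}), giving the upper bound $\dim \PPP_{u,v} \leq n - \#B_{u,v}$; third, exhibit $n - \#B_{u,v} + 1$ affinely independent vertices of $\PPP_{u,v}$ to match this bound.

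For the invariance of $B_{\mathcal{C}}$, I will reduce to a rank-$2$ analysis using the topology of $\Delta(u,v)$. When $\ell(v) - \ell(u) \geq 3$, Theorem \ref{th:BW} says the order complex is a PL sphere of dimension $\geq 1$, so the dual graph whose vertices are maximal chains of $[u,v]$ and whose edges correspond to swaps of a single interior element (necessarily inside a rank-$2$ subinterval) is connected. It therefore suffices to prove invariance across a single diamond. In any rank-$2$ interval, the Generalized Lifting Property (Theorem \ref{GeneralizedLiftingProperty}) provides a transposition $t = (ij)$ with $u \lessdot ut \lessdot v$ and $u \lessdot vt \lessdot v$, so the two atoms of $[u,v]$ are $ut$ and $vt$, and each maximal chain uses $t$ as one of its two transpositions. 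Writing $\sigma = u^{-1}v$, the other transposition on the chain $u \lessdot ut \lessdot v$ is $\tau := t\sigma$, while the other transposition on $u \lessdot vt \lessdot v$ is its $t$-conjugate $t\tau t$. A short case analysis on $\tau = (ab)$ shows that $\{t, \tau\}$ and $\{t, t\tau t\}$ generate the same equivalence relation on $[n]$: when $\{a,b\} \cap \{i,j\} = \emptyset$, we have $t\tau t = \tau$ and the partitions are literally equal; when $|\{a,b\} \cap \{i,j\}| = 1$, both pairs of transpositions connect the three-element set $\{i,j\} \cup \{a,b\}$ into a single block.

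Once $B_{u,v}$ is well-defined, any cover $x \lessdot x\cdot(ab)$ in $[u,v]$ can be extended to a maximal chain of $[u,v]$, forcing $(ab)$ to be an edge of some $G^{\mathcal{C}}$ and $a,b$ to lie in a common block of $B_{u,v}$. Since such a cover only swaps the values at positions $a$ and $b$, each block-sum $\sum_{i \in B^j} x_i$ is preserved along every cover, hence constant on $[u,v]$, with common value $\sum_{i \in B^j} u_i = \sum_{i \in B^j} v_i$. These equations involve disjoint variable sets, so they are linearly independent, yielding $\dim \PPP_{u,v} \leq n - \#B_{u,v}$. For the matching lower bound, fix a maximal chain $u = x_{(0)} \lessdot \dots \lessdot x_{(\ell)} = v$ with $x_{(k+1)} = x_{(k)} \cdot (a_k b_k)$. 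Each displacement $x_{(k+1)} - x_{(k)}$ is a nonzero scalar multiple of $e_{a_k} - e_{b_k}$, so the direction space of $\PPP_{u,v}$ contains $\operatorname{span}\{e_a - e_b : (ab) \in E(G^{\mathcal{C}})\}$, which has dimension $n - \#B_{\mathcal{C}}$ by the standard graph-Laplacian/cut-space computation. Thus $\dim \PPP_{u,v} \geq n - \#B_{u,v}$. The main obstacle I anticipate is the diamond-invariance of $B_{\mathcal{C}}$; the two dimension bounds then follow mechanically.
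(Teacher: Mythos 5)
Your proof is correct, and it reaches the theorem by a genuinely different route than the paper. The paper funnels everything through linear functionals: Proposition \ref{prop:allchains} (a functional constant on one maximal chain of $[u,v]$ is constant on all of $[u,v]$) yields Lemma \ref{wConst}, which characterizes the functionals constant on $[u,v]$ as exactly the span of the $e_{B^j}$; Corollary \ref{B} and both halves of the dimension count then fall out of that characterization --- in particular, the claim that no further independent equation holds is immediate, since any functional constant on $\PPP_{u,v}$ must be a combination of the $e_{B^j}$. You instead work combinatorially: well-definedness of $B_{u,v}$ comes from a direct computation showing that the two maximal chains of a diamond carry transposition sets $\{t,\tau\}$ and $\{t, t\tau t\}$ generating the same partition (your case analysis is complete once one notes that $\tau = t$ would force $u = v$), with the same two external inputs the paper uses --- connectivity in codimension one of the PL sphere $\Delta(u,v)$ from Theorem \ref{th:BW}, and the Generalized Lifting Property in rank $2$ --- invoked at the level of partitions rather than functionals. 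Your upper bound (covers preserve block sums, and every $z \in [u,v]$ lies on a saturated chain from $u$) matches the paper's first step, while your lower bound is where the arguments really diverge: the paper deduces it from the maximality of the equation set via Lemma \ref{wConst}, whereas you obtain it constructively from the rank $n-\#B_{\mathcal{C}}$ of the span of the edge directions $e_a - e_b$ along a single maximal chain. The trade-off is that Lemma \ref{wConst} is a stronger, reusable statement (the paper uses it again, e.g.\ implicitly in Theorem \ref{faceCrit}), while your argument is more self-contained for the dimension formula itself and exhibits an explicit affinely independent set of vertices.
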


Before proving Theorem \ref{th:dim}, we need to show that 
$B_{u,v}$ is well-defined.
Given a subset $A \subset [n]$,
let $e_A$ denote the $0-1$ vector in $\R^n$ with a $1$ in position
$a$ if and only if $a\in A$.

\begin{lemma} \label{wConst}
Let $\mathcal{C}$ be a maximal chain in $[u,v] \subset S_n$. Let 
$B_{\mathcal{C}}=\{B^1,\dots, B^r\}$ be the associated partition of 
$[n]$.
Then a linear functional 
$\omega:\R^n \rightarrow \R$ is constant on the interval 
$[u,v]$ 
if and only if 
\[
\omega=\sum_{j=1}^{r} c_j e_{B^j}
\]
for some coefficients $c_j$.
\end{lemma}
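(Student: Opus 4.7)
The plan is to prove both directions using the observation that right multiplication by a transposition $(ab)$ swaps the values in positions $a$ and $b$, combined with Proposition \ref{prop:allchains} for the backward direction.

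First I would unpack the action: if $x_{(i)} \lessdot x_{(i+1)}$ with $x_{(i)}^{-1} x_{(i+1)} = (ab)$, then $x_{(i+1)} = x_{(i)} \cdot (ab)$, which means $x_{(i+1)}(a) = x_{(i)}(b)$, $x_{(i+1)}(b) = x_{(i)}(a)$, and $x_{(i+1)}(k) = x_{(i)}(k)$ for $k \notin \{a,b\}$. Identifying $\omega$ with $(\omega_1,\dots,\omega_n)$ so that $\omega(z) = \sum_k \omega_k z(k)$, a direct computation gives
\[
\omega(x_{(i+1)}) - \omega(x_{(i)}) = (\omega_a - \omega_b)\bigl(x_{(i)}(b) - x_{(i)}(a)\bigr).
\]
Since $x_{(i)}(a) \neq x_{(i)}(b)$, this difference vanishes if and only if $\omega_a = \omega_b$. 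This is the single computational input driving the whole argument.

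For the forward direction, if $\omega$ is constant on $[u,v]$, then it is in particular constant along $\mathcal{C}$, so the displayed identity forces $\omega_a = \omega_b$ for every edge $\{a,b\}$ of $G^{\mathcal{C}}$. Transitivity along paths in $G^{\mathcal{C}}$ then gives $\omega_a = \omega_b$ whenever $a,b$ lie in the same connected component $B^j$, which is exactly the statement that $\omega = \sum_{j=1}^r c_j e_{B^j}$, with $c_j$ the common value of $\omega$ on $B^j$.

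For the backward direction, if $\omega = \sum_j c_j e_{B^j}$ then $\omega_a = \omega_b$ whenever $a,b$ lie in the same block. Applying the same identity in reverse shows $\omega(x_{(i)}) = \omega(x_{(i+1)})$ for every covering step along $\mathcal{C}$, since the swapped positions $a,b$ are by construction in the same block. Hence $\omega$ is constant on the maximal chain $\mathcal{C}$. At this point I would invoke Proposition \ref{prop:allchains}, which promotes constancy on a single maximal chain from $u$ to $v$ to constancy on the entire interval $[u,v]$. No step here looks like a serious obstacle; the only subtle point is making sure the convention for right multiplication matches the definition of $G^{\mathcal{C}}$, and the real content is already carried by Proposition \ref{prop:allchains}, which in turn rests on the Generalized lifting property and the thinness of Bruhat order.
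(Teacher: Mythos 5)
Your proposal is correct and follows exactly the same route as the paper: the paper's proof declares the equivalence between constancy on the chain $\mathcal{C}$ and the block form $\sum_j c_j e_{B^j}$ to be immediate from the definition of $B_{\mathcal{C}}$ (which is precisely the computation $\omega(x_{(i+1)})-\omega(x_{(i)})=(\omega_a-\omega_b)(x_{(i)}(b)-x_{(i)}(a))$ that you spell out), and then invokes Proposition \ref{prop:allchains} to pass from the chain to the whole interval. No gaps; you have simply made the ``immediate'' step explicit.
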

\begin{proof}
Using the definition of the partition $B_{\mathcal{C}}$, it is immediate that $\omega$ is constant on the chain $\mathcal{C}$ if and only if it 
has the form $\sum_{j=1}^{r} c_j e_{B^j}$.
The lemma now follows from Proposition \ref{prop:allchains}.
\end{proof}

\begin{corollary} \label{B}
The partition $B_{\mathcal{C}}$ is independent of the choice of $\mathcal{C}$.
\end{corollary}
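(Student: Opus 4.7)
The plan is to deduce Corollary~\ref{B} directly from Lemma~\ref{wConst} by observing that the lemma pins down the partition $B_{\mathcal{C}}$ in terms of an object — the space of linear functionals constant on $[u,v]$ — which manifestly does not depend on $\mathcal{C}$.

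More precisely, let $V_{u,v} \subseteq (\R^n)^*$ denote the vector space of all linear functionals $\omega:\R^n \to \R$ that are constant on the interval $[u,v]$. This is intrinsic to the pair $(u,v)$. By Lemma~\ref{wConst}, for any choice of maximal chain $\mathcal{C}$ from $u$ to $v$ with associated partition $B_{\mathcal{C}}=\{B^1,\ldots,B^r\}$, we have
\[
V_{u,v} \;=\; \mathrm{span}\{e_{B^1}, e_{B^2}, \ldots, e_{B^r}\}.
\]
In particular the span on the right is independent of $\mathcal{C}$.

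Next I would argue that a partition of $[n]$ is determined by the span of the indicator vectors of its blocks. One clean way: define an equivalence relation $\sim$ on $[n]$ by declaring $a \sim b$ iff $\omega_a = \omega_b$ for every $\omega \in V_{u,v}$. Using the basis $\{e_{B^1}, \ldots, e_{B^r}\}$, one sees immediately that $a \sim b$ iff $a$ and $b$ lie in a common block $B^j$. Thus the equivalence classes of $\sim$ are exactly the blocks of $B_{\mathcal{C}}$. Since $\sim$ is defined purely in terms of $V_{u,v}$, which does not depend on $\mathcal{C}$, the partition $B_{\mathcal{C}}$ is independent of $\mathcal{C}$.

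There is essentially no obstacle here — all the real work was done in Proposition~\ref{prop:allchains} and Lemma~\ref{wConst}. The only thing one must be slightly careful about is checking that the indicator vectors $e_{B^1},\ldots,e_{B^r}$ of the blocks of a set partition do recover the partition (equivalently, that they are linearly independent and that the equivalence relation above reproduces the blocks), which is immediate from their disjoint supports.
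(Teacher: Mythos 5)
Your proof is correct and follows essentially the same route as the paper: both arguments rest entirely on Lemma \ref{wConst}, which identifies the functionals constant on $[u,v]$ (an object intrinsic to the pair $(u,v)$) with the span of the block indicator vectors of $B_{\mathcal{C}}$. The paper phrases the final step as a mutual-refinement argument between $B_{\mathcal{C}}$ and $B_{\mathcal{C}'}$, while you recover the blocks directly as equivalence classes of the relation $a \sim b$ iff $\omega_a=\omega_b$ for all constant functionals $\omega$; these are interchangeable bookkeeping for the same idea.
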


\begin{proof}
Let $B_\mathcal{C}=\{B_\mathcal{C}^1,B_\mathcal{C}^2,\ldots,B_\mathcal{C}^r\}$. 
Take $\omega=e_{B_\mathcal{C}^j}$. By Lemma \ref{wConst}, $\omega$ is constant on $[u,v]$ and therefore on any other chain $\mathcal{C}'$. Consequently, there exist some elements $B_{\mathcal{C}'}^{j_1},\ldots,B_{\mathcal{C}'}^{j_k}$ of $B_{\mathcal{C}'}$ such that 
\[
B_\mathcal{C}^j=B_{\mathcal{C}'}^{j_1}\sqcup \ldots \sqcup B_{\mathcal{C}'}^{j_k}
\] 
It follows that $B_\mathcal{C}'$ is a refinement of $B_\mathcal{C}$. Similarly, $B_\mathcal{C}$ is a refinement of $B_\mathcal{C}'$. 
\end{proof}

\begin{definition}
Let $u \leq v$ be permutations in $S_n$,
 and let $\overline{T}(u):=\{t  \in T : u \lessdot ut \leq v\}$ 
and $\underline{T}(v):=\{t  \in T : v \gtrdot vt \geq u\}$
be the 
transpositions labeling the cover relations corresponding
to the atoms and coatoms in the interval. Define a labeled graph $G^{at}$ (resp. $G^{coat}$) on $[n]$ such that $G^{at}$ (resp. $G^{coat}$) has an edge
between $a$ and $b$ if and only if 
the transposition $(a b) \in \overline{T}(u)$ (resp. $(a b) \in \underline{T}(v)$). Let $B_{u,v}^{at}$
be the partition of $[n]$ whose
blocks  are the connected components of $G^{at}$.
Similarly, define partition $B_{u,v}^{coat}$ whose
blocks  are the connected components of $G^{coat}$.
\end{definition}

\begin{proposition} \label{atomPartition}
Let $[u,v] \subset S_n$.
The partitions $B_{u,v}^{at}$ and $B_{u,v}^{coat}$ 
are equal to $B_{u,v}$. Consequently, the labeled graphs $G^{\mathcal{C}}, G^{at}$ and $G^{coat}$ all have the same connected components.
\end{proposition}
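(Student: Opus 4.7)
The plan is to prove both the refinements $B_{u,v} \preceq B_{u,v}^{at}$ and $B_{u,v}^{at} \preceq B_{u,v}$ (where $\preceq$ means ``is a refinement of''), and then argue analogously for coatoms. The conclusion about the connected components of the three graphs is then immediate from the resulting partition equalities.

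First I would apply Corollary \ref{atomCoatomChains} to obtain a maximal chain $\mathcal{C}_u : u = y_{(0)} \lessdot y_{(1)} \lessdot \cdots \lessdot y_{(l)} = v$ with every $y_{(i)}^{-1} y_{(i+1)} \in \overline{T}(u)$. By construction, every edge of $G^{\mathcal{C}_u}$ is also an edge of $G^{at}$, so the partition into connected components $B_{\mathcal{C}_u}$ refines $B_{u,v}^{at}$. Invoking Corollary \ref{B}, $B_{\mathcal{C}_u} = B_{u,v}$, which gives $B_{u,v} \preceq B_{u,v}^{at}$.

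For the reverse refinement, I would show that each edge of $G^{at}$ lies within a single block of $B_{u,v}$. Suppose $(ab) \in \overline{T}(u)$, so that $u \lessdot u(ab) \leq v$. Fix any block $B^j$ of $B_{u,v}$ and set $\omega = e_{B^j}$. By Lemma \ref{wConst}, $\omega$ is constant on the interval $[u,v]$; in particular $\omega(u(ab)) = \omega(u)$. A direct computation, using that right-multiplication by $(ab)$ swaps the values in positions $a$ and $b$ of the one-line notation, yields
\[
\omega(u(ab)) - \omega(u) = (\omega_a - \omega_b)(u_b - u_a).
\]
Since $u_a \neq u_b$, we must have $\omega_a = \omega_b$, i.e.\ $a$ and $b$ either both belong to $B^j$ or both do not. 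Running this over every block $B^j$ shows that $a$ and $b$ lie in the same block of $B_{u,v}$, so $B_{u,v}^{at} \preceq B_{u,v}$. Combined with the previous paragraph, $B_{u,v}^{at} = B_{u,v}$.

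The coatom case is handled identically, substituting the chain $\mathcal{C}_v$ from Corollary \ref{atomCoatomChains} (whose edges lie in $\underline{T}(v)$) for $\mathcal{C}_u$, and carrying out the same linear-functional computation using $v$ and $v(ab)$ in place of $u$ and $u(ab)$ when $(ab) \in \underline{T}(v)$. This gives $B_{u,v}^{coat} = B_{u,v}$, completing the proof. I do not anticipate a serious obstacle: the Generalized lifting property does all the real work through Corollary \ref{atomCoatomChains}, and Lemma \ref{wConst} packages exactly the information needed to go from ``$\omega$ constant on $[u,v]$'' to coordinate equalities on atom/coatom edges.
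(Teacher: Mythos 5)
Your proof is correct and follows essentially the same route as the paper, which simply cites Corollary \ref{atomCoatomChains} (giving a maximal chain whose edges all lie in $G^{at}$, hence $B_{u,v}$ refines $B_{u,v}^{at}$) together with Corollary \ref{B}. For the reverse refinement you route through Lemma \ref{wConst} and the identity $\omega(u(ab))-\omega(u)=(\omega_a-\omega_b)(u_b-u_a)$, whereas the paper implicitly extends each atom edge to a maximal chain and applies Corollary \ref{B} again; both are valid and rest on the same underlying machinery.
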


\begin{proof}
The result follows from Corollary \ref{atomCoatomChains} and Corollary \ref{B}.
\end{proof}

We now prove Theorem \ref{th:dim}.
\begin{proof}[Proof of Theorem \ref{th:dim}]
We begin by showing that any point $(x_1,x_2,\ldots,x_n) \in \PPP_{u,v}$ satisfies the independent equations (\ref{affineSpaces}).
By Lemma \ref{wConst}, the linear functional $\omega=e_{B^j}$ is constant on $[u,v]$. Since $e_{B^j}(x_1,x_2,\ldots,x_n)=\sum_{i \in B^j} x_i$, \eqref{affineSpaces} holds.

Now suppose that there exists another affine space 
\begin{align} \label{auxEq}
\sum_{i=1}^n a_i x_i=c
\end{align}
 to which $\PPP_{u,v}$ belongs. By assumption, the linear functional $a=(a_1,\dots,a_n)$ is constant on $\PPP_{u,v}$, so by Lemma \ref{wConst}, 
\[
a=\sum_j c_j e_{B^j}
\]
for some coefficients $c_j$. Therefore equation (\ref{auxEq}) is a linear combination of equations (\ref{affineSpaces}).
\end{proof}

\begin{example}
Consider the intervals $[1234,1432]$ and $[1234,3412]$ in Figures \ref{interval1234to1432} and \ref{interval1234to3412}. We see that $B_{1234,1432}=|1|234|$ and $B_{1234,3412}=|1234|$, so that the dimensions are $2$ and $3$, respectively.

\centering
\begin{tikzpicture}[scale=.7] 
\node (u) at (0,-2) {$1234$};
  \node (a) at (-2,-0.1) {$1243$};
  \node (b) at (2,-0.1) {$1324$};
  \node (c) at (-2,2.1) {$1423$};
  \node (d) at (2,2.1) {$1342$};
  \node (v) at (0,4) {$1432$};
  \draw  (u) -- (a);
  \draw (u) -- (b); 
 \draw (b) -- (c);
 \draw (b) -- (d);
 \draw (a) -- (d);
  \draw (a) -- (c);
  \draw (b) -- (d);
  \draw (c) -- (v);
  \draw (d) -- (v);
  \node (x1) at (1.8,-1.2) {$(23)$};
  \node (x2) at (-1.8,-1.2) {$(34)$};
  \node (x3) at (-2.6,1) {$(23)$};
  \node (x4) at (2.6,1) {$(34)$};
  \node (x5) at (1.8,3.2) {$(23)$};
  \node (x6) at (-1.8,3.2) {$(34)$};
  \node (x7) at (0.75,2) {$(24)$};
  \node (x8) at (-0.75,2) {$(24)$};
\end{tikzpicture}
\captionof{figure}{}
\label{interval1234to1432}

\begin{center}
\begin{tikzpicture}[scale=.5] 
\node (u) at (0,-8) {\small{$1234$}};
\node (a) at (-4,-4) {\small{$1243$}};
\node (b) at (0,-4) {\small{$1324$}};
\node (c) at (4,-4) {\small{$2134$}};
\node (d) at (-6,0) {\small{$1423$}};
\node (e) at (-3,0) {\small{$1342$}};
\node (f) at (3,0) {\small{$3124$}};
\node (g) at (6,0) {\small{$2314$}};
\node (h) at (-6,4) {\small{$1432$}};
\node (i) at (-2,4) {\small{$2413$}};
\node (j) at (2,4) {\small{$3142$}};
\node (k) at (6,4) {\small{$3214$}};
\node (v) at (0,8) {\small{$3412$}};
\node (l) at (0,0) {\small{$2143$}};
\draw  (u) -- (a);
\draw  (l) -- (j);
\draw  (l) -- (i);
\draw  (l) -- (c);
\draw  (l) -- (a);
\node (x1) at (-3,-6) {\tiny{$(34)$}};
\draw  (u) -- (b);
\node (x2) at (-0.6,-6) {\tiny{$(23)$}};
\draw  (u) -- (c);
\node (x3) at (1.3,-6) {\tiny{$(12)$}};
\draw  (a) -- (d);
\node (x4) at (-5.5,-2) {\tiny{$(23)$}};
\draw  (a) -- (e);
\draw  (b) -- (d);
\draw  (b) -- (e);
\draw  (b) -- (f);
\draw  (b) -- (g);
\draw  (c) -- (f);
\draw  (c) -- (g);
\draw  (d) -- (h);
\draw  (d) -- (i);
\node (x5) at (-3.8,3) {\tiny{$(13)$}};
\draw  (e) -- (h);
\draw  (e) -- (j);
\draw  (f) -- (j);
\draw  (f) -- (k);
\draw  (g) -- (i);
\draw  (g) -- (k);
\draw  (h) -- (v);
\node (x6) at (-3.8,6) {\tiny{$(13)$}};
\draw  (i) -- (v);
\node (x7) at (-1.7,6) {\tiny{$(14)$}};
\draw  (j) -- (v);
\node (x8) at (1.6,6) {\tiny{$(23)$}};
\draw  (k) -- (v);
\node (x8) at (3.6,6) {\tiny{$(24)$}};
\end{tikzpicture}
\captionof{figure}{}
\label{interval1234to3412}
\end{center}

\end{example}

We now turn to the question of when the Richardson variety 
$\mathcal{R}_{u,v}$ is a toric variety.  Our proof uses Proposition \ref{prop:toric},
which will be proved later, using properties of the moment map.

\begin{proposition}\label{RichardsonIsToric}
The Richardson variety $\mathcal{R}_{u,v}$ in $\Fl_n$ is a toric variety if and only if 
the number of blocks $\# B_{u,v}$ of the partition $B_{u,v}$ satisfies 
$\# B_{u,v} = n-\ell(v)+\ell(u)$.
Equivalently, $\mathcal{R}_{u,v}$ is a toric variety if and only if the labeled graph $G^{\mathcal{C}}$ is a forest (with no multiple edges).
\end{proposition}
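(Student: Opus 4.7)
The plan is to combine two ingredients: the dimension formula for Bruhat interval polytopes (Theorem \ref{th:dim}), and Proposition \ref{prop:toric} (to be established later using the moment map). Specifically, Proposition \ref{prop:toric} will provide the general criterion that a $T$-invariant irreducible subvariety $Y$ of $\Fl_n$ is a toric variety (with respect to the restricted torus action) if and only if the dimension of its moment map image equals $\dim Y$. Since $\PPP_{u,v}$ is the moment-map image of $\mathcal{R}_{u,v}$ and $\dim \mathcal{R}_{u,v} = \ell(v) - \ell(u)$, this reduces the question of whether $\mathcal{R}_{u,v}$ is toric to the numerical identity $\dim \PPP_{u,v} = \ell(v) - \ell(u)$. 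Applying Theorem \ref{th:dim} then converts this into $n - \#B_{u,v} = \ell(v) - \ell(u)$, which is the first claimed equivalence.

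For the equivalent forest characterization, I would fix any maximal chain $\mathcal{C}: u = x_{(0)} \lessdot \cdots \lessdot x_{(\ell)} = v$; such a chain has $\ell = \ell(v) - \ell(u)$ cover relations, each contributing exactly one edge (possibly a repeated edge) to the multigraph $G^{\mathcal{C}}$ on the vertex set $[n]$. A multigraph on $n$ vertices with $c$ connected components always has at least $n - c$ edges, with equality if and only if it is a forest containing no multiple edges. Taking $c = \#B_{u,v}$ (using Corollary \ref{B} to see that $B_{\mathcal{C}} = B_{u,v}$), the forest condition becomes precisely $\ell(v) - \ell(u) = n - \#B_{u,v}$, matching the criterion established in the previous paragraph. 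As a side benefit, this argument shows that the forest property is intrinsic to the interval $[u,v]$ and independent of the particular choice of $\mathcal{C}$.

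The main obstacle is not in the combinatorics, which amounts to a single edge-counting observation once Theorem \ref{th:dim} is applied, but rather in the geometric input supplied by Proposition \ref{prop:toric}. That proposition contains the entire algebro-geometric content of the statement: it relates the dimension of the moment polytope to the question of whether some torus orbit is dense in $\mathcal{R}_{u,v}$, using standard properties of the moment map on smooth projective $T$-varieties (in particular, the fact that the rank of the moment map at a point equals the dimension of the torus orbit through that point). Once that criterion is available, the remainder of the proof is immediate.
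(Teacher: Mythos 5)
Your proposal is correct and follows essentially the same route as the paper: reduce to $\dim \PPP_{u,v} = \ell(v)-\ell(u)$ via Proposition \ref{prop:toric}, apply Theorem \ref{th:dim}, and then observe that a multigraph built from the $\ell(v)-\ell(u)$ cover relations of a maximal chain has exactly $n-\#B_{u,v}$ fewer components than vertices precisely when it is a forest with no multiple edges. The paper phrases this last step as adding edges one at a time and requiring each to decrease the component count, which is the same counting argument.
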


\begin{proof}
By Proposition \ref{prop:toric}, 
$\mathcal{R}_{u,v}$ is a toric variety if and only if 
$\dim \mathsf{Q}_{u,v} = \ell(v)-\ell(u)$.
The first statement of the proposition now follows from 
Theorem \ref{th:dim}.

We will prove the second statement from the first.  
Note that $\mathcal{C}$ is a chain with $\ell(v)-\ell(u)$ edges.  
Let us consider the process of building the graph $G$
by adding one edge at a time while reading the edge-labels of $\mathcal{C}$, say from top to bottom.
We start out with a totally disconnected graph on the vertices $[n]$.  Adding a new edge will either 
preserve the number of connected components of the graph, or will decrease it by $1$.  
In order to arrive at a partition $B_{u,v}$ with $n-(\ell(v)-\ell(u))$ parts, we must 
decrease the number of connected components of the graph with every new edge added.  
But this will happen if and only if the graph $G$ we construct is a forest
(with no multiple edges).
\end{proof}

Given a labeled graph $G$, we will say that a cycle $(v_0,v_1,\ldots,v_k)$ with $v_k=v_0$ is \emph{increasing} if $v_0<v_1<\ldots<v_{k-1}$. We shall call a labeled graph with no increasing cycles an \emph{increasing-cycle-free labeled graph}.

\begin{lemma} \label{triangleFree}
The labeled graphs $G^{at}$ and $G^{coat}$ are increasing-cycle-free. In particular, they are simple and triangle-free.
\end{lemma}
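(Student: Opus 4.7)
The plan is to reduce the claim to the standard combinatorial description of the cover relation in Bruhat order on $S_n$: if $a<b$, then $u \lessdot u(ab)$ iff $u_a < u_b$ and there is no $c$ with $a<c<b$ and $u_a < u_c < u_b$; dually, $v \gtrdot v(ab)$ iff $v_a > v_b$ and there is no $c$ with $a<c<b$ and $v_a > v_c > v_b$. I would state this characterization explicitly at the start of the proof.

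Next I would argue by contradiction. Suppose $G^{at}$ has an increasing cycle $a_0 < a_1 < \dots < a_{k-1}$ (with the edge from $a_{k-1}$ back to $a_0$), so $k \geq 3$ and each edge $(a_i, a_{i+1 \bmod k})$ corresponds to a transposition in $\overline{T}(u)$. The edges among consecutive vertices $a_i, a_{i+1}$ for $0 \leq i \leq k-2$ give $u_{a_0} < u_{a_1} < \dots < u_{a_{k-1}}$ by applying the cover condition to each. In particular $a_0 < a_1 < a_{k-1}$ and $u_{a_0} < u_{a_1} < u_{a_{k-1}}$. But the closing edge $(a_0, a_{k-1})$ requires that no index $c$ with $a_0 < c < a_{k-1}$ satisfies $u_{a_0} < u_c < u_{a_{k-1}}$, and $c = a_1$ violates this. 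Contradiction. The argument for $G^{coat}$ is exactly parallel, using the dual characterization: the chain yields $v_{a_0} > v_{a_1} > \dots > v_{a_{k-1}}$, and $a_1$ witnesses a forbidden index for the closing edge $(a_0, a_{k-1})$.

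For the ``in particular'' clause, simplicity is immediate: there are no loops (a transposition $(aa)$ is not defined) and no multi-edges (each unordered pair $\{a,b\}$ yields at most one transposition, so contributes at most one edge). Triangle-freeness is the special case $k=3$ of increasing-cycle-freeness, since any three distinct vertices of a triangle can be linearly ordered and the resulting triangle is automatically an increasing $3$-cycle.

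I do not anticipate a real obstacle here; the only subtlety is being careful that the chain of strict inequalities among the $u_{a_i}$ (resp.\ $v_{a_i}$) comes from applying the cover characterization to each consecutive edge, not from a global monotonicity assumption. Once the chain is established, a single index in the interior of the closing edge's support produces the contradiction directly from the Bruhat cover condition.
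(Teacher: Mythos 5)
Your proof is correct and follows essentially the same route as the paper: both invoke the standard characterization of Bruhat covers by transpositions in $S_n$ (an edge $\{a,b\}$ with $a<b$ in $G^{at}$ forces $u_a<u_b$ with no intermediate $c$ satisfying $u_a<u_c<u_b$), chain the inequalities along the consecutive edges of a hypothetical increasing cycle, and derive a contradiction from the closing edge, with $G^{coat}$ handled dually. No gaps.
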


\begin{proof}
From the definition, it is clear that the graphs are simple. Assume by contradiction that $C=(v_0,v_1,\ldots,v_k)$ is an increasing cycle in $G^{at}$. By properties of Bruhat order on the symmetric group, the existence of an edge $\{a,b\}$ with $a<b$ implies that $u(a)<u(b)$ and for any $a < c < b$, $u(c) \not \in [u(a),u(b)]$. Looking at edges $\{v_{i} v_{i+1}\}, i=0,1,\ldots,k-2$, of cycle $C$, we see that
\[
u(v_0)<u(v_1)<\ldots<u(v_{k-1}).
\]
However, edge $\{v_0,v_{k-1}\}$ implies that $u(v_i) \not \in [u(v_0),u(v_{k-1})]$ for any $1 \leq i \leq k-2$, which is a contradiction.
The proof for $G^{coat}$ is analogous.
\end{proof}

Following Bj\"{o}rner and Brenti \cite{BB}, we call the face poset of a $k$-gon a $k$-crown. Any length $3$ interval in a Coxeter group is a $k$-crown \cite[Corollary 2.7.8]{BB}. It is also known that in $S_n$, the values of $k$ can only be $2,3$ or $4$.

\begin{remark}
Using Proposition \ref{atomPartition} and Lemma \ref{triangleFree}, it is easy to show that any $k$-crown must have $k \leq 4$. Indeed, the graph $G^{\mathcal{C}}$ has $3$ edges, and therefore at least $n-3$ connected components. By Proposition \ref{atomPartition}, the graph $G^{at}$ has the same connected components as $G^{\mathcal{C}}$ and $k$ edges. By Lemma \ref{triangleFree} it is simple and triangle-free. Consequently, if $k>4$ then $G^{at}$ must have at most $n-4$ components. 
\end{remark}

\begin{lemma} \label{4crown}
Let $[u,v]$ be a $4$-crown and let $\mathcal{C}:u=x_{(0)} \lessdot x_{(1)} \lessdot x_{(2)} \lessdot   x_{(3)}=v$ be any maximal chain. The graph $G^{\mathcal{C}}$ is a forest. In particular, if we set 
$t_i:=x_{(i)}^{-1} x_{(i+1)}$ for
$0 \leq i \leq 2$, then $t_0 \neq t_2$ since there are no multiple edges.
\end{lemma}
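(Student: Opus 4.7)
I propose to prove the statement by contradiction: suppose $G^{\mathcal{C}}$ fails to be a forest, and show that this forces the interval $[u,v]$ to be a $2$-crown, contradicting the $4$-crown hypothesis. Since $G^{\mathcal{C}}$ has exactly three edges, the only ways it can fail to be a forest are (a) a repeated edge, or (b) the three edges forming a triangle. The consecutive transpositions $t_0,t_1$ and $t_1,t_2$ must be distinct, since $t_i=t_{i+1}$ would force $x_{(i)}=x_{(i+2)}$, contradicting the strict ascent of the chain. Hence the only possible repeated edge is $t_0=t_2$.

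In both bad cases, $u^{-1}v=t_0t_1t_2$ is a transposition. In case (a), writing $t_0=t_2=(ab)$ gives $u^{-1}v=(ab)\,t_1\,(ab)$, which is conjugate to $t_1$ and hence a transposition. In case (b), if $\{t_0,t_1,t_2\}$ is a triangle on $\{a,b,c\}$, then $t_0t_1t_2$ is an odd permutation of $\{a,b,c\}$ fixing every other letter; the only odd elements of $S_3$ are its three transpositions, so $u^{-1}v$ is one of $(ab),(bc),(ca)$.

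The final step is to observe that when $v=u\cdot(ij)$ for some transposition $(ij)$ with $i<j$ and $\ell(v)-\ell(u)=3$, a standard calculation (see, e.g., \cite{BB}) shows that there is a unique index $k$ with $i<k<j$ and $u_i<u_k<u_j$, and that $[u,v]$ is poset-isomorphic to the interval $[e,w_0]$ in $S_3$: its six elements are precisely the $3!$ rearrangements of the values $u_i, u_k, u_j$ at positions $i,k,j$, with every other entry of $u$ held fixed. The interval $[e,w_0]$ in $S_3$ is a hexagon with only two atoms, i.e.\ a $2$-crown. This contradicts the hypothesis that $[u,v]$ is a $4$-crown, so both (a) and (b) are impossible, $G^{\mathcal{C}}$ is a forest, and in particular $t_0\neq t_2$.

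The main obstacle is Step 3 --- invoking the classical identification of $[u,u\tau]$ with $[e,w_0]\subset S_3$ when the length difference is exactly $3$. Everything else is bookkeeping or a short parity check in $S_3$.
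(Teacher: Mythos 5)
Your proof is correct, but it takes a genuinely different route from the paper's. The paper deduces the lemma from its earlier machinery: since $[u,v]$ is a $4$-crown, the atom graph $G^{at}$ has $4$ edges and, being simple and triangle-free (Lemma \ref{triangleFree}), has at most $n-3$ connected components; since $G^{\mathcal{C}}$ and $G^{at}$ have the same components (Proposition \ref{atomPartition}, which rests on the Generalized lifting property and Corollary \ref{B}), a non-forest $G^{\mathcal{C}}$ with only $3$ edges would have at least $n-2$ components, a contradiction. You instead argue directly on the chain: a non-forest with three edges is either a doubled edge (necessarily $t_0=t_2$, since consecutive repeats collapse the chain) or a triangle, and in either case $u^{-1}v=t_0t_1t_2$ is a transposition; a length-$3$ interval of the form $[u,u(ij)]$ then has only $6$ elements, so it is a $2$-crown, not a $4$-crown. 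This is more elementary and self-contained — it bypasses the generalized lifting property entirely — at the cost of invoking the structure of $[u,u(ij)]$ when $\ell(u(ij))-\ell(u)=3$. That step is true but deserves a line of justification: by the rank-matrix (dot) criterion for Bruhat order in $S_n$, any $z\in[u,u(ij)]$ must agree with $u$ outside the positions $i<k<j$ carrying the values of $[u_i,u_j]$, so $|[u,v]|\leq 6$; note that this upper bound alone (versus the $10$ elements of a $4$-crown) already finishes your contradiction, so you do not actually need the full poset isomorphism with $[e,w_0]\subset S_3$. The paper's approach buys uniformity with the surrounding results (the same component-counting argument drives Proposition \ref{RichardsonIsToric} and the remark preceding the lemma), while yours gives a quicker standalone proof of this particular fact.
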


\begin{proof}
The graph $G^{at}$ has $4$ edges. By the discussion above, the smallest cycle $G^{at}$ can have is of length $4$. Therefore $G^{at}$ has at most $n-3$ connected components. 

Assume by contradiction that $G^{\mathcal{C}}$ is not a forest. Then the graph $G^{\mathcal{C}}$, which has $3$ edges, has at least $n-2$ connected components. But the number of connected components of $G^{\mathcal{C}}$ and $G^{at}$ must be equal, so we obtain a contradiction.
\end{proof}

\begin{corollary}
A Richardson variety $\mathcal{R}_{u,v}$ in $\Fl_n$ with $\ell(v)-\ell(u)=3$ is a toric variety if and only if $[u,v]$ is a $3$-crown or a $4$-crown. 
\end{corollary}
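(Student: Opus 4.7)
The plan is to reduce the question to a graph-theoretic one via Proposition~\ref{RichardsonIsToric} and Proposition~\ref{atomPartition}: since $\ell(v)-\ell(u)=3$, the Richardson variety $\mathcal{R}_{u,v}$ is toric if and only if $\#B_{u,v}=n-3$, equivalently, the chain graph $G^{\mathcal{C}}$ is a forest with no multiple edges. The rank-$3$ interval $[u,v]$ is a $k$-crown for some $k\in\{2,3,4\}$ by the remark preceding Lemma~\ref{4crown}, and the atom graph $G^{at}$ has exactly $k$ edges (one per atom of $[u,v]$) while sharing its connected components with $G^{\mathcal{C}}$ by Proposition~\ref{atomPartition}. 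I will handle each value of $k$ separately, repeatedly using that $G^{at}$ is simple and triangle-free by Lemma~\ref{triangleFree}.

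First I would dispose of the case $k=2$. Here $G^{at}$ is simple with $2$ edges on $n$ vertices, so it has exactly $n-2$ connected components regardless of whether the two edges share a vertex. Hence $\#B_{u,v}=n-2\neq n-3$, and $\mathcal{R}_{u,v}$ fails to be toric.

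For $k=3$, the graph $G^{at}$ has $3$ edges and is simple and triangle-free, so any cycle in $G^{at}$ would have length at least $4$ and therefore require at least $4$ edges. Consequently $G^{at}$ is a forest with $3$ edges on $n$ vertices and has $n-3$ components, so $\#B_{u,v}=n-3$ and the variety is toric. For $k=4$, I would simply quote Lemma~\ref{4crown}, which asserts that $G^{\mathcal{C}}$ is a forest with no multiple edges; Proposition~\ref{RichardsonIsToric} then gives toric directly.

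The entire argument is bookkeeping once Propositions~\ref{RichardsonIsToric} and~\ref{atomPartition} and Lemmas~\ref{triangleFree} and~\ref{4crown} are in hand, so there is no serious obstacle. The only mildly subtle step is the case $k=3$, where one must observe that triangle-freeness together with having only $3$ edges forces acyclicity; this is why I compute the number of components via $G^{at}$ rather than via $G^{\mathcal{C}}$.
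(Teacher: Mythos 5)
Your proof is correct and follows essentially the same route as the paper's: reduce to counting components of $G^{at}$ via Proposition \ref{RichardsonIsToric} and Proposition \ref{atomPartition}, use Lemma \ref{triangleFree} to handle $k=2,3$, and quote Lemma \ref{4crown} for $k=4$. You merely spell out a bit more explicitly why simplicity suffices for $k=2$ and why triangle-freeness forces acyclicity for $k=3$, which the paper leaves implicit.
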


\begin{proof}
The interval $[u,v]$ is a $k$-crown for $k=2,3$ or $4$. If $k=2$ or $3$, then, by Lemma \ref{triangleFree} the graph $G^{at}$ must have $n-k$ connected components. Consequently, $k$ cannot equal to $2$. For $k=3$, we see that the Richardson variety is toric. For $k=4$, the result follows from Lemma \ref{4crown}.
\end{proof}

\subsection{Faces of Bruhat interval polytopes}

Using the results of prior sections, we will give a combinatorial criterion 
for when one Bruhat interval polytope is a face of another
(see Theorem \ref{faceCrit}).  First we need a few lemmas.

Let $\overline{T}(x,X):=\{t  \in T : \exists z \overset{t}{\gtrdot} x, z \in X \}$  and $\underline{T}(x,X):=\{t  \in T : \exists z \overset{t}{\lessdot} x, z \in X \}$
be the transpositions labeling cover relations of an element $x$ in a set $X$. In the following we use the convention that $i<j$ in $(i,j)$.

\begin{lemma}\label{diamondineq}
Let $\omega: \R^n \rightarrow \R$ be a linear functional satisfying
\[
\omega(b) = \omega(a) \geq \omega(d)
\]
for $a,b,c,d \in S_n$ the elements of a Bruhat interval of length $2$:

\begin{center}
\begin{tikzpicture}[scale=.5] 
\node (u) at (0,-3) {{$a$}};
  \node (a) at (-3,-0.1) {{$c$}};
  \node (b) at (3,-0.1) {{$b$}};
  \node (v) at (0,3) {{$d$}};
  \draw  (u) -- (a);
  \draw (u) -- (b); 
  \draw (a) -- (v);
  \draw (b) -- (v);
  
\end{tikzpicture}
    \end{center}
Then either $\omega(c) \leq \omega(a)$ or $\omega(c) \leq \omega(d)$.    
\end{lemma}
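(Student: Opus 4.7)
My plan is to apply the Generalized Lifting Property to the length-$2$ interval $[a,d]$ and to track what it implies for the two middle elements.

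By Theorem~\ref{GeneralizedLiftingProperty}, there exists a transposition $t = (ij)$ with $i<j$ such that $a \leq dt \lessdot d$ and $a \lessdot at \leq d$. Since $\ell(at) = \ell(dt) = \ell(a)+1$ and $at \neq dt$ (otherwise $a=d$), both $at$ and $dt$ lie at the middle rank of the diamond, so $\{at, dt\} = \{b, c\}$. The crucial point is that this realizes \emph{both} middle elements of the diamond using a single transposition $t$.

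I will then invoke the elementary identity that any cover relation $w \lessdot wt$ with $t=(ij)$, $i<j$, forces $w_i < w_j$ and yields
\[
wt - w \;=\; (w_j - w_i)(e_i - e_j),
\]
so that applying $\omega$ produces a \emph{positive} multiple of $\omega_i - \omega_j$. The proof will then split on which of the two middle elements is $b$.

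\textbf{Case 1:} $b = dt$ (so $c = at$). Both $\omega(d) - \omega(b)$ and $\omega(c) - \omega(a)$ are positive multiples of $\omega_i - \omega_j$, hence share a sign. The hypothesis $\omega(d) \leq \omega(a) = \omega(b)$ forces $\omega_i \leq \omega_j$, which gives $\omega(c) \leq \omega(a)$.

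\textbf{Case 2:} $b = at$ (so $c = dt$, and hence $ct = d$). Now $\omega(b) - \omega(a)$ is a positive multiple of $\omega_i - \omega_j$ and equals $0$ by hypothesis, so $\omega_i = \omega_j$. The same identity applied to $c \lessdot ct = d$ then yields $\omega(d) = \omega(c)$, and combined with $\omega(a) \geq \omega(d)$ this gives $\omega(c) = \omega(d) \leq \omega(a)$.

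In either case the first disjunct $\omega(c) \leq \omega(a)$ holds. I do not anticipate any serious obstacle: the only conceptual step is recognizing that Generalized Lifting expresses both middle elements via the \emph{same} transposition $t$, after which the proof is a routine sign-check on $\omega_i - \omega_j$.
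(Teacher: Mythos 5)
Your proof is correct and follows essentially the same route as the paper's: apply the Generalized Lifting Property to the diamond $[a,d]$ to obtain a single transposition $t$ realizing both middle covers, then do a sign check on $\omega_i-\omega_j$ in the two cases $\{at,dt\}=\{c,b\}$ versus $\{b,c\}$. Your write-up is just a more explicit version of the paper's argument (which states the resulting equivalences $\omega(a)=\omega(b)\iff\omega(c)=\omega(d)$ and $\omega(a)\geq\omega(c)\iff\omega(b)\geq\omega(d)$ and leaves the rest implicit).
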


\begin{proof}
By Theorem \ref{GeneralizedLiftingProperty}, there exists $(i,j)$ such that either
\[
a \overset{(i,j)}{\lessdot} b \text{ and } c \overset{(i,j)}{\lessdot} d
\]
or
\[
a \overset{(i,j)}{\lessdot} c \text{ and } b \overset{(i,j)}{\lessdot} d.
\]
In the former case, $\omega(a) = \omega(b) \iff \omega(c) = \omega(d)$. In the latter case, $\omega(a) \geq \omega(c) \iff \omega(b) \geq \omega(d)$.
\end{proof}

\begin{lemma} \label{coversuv}
Let $x,y,u,v \in S_n$ such that $u \leq x \leq y \leq v$. Assume that $\omega: \R^n \rightarrow \R$ is a linear functional satisfying 
\begin{align}
w:=\omega(z) \text{ for all } z \in [x,y],
\end{align}
\begin{align}\label{eq:1}
\omega(y) \geq \omega(b) \text{ for all } b \gtrdot y, b \in [u,v],
\end{align}
and
\begin{align}\label{eq:2}
\omega(x)\geq \omega(a) \text{ for all } a \lessdot x, a \in [u,v].
\end{align}
Then for any $z \in [x,v] \cup [u,y]$, $\omega(z) \leq w$.
\end{lemma}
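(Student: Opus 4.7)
The plan is to combine Theorem~\ref{bipFaceisBIP} with the standard convex-geometry fact that a vertex $p$ of a polytope $P$ maximizes a linear functional $\omega$ on $P$ as soon as $\omega(p) \geq \omega(q)$ for every polytope-neighbor $q$ of $p$. The argument then splits into two symmetric halves: one shows that $y$ maximizes $\omega$ on $\PPP_{x,v}$, giving $\omega(z) \leq w$ for $z \in [x,v]$, and that $x$ maximizes $\omega$ on $\PPP_{u,y}$, giving $\omega(z) \leq w$ for $z \in [u,y]$.

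The enabling observation is that by Theorem~\ref{bipFaceisBIP}, every edge of any Bruhat interval polytope $\PPP_{a,b}$ is itself a Bruhat interval polytope. Since the vertices of any $\PPP_{c,d}$ are precisely the permutation vectors corresponding to the elements of $[c,d]$, a one-dimensional face $\PPP_{c,d}$ forces $|[c,d]|=2$, i.e., $c \lessdot d$ is a Bruhat cover. Consequently, every polytope-neighbor of the vertex $y$ in $\PPP_{x,v}$ is some $y' \in [x,v]$ with either $y' \lessdot y$ or $y \lessdot y'$.

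To finish the first half: if $y' \lessdot y$ with $y' \in [x,v]$, then automatically $y' \in [x,y]$, so $\omega(y') = w$ by the constancy assumption on $[x,y]$; if instead $y \lessdot y'$ with $y' \in [x,v] \subseteq [u,v]$, then hypothesis~\eqref{eq:1} gives $\omega(y') \leq w$. In either case $\omega$ is non-increasing along every polytope-edge of $\PPP_{x,v}$ leaving $y$, so the convex-geometry fact identifies $y$ as a maximizer of $\omega$ on $\PPP_{x,v}$. The argument at $x \in \PPP_{u,y}$ is entirely symmetric, with the roles of hypotheses~\eqref{eq:1} and \eqref{eq:2} interchanged: downward covers $x' \lessdot x$ with $x' \in [u,y]$ satisfy $\omega(x') \leq w$ by \eqref{eq:2}, while upward covers $x \lessdot x'$ with $x' \in [u,y]$ lie in $[x,y]$ and so have $\omega(x') = w$.

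The only thing to justify carefully is the edge-to-cover correspondence for polytope-neighbors of $y$ in $\PPP_{x,v}$; once that is in hand via Theorem~\ref{bipFaceisBIP}, the remainder is essentially a one-line vertex-optimality argument. An induction-based proof using Lemma~\ref{diamondineq} (say on $\ell(v)-\ell(y)$) is in principle also possible, but I expect it to be noticeably messier, since it would have to accommodate elements of $[x,v]$ that are incomparable to $y$.
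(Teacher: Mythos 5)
Your proof is correct, and it takes a genuinely different route from the paper's. The paper argues entirely inside the Bruhat order: it first handles $z \in [y,v] \cup [u,x]$ by pushing the hypotheses along atom/coatom chains (Corollary \ref{atomCoatomChains}), and then runs a downward induction over $q \in [x,y]$, using directedness of Bruhat order to complete each configuration $q \lessdot z, q'$ to a diamond and invoking Lemma \ref{diamondineq} (itself a consequence of the Generalized lifting property) to propagate the inequality $\omega(q) \geq \omega(z)$ for all upper covers $z \in [u,v]$ from $y$ down to $x$; a final application of Corollary \ref{atomCoatomChains} then covers all of $[x,v]$ and $[u,y]$. You instead apply Theorem \ref{bipFaceisBIP} to the auxiliary polytopes $\PPP_{x,v}$ and $\PPP_{u,y}$, identify polytope-neighbors of $y$ (resp.\ $x$) with Bruhat covers inside $[x,v]$ (resp.\ $[u,y]$), and conclude by local-to-global vertex optimality. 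Both ingredients you use are legitimately available here and there is no circularity: Theorem \ref{bipFaceisBIP} is established before this lemma and independently of it, and the vertex-cone fact is the same one the paper itself invokes later in the proof of Theorem \ref{faceCrit}. Your route is shorter and makes transparent that the lemma is really a statement about maximizing $\omega$ over the two sub-polytopes; the cost is that it leans on the full strength of Theorem \ref{bipFaceisBIP} (hence on the Coxeter-matroid input of Proposition \ref{minmax} as well as the lifting machinery), whereas the paper's induction is purely order-theoretic and only needs the lifting property through Lemma \ref{diamondineq} and Corollary \ref{atomCoatomChains}. One point you flag as needing care --- that an edge of $\PPP_{x,v}$ at $y$ must join $y$ to a cover or cocover in $[x,v]$ --- does follow exactly as you say: by Theorem \ref{bipFaceisBIP} the edge is some $\PPP_{c,d}$ whose vertex set is all of $[c,d]$, and a two-element Bruhat interval is a cover relation.
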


\begin{proof}
By Corollary \ref{atomCoatomChains}, the result holds for any $z \in [y,v] \cup [u,x]$. Indeed, given a $z \in [y,v]$, we can construct a chain from $y$ to $z$ whose transpositions are in $\overline{T}(y,[y,z]) \subset \overline{T}(y,[y,v])$. Analogously, for $z \in [u,x]$, we can construct a chain from $z$ to $x$ with transpositions in  $\underline{T}(x,[z,x]) \subset \underline{T}(x,[u,x])$.
Applying \eqref{eq:1} and \eqref{eq:2}, respectively, yields the result.

Now let $q \in [x,y]$. We show that
\[
\omega(q) \geq \omega(z) \, \forall z \gtrdot q, z \in [u,v].
\]
Proceed by induction on $m:=\ell(y)-\ell(q) \geq 0$. The base case holds by assumption. Consider now such a $q$ with $m \geq 1$. Suppose that $z\in [u,v]$,
where $z \gtrdot q$, and take $q' \gtrdot q$ with $q' \in [x,y]$. We have the following diagram
\begin{center}
\begin{tikzpicture}[scale=.5] 
\node (u) at (0,-3) {{$q$}};
  \node (a) at (-3,-0.1) {{$z$}};
  \node (b) at (3,-0.1) {{$q'$}};
  \node (v) at (0,3) {{$z'$}};
  \draw  (u) -- (a);
  \draw (u) -- (b); 
  \draw (a) -- (v);
  \draw (b) -- (v);
  
\end{tikzpicture}
    \end{center}
for some $z' \gtrdot z,q'$, with $z' \in [u,v]$. The existence of such a $z'$ comes from the fact that Bruhat order is a directed poset \cite[Proposition 2.2.9]{BB} and from the structure of its length $2$ intervals \cite[Lemma 2.7.3]{BB}.
 By induction, $\omega(q') \geq \omega(z')$. We also know that $\omega(q)=\omega(q')$. Applying Lemma \ref{diamondineq} completes the induction. 

We have shown in particular that $\omega(x) \geq \omega(z)$ for all $x \lessdot z \in [u,v]$. Applying Corollary \ref{atomCoatomChains} shows that $\omega(x) \geq \omega(z) \, \forall z \in [x,v]$.
By symmetry, $\omega(y) \geq \omega(z) \, \forall z \in [u,y]$.
\end{proof}

\begin{theorem}\label{faceCrit}
Let $[x,y] \subset [u,v]$.   We define the graph 
$G_{x,y}^{u,v}$ as follows:
\begin{enumerate}
\item The nodes of $G_{x,y}^{u,v}$ are $\{1,2,\ldots,n\}$,
 with nodes $i$ and $j$ identified if they are in the same part of $B_{x,y}$.
\item There is a directed edge from $i$ to $j$ for every $(i,j) \in \overline{T}(y,[u,v])$.
\item There is a directed edge from $j$ to $i$ for every $(i,j) \in \underline{T}(x,[u,v])$.
\end{enumerate}
Then the Bruhat interval polytope $\PPP_{x,y}$ is a face of the Bruhat interval polytope $\PPP_{u,v}$ if and only if the graph 
$G_{x,y}^{u,v}$ 
is a directed acyclic graph.
\end{theorem}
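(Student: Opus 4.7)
The plan is to prove both implications, using Lemma \ref{coversuv} and the face classification from Theorem \ref{bipFaceisBIP} as the main tools. For the forward implication, suppose $\PPP_{x,y}$ is a face of $\PPP_{u,v}$, witnessed by a linear functional $\omega$ whose maximum over $\PPP_{u,v}$ equals some value $w$ and is attained exactly on $[x,y]$. Since $\omega$ is constant on $[x,y]$, Lemma \ref{wConst} gives $\omega = \sum_j c_j e_{B^j}$, so identifying positions within a block of $B_{x,y}$ is compatible with $\omega$. For each directed edge $i \to j$ coming from $(i,j) \in \overline{T}(y,[u,v])$, the cover $b := y(ij)$ lies strictly above $y$ and hence outside $[x,y]$, so $\omega(b) < w$. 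The identity $\omega(b) - \omega(y) = (y_j - y_i)(\omega_i - \omega_j)$ combined with $y_i < y_j$ (since $y \lessdot b$) forces $\omega_i < \omega_j$. A symmetric argument applies to edges coming from $\underline{T}(x,[u,v])$, giving $\omega_j < \omega_i$. Thus every directed edge $a \to b$ encodes a strict inequality $\omega_a < \omega_b$, and a directed cycle would produce a contradictory chain of strict inequalities; hence $G_{x,y}^{u,v}$ is a DAG.

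For the converse, I would assume $G_{x,y}^{u,v}$ is a DAG, choose a topological ordering of its contracted nodes, and assign strictly increasing real weights to the blocks so that every edge $a \to b$ gives $\omega_a < \omega_b$, where $\omega_i$ is the weight of the block of $B_{x,y}$ containing $i$. Lemma \ref{wConst} then makes $\omega$ constant on $[x,y]$ with some value $w$, and by construction $\omega(b) < w$ for every $b \gtrdot y$ in $[u,v]$ and $\omega(a) < w$ for every $a \lessdot x$ in $[u,v]$. Lemma \ref{coversuv} immediately yields $\omega(z) \leq w$ for every $z \in [x,v] \cup [u,y]$. Let $M = \max_{z \in [u,v]} \omega(z)$, and by Theorem \ref{bipFaceisBIP} and Proposition \ref{minmax} write the corresponding face as $\PPP_{x',y'}$, with $\omega \equiv M$ on $[x',y']$.

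Once $M = w$ is established, the conclusion $[x',y'] = [x,y]$ follows quickly: $[x,y] \subseteq [x',y']$ since $[x,y]$ is contained in the $w$-level set of $\omega$ inside $[u,v]$, and if $y' > y$ strictly one can pick $b \in [u,v]$ with $y \lessdot b \leq y'$, so that $b \in [x',y']$ forces $\omega(b) = M = w$ while the DAG edge from $(y,b)$ forces $\omega(b) < w$, a contradiction; the same reasoning gives $x' = x$. The main remaining step, and the one I expect to be the most delicate, is establishing $M = w$, i.e., ruling out any $z^* \in [u,v] \setminus ([x,v] \cup [u,y])$ with $\omega(z^*) > w$. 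Any hypothetical face $\PPP_{x^*,y^*}$ carrying such a maximum must lie entirely in this ``outside'' region, and applying the forward direction to $\PPP_{x^*,y^*}$ shows $G_{x^*,y^*}^{u,v}$ is also a DAG whose edge constraints are satisfied by $\omega$. The plan is to combine these constraints with a chain argument using the Generalized lifting property (Theorem \ref{GeneralizedLiftingProperty}) to connect an element of $[x^*,y^*]$ to the region $[x,v] \cup [u,y]$ bounded by Lemma \ref{coversuv}, and then to use Lemma \ref{diamondineq} to force a contradiction with one of the strict inequalities coming from the DAG $G_{x,y}^{u,v}$.
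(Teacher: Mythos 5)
Your forward direction is correct and is essentially the paper's argument: $\omega$ constant on $[x,y]$ forces compatibility with $B_{x,y}$, and the strict inequalities $\omega(y)>\omega(b)$, $\omega(x)>\omega(a)$ translate into $\omega_i<\omega_j$ along every directed edge, so a directed cycle is impossible.

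The converse, however, has a genuine gap exactly where you flag it: you never actually prove that $M=w$, i.e.\ that no $z^*\in[u,v]\setminus([x,v]\cup[u,y])$ satisfies $\omega(z^*)>w$. What you offer instead is a ``plan'' (apply the forward direction to a hypothetical maximizing face $\PPP_{x^*,y^*}$, then connect it back via the Generalized lifting property and Lemma \ref{diamondineq}), and it is not clear this plan can be executed: the edge constraints of $G_{x^*,y^*}^{u,v}$ say nothing a priori about how $\omega$ compares between $[x^*,y^*]$ and $[x,y]$, and a chain from $[x^*,y^*]$ down to $[u,y]$ or up to $[x,v]$ need not pass through any element on which you control $\omega$. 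The paper closes this gap with a \emph{local-to-global convexity argument at the vertex $y$} (and symmetrically at $x$), which is the idea missing from your write-up: by Theorem \ref{bipFaceisBIP}, every edge of $\PPP_{u,v}$ emanating from the vertex $y$ joins $y$ to some $z$ with $z\gtrdot y$ or $z\lessdot y$ in $[u,v]$; all such $z$ lie in $[y,v]\cup[u,y]\subset[x,v]\cup[u,y]$, so Lemma \ref{coversuv} gives $\omega(z)\leq\omega(y)$, hence $\omega(f)\leq 0$ for every edge vector $f$ at $y$. Since $\PPP_{u,v}$ is contained in $y+\cone$ of its edge vectors at $y$, this yields $\omega(y)\geq\omega(z)$ for \emph{all} $z\in[u,v]$, with no need to reason about distant elements of the Bruhat interval. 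Your concluding step (that the maximizing face equals $[x,y]$ once $M=w$ is known) is fine, but without the cone argument the proof of the converse is incomplete.
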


\begin{proof}
Assume first that $\omega: \R^n \rightarrow \R$ is a linear functional with $\omega |_{\PPP_{u,v}}$ maximized exactly on $\PPP_{x,y}$. Since 
$\omega$ is constant on $[x,y]$, $\omega$ is compatible with the partition $B_{x,y}$, i.e. $\omega_i = \omega_j$ whenever $i$ and $j$ are in the same 
part of $B_{x,y}$. From the definition of $\omega$,
\begin{align*}
&\omega(y)>\omega(b) \text{ for all } b \gtrdot y, b \in [u,v], \text{ and }\\
&\omega(x)>\omega(a) \text{ for all } a \lessdot x, a \in [u,v].
\end{align*}
Equivalently,
\begin{align*}
&\omega_i <\omega_j \text{ for all } (i,j) \in \overline{T}(y,[u,v]), \text{ and }\\
&\omega_i >\omega_j \text{ for all } (i,j) \in \underline{T}(x,[u,v]).
\end{align*}
Label each vertex $k \in G_{x,y}^{u,v}$ with the number $\omega_k$. If $G_{x,y}^{u,v}$ has  a directed edge from $i$ to $j$, then $\omega_i < \omega_j$. It follows that $G_{x,y}^{u,v}$ is acyclic.

Conversely, we assume that $G_{x,y}^{u,v}$ is acyclic. Consequently, there exists a linear ordering $L$ of the vertices of $G_{x,y}^{u,v}$ such that for every directed edge $i\rightarrow j$ from vertex $i$ to vertex $j$, $i$ comes before $j$ in the ordering (i.e., $i \rightarrow j \implies L(i)<L(j)$). Define $\omega:\R^n \rightarrow \R$ via
\[
\omega_i:=L(i).
\]

Since $\omega$ is constant on each block of $B_{x,y}$, 
$\omega$ is constant on $[x,y]$.  Also, 
\begin{align}\label{ineqsOmega}
&\omega_i <\omega_j \text{ for all } (i,j) \in \overline{T}(y,[u,v]), \text{ and } \\
&\omega_i >\omega_j \text{ for all } (i,j) \in \underline{T}(x,[u,v]),\label{ineqsOmega2}
\end{align}
so that
\begin{align*}
&\omega(y)>\omega(b) \text{ for all } b \gtrdot y, b \in [u,v], \text{ and }\\
&\omega(x)>\omega(a) \text{ for all } a \lessdot x, a \in [u,v].
\end{align*}

We show now that these conditions imply that $\omega$ defines $\PPP_{x,y}$ as a face of $\PPP_{u,v}$. Indeed, note that $y$ is a vertex of $\PPP_{u,v}$, and by Theorem \ref{bipFaceisBIP}, any edge of $\PPP_{u,v}$ emanating from $y$ corresponds to a cover relation of $y$. Thus, if $f$ is an edge vector emanating from $y$, then $y+f=z$ for some $z \gtrdot y$ or $z \lessdot y$ in $[u,v]$. Consequently, by Lemma \ref{coversuv},
\[
\omega(z) \leq \omega(y) \implies \omega(f) \leq 0.
\]
This argument shows that $\omega(f) \leq 0$ for any edge vector. By convexity, $\PPP_{u,v}$ is contained in the polyhedral cone spanned by the edges emanating from $y$. Therefore $\omega(y) \geq \omega(z) \, \forall z \in [u,v]$. Similarly, $\omega(x) \geq \omega(z) \, \forall z \in [u,v]$. It follows that $[x,y]$ is a subset of the face defined by $\omega$. By Theorem \ref{bipFaceisBIP}, this face corresponds to an interval, which we showed contains $[x,y]$. Inequalities (\ref{ineqsOmega}), (\ref{ineqsOmega2}) imply that this interval is no larger than $[x,y]$.
\end{proof} 

From the proof we obtain

\begin{corollary}
The normal cone of $\PPP_{x,y}$ in $\PPP_{u,v}$ is the set of linear functionals $\omega=(\omega_i)$ compatible with $G_{x,y}^{u,v}$:
\begin{enumerate}
\item $\omega_i=\omega_j$ if $i,j$ are identified nodes of $G_{x,y}^{u,v}$,
\item $\omega_i < \omega_j$ if there is a directed edge from $i$ to $j$ in $G_{x,y}^{u,v}$.
\end{enumerate}
\end{corollary}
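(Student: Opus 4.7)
The plan is to extract this characterization directly from the two halves of the proof of Theorem \ref{faceCrit}, observing that nothing there uses more than the compatibility of $\omega$ with $G_{x,y}^{u,v}$ in the sense of (1) and (2). By definition, the normal cone (relative interior) of $\PPP_{x,y}$ in $\PPP_{u,v}$ is the set of linear functionals $\omega \in (\R^n)^*$ whose maximum on $\PPP_{u,v}$ is attained exactly on $\PPP_{x,y}$, so I need to prove that this set coincides with the set of $\omega$ satisfying (1) and (2).

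For the forward direction I would take such an $\omega$ in the normal cone. Since $\omega$ is then constant on the vertex set $[x,y]$, Lemma \ref{wConst} immediately gives (1). Strict maximality on $\PPP_{x,y}$ yields $\omega(y) > \omega(b)$ for every cover $b \gtrdot y$ in $[u,v]$ and $\omega(x) > \omega(a)$ for every cover $a \lessdot x$ in $[u,v]$; rewriting each cover in terms of its labelling transposition $(i,j) \in \overline{T}(y,[u,v])$ or $\underline{T}(x,[u,v])$ produces exactly the strict inequalities in (2). This is essentially verbatim the first paragraph of the proof of Theorem \ref{faceCrit}.

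For the reverse direction I would take an arbitrary $\omega$ satisfying (1) and (2) and rerun the second half of the proof of Theorem \ref{faceCrit} with this $\omega$ in place of the specific one built from a linear extension of $G_{x,y}^{u,v}$. Condition (1) combined with Lemma \ref{wConst} gives that $\omega$ is constant on $[x,y]$; condition (2) gives the strict cover inequalities $\omega(y) > \omega(b)$ and $\omega(x) > \omega(a)$. These are exactly the hypotheses needed to invoke Lemma \ref{coversuv}, which yields $\omega(z) \leq \omega(y)$ for all $z \in [u,v]$, and the strictness at the covers then forces the face on which $\omega$ is maximized to be no larger than $[x,y]$. The main (and essentially only) obstacle is verifying that the second-half argument of Theorem \ref{faceCrit} never exploits the precise linear-extension form of $\omega$ beyond its compatibility with $G_{x,y}^{u,v}$; this is a routine inspection, after which both inclusions follow with no further work.
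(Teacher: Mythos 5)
Your proposal is correct and matches the paper's intent exactly: the paper proves this corollary with the single line ``From the proof we obtain,'' meaning precisely that the first half of the proof of Theorem \ref{faceCrit} gives the forward inclusion and the second half, which only ever uses compatibility of $\omega$ with $G_{x,y}^{u,v}$ rather than the specific linear-extension construction, gives the reverse inclusion. Your observation that the conditions with strict inequalities describe the relative interior of the normal cone is also the right reading of the statement.
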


\begin{example}

Set $[u,v]=[1243,4132]$.
Let us verify using Theorem \ref{faceCrit} that the BIP $\PPP_{2143,4132}$ corresponding to $[x,y]=[2143,4132]$ is a face of $\PPP_{1243,4132}$.
\begin{figure}[H]
\centering
\includegraphics[width=0.4\linewidth]{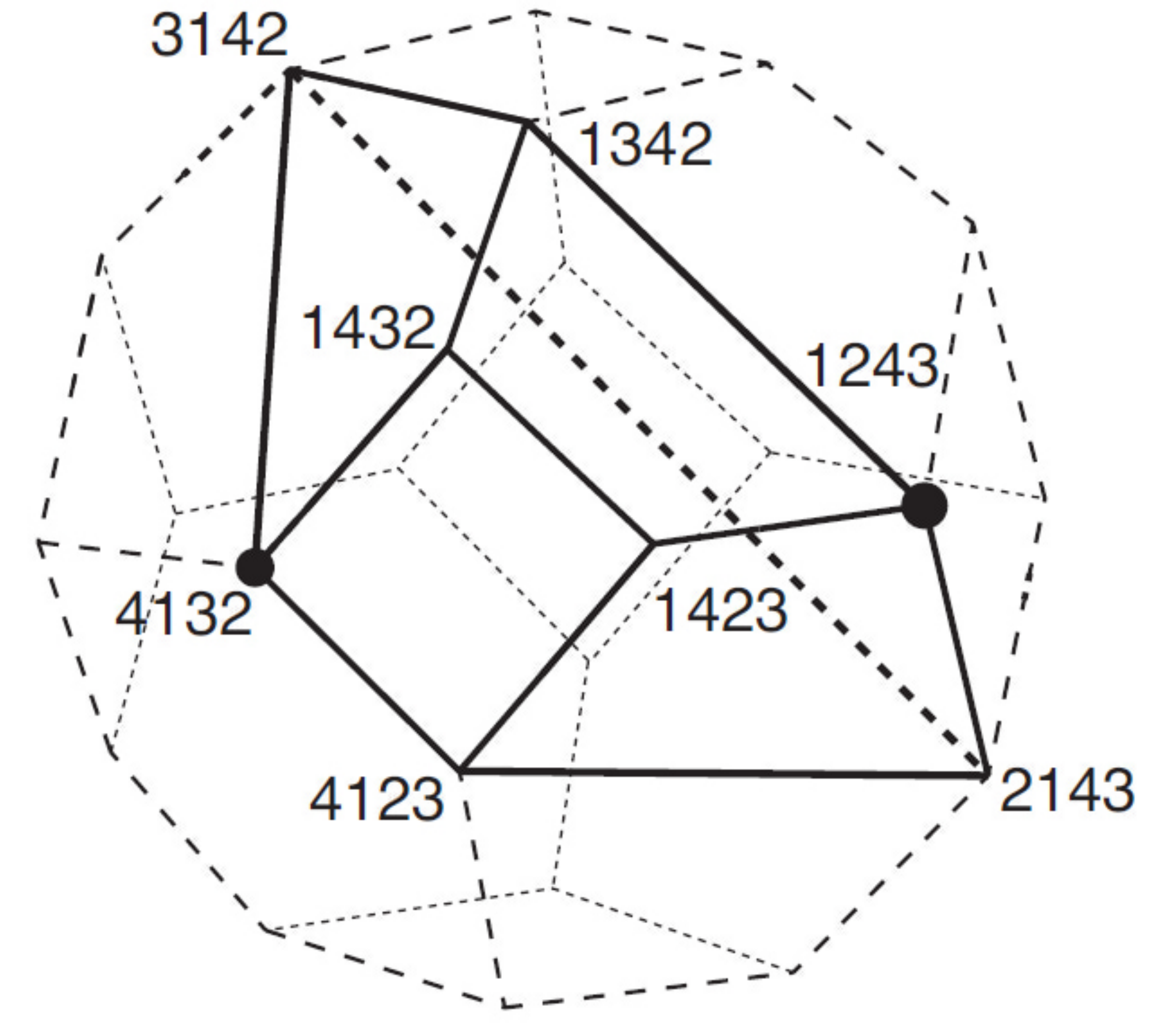}
\includegraphics[width=0.4\linewidth]{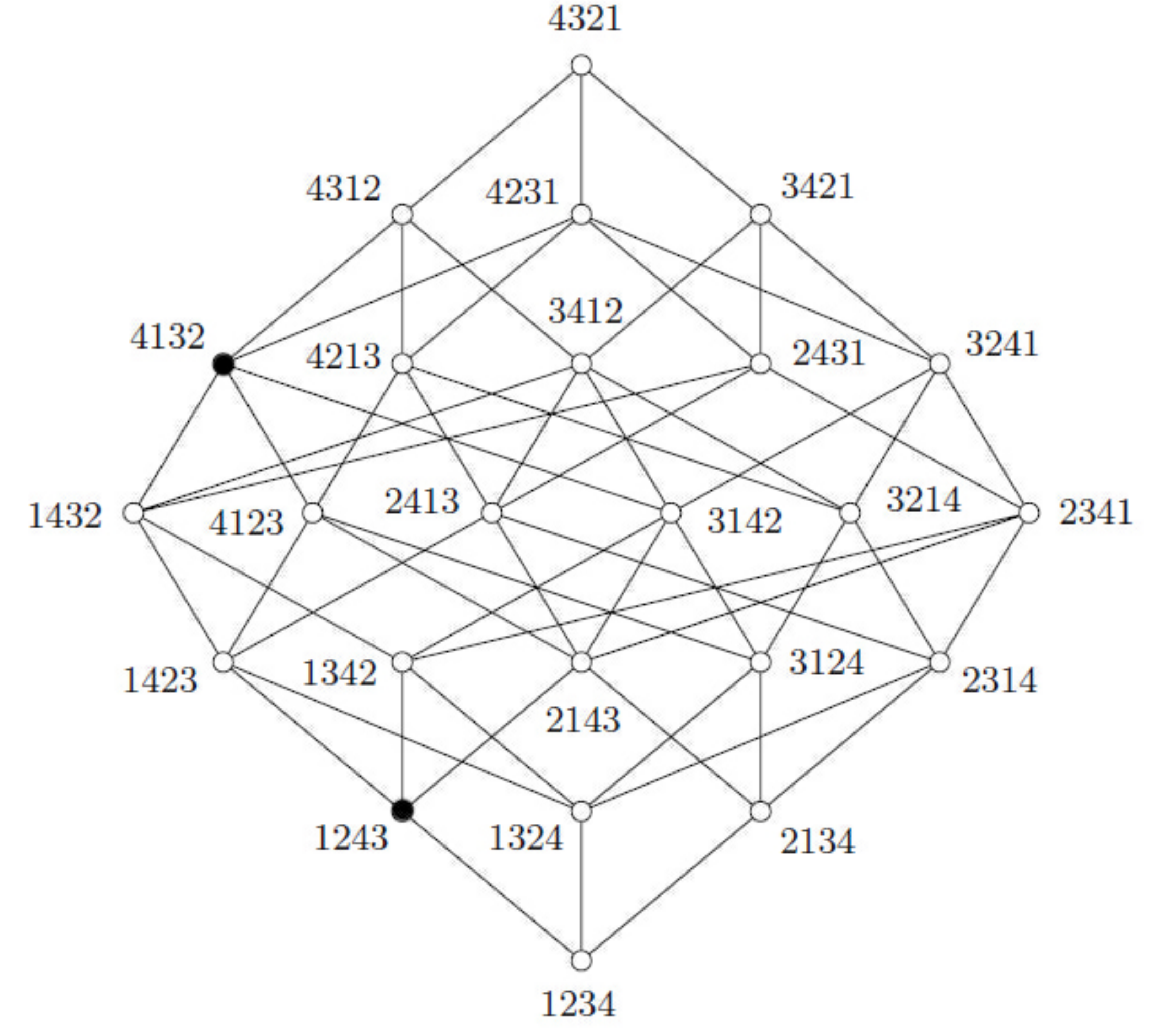}
\caption*{\label{bruh} $\PPP_{1243,4132}$}
\end{figure}

The interval $[x,y]$ along with its neighbors in $[u,v]$ are

\begin{center}
\begin{tikzpicture}[scale=.5] 
\node (u) at (0,-3) {{$2143$}};
  \node (a) at (-3,-0.1) {{$4123$}};
  \node (c) at (0,-6) {{$1243$}};
  \node (b) at (3,-0.1) {{$3142$}};
  \node (v) at (0,3) {{$4132$}};
  \draw  (u) -- (a);
   \draw  (u) -- (c);
  \draw (u) -- (b); 
  \draw (a) -- (v);
  \draw (b) -- (v);
  \node (x1) at (2.5,-1.8) {{$(14)$}};
  \node (x4) at (2.5,1.8) {{$(13)$}};
  \node (x2) at (0.8,-4.5) {{$(12)$}};
\end{tikzpicture}
    \end{center}
    
Therefore the graph $G_{x,y}^{u,v}$ is
    
    \begin{center}
   \begin{tikzpicture}[scale=.7,main node/.style={circle,fill=blue!20,draw,font=\sffamily\Large\bfseries}] 
\node[main node] (u) at (0,0) {{$1,3,4$}};
  \node[main node] (a) at (3,0) {{$2$}};
   \path (a) edge [->] (u);
\end{tikzpicture}
 \end{center}
 which is clearly acyclic.

\end{example}

\subsection{Diameter of Bruhat interval polytopes}
In this section, we show that the diameter of $\PPP_{u,v}$ is equal to $\ell(v)-\ell(u)$ (see Theorem \ref{diameter}). Let 
\[
\overline{E}(x,[u,v]):=\{z-x \in \R^n : z \gtrdot x, z \in [u,v]\}, \quad \underline{E}(x,[u,v]):=\{z-x \in \R^n : z \lessdot x, z \in [u,v]\}.
\]
We note that $\overline{E}(x,[u,v]),\underline{E}(x,[u,v]) \subset [u,v]-x$.

For a set $\{v_1,\ldots,v_m\} \subset \R^n$, define
\[
\cone(\{v_1,\ldots,v_m\}):=\R_{\geq 0} v_1 +\ldots+\R_{\geq 0} v_m.
\]

\begin{lemma} \label{cones}
If $e \in \underline{E}(x,[u,v])$, then $e \not \in \cone(\overline{E}(x,[u,v]))$. Similarly, if $e \in \overline{E}(x,[u,v])$, then $e \not \in \cone(\underline{E}(x,[u,v]))$.
\end{lemma}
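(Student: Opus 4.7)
The plan is to separate $\underline{E}(x,[u,v])$ from $\cone(\overline{E}(x,[u,v]))$ (and vice versa) by exhibiting a single linear functional $\omega \in (\R^n)^*$ that is strictly negative on every element of $\overline{E}(x,[u,v])$ and strictly positive on every element of $\underline{E}(x,[u,v])$. The natural candidate is $\omega := (1,2,\dots,n)$, i.e.\ $\omega \cdot z = \sum_{k=1}^n k\, z_k$.

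First I would record the explicit shape of the vectors in $\overline{E}(x,[u,v])$ and $\underline{E}(x,[u,v])$. A cover $z \gtrdot x$ in $S_n$ is given by $z = x \cdot (ij)$ for some transposition $(ij)$ with $i<j$, and the cover condition forces $x_i < x_j$. Hence $z - x = (x_j - x_i)(e_i - e_j)$ with $x_j - x_i > 0$. Symmetrically, for $z \lessdot x$ one gets $z - x = (x_j - x_i)(e_i - e_j)$ with $x_j - x_i < 0$.

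Next I would compute $\omega$ on these vectors. For $f = z - x \in \overline{E}(x,[u,v])$,
\[
\omega \cdot f \;=\; i(x_j - x_i) + j(x_i - x_j) \;=\; (j-i)(x_i - x_j) \;<\; 0,
\]
since $i<j$ and $x_i < x_j$. For $e = z - x \in \underline{E}(x,[u,v])$, the same computation yields $\omega \cdot e = (j-i)(x_i - x_j) > 0$, because now $x_i > x_j$.

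With this separation in hand, the lemma is immediate. If $e \in \underline{E}(x,[u,v])$ then $\omega \cdot e > 0$, while any element of $\cone(\overline{E}(x,[u,v]))$ is a nonnegative combination $\sum_k a_k f_k$ satisfying $\omega \cdot \sum_k a_k f_k = \sum_k a_k (\omega \cdot f_k) \leq 0$, contradicting membership. The reverse direction is symmetric. I do not anticipate any real obstacle; the only point to verify carefully is the explicit description of covers by transpositions $(ij)$ with $i<j$, which is standard for Bruhat order on $S_n$.
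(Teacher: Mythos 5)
Your proof is correct and takes essentially the same route as the paper: both arguments rest on the observation that vectors in $\overline{E}(x,[u,v])$ are positive multiples of positive roots $e_i - e_j$ ($i<j$) while vectors in $\underline{E}(x,[u,v])$ are positive multiples of negative roots, so the two cones meet only at the origin. The only cosmetic difference is how the separation is concluded: the paper cites the fact that every root is a same-sign combination of simple roots, whereas you exhibit the explicit separating functional $\omega=(1,2,\dots,n)$, which is equally valid.
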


\begin{proof}
Recall that a set of simple roots for type $A$ is given by
\[
e_i-e_{i+1}, \quad i=1,2,\ldots,n-1,
\]
and that any other root can be expressed uniquely as a linear combination of simple roots with integral coefficients of the same sign.

Let $e \in \underline{E}(x,[u,v])$. Then $e$ is of the form
\[
e=c(e_j-e_i), \quad i<j, c>0, 
\]
which is in the cone of negative roots. On the other hand, $\cone(\overline{E}(x,[u,v]))$ is a subset of the cone of positive roots. It follows that $e \not \in \cone(\overline{E}(x,[u,v]))$. The argument for $e \in \overline{E}(x,[u,v]) \implies e \not \in \cone(\underline{E}(x,[u,v]))$ is analogous. 
\end{proof}

\begin{lemma}\label{edgeLemma}
Let $x \in (u,v) \subset S_n$. The sets $\overline{E}(x,[u,v])$ and $\underline{E}(x,[u,v])$ each contain an edge of $\PPP_{u,v}$ incident to $x$.
\end{lemma}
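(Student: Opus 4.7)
The plan is to exploit the sign separation provided by Lemma \ref{cones} together with the characterization of edges of $\PPP_{u,v}$ given by Theorem \ref{bipFaceisBIP}. I will argue the statement for $\overline{E}(x,[u,v])$; the argument for $\underline{E}(x,[u,v])$ is symmetric.

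First I would record two preliminary observations. (1) Since $x \in (u,v)$, the set $\overline{E}(x,[u,v])$ is nonempty: choose any saturated chain from $x$ up to $v$ and take its first step to produce some $z \gtrdot x$ with $z \in [u,v]$, giving $z-x \in \overline{E}(x,[u,v])$. (2) Every edge of $\PPP_{u,v}$ incident to $x$ lies in $\overline{E}(x,[u,v]) \cup \underline{E}(x,[u,v])$: indeed, by Theorem \ref{bipFaceisBIP} such an edge is itself a Bruhat interval polytope $\PPP_{a,b}$ with $a \lessdot b$ and one of $a,b$ equal to $x$, so the corresponding edge vector is either $b-x \in \overline{E}(x,[u,v])$ or $a-x \in \underline{E}(x,[u,v])$.

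For the main step, suppose for contradiction that no edge of $\PPP_{u,v}$ incident to $x$ lies in $\overline{E}(x,[u,v])$. By observation (2), every edge vector of $\PPP_{u,v}$ at $x$ then lies in $\underline{E}(x,[u,v])$. Pick some $z - x \in \overline{E}(x,[u,v])$ as in observation (1). Since $x$ is a vertex of $\PPP_{u,v}$ (every permutation in $[u,v]$ is a vertex, being an extreme point of the ambient permutohedron), and $z$ is another vertex, the vector $z - x$ lies in the tangent cone of $\PPP_{u,v}$ at $x$, which is spanned as a cone by the edge vectors of $\PPP_{u,v}$ emanating from $x$. Hence $z - x \in \cone(\underline{E}(x,[u,v]))$, directly contradicting Lemma \ref{cones}.

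The main thing to be careful about is the invocation of the tangent cone fact in the last step, namely that $z - x$ lies in the cone generated by the edges of $\PPP_{u,v}$ at the vertex $x$. This is a standard property of convex polytopes (the tangent cone at a vertex is the conic hull of the edge directions there, and it contains every vector from the vertex to any other point of the polytope), so it should be stated explicitly but does not require real work. Once that is in place, the proof is a one-line application of Lemma \ref{cones}.
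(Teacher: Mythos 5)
Your proof is correct and follows essentially the same route as the paper's: both reduce to the fact (via Theorem \ref{bipFaceisBIP}) that edges at $x$ lie in $\overline{E}(x,[u,v]) \cup \underline{E}(x,[u,v])$, assume all edges at $x$ lie on one side, use convexity to place the whole polytope in $x$ plus the cone generated by those edge vectors, and contradict Lemma \ref{cones} using the nonemptiness of the other side. Your "tangent cone" step is exactly the paper's containment $\PPP_{u,v} \subset x + \cone(\text{edge vectors at } x)$, so there is no substantive difference.
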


\begin{proof}
As a consequence of Theorem \ref{bipFaceisBIP}, the edges of $\PPP_{u,v}$ incident to $x$ are a subset of $\overline{E}(x,[u,v]) \cup \underline{E}(x,[u,v])$. Assume by contradiction that all edges of $\PPP_{u,v}$ incident to $x$ are in $\overline{E}(x,[u,v])$. Then by convexity of $\PPP_{u,v}$, 
\[
\PPP_{u,v} \subset x+\cone(\overline{E}(x,[u,v])) \implies \cone(\PPP_{u,v}-x) \subset \cone(\overline{E}(x,[u,v])).
\]
Since $\cone(\underline{E}(x,[u,v])) \subset \cone(\PPP_{u,v}-x)$, we have $\cone(\underline{E}(x,[u,v])) \subset \cone(\overline{E}(x,[u,v]))$. By assumption, $x \not \in \{u,v\}$, so that $\underline{E}(x,[u,v]) \neq \emptyset$,  contradicting Lemma \ref{cones}. The argument for $\underline{E}(x,[u,v])$ is analogous.
\end{proof}

\begin{theorem}\label{diameter}
The diameter of $\PPP_{u,v}$ is equal to $\ell(v)-\ell(u)$.
\end{theorem}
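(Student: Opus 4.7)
The plan is to prove the two inequalities $\mathrm{diam}(\PPP_{u,v}) \geq \ell(v)-\ell(u)$ and $\mathrm{diam}(\PPP_{u,v}) \leq \ell(v)-\ell(u)$ separately.

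For the lower bound it suffices to show $d_{\PPP_{u,v}}(u,v) \geq \ell(v)-\ell(u)$ in the $1$-skeleton of $\PPP_{u,v}$. By Theorem \ref{bipFaceisBIP} every edge of $\PPP_{u,v}$ is itself a one-dimensional Bruhat interval polytope, hence joins two permutations differing by a Bruhat cover; so walking along any edge changes the Bruhat length by exactly $\pm 1$. Any path from $u$ to $v$ in the graph of $\PPP_{u,v}$ must therefore traverse at least $\ell(v)-\ell(u)$ edges.

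The heart of the upper bound is the following claim, which I will prove by induction on $\ell(z)-\ell(u)$: for every vertex $z \in [u,v]$, $d_{\PPP_{u,v}}(u,z) \leq \ell(z)-\ell(u)$, and symmetrically $d_{\PPP_{u,v}}(z,v) \leq \ell(v)-\ell(z)$. The base case $z=u$ is trivial. For the inductive step with $z \in (u,v)$, Lemma \ref{edgeLemma} furnishes a covering relation $w \lessdot z$ with $w \in [u,v]$ such that $(w,z)$ is an \emph{actual edge} of $\PPP_{u,v}$; by the inductive hypothesis applied to $w$, we get $d(u,z) \leq d(u,w)+1 \leq \ell(z)-\ell(u)$. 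The boundary case $z=v$ is handled separately: since $\ell(v) > \ell(u)$ implies $\dim \PPP_{u,v} \geq 1$, the vertex $v$ is incident to at least one edge of the polytope, which by Theorem \ref{bipFaceisBIP} corresponds to a Bruhat cover $w \lessdot v$ with $w \in [u,v]$, and the induction proceeds identically.

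Granted the claim, the diameter bound follows from the triangle inequality: for any $z_1, z_2 \in [u,v]$, routing through $u$ or through $v$ gives
\[
d(z_1,z_2) \;\leq\; \min(A,B), \qquad A := \ell(z_1)+\ell(z_2)-2\ell(u), \quad B := 2\ell(v)-\ell(z_1)-\ell(z_2).
\]
Since $A+B = 2(\ell(v)-\ell(u))$, we have $\min(A,B) \leq \ell(v)-\ell(u)$, completing the upper bound.

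The main obstacle is the inductive claim: the crucial point is that $w$ must be connected to $z$ by a \emph{genuine edge} of $\PPP_{u,v}$, not merely a Bruhat cover inside $[u,v]$, since in general not every Bruhat cover of an element of $[u,v]$ produces an edge of the polytope. Lemma \ref{edgeLemma} is exactly the tool that supplies such a genuine downward edge at every internal vertex, and the dimension remark fills in the boundary case $z=v$ which falls outside the scope of that lemma.
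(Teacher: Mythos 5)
Your proposal is correct and follows essentially the same route as the paper: the lower bound comes from every edge being a Bruhat cover (Theorem \ref{bipFaceisBIP}), the upper bound from using Lemma \ref{edgeLemma} to build monotone edge-paths from any vertex down to $u$ and up to $v$, and then routing any pair $z_1,z_2$ through $u$ or $v$ and observing that the two route lengths sum to $2(\ell(v)-\ell(u))$. Your explicit induction and the separate treatment of the endpoint (where Lemma \ref{edgeLemma}, stated only for $x$ in the open interval, does not directly apply) merely spell out details the paper leaves implicit.
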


\begin{proof}
Since any edge of a Bruhat interval polytope corresponds to a cover relation in Bruhat order, the distance from $u$ to $v$ is at least $\ell(v)-\ell(u)$. By Lemma \ref{edgeLemma}, there is a path from $u$ to $v$ which takes steps up in the Hasse diagram. Therefore the distance from $u$ to $v$ is $\ell(v)-\ell(u)$.

Next, take $x \neq y$ in $[u,v]$. By Lemma \ref{edgeLemma}, a path from $x$ to $y$ can be formed by either taking a path of length $\ell(v)-\ell(x)$ from $x$ to $v$ and then of length $\ell(v)-\ell(y)$ from $v$ to $y$, or of length $\ell(x)-\ell(u)$ from $x$ to $u$ and then of length $\ell(y)-\ell(u)$ from $u$ to $y$. One of these paths must be of length less than or equal to $\ell(v)-\ell(u)$, since the sum of the lengths is $2(\ell(v)-\ell(u))$.
\end{proof}

\subsection{An inequality description of Bruhat interval polytopes}

Using Proposition \ref{prop:Mink-sum2}, 
which says that Bruhat interval polytopes 
are Minkowski 
sums of matroid polytopes, we will provide an inequality description
of Bruhat interval polytopes.

We first need to recall the notion of the rank function $r_{\M}$ of a matroid $\M$.
Suppose that $\M$ is a matroid of rank $k$ on the ground set $[n]$.  Then 
the \emph{rank function} $r_{\M}: 2^{[n]} \to \Z_{\geq 0}$ is the function defined by 
\begin{equation*}
r_{\M}(A) = \max_{I \in \M} |A \cap I| \text{ for all }A \in 2^{[n]}.
\end{equation*}

There is an inequality description of matroid polytopes, using the rank function.
\begin{proposition}[\cite{welsh}]\label{r:inequalitiesmatroids}
Let $\M$ be any matroid of rank $k$ on the ground set $[n]$, and 
let $r_{\M}:2^{[n]} \to \Z_{\geq 0}$ be its rank function. Then the matroid polytope $\Gamma_{\M}$ can be described as
\[
\Gamma_{\M} = \left\{ {\bf x} \in \R^n \ \vert \ \sum_{i \in [n]} x_i = k, \, \sum_{i \in A} x_i \leq r_{\M}(A) \, \text{ for all $A \subset [n]$} \right\}.
\]
\end{proposition}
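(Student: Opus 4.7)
The easier containment $\Gamma_\M \subseteq P$, where $P$ denotes the polytope on the right-hand side, is immediate: for each basis $B \in \M$, the indicator vector $e_B$ satisfies $\sum_{i \in [n]}(e_B)_i = |B| = k$ and $\sum_{i \in A}(e_B)_i = |A \cap B| \leq r_\M(A)$ by definition of the rank function, so convexity of $P$ and the fact that $\Gamma_\M = \conv\{e_B : B \in \M\}$ yield this direction.

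For the reverse containment $P \subseteq \Gamma_\M$, my plan is to show that every vertex of $P$ is of the form $e_B$ for some basis $B \in \M$. Since each vertex of $P$ is the unique maximizer on $P$ of some generic linear functional, it suffices to prove that for every weight vector $w \in \R^n$, the maximum of $w \cdot x$ over $P$ is attained at $e_B$ for some basis $B$. To produce such a $B$ I would apply Edmonds' greedy algorithm: reorder $[n] = \{i_1, \ldots, i_n\}$ so that $w_{i_1} \geq w_{i_2} \geq \cdots \geq w_{i_n}$, and construct $B$ by scanning the $i_j$'s in order, adding $i_j$ whenever the current set remains independent in $\M$. The matroid augmentation axiom guarantees that $B$ is a basis of $\M$ and, by a standard induction, that $|A_j \cap B| = r_\M(A_j)$ for every initial segment $A_j := \{i_1, \ldots, i_j\}$.

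To certify optimality of $e_B$, I would set $w_{i_{n+1}} := 0$ and telescope to write
\[
w \;=\; \sum_{j=1}^{n-1}(w_{i_j} - w_{i_{j+1}})\, e_{A_j} \;+\; w_{i_n}\, e_{[n]}.
\]
Since $w_{i_j} - w_{i_{j+1}} \geq 0$ for $1 \leq j \leq n-1$, for any $x \in P$ we have
\[
w \cdot x \;=\; \sum_{j=1}^{n-1}(w_{i_j} - w_{i_{j+1}}) \sum_{i \in A_j} x_i \;+\; w_{i_n} \sum_{i \in [n]} x_i \;\leq\; \sum_{j=1}^{n-1}(w_{i_j} - w_{i_{j+1}})\, r_\M(A_j) \;+\; w_{i_n} k,
\]
where the last term is handled by the equality constraint $\sum_i x_i = k$ without any sign assumption on $w_{i_n}$, and the greedy identity $|A_j \cap B| = r_\M(A_j)$ shows the bound is met by $x = e_B$. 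Hence every vertex of $P$ lies in $\{e_B : B \in \M\}$, giving $P \subseteq \Gamma_\M$.

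The main obstacle is the verification that the greedy construction yields $|A_j \cap B| = r_\M(A_j)$ for every $j$; this is exactly where submodularity of $r_\M$ enters, and it is the core classical lemma underlying Edmonds' polymatroid theorem. Once it is in hand, the LP-duality argument above runs cleanly and forces each vertex of $P$ to be the indicator vector of a basis.
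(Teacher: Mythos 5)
Your argument is correct, and it is the standard one: the paper does not prove this proposition at all but simply cites Welsh for it, so there is no in-paper proof to compare against. What you have written is essentially Edmonds' classical greedy/LP-duality proof of the matroid polytope description, and all the steps check out: the telescoping identity $w=\sum_{j=1}^{n-1}(w_{i_j}-w_{i_{j+1}})e_{A_j}+w_{i_n}e_{[n]}$ is valid, the nonnegativity of the successive differences is exactly what lets you apply the rank inequalities termwise, and the equality constraint absorbs the possibly negative last coefficient. The ``core lemma'' you flag is indeed standard and does not even require submodularity in full strength: by construction $B\cap A_j$ is a maximal independent subset of $A_j$ (any $i_m\in A_j$ rejected at stage $m$ satisfies that $(B\cap A_{m-1})\cup\{i_m\}$ is dependent, hence so is $(B\cap A_j)\cup\{i_m\}$), and all maximal independent subsets of $A_j$ have cardinality $r_\M(A_j)$. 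The only point worth making explicit is that $P$ is bounded (from $\sum_{i\neq j}x_i\leq r_\M([n]\setminus\{j\})\leq k$ and $x_j\leq r_\M(\{j\})\leq 1$ one gets $0\leq x_j\leq 1$), so that $P$ really is the convex hull of its vertices and the vertex-by-vertex argument closes the containment $P\subseteq\Gamma_\M$.
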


Using Proposition \ref{r:inequalitiesmatroids} we obtain the following result.

\begin{proposition}\label{prop:inequality}
Choose $u \leq v \in S_n$, and for each $1 \leq k \leq n-1$,
define the matroid 
\begin{equation*}
\M_k = \{I \in {[n] \choose k} \ \vert \ \text{ there exists }
z \in [u,v] \text{ such that }I=\{z(1),\dots, z(k)\}\}.
\end{equation*}
Then 
\begin{equation*}
\mathsf{Q}_{u,v} =  \left\{ {\bf x} \in \R^n \ \vert \ \sum_{i \in [n]} x_i = {n+1 \choose 2}, \, 
\sum_{i \in A} x_i \leq \sum_{j=1}^{n-1} r_{\M_j}(A) \, \text{ for all $A \subset [n]$} \right\}.
\end{equation*}
\end{proposition}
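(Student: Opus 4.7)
The approach is to combine Proposition \ref{prop:Mink-sum2}, which realizes $\mathsf{Q}_{u,v}$ as a Minkowski sum of matroid base polytopes, with Proposition \ref{r:inequalitiesmatroids}, which supplies an inequality description of each individual summand, and then to invoke the classical polymatroid Minkowski-sum identity to merge these into a single inequality description for the sum.

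Concretely, Proposition \ref{prop:Mink-sum2} gives $\mathsf{Q}_{u,v} = \sum_{k=1}^{n-1} \Gamma_{\M_k}$, and by Proposition \ref{r:inequalitiesmatroids} each summand takes the form
\[
\Gamma_{\M_k} = \left\{ x \in \R^n \ \vert \ \sum_{i=1}^n x_i = k, \ \sum_{i \in A} x_i \leq r_{\M_k}(A) \text{ for all } A \subset [n] \right\}.
\]
What remains is the following classical identity, essentially due to Edmonds: if $f_1, \dots, f_m : 2^{[n]} \to \R_{\geq 0}$ are submodular with $f_j(\emptyset) = 0$, and $B(f_j)$ denotes the associated base polytope $\{x : \sum_i x_i = f_j([n]), \ \sum_{i \in A} x_i \leq f_j(A) \ \forall A \subset [n]\}$, then $B(f_1) + \cdots + B(f_m) = B(f_1 + \cdots + f_m)$. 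Applying this with $f_j = r_{\M_j}$ --- each submodular as a matroid rank function, and submodularity being preserved under summation --- directly produces the claimed inequality description of $\mathsf{Q}_{u,v}$.

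The inclusion ``$\subseteq$'' in the polymatroid identity is immediate from summing inequalities. For ``$\supseteq$'', it suffices to show that every vertex of $B(\sum_j f_j)$ lies in the Minkowski sum; I would invoke the greedy algorithm for polymatroids. Any such vertex is the unique maximizer of a generic linear functional $\omega$, and the linear ordering of $[n]$ induced by $\omega$ simultaneously picks out, via the greedy rule applied to each $f_j$ separately, a vertex of each $B(f_j)$ whose coordinate-wise sum reproduces the given vertex of $B(\sum_j f_j)$.

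The principal obstacle is the polymatroid identity itself: although entirely classical, its ``$\supseteq$'' direction relies essentially on submodularity, and a careful justification of the greedy-algorithm argument is needed. A minor bookkeeping step will also be required to reconcile the definition of $\M_k$ used here (first $k$ values $\{z(1),\dots,z(k)\}$) with the Minkowski-sum decomposition of Proposition \ref{prop:Mink-sum2} (phrased via positions of the $k$ largest values), and to verify that the total $\sum_k r_{\M_k}([n])$ matches $\binom{n+1}{2}$ under the chosen convention.
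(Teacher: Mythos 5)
Your proposal follows the same route as the paper: Proposition \ref{prop:Mink-sum2} for the Minkowski-sum decomposition, Proposition \ref{r:inequalitiesmatroids} for the summands, and additivity of the inequality descriptions over Minkowski sums. Where the paper settles the last step with the one-line observation from \cite{ABD} that support functions add under Minkowski sum, your appeal to the Edmonds identity $B(f_1)+\cdots+B(f_m)=B(f_1+\cdots+f_m)$, with the greedy argument for the reverse inclusion, is a more careful justification of exactly that step: the support-function observation by itself gives only the containment of $\mathsf{Q}_{u,v}$ in the right-hand side together with tightness of each inequality, and one still needs to know that inequalities with $0$--$1$ normal vectors suffice to cut out the sum.

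The two points you defer as ``minor bookkeeping'' are, however, precisely where the statement as written breaks, and they should not be waved off. First, the matroid $\M_k$ of Proposition \ref{prop:inequality} (built from $\{z(1),\dots,z(k)\}$) and the matroid $\M_k$ of Proposition \ref{prop:Mink-sum2} (built from $\{z^{-1}(n),\dots,z^{-1}(n-k+1)\}$) are genuinely different in general: the vertex $\sum_k e_{\{z(1),\dots,z(k)\}}$ has $i$th coordinate $n-z^{-1}(i)$, so this choice encodes a reflected copy of $\mathsf{Q}_{u^{-1},v^{-1}}$ rather than $\mathsf{Q}_{u,v}$. Second, the check you postpone fails: $\sum_{k=1}^{n-1}r_{\M_k}([n])=\binom{n}{2}$, not $\binom{n+1}{2}$, because $\sum_k\Gamma_{\M_k}$ equals $\mathsf{Q}_{u,v}$ only after translating by $(1,1,\dots,1)$. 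Consequently the displayed system is empty as stated (take $A=[n]$, or sum the inequalities over all $A$ with $|A|=n-1$, to contradict the equality $\sum_i x_i=\binom{n+1}{2}$); the right-hand sides must be $|A|+\sum_j r_{\M_j}(A)$, with $\M_j$ as in Proposition \ref{prop:Mink-sum2}. Your method, carried out with these corrections, does prove the corrected proposition; note that the paper's own statement, example, and proof are silent on both points.
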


\begin{proof}
We know from Proposition \ref{prop:Mink-sum2} that 
$\mathsf{Q}_{u,v}$ is the Minkowski sum 
$$\mathsf{Q}_{u,v} = \sum_{k=1}^{n-1} \Gamma_{\M_k},$$ where
$\M_k$ is defined as above.
But now Proposition \ref{prop:inequality} 
follows from 
Proposition \ref{r:inequalitiesmatroids}
and the observation (made in the proof of \cite[Lemma 2.1]{ABD}) that, if a linear functional $\omega$ takes maximum values
$a$ and $b$ on (faces $A$ and $B$ of) polytopes $P$ and $Q$, respectively, then
it takes maximum value $a+b$ on (the face $A+B$ of) their Minkowski sum.
\end{proof}

\begin{example}
Consider $u=1324$ and $v=2431$ in $S_4$.  
We will compute the inequality description of $\mathsf{Q}_{u,v}$.
First note that $[u,v] = \{1324, 1342, 1423, 1432, 2314, 2341, 2413, 2431\}$.
We then compute:
\begin{itemize} 
\item $\M_1 = \{\{1\}, \{2\}\}$,  a matroid of rank $1$ on $[4]$.
\item $\M_2 = \{\{1,3\}, \{1,4\}, \{2,3\}, \{2,4\}\}$, a matroid of rank $2$ on $[4]$.
\item $\M_3 = \{\{1,2,3\}, \{1,2,4\}, \{1,3,4\}, \{2,3,4\}\}$, a matroid of rank $3$ on $[4]$.
\end{itemize}
Now using Proposition \ref{prop:inequality}, we get 
\begin{align*}
\mathsf{Q}_{u,v}  = 
\{ {\bf x} \in \R^4 \ \vert \  \sum_{i \in [4]} x_i = 10, \, 
& x_1+x_2+x_3 \leq 6, x_1 + x_2 + x_4 \leq 6,
 x_1+x_3 + x_4 \leq 6, x_2 + x_3 + x_4 \leq 6,\\
& x_1+x_2 \leq 4, x_1+x_3 \leq 5, x_1 + x_4 \leq 5,
 x_2+x_3 \leq 5, x_2+x_4 \leq 5,
 x_3+x_4 \leq 3, \\
& x_1 \leq 3, x_2\leq 3, x_3 \leq 2, x_4 \leq 2.\}
\end{align*}

\end{example}

\section{A generalization of the recurrence for $R$-polynomials}\label{sec:R}

The well-known $R$-polynomials were introduced by Kazhdan and Lusztig  as a useful tool for computing
Kazhdan-Lusztig polynomials \cite{KL}.  $R$-polynomials also have a
geometric interpretation in terms of Richardson varieties.  More specifically,
the Richardson variety 
$\mathcal{R}_{u,v}$  (see Section \ref{sec:preliminaries} for the definition)
may be defined over a finite field $\F_q$, and the number of points
it contains is given by the $R$-polynomial $R_{u,v}(q) = \#\mathcal{R}_{u,v}(\F_q)$.

The $R$-polynomials may be defined by the following recurrence.
\begin{theorem} \cite[Theorem 5.1.1]{BB}\label{prop:R}
There exists a unique family of polynomials $\{R_{u,v}(q)\}_{u,v \in W} \subset \Z[q]$ satisfying the following conditions:
\begin{enumerate}
\item $R_{u,v}(q)=0 ,\ \text{ if }u \not \leq v$.
\item $R_{u,v}(q)=1 ,\ \text{ if }u=v$.
\item If $s \in D_R(v)$, then 

\[ R_{u,v}(q)=\begin{cases} {R}_{us,vs}(q) & \text{ if }s \in D_R(u),\\
                                            
        q {R}_{us,vs}(q)+(q-1){R}_{u,vs}(q) & \text{ if }s \not \in D_R(u)

        .\end{cases}
\]
\end{enumerate}
\end{theorem}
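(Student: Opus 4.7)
The plan is to establish uniqueness and existence separately, with uniqueness essentially immediate from the recurrence and existence requiring a consistency check.

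For uniqueness, I would proceed by strong induction on $\ell(v)$. The base case $\ell(v) = 0$ forces $v = e$; then condition (i) handles any $u \neq e$, while condition (ii) gives $R_{e,e}(q) = 1$. For the inductive step with $\ell(v) \geq 1$, there exists some $s \in D_R(v)$. For each $u$, if $u \not\leq v$ or $u = v$, clauses (i)--(ii) pin down $R_{u,v}(q)$. Otherwise $u < v$, and clause (iii) expresses $R_{u,v}(q)$ as either $R_{us,vs}(q)$ or $q R_{us,vs}(q) + (q-1) R_{u,vs}(q)$. Since $\ell(vs) < \ell(v)$, the inductive hypothesis determines these polynomials, hence $R_{u,v}(q)$.

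For existence, the subtlety is that $v$ may admit several right descents and we must verify that clause (iii) gives a consistent answer. The cleanest approach is to realize the $R$-polynomials inside the Iwahori--Hecke algebra $H$ of $(W,S)$: the $\Z[q,q^{-1}]$-algebra with basis $\{T_w\}_{w \in W}$ subject to $T_w T_s = T_{ws}$ when $\ell(ws) > \ell(w)$, and $T_s^2 = (q-1)T_s + q$. Each $T_s$ is invertible with $T_s^{-1} = q^{-1}T_s + (q^{-1}-1)$, hence so is each $T_w$. I would \emph{define}
\[
T_{v^{-1}}^{-1} \;=\; q^{-\ell(v)} \sum_{u \leq v} (-1)^{\ell(v)-\ell(u)}\, R_{u,v}(q)\, T_u,
\]
so that $R_{u,v}(q)$ is unambiguously specified as an element of $\Z[q,q^{-1}]$. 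Integrality (and the conditions (i)--(ii)) will follow from the recurrence check.

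To verify clause (iii), fix $s \in D_R(v)$. Then $\ell(vs) < \ell(v)$ and $T_{v^{-1}}^{-1} = T_s^{-1}\, T_{(vs)^{-1}}^{-1}$. I would substitute the expansion of $T_{(vs)^{-1}}^{-1}$ into this identity, apply $T_s^{-1} = q^{-1}T_s + (q^{-1}-1)$, and use the Hecke relation
\[
T_s\, T_u \;=\; \begin{cases} T_{us} & \text{if } s \notin D_R(u), \\ (q-1)T_u + q\, T_{us} & \text{if } s \in D_R(u),\end{cases}
\]
to re-express everything in the basis $\{T_w\}$. Comparing the coefficient of $T_u$ on both sides recovers precisely the two cases of clause (iii). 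The main obstacle is bookkeeping the signs, the powers of $q$, and the split into the two descent cases for $u$; once the Hecke computation is organized properly the recurrence drops out without any genuinely delicate step.

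A more combinatorial alternative, which avoids the Hecke algebra altogether, is to define $R_{u,v}(q)$ directly via Deodhar's subword formula (or Dyer's formula using a reflection ordering) and then verify the recurrence by a subword-deletion argument keyed to a chosen $s \in D_R(v)$; consistency across different choices of $s$ then amounts to a braid-move invariance check.
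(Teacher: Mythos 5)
The paper does not actually prove this statement; it is quoted verbatim from B{j}\"orner--Brenti \cite[Theorem 5.1.1]{BB}, so there is no internal proof to compare against. Your proposal reproduces the standard argument from that reference: uniqueness by induction on $\ell(v)$ (which is correct and complete as you state it), and existence by defining $R_{u,v}$ through the expansion of $T_{v^{-1}}^{-1}$ in the $T_w$-basis of the Hecke algebra and verifying the recurrence from the factorization through $T_s^{-1}$. That is the right strategy and it does work.

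One concrete correction to the Hecke computation: you have the multiplication on the wrong side in two places, consistently. From $s\in D_R(v)$ we get $v^{-1}=s\,(vs)^{-1}$ with lengths adding, so $T_{v^{-1}}=T_s\,T_{(vs)^{-1}}$ and hence
\[
T_{v^{-1}}^{-1}=T_{(vs)^{-1}}^{-1}\,T_s^{-1},
\]
not $T_s^{-1}T_{(vs)^{-1}}^{-1}$ as you wrote (the latter would invert $T_{(vs)^{-1}}T_s=T_{s v^{-1}s}$, which is not $T_{v^{-1}}$ in general). Correspondingly, the relation you then need is the \emph{right}-multiplication rule $T_u T_s = T_{us}$ if $s\notin D_R(u)$ and $T_u T_s = (q-1)T_u + q\,T_{us}$ if $s\in D_R(u)$; the rule you displayed, $T_sT_u$, produces $T_{su}$ and is keyed to left descents of $u$, so as written it does not match the case split by $D_R(u)$ in clause (iii). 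With both fixed the two errors cancel and the coefficient comparison yields exactly the recurrence. A second, minor point: the defining expansion of $T_{v^{-1}}^{-1}$ should a priori run over all $u\in W$, with the vanishing for $u\not\leq v$ (condition (i)) deduced afterwards from the recurrence, rather than built into the sum; you gesture at this but it is worth making explicit. The alternative route via Deodhar's or Dyer's closed formula is also viable, though the consistency check across choices of $s$ is then the real content rather than an afterthought.
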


It is natural to wonder whether one can replace $s$ with a transposition $t$ whenever the Generalized lifting property holds. More precisely, suppose that $t$ is a transposition such that
\begin{align} \label{GeneralLiftingShort}
vt \lessdot v \quad
u \lessdot ut \quad
u \leq vt \quad
ut \leq v.
\end{align}
Is it true that 
\begin{equation}\label{eq:Rpoly}
{R}_{u,v}(q)=q {R}_{ut,vt}(q)+(q-1){R}_{u,vt}(q)?
\end{equation}
In general, the answer is no. For example, one can check that $u=1324$, $v=4231$ and $t=(24)$ give a counterexample. 
However, 
when $t$ is an inversion-minimal transposition on $(u,v)$, 
\eqref{eq:Rpoly} does hold.
We'll use the next lemma to prove this.

\begin{lemma} \label{minimalIntermediate}
Let $u,v \in S_n$ and suppose that $(i k)$ is inversion-minimal on $(u,v)$. Assume further that  $v_j>v_{j+1}$ and $u_j>u_{j+1}$ for some $j$ such 
that $i<j<k-1$. Then $(i k)$ is inversion-minimal on $(vs_j,us_j)$.
\end{lemma}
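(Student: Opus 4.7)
The plan is to verify directly both parts of the inversion-minimality condition for the new pair. Set $u' = us_j$ and $v' = vs_j$. The hypothesis $i < j < k-1$ guarantees that both $j$ and $j+1$ lie strictly inside the open interval $(i,k)$, so right-multiplication by $s_j$ (which only swaps entries in positions $j$ and $j+1$) leaves the values at positions $i$ and $k$ untouched. Thus $u'_i = u_i < u_k = u'_k$ and $v'_i = v_i > v_k = v'_k$, so $[i,k]$ itself has the defining inversion property for the pair $(u', v')$.

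For minimality, I would suppose for contradiction that some strict subinterval $[p, q] \subsetneq [i, k]$ satisfies $v'_p > v'_q$ and $u'_p < u'_q$, and derive a contradiction by case analysis on $\{p, q\} \cap \{j, j+1\}$. If this intersection is empty, $u'$ and $v'$ agree with $u$ and $v$ at both endpoints, so $[p, q]$ itself violates the inversion-minimality of $[i,k]$ on $(u,v)$. If the intersection equals all of $\{j, j+1\}$, then $v'_j > v'_{j+1}$ unpacks to $v_{j+1} > v_j$, contradicting the hypothesis $v_j > v_{j+1}$.

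The remaining case, where exactly one of $p, q$ lies in $\{j, j+1\}$, is handled by shifting the offending endpoint by one to recast the inversion condition on $(u', v')$ as one on $(u, v)$ for a nearby interval. The four sub-cases are: for $p = j$ (which forces $q \geq j+2$) use $[j+1, q]$; for $p = j+1$ use $[j, q]$; for $q = j$ (which forces $p \leq j-1$) use $[p, j+1]$; for $q = j+1$ use $[p, j]$. In every sub-case the shifted interval is strictly contained in $[i, k]$ because $i < j$ and $j+1 < k$, and it carries the inversion property on the original pair $(u, v)$, contradicting minimality. The only delicate point is the bookkeeping: verifying that each shifted sub-interval is both non-degenerate and strictly contained in $[i, k]$. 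This is precisely the role of the hypothesis $i < j < k - 1$, which places both $j$ and $j+1$ strictly interior to $(i, k)$ so that shifting by one in either direction stays inside.
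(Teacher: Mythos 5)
Your proof is correct and matches the paper's approach exactly: the paper's entire proof of this lemma is the single sentence ``The result follows directly from the definition,'' and your case analysis on $\{p,q\}\cap\{j,j+1\}$ (with the four shifted subintervals $[j+1,q]$, $[j,q]$, $[p,j+1]$, $[p,j]$, each nondegenerate and strictly inside $[i,k]$ because $i<j<k-1$) is precisely that verification written out. You have also, sensibly, interpreted the conclusion as inversion-minimality for the pair with $us_j$ in the role of $u$ and $vs_j$ in the role of $v$ in the sense of Definition \ref{def:inversion-minimal}, which is what the lemma's applications in the paper actually require.
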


\begin{proof}
The result follows directly from the definition.
\end{proof}

\begin{proposition} \label{Rpolyt}
Let $u,v \in S_n$ with $v \geq u$. Let $t=(i j)$ be inversion-minimal on $(u,v)$. Then
\[
 {R}_{u,v}(q)=q{R}_{ut,vt}(q)+(q-1){R}_{u,vt}(q).
\]
\end{proposition}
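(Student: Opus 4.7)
The plan is to induct on $k := j - i$, the ``width'' of the inversion-minimal transposition $t = (ij)$. The base case $k = 1$ is immediate: $t = s_i$ is a simple reflection, and inversion-minimality forces $v_i > v_{i+1}$ (so $s_i \in D_R(v)$) together with $u_i < u_{i+1}$ (so $s_i \notin D_R(u)$). The claimed identity is then exactly the second branch of the standard recurrence in Theorem~\ref{prop:R}(3).

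For the inductive step ($k > 1$), I would first invoke Lemma~\ref{lem:dot-bijection} at $m = i+1$ to conclude that $u_{i+1} > u_i$ if and only if $v_{i+1} > v_i$; equivalently, $s_i \in D_R(u)$ if and only if $s_i \in D_R(v)$. In either scenario the standard recurrence (applied directly when $s_i$ is a right descent of both $u$ and $v$, or applied to the pair $(us_i, vs_i)$ when $s_i$ is a right descent of neither) yields
$$R_{u,v}(q) = R_{us_i,\, vs_i}(q).$$
Next, I would verify, directly from Definition~\ref{def:inversion-minimal} and again using Lemma~\ref{lem:dot-bijection}, that $t' := ((i+1)\,j)$ is inversion-minimal on $(us_i, vs_i)$. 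Since $j - (i+1) = k-1$, the inductive hypothesis gives
$$R_{us_i,\, vs_i}(q) = q\,R_{us_i t',\, vs_i t'}(q) + (q-1)\,R_{us_i,\, vs_i t'}(q).$$

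The conjugation identity $s_i t' s_i = t$ rewrites the right-hand side as $q\,R_{ut s_i,\, vt s_i}(q) + (q-1)\,R_{us_i,\, vt s_i}(q)$. To close the argument I would show that the pair $(ut, vt)$ exhibits the same matching behavior at position $(i, i+1)$ as $(u, v)$ does: another application of Lemma~\ref{lem:dot-bijection} shows that $s_i \in D_R(ut) \cap D_R(vt)$ in the first sub-case and $s_i \notin D_R(ut) \cup D_R(vt)$ in the second. In either sub-case one more application of Theorem~\ref{prop:R}(3) yields $R_{ut s_i,\, vt s_i}(q) = R_{ut, vt}(q)$ and $R_{us_i,\, vt s_i}(q) = R_{u, vt}(q)$. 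Substituting these identities gives the required recurrence.

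The main obstacle is the bookkeeping around the descent structure at $s_i$: one must track, in each of the two sub-cases, whether $s_i$ is a right descent of each of $u$, $v$, $ut$, $vt$, and verify that the standard recurrence collapses the $s_i$-factor in the correct direction every time. Once this is organized the remaining verifications, namely the inversion-minimality of $t'$ on $(us_i, vs_i)$ and the conjugation $s_i t' s_i = t$, are routine. An alternative route suggested by the hint preceding the proposition would use Lemma~\ref{minimalIntermediate} to multiply by an interior $s_m$ that is a simultaneous right descent of $u$ and $v$, preserving both $t$ and $k$ while decreasing $\ell(u) + \ell(v)$; that approach requires a secondary induction on length together with a separate treatment of the case when no such interior matched descent is available.
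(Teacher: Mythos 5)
Your proof is correct and follows the same inductive skeleton as the paper's: induct on $j-i$, strip off $s_i$, apply the inductive hypothesis to $t'=((i+1)\,j)$ on the pair $(us_i,vs_i)$, and undo the conjugation $s_it's_i=t$. The one genuine difference is that you invoke Lemma \ref{lem:dot-bijection} at the outset to conclude that the descent behaviour of $s_i$ on $u$ and on $v$ always agrees when $j-i>1$. This is correct, and it collapses the paper's three-case analysis to a single case: in particular the paper's Case 3 (the reduction by interior commuting transpositions via Lemma \ref{minimalIntermediate}, which you mention only as an alternative route) is never actually needed, since its hypothesis -- that the descents of $u$ and $v$ at position $i$ disagree -- contradicts Lemma \ref{lem:dot-bijection}. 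Two small points to tie down. First, before applying the inductive hypothesis you should note that $us_i\le vs_i$, which holds by standard lifting because $s_i$ is a right descent of both or of neither of $u\le v$. Second, for the collapse $R_{us_i,\,vts_i}(q)=R_{u,\,vt}(q)$ the relevant comparison is between $D_R(u)$ and $D_R(vt)$, not between $D_R(ut)$ and $D_R(vt)$; the needed equivalence $u_i>u_{i+1}\iff v_j>v_{i+1}$ is again an instance of Lemma \ref{lem:dot-bijection}, so this is only a matter of stating the bookkeeping precisely. With those details filled in, the argument is complete and slightly cleaner than the published one.
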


\begin{proof}
Proceed by induction on $\ell=j-i$. The base case $\ell=1$ follows from Theorem \ref{prop:R}. Assume the inductive hypothesis and consider $\ell>1$. Since $(i j)$ is inversion-minimal on $(u,v)$, we have $v_i>v_j$ and $u_i<u_j$.

\textbf{Case 1}: Suppose that $v_i>v_{i+1}$ and $u_i>u_{i+1}$ or $v_i<v_{i+1}$ and $u_i<u_{i+1}$.

We have ${R}_{u,v}(q)={R}_{us_i,vs_i}(q)$. Let $t'$ be the 
transposition $((i+1)j)$.  Clearly $t'$ 
is inversion-minimal on $(vs_i,us_i)$. By induction,
\[
{R}_{us_i,vs_i}(q)=q{R}_{us_i t',vs_i t'}(q)+(q-1) {R}_{us_i,vs_i t'}(q).
\]
By Lemma \ref{lem:dot-bijection}, we have $v_{i+1}>v_j \iff u_{j+1}>u_j$.
Using this and the fact that $s_i t' s_i=(ij)=t$, we see that
\[
{R}_{us_i t',vs_i t'}(q)=R_{ut s_i, v t s_i}(q) = {R}_{ut,vt}(q).
\]
Similarly, by Lemma \ref{lem:dot-bijection}, we have
$u_{i+1}>u_i \iff v_{i+1}>v_i \iff v_{i+1}>v_j$.  This implies that 
\[
{R}_{us_i,vs_i t'}(q)={R}_{u,vt}(q).
\]
Putting everything together, we have the desired equality
\[
R_{u,v}(q) = R_{us_i, vs_i}(q) = R_{ut, vt}(q)+R_{u,vt}(q).
\]

\textbf{Case 2}: Suppose that $v_{j-1}>v_{j}$ and $u_{j-1}>u_j$ or $v_{j-1}<v_j$ and $u_{j-1}<u_j$.

This case is analogous to Case 1.

\textbf{Case 3}: Suppose that neither of the above two cases holds.

Since $(i j)$ is inversion-minimal on $(u,v)$, we must have $v_i<v_{i+1}$ and $v_{j-1}<v_j$. Since $v_i>v_j$, there exists some $m_1 \in (i,j-1)$ such that $v_{m_1}>v_{m_1+1}$. Using the fact that $(i j)$ is inversion-minimal on 
$(u,v)$, we must have  $u_{m_1}>u_{m_1+1}$. 
By Lemma \ref{minimalIntermediate}, $(i j)$ is inversion-minimal on $(vs_{m_1},us_{m_1})$. If $us_{m_1}$ and $vs_{m_1}$ do not satisfy the conditions of Cases 1 or 2, then we may find $m_2 \in (i,j-1)$ and then $(i j)$ is inversion-minimal on $(vs_{m_1} s_{m_2},us_{m_1} s_{m_2})$. Such a sequence $m_1,m_2,\ldots$ clearly terminates. Assume that it terminates at $k$, so that $(i j)$ is inversion-minimal on $(vs_{m_1} s_{m_2} \cdots s_{m_k},u s_{m_1}s_{m_2} \cdots s_{m_k})$ and the hypotheses of Case 1 or 2 are satisfied for $vs_{m_1} s_{m_2} \cdots s_{m_k}$ and $u s_{m_1}s_{m_2} \cdots s_{m_k}$. Set $\Pi_k:=s_{m_1} s_{m_2} \cdots s_{m_k}$. We then have

\[
{R}_{u\Pi_k,v\Pi_k}(q)={R}_{u \Pi_k t,v \Pi_k t}(q)+q{R}_{u\Pi_k,v\Pi_kt}(q)
\]

To prove Proposition \ref{Rpolyt}, it suffices to 
show that for $1 \leq p \leq k$, if 
\begin{equation}\label{eq:R1}
{R}_{u\Pi_p,v\Pi_p}(q)=q{R}_{u\Pi_p t,v\Pi_p t}(q)+(q-1){R}_{u \Pi_p,v \Pi_p t}(q)
\end{equation}
then
\begin{equation}\label{eq:R2}
{R}_{u \Pi_{p-1},v\Pi_{p-1}}(q)=q{R}_{u\Pi_{p-1}t,v\Pi_{p-1}t}(q)+(q-1){R}_{u\Pi_{p-1},v\Pi_{p-1} t}(q).
\end{equation}
By Proposition \ref{prop:R}, we have that  
\[
{R}_{u\Pi_p,v\Pi_p}(q)={R}_{u\Pi_{p-1},v \Pi_{p-1}}(q).
\]
Note that for any $m$, $ts_m=s_mt$. Therefore $u\Pi_p t=u\Pi_{p-1} t s_m$ and $v \Pi_p t=v \Pi_{p-1} t s_m$. This implies that
\[
{R}_{u \Pi_p t,v \Pi_p t}(q)={R}_{u \Pi_{p-1} t,v \Pi_{p-1} t}(q).
\]
Similarly, we have that 
\[
{R}_{u\Pi_p,v\Pi_p t}(q)={R}_{u \Pi_{p-1},v \Pi_{p-1} t}(q).
\]
This shows that \eqref{eq:R1} implies \eqref{eq:R2}.
\end{proof}

\begin{remark}
The above statement and proof hold mutatis mutandis for the $\tilde{R}$-polynomials, which are a renormalization of the $R$-polynomials.
\end{remark}

\begin{example}
Take $u=21345$, $v=53421$ and $t=(13)$. We have
\[
{R}_{u,v}(q)=q^8-4q^7+7q^6-8q^5+8q^4-8q^3+7q^2-4q+1
\]
\[
{R}_{ut,vt}(q)=q^6-4q^5+7q^4-8q^3+7q^2-4q+1
\]
and
\[
{R}_{u,vt}(q)=q^7-4q^6+7q^5-8q^4+8q^3-7q^2+4q-1.
\]
\end{example}

\begin{definition}
A matching of a graph $G=(V,E)$ is an involution $M:V \rightarrow V$ such that $\{v,M(v)\} \in E$ for all $v \in V$. 
\end{definition}

\begin{definition}
Let $P$ be a graded poset. A matching $M$ of the Hasse diagram of $P$ is a \emph{special matching} if for all $x , y \in P$ such that $x \lessdot y$, we have $M(x) = y$ or $M(x) \leq M(y)$.
\end{definition}

It is known that  special matchings can be used to compute $R$-polynomials:

\begin{theorem} \cite[Theorem 7.8]{MR2222360} Let $(W,S)$ be a Coxeter system, 
let $w \in W$, and let $M$ be a special matching of the Hasse diagram of 
the interval $[e,w]$ in Bruhat order. Then
\[
R_{u,w}(q)=q^c R_{M(u),M(w)}(q)+(q^c-1)R_{u,M(w)}(q)
\]
for all $u \leq w$, where $c=1$ if $M(u) \gtrdot u$ and $c=0$ otherwise.
\end{theorem}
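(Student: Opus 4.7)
The plan is to proceed by induction on $\ell(w)$, treating the two cases $c = 0$ (when $M(u) \lessdot u$) and $c = 1$ (when $M(u) \gtrdot u$) in parallel. The base case $\ell(w) = 1$ is immediate, since then $[e, w]$ is a two-element chain, the matching is forced, and both sides of the identity collapse to $1$ or $q - 1$ as appropriate. For the inductive step, fix a simple reflection $s \in D_R(w)$. Right multiplication by $s$ is itself a special matching $\rho_s$ of $[e, w]$, and for $\rho_s$ the claimed identity is precisely the standard recurrence of Theorem \ref{prop:R}. The goal is therefore to transfer the identity from the canonical matching $\rho_s$ to an arbitrary special matching $M$.

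The next step is to compare $M$ and $\rho_s$, distinguishing two sub-cases based on the value $M(w)$. If $M(w) = ws$, then $M$ restricts to a map on the lower interval $[e, ws]$; using the defining covering property of special matchings one checks that the restriction is again a special matching, and the inductive hypothesis applied to $[e, ws]$ combines with the $\rho_s$-pairing $\{x, xs\}$ of $[e, w]$ to yield the identity after a short calculation. If $M(w) \neq ws$, then both $M(w)$ and $ws$ are coatoms of $w$, and the special matching property applied to the covers $M(w) \lessdot w$ and $ws \lessdot w$ forces $M$ and $\rho_s$ to be compatible on the four-element subconfigurations $\{x, xs, M(x), M(xs)\}$ throughout $[e,w]$; this compatibility supports a direct algebraic comparison of the two recurrences.

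A cleaner alternative is to invoke uniqueness in Theorem \ref{prop:R}: denote the right-hand side of the claimed identity by $P_{u,w}(q)$, and show that $P$ satisfies the three defining conditions of the $R$-polynomial. The boundary conditions are easy; the substantive content is that $P$ satisfies the simple-reflection recurrence, which one verifies by comparing the recursion induced by $M$ with the recursion induced by $\rho_s$ and appealing to induction. The main obstacle, in either approach, will be the case $M(w) \neq ws$: the interaction between the given matching and the canonical one is genuinely nontrivial, and one must use the full strength of the special matching condition (that covers are preserved, not merely that $M$ is an involution on the Hasse graph) to rule out incompatible configurations and to match up the resulting polynomial expressions term-by-term.
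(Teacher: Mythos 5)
First, note that this statement is not proved in the paper at all: it is quoted verbatim from the cited reference \cite{MR2222360} (Brenti--Caselli--Marietti), so there is no in-paper argument to compare yours against. Your proposal must therefore stand on its own, and as written it is an outline rather than a proof: you correctly identify the natural strategy (induction on $\ell(w)$, comparison of $M$ with the multiplication matching $\rho_s$ for $s \in D_R(w)$, or equivalently verification that the right-hand side satisfies the uniqueness characterization of Theorem \ref{prop:R}), but the two steps that carry all the weight are either wrong or explicitly deferred.

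Concretely: (1) In the case $M(w)=ws$ you assert that $M$ ``restricts to a map on the lower interval $[e,ws]$.'' This is false as stated, since $M(ws)=w \notin [e,ws]$. The correct restriction lemma says that a special matching $M$ of $[e,w]$ restricts to a special matching of $[e,v]$ for those $v\leq w$ with $M(v)\lessdot v$; it cannot be applied to $v=M(w)$ itself, and the actual argument in the literature routes around this differently. (2) In the case $M(w)\neq ws$ you claim the special matching condition ``forces $M$ and $\rho_s$ to be compatible on the four-element subconfigurations $\{x,xs,M(x),M(xs)\}$,'' i.e.\ that $M$ and $\rho_s$ essentially commute. This is precisely the hard content of the theorem and it is not true orbit-by-orbit in the naive sense: the orbits of the group generated by $M$ and $\rho_s$ are dihedral of various even sizes, not necessarily $4$, and establishing the structural results needed to compare the two recurrences (the ``calculus'' of pairs of special matchings, restriction lemmas, and the classification of their joint orbits) occupies most of the cited forty-page paper. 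You acknowledge this yourself (``the main obstacle \dots is genuinely nontrivial''), which is honest but confirms that the essential idea is missing. As it stands the proposal is a reasonable plan of attack, not a proof.
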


One might guess that the Generalized lifting property is compatible with 
the notion of special matching. More precisely, one
might speculate 
that if $[u,v] \subset S_n$ and $t$ is inversion-minimal on $(u,v)$ then there is a special matching $M$ of $[u,v]$ such that $M(u)=ut$ and $M(v)=vt$. 
The following gives an example of this.

\begin{example}
Take $u=143265$ and $v=254163$. Then $t=(36)$ is inversion-minimal on $(u,v)$. 
Suppose that a special matching $M$ of $[u,v]$ (see Figure \ref{interval143265to254163}) satisfies $M(v)=vt$ and $M(u)=ut$.  Then we must have
$M(154263)=153264$ and $M(243165)=245163$. Observe that the result is a multiplication matching.
Similarly, if we take $t=(14)$, another inversion-minimal transposition
on $(u,v)$, we again obtain a multiplication matching.

\begin{center}
\begin{tikzpicture}[scale=.5] 
\node (u) at (0,-6) {\small{$143265$}};

\node (a) at (-4,-2) {\small{$153264$}};

\node (b) at (0,-2) {\small{$243165$}};

\node (c) at (4,-2) {\small{$145263$}};

\node (d) at (-4,2) {\small{$253164$}};

\node (e) at (-0,2) {\small{$154263$}};

\node (f) at (4,2) {\small{$245163$}};

\node (v) at (0,6) {\small{$254163$}};

\draw  (u) -- (a);
\node (x1) at (-3,-4) {\tiny{$(26)$}};
\draw  (u) -- (b);
\node (x2) at (-0.6,-4) {\tiny{$(14)$}};
\draw  [thick] (u) -- (c);
\node (x3) at (3,-4) {\tiny{$(36)$}};

\draw  [thick] (v) -- (d);
\node (x4) at (-3,4) {\tiny{$(36)$}};
\draw  (v) -- (e);
\node (x4) at (-0.6,4) {\tiny{$(14)$}};
\draw  (v) -- (f);
\node (x5) at (3,4) {\tiny{$(23)$}};

\draw  (a) -- (d);
\draw  [thick] (a) -- (e);

\draw  (b) -- (d);

\draw  [thick] (b) -- (f);
\draw  (c) -- (e);
\draw  (c) -- (f);

\end{tikzpicture}
\captionof{figure}{}
\label{interval143265to254163}
\end{center}

\end{example}

The following example shows that it is not the case that an inversion-minimal transposition must be compatible with a special matching. This makes Proposition \ref{Rpolyt} all the more surprising, and shows that it cannot be deduced using special matchings.

\begin{example}
Take $u=1324$ and $v=4312$. Then $t=(24)$ is inversion-minimal on $(u,v)$. Suppose that a special matching $M$ of $[u,v]$ (Figure \ref{interval1323to4312}) satisfies $M(v)=vt$, i.e., sends $4312$ to $4213$. Then
\[
M(4132)=4123, \hspace{.1cm} M(1432)=1423,\hspace{.1cm} M(1342)=1324, \hspace{.1cm} M(3142)=3124, \hspace{.1cm} M(3412)=3214, \hspace{.1cm} M(2413)=2314.
\]
But $M(2314)=2413 \not \geq 1342= M(1324)$, which is a contradiction.

\begin{center}
\begin{tikzpicture}[scale=.5] 
\node (a) at (0,-6) {\small{$1324$}};

\node (a) at (-6,-2) {\small{$1423$}};

\node (b) at (-2,-2) {\small{$1342$}};

\node (c) at (2,-2) {\small{$3124$}};

\node (d) at (6,-2) {\small{$2314$}};

\node (e) at (-8,2) {\small{$1432$}};

\node (f) at (-4,2) {\small{$4123$}};

\node (g) at (0,2) {\small{$2413$}};

\node (h) at (4,2) {\small{$3142$}};

\node (i) at (8,2) {\small{$3214$}};

\node (j) at (-4,6) {\small{$4132$}};

\node (k) at (0,6) {\small{$4213$}};

\node (l) at (4,6) {\small{$3412$}};

\node (v) at (0,10) {\small{$4312$}};

\node (x1) at (0.8,7.7) {$(24)$};
\node (x2) at (-3.5,-4.5) {$(24)$};

\draw  (u) -- (a);
\draw  (u) -- (b);
\draw  (u) -- (c);
\draw  (u) -- (d);
\draw  (a) -- (e);
\draw  (a) -- (f);
\draw  (a) -- (g);
\draw  (b) -- (e);
\draw  (b) -- (h);
\draw  (c) -- (f);
\draw  (c) -- (h);
\draw  (c) -- (i);
\draw  (d) -- (g);
\draw  (d) -- (i);
\draw  (e) -- (j);
\draw  (e) -- (l);
\draw  (f) -- (j);
\draw  (f) -- (k);
\draw  (g) -- (k);
\draw  (g) -- (l);
\draw  (h) -- (j);
\draw  (h) -- (l);
\draw  (i) -- (k);
\draw  (i) -- (l);
\draw  (j) -- (v);
\draw  (k) -- (v);
\draw  (l) -- (v);

\end{tikzpicture}
\captionof{figure}{}
\label{interval1323to4312}
\end{center}

\end{example}

\section{Background on partial flag varieties $G/P$}\label{sec:flag}

\subsection{Preliminaries}\label{sec:preliminaries}
Let $G$ be a semisimple, simply connected linear algebraic group over $\C$ 
split over
$\R$, with split torus $T$.
We identify $G$ (and related spaces)
with their real points and consider them with their real topology.
Let $\mathfrak{t}$ denote the Lie algebra of $T$,
$\mathfrak{t}_{\R}$ denote its real part, and let 
$\mathfrak{t}_{\R}^*$ denote the dual of the torus.
Let $\Phi \subset \mathfrak{t}_{\R}^*$ denote the set of roots,
and choose a system of positive roots $\Phi^+$.  We denote by $B=B^+$ the
Borel subgroup corresponding to $\Phi^+$ and by $U^+$ its unipotent
radical.  We also have the opposite Borel subgroup $B^-$ such that
$B^+ \cap B^- = T$, and its unipotent radical $U^-$. For background
on algebraic groups, see e.g. \cite{Humphreys}.

Let $\Pi = \{\alpha_i \ \vline \ i \in I \} \subset \Phi^+$
denote the simple roots, 
and let $\{\omega_i \ \vline \ i \in I\}$ denote the fundamental weights.
For each $\alpha_i \in \Pi$ there is an associated homomorphism
$\phi_i : \Sl_2 \to G$.
Consider the $1$-parameter subgroups in $G$ (landing in $U^+, U^-$,
and $T$, respectively) defined by
\begin{equation*}
x_i(m) = \phi_i \left(
                   \begin{array}{cc}
                     1 & m \\ 0 & 1\\
                   \end{array} \right) ,\
y_i(m) = \phi_i \left(
                   \begin{array}{cc}
                     1 & 0 \\ m & 1\\
                   \end{array} \right) ,\
\alpha_i^{\vee}(\ell) = \phi_i \left(
                   \begin{array}{cc}
                     \ell & 0 \\ 0 & \ell^{-1}\\
                   \end{array} \right) ,
\end{equation*}
where $m \in \R, \ell \in \R^*, i \in I$.
The datum $(T, B^+, B^-, x_i, y_i; i \in I)$ for $G$ is
called a {\it pinning}.  The standard pinning for
$\Sl_n$ consists of the diagonal, upper-triangular, and lower-triangular
matrices, along with the simple root subgroups
$x_i (m) = I_n + mE_{i,i+1}$ and
$y_i (m) = I_n + mE_{i+1,i}$ where
$I_n$ is the identity matrix and $E_{i,j}$ has a $1$ in
position $(i,j)$ and zeroes elsewhere.

The Weyl group $W = N_G(T) / T$ acts on $\mathfrak{t}^*_{\R}$,
permuting  the roots
$\Phi$.
We set $s_i:= \dot{s_i} T$ where
$\dot{s_i} :=
                 \phi_i \left(
                   \begin{array}{cc}
                     0 & -1 \\ 1 & 0\\
                   \end{array} \right).$
Then any $w \in W$ can be expressed as a product
$w = s_{i_1} s_{i_2} \dots s_{i_m}$ with $\ell(w)$ factors.
This gives $W$ the structure of a Coxeter group; we will
assume some
basic knowledge
of Coxeter systems and Bruhat order as in
\cite{BB}.
We set
$\dot{w} = \dot{s}_{i_1} \dot{s}_{i_2} \dots \dot{s}_{i_m}$.
It is known that $\dot w$ is independent of the reduced expression
chosen.

The \emph{(complete) flag variety} is the homogeneous space
$G/B^+=G/B$. Note that we will frequently use $B$ to denote $B^+$.

We have two opposite Bruhat decompositions of $G/B$:
\begin{equation*}
G/B=\bigsqcup_{v\in W} B \dot v B/B=\bigsqcup_{u\in W}
B^- \dot u B/B.
\end{equation*}
We define the intersection of opposite Bruhat cells
\begin{equation*}
\mathcal R_{u,v}:=(B\dot v B/B)\cap (B^-\dot u B/B),
\end{equation*}
which is nonempty
precisely when $u \leq v$ in Bruhat order, and in that 
case is irreducible of dimension $\ell(v)-\ell(u)$, see \cite{KL}.
The strata $\mathcal R_{u,v}$ are often called \emph{Richardson varieties}.

Let $J \subset I$.  The parabolic subgroup
$W_J \subset W$
corresponds to a parabolic subgroup $P_J$ in $G$
containing $B$.  Namely,
$P_J = \sqcup_{w \in W_J} B \dot{w} B$.
There is a corresponding
\emph{generalized partial flag variety}, which is
the homogeneous space $G/P_J$.

There is a natural projection
from the complete flag variety to
a partial flag variety which takes the form
$\pi = \pi^J: G/B \to G/P_J$, where
$\pi(gB) = gP_J$.

\subsection{Generalized Pl\"ucker coordinates and the 
Gelfand-Serganova stratification of $G/P$}\label{sec:Plucker}

Let $P=P_J$ be a parabolic subgroup of $G$.
In \cite{GelfSerg}, Gelfand and Serganova defined a new
stratification of $G/P$.  In the case that 
$G = \SL_n$ and $P$ is a maximal parabolic subgroup,
their stratification recovers the well-known matroid stratification
of the Grassmannian.

Let $C$ be a Borel subgroup of $G$ containing the maximal torus $T$.
The Schubert cells on $G/P$ associated with $C$ are the orbits of $C$
in $G/P$.  The Schubert cells are in bijection with 
$W^J$, and can be written as $C \dot w P$ where $w\in W^J$.

\begin{definition}
The \emph{Gelfand-Serganova stratification} 
(or \emph{thin cell stratification}) of $G/P$ is the simultaneous
refinement of all the Schubert cell decompositions described above.
The (nonempty) strata in this decomposition are called \emph{Gelfand-Serganova
strata} or \emph{thin cells}.
In other words,  we choose for each Borel 
subgroup $C$ a Schubert cell associated with $C$.  The intersection
of all chosen cells, if it is nonempy, is called a Gelfand-Serganova 
stratum or a thin cell.
\end{definition}

There is another way to think about the Gelfand-Serganova stratification,
using generalized Pl\"ucker coordinates.

Let $J \subset I$ index
the simple roots corresponding to the parabolic subgroup
$P=P_J$, and let ${\rho_J} = \sum_{j \in J} \omega_j$. 
Let $V_{\rho_J}$ be the representation of $G$ with 
highest weight $\rho_j$, and choose a highest weight vector $\eta_J$.
Recall that we have an embedding of the flag variety 
$$G/P \hookrightarrow \mathbb{P} (V_{\rho_J})$$
given by 
$$gP \mapsto g \cdot \eta_J.$$

Let $\mathcal{A}$ be the set of weights of $V_{\rho_J}$ taken
with multiplicitiy.  We choose a weight basis 
$\{e_{\alpha} \vert \alpha \in \mathcal{A}\}$ in $V_{\rho_J}$.
Then any point $X \in G/P$ determines, uniquely up to scalar $d$,
a collection of numbers $p^{\alpha}(X)$,
where 
\begin{equation}\label{gen:Plucker}
X = d \sum_{\alpha \in \mathcal{A}} p^{\alpha}(X) e_{\alpha}.
\end{equation}

Let $W(\rho_J) \subset \mathfrak{t}^*_{\R}$ 
be the orbit of $\rho_J$ under $W$.
Then $W(\rho_J)$ are the \emph{extremal weight vectors},
that is, they lie at the vertices of some convex polytope $\Delta_P$,
and the other elements of $\mathcal{A}$ lie inside of $\Delta_P$,
see \cite{A:82, GelfSerg}.
The extremal weight vectors
can be identified with the set $W/W_J$ of cosets via the map
$w\cdot \rho_J \mapsto wW_J$.

\begin{definition}
Let $X\in G/P$.
The numbers $\{p^{\alpha}(X) \ \vert \ \alpha\in W(\rho_J) \}$,
defined up to scalar, are called the \emph{generalized Pl\"ucker
coordinates} of $X$.  And the \emph{list} of $X$ is the 
subset $$L_X \subset W(\rho_J) = 
\{\alpha \in W(\rho_J) \ \vert \ p^{\alpha(X)} \neq 0\}.$$
\end{definition}

\begin{example}
Let $G = \SL_n$ and $P = \SL_k \times \SL_{n-k}$; note that 
$G/P \cong Gr_{k,n}$.  
Let $V$ denote the $n$-dimensional vector space with standard
basis $e_1,\dots, e_n$. The vector $e_1 \wedge e_2 \wedge \dots \wedge e_k$
is a highest weight vector for the representation $\bigwedge^k V$
of $G$, and we have an embedding
$$G/P \hookrightarrow \mathbb{P} (\bigwedge^k V)$$
given by 
$$gP \mapsto g(e_1 \wedge e_2 \wedge \dots \wedge e_k).$$
Expanding the right-hand side in the natural basis, we get 
$$g(e_1 \wedge e_2 \wedge \dots \wedge e_k) = 
\sum_{I \in {[n] \choose k}} \Delta_I(A) e_{i_1} \wedge \dots \wedge 
e_{i_k},$$
where $I = \{i_1 < \dots < i_k\}$, and $A = \pi_k(g) \in Gr_{k,n}$ is the span
of the leftmost $k$ columns of $A$.
This shows that the generalized Pl\"ucker coordinates 
agree with the Pl\"ucker coordinates from Section \ref{sec:background}
in the case of the Grassmannian.
\end{example}

\begin{theorem}\cite[Theorem 1]{GelfSerg}
Two points $X,Y \in G/P$ lie in the same Gelfand-Serganova stratum if and only if 
they have the same list.
\end{theorem}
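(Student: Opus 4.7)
The plan is to reformulate the Gelfand--Serganova stratum of $X \in G/P$ as the data recording, for each Borel $C \supset T$, which Schubert cell $C\dot{w}P/P$ contains $X$, and then to read off this data directly from $L_X$. For each such $C$, the bijection $W^J \to W(\rho_J)$, $w \mapsto w\rho_J$, transports the Bruhat order on $W^J$ to a partial order $\leq_C$ on the extremal weights; equivalently, $\leq_C$ ranks the vertices of $\Delta_P$ as seen by any generic linear functional lying in the interior of the Weyl chamber of $C$. The heart of the argument will be the following key lemma: $X \in C\dot{w}P/P$ if and only if $w\rho_J$ is the $\leq_C$-maximum of $L_X$.

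I would first verify the lemma for the standard Borel $C = B^+$. Writing $X = tu\dot{w}P$ with $t \in T$ and $u \in U^+$ and using the embedding $G/P \hookrightarrow \mathbb{P}(V_{\rho_J})$, the image of $X$ equals $tu \cdot v_{w\rho_J}$ up to scalar. Each one-parameter subgroup $x_i(m) = \exp(m e_i)$ strictly raises weights by positive integer multiples of $\alpha_i$, so $u \cdot v_{w\rho_J}$ decomposes as $v_{w\rho_J}$ plus vectors whose weights strictly exceed $w\rho_J$ in the dominance order. Restricting to $W(\rho_J)$ and using the standard equivalence ``$u \leq w$ in Bruhat on $W^J$ if and only if $u\rho_J \geq w\rho_J$ in dominance,'' this says exactly that each extremal weight contributing to $L_X$ satisfies $\alpha \leq_{B^+} w\rho_J$, with $w\rho_J$ itself contributing. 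Hence $w\rho_J$ is the $\leq_{B^+}$-maximum of $L_X$, and the converse follows from the fact that the Schubert cells partition $G/P$. A general Borel $C = \dot{v}B^+\dot{v}^{-1}$ reduces to this case by applying the biholomorphism of $G/P$ induced by $\dot{v}^{-1}$, which sends $C\dot{w}P/P$ to $B^+ \dot{v^{-1}w}P/P$ and permutes generalized Pl\"ucker coordinates via $L_{\dot{v}^{-1}X} = v^{-1}L_X$.

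Granting the lemma, both directions of the theorem follow quickly. If $L_X = L_Y$, then for every Borel $C$ the $\leq_C$-maximum of $L_X$ coincides with that of $L_Y$, so $X$ and $Y$ lie in the same Schubert cell for every $C$ and hence in the same Gelfand--Serganova stratum. Conversely, if $X$ and $Y$ share a stratum, they share the $\leq_C$-maximum of their lists for every $C$. Since $W(\rho_J)$ is exactly the vertex set of $\Delta_P$, every element of $L_X \subseteq W(\rho_J)$ is automatically a vertex of $\conv(L_X)$ and therefore the unique strict maximum of some generic linear functional on $\mathfrak{t}^*_{\R}$; taking that functional inside a Weyl chamber exhibits a Borel $C$ for which this element is the $\leq_C$-maximum of $L_X$. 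By hypothesis it is then also the $\leq_C$-maximum of $L_Y$, and in particular lies in $L_Y$. Symmetry then gives $L_X = L_Y$. The main obstacle is the key lemma, and specifically the claim that no extremal weight strictly above $w\rho_J$ in $\leq_{B^+}$ can appear with nonzero coefficient in $u \cdot v_{w\rho_J}$; the argument depends crucially on the fact that the elements of $W(\rho_J)$ are \emph{vertices} of the weight polytope $\Delta_P$ rather than arbitrary weights of $V_{\rho_J}$, which is precisely what makes the ``maximum in a Weyl chamber direction'' well-defined.
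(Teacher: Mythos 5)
The paper does not prove this statement; it is quoted directly from Gelfand--Serganova with a citation, so there is no internal proof to compare against. Your argument is essentially the original one and is correct in outline: the stratum is by definition the record of which $C$-Schubert cell contains $X$ for each Borel $C\supseteq T$, and your key lemma translates that record into the list and back. Two points need tightening. First, the identification of $\leq_C$ with ``the ranking by a generic functional in the Weyl chamber of $C$'' is not literally right, since the transported Bruhat order is only a partial order; what you actually use is that a generic $\lambda$ in the appropriate chamber (for $C=B^+$ this is the \emph{antidominant} chamber, because $z\le w$ in $W^J$ corresponds to $z\rho_J-w\rho_J$ being a nonnegative combination of simple roots) is order-preserving for $\leq_C$. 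The converse direction of your last paragraph should then run: $X$ lies in \emph{some} cell $C\dot w P$, so by the lemma $w\rho_J$ is the $\leq_C$-maximum of $L_X$, hence also the $\lambda$-maximum, hence equal to the chosen vertex $\alpha$; the same $w$ serves for $Y$, so $\alpha\in L_Y$. Second, the step you flag as the main obstacle is exactly the standard comparison between the Bruhat order on $W/W_J$ and the root-lattice order on the orbit $W\rho_J$: for $z,w\in W^J$, $z\rho_J-w\rho_J\in\sum_i\mathbb{Z}_{\geq 0}\,\alpha_i$ implies $z\le w$. This is a genuine theorem that does not follow just from the elements of $W(\rho_J)$ being vertices of $\Delta_P$, and it should be cited; alternatively you can avoid it entirely by replacing $\leq_C$ throughout with the root-lattice order itself, since the computation with $u\in U^+$ shows directly that $L_X\subseteq w\rho_J+\sum_i\mathbb{Z}_{\geq 0}\,\alpha_i$ with $w\rho_J\in L_X$, and an antidominant generic $\lambda$ already singles out $w\rho_J$ from that cone. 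With these points made precise the proof is complete.
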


\subsection{Total positivity}

We start by reviewing the totally nonnegative part 
$(G/P)_{\geq 0}$ of $G/P$, and Rietsch's cell decomposition of it.
We then relate this cell decomposition to 
the Gelfand-Serganova stratification.

\begin{definition} \cite{Lusztig3}
The totally non-negative
part $U_{\geq 0}^-$ of $U^-$ is defined to be the semigroup in
$U^-$ generated by the $y_i(t)$ for $t \in \R_{\geq 0}$.

The
totally non-negative part 
$(G/B)_{\geq 0}$ of $G/B$ is defined by
\begin{equation*}
(G/B)_{\geq 0} := \overline{ \{y B \ \vline \ y \in U_{\geq 0}^- \} },
\end{equation*}
where the closure is taken inside $G/B$ in its real topology.

The totally non-negative
part 
$(G/P_J)_{\geq 0}$ of $G/P_J$
 is defined to be
$\pi^J ((G/B)_{\geq 0})$.
\end{definition}

Lusztig \cite{Lusztig3, Lusztig2} introduced natural 
decompositions of $(G/B)_{\geq 0}$ and
$(G/P)_{\geq 0}$.

\begin{definition}  \cite{Lusztig3}
For $u,v \in W$ with $u \leq v$, let
\begin{equation*}
\mathcal R_{u,v ; >0} := \mathcal R_{u,v} \cap (G/B)_{\geq 0}.
\end{equation*}
\end{definition}

We write $W^J$ 
(respectively $W^J_{max}$) 
for the set of minimal
(respectively maximal) 
length coset representatives of $W/W_J$.

\begin{definition} \cite{Lusztig2} \label{index}
For $u\in W$ and $v\in W^J$ with $u \leq v$,
define $\mathcal P_{u,v; >0}^J := \pi^J(\mathcal R_{u,v; >0}).$
Here the projection $\pi^J$ is an isomorphism from
$\mathcal R_{u,v; >0}$ to 
$\mathcal P_{u,v; >0}^J$.
\end{definition}

Lusztig conjectured and Rietsch proved \cite{Rietsch1, RietschThesis} 
that $\mathcal R_{u,v}^{>0}$  (and hence $\mathcal P_{u,v;>0}^J$)
is a semi-algebraic cell of dimension
$\ell(v)-\ell(u)$.  
Subsequently Marsh-Rietsch \cite{MR} provided an explicit parameterization of each 
cell.  To state their result, we first review the notion of 
positive distinguished subexpression, as in 
\cite{Deodhar} and \cite{MR}.

Let $\v:= s_{i_1}\dots s_{i_m}$ be a reduced expression for $v\in W$.
A  {\it subexpression} $\u$ of $\v$
is a word obtained from the reduced expression $\v$ by replacing some of
the factors with $1$. For example, consider a reduced expression in 
the symmetric group $S_4$, say $s_3
s_2 s_1 s_3 s_2 s_3$.  Then $1\, s_2\, 1\, 1\, s_2\, s_3$ is a
subexpression of $s_3 s_2 s_1 s_3 s_2 s_3$.
Given a subexpression $\u$,
we set $u_{(k)}$ to be the product of the leftmost $k$
factors of $\u$, if $k \geq 1$, and $u_{(0)}=1$.

\begin{definition}\label{d:Js}\cite{Deodhar, MR}
Given a subexpression $\u$ of  $\v=
s_{i_1} s_{i_2} \dots s_{i_m}$, we define
\begin{align*}
J^{\circ}_\u &:=\{k\in\{1,\dotsc,m\}\ |\  u_{(k-1)}<u_{(k)}\},\\
J^{+}_\u &:=\{k\in\{1,\dotsc,m\}\ |\  u_{(k-1)}=u_{(k)}\},\\
J^{\bullet}_\u &:=\{k\in\{1,\dotsc,m\}\ |\  u_{(k-1)}>u_{(k)}\}.
\end{align*}

The subexpression  $\u$
is called {\it
non-decreasing} if $u_{(j-1)}\le u_{(j)}$ for all $j=1,\dotsc, m$,
e.g.\ if $J^{\bullet}_\u=\emptyset$.
It is called {\it distinguished}
if we have
$u_{(j)}\le u_{(j-1)}\hspace{2pt} s_{i_j}$ for all
$j\in\{1,\dotsc,m\}.$
In other words, if right multiplication by $s_{i_j}$ decreases the
length of $u_{(j-1)}$, then in a distinguished subexpression we
must have
$u_{(j)}=u_{(j-1)}s_{i_j}$.
Finally, 
 $\u$ is called a {\it positive distinguished subexpression}
(or a PDS for short) if
$u_{(j-1)}< u_{(j-1)}s_{i_j}$ for all
$j\in\{1,\dotsc,m\}$.
In other words, it is distinguished and non-decreasing.
\end{definition}

\begin{lemma}\label{l:positive}\cite{MR}
Given $u\le v$
and a reduced expression $\v$ for $v$,
there is a unique PDS $\u_+$ for $u$ contained in $\v$.
\end{lemma}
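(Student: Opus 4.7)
The plan is to prove existence and uniqueness simultaneously by induction on $m=\ell(v)$, the length of the fixed reduced expression $\v=s_{i_1}\cdots s_{i_m}$. The base case $m=0$ is immediate: then $v=e$, so $u=e$, and the empty subexpression is the unique PDS.

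For the inductive step, set $v'=vs_{i_m}$, which has reduced expression $\v'=s_{i_1}\cdots s_{i_{m-1}}$ of length $m-1$, and note $s_{i_m}\in D_R(v)$. The key observation is that the $m$-th factor of any PDS $\u$ inside $\v$ is \emph{forced} by the defining conditions. Indeed, any PDS with $u_{(m)}=u$ must satisfy the non-decreasing condition $u_{(m-1)}\le u_{(m-1)}s_{i_m}$ at step $m$, together with the distinguished condition $u_{(m)}\le u_{(m-1)}s_{i_m}$. A short case analysis on whether $s_{i_m}\in D_R(u)$ then pins down $u_{(m-1)}$: if $s_{i_m}\in D_R(u)$, i.e.\ $us_{i_m}<u$, then skipping the $m$-th factor would force $u_{(m-1)}=u$ and hence $u<us_{i_m}$ by the non-decreasing requirement, contradicting $us_{i_m}<u$; so the factor must be taken, giving $u_{(m-1)}=us_{i_m}$. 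Conversely, if $s_{i_m}\notin D_R(u)$, taking the $m$-th factor would force $u_{(m-1)}=us_{i_m}>u=u_{(m)}$, violating non-decreasing; so the factor must be skipped, giving $u_{(m-1)}=u$.

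In either case, the classical lifting property (Proposition \ref{lifting1}) reduces the problem to $\v'$: in the first case it gives $us_{i_m}\le vs_{i_m}=v'$, and in the second case, since $s_{i_m}\in D_R(v)\setminus D_R(u)$, it gives $u\le vs_{i_m}=v'$. By the inductive hypothesis, a unique PDS for the truncated element ($us_{i_m}$ or $u$, respectively) exists inside $\v'$, and appending the uniquely determined $m$-th factor yields the required PDS $\u_+$ for $u$ inside $\v$. A final sanity check verifies that the resulting subexpression really is a PDS: the non-decreasing and distinguished conditions at steps $j<m$ are inherited from the PDS inside $\v'$, and the conditions at the single new step $m$ hold by construction.

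The only step requiring any care is the case-analysis that forces the last factor; everything else is a direct application of the lifting property and the inductive hypothesis, so I do not anticipate a substantive obstacle. It is perhaps worth noting that the entire argument is local at the last letter of $\v$, which is what makes ``positive distinguished'' a robust notion: it is determined letter-by-letter, in contrast to being a global property of the subexpression.
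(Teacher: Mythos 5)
Your argument is correct, and it is essentially the standard proof of this lemma from the cited reference \cite{MR} (the paper itself gives no proof, only the citation): one shows the last factor is forced by whether $s_{i_m}\in D_R(u)$, reduces to $\v'=s_{i_1}\cdots s_{i_{m-1}}$ via the lifting property, and inducts. The only quibbles are terminological and cosmetic: the condition $u_{(m-1)}<u_{(m-1)}s_{i_m}$ you invoke when ruling out skipping is the defining PDS (positivity) condition rather than the ``non-decreasing'' condition $u_{(m-1)}\le u_{(m)}$, and in the case $s_{i_m}\in D_R(u)\cap D_R(v)$ the conclusion $us_{i_m}\le vs_{i_m}$ is obtained by applying Proposition \ref{lifting1} to the pair $us_{i_m}<v$ rather than being its literal statement.
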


\begin{theorem}\label{t:parameterization}\cite[Proposition 5.2, Theorem 11.3]{MR}
Choose a reduced expression $\v=s_{i_1} \dots s_{i_m}$ for $v$ with $\ell(v)=m$.
To $u \leq v$ we associate the unique 
PDS 
$\u_+$ for $u$ in $\v$.  Then $J^{\bullet}_{\u^+} = \emptyset$.
We define 
\begin{equation}\label{eq:G}
G_{\u_+,\v}^{>0}:=\left\{g= g_1 g_2\cdots g_m \left
|\begin{array}{ll}
 g_\ell= y_{i_\ell}(p_\ell)& \text{ if $\ell\in J^{+}_\u$,}\\
 g_\ell=\dot s_{i_\ell}& \text{ if $\ell\in J^{\circ}_\u$,}
 \end{array}\right. \right\},
\end{equation}
where each $p_\ell$ ranges over $\R_{>0}$.
Then $G_{\u_+,\v}^{>0} \cong \R_{>0}^{\ell(v)-\ell(u)}$,
and the map $g\mapsto g  B$ defines an isomorphism 
\begin{align*}
G_{\u_+,\v}^{>0}&~\overset\sim\To ~\mathcal R_{u,v}^{>0}.
\end{align*}
\end{theorem}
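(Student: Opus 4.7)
The plan is to handle the three claims of the theorem in order: (i) $J^\bullet_{\u_+} = \emptyset$; (ii) the parameter space $G_{\u_+,\v}^{>0}$ is homeomorphic to $\R_{>0}^{\ell(v)-\ell(u)}$; and (iii) the map $g \mapsto gB$ is an isomorphism onto $\mathcal R_{u,v}^{>0}$. Parts (i) and (ii) are essentially bookkeeping and can be dispatched quickly, with (iii) being the substance.

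For (i), the emptiness of $J^\bullet_{\u_+}$ is immediate from Definition \ref{d:Js}: by definition, a PDS is non-decreasing, so $u_{(k-1)} \leq u_{(k)}$ for all $k$, which is precisely the statement that no $k$ lies in $J^\bullet$. For (ii), starting from $u_{(0)} = e$ and ending at $u_{(m)} = u$, only indices in $J^\circ$ increase the length, each by exactly one. Hence $|J^\circ_{\u_+}| = \ell(u)$ and $|J^+_{\u_+}| = m - \ell(u) = \ell(v) - \ell(u)$, so that $G_{\u_+,\v}^{>0}$ is explicitly a direct product of $\ell(v)-\ell(u)$ copies of $\R_{>0}$ indexed by $J^+_{\u_+}$, via the coordinates $p_\ell$.

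For (iii), my plan is to proceed by induction on $m = \ell(v)$. The base case $m = 0$ forces $u = v = e$ and both sides reduce to a singleton. For the inductive step, I would separate the last factor $g_m$ and split into cases according to whether $m \in J^\circ_{\u_+}$ or $m \in J^+_{\u_+}$. Let $v' = v s_{i_m}$, so that $s_{i_1}\cdots s_{i_{m-1}}$ is a reduced expression for $v'$. If $m \in J^\circ$, then $g_m = \dot s_{i_m}$ and $u_{(m-1)} = u s_{i_m}$ with $u s_{i_m} < u$; the truncation of $\u_+$ is itself a PDS for $u s_{i_m}$ in $s_{i_1}\cdots s_{i_{m-1}}$, and one sees that right-multiplication by $\dot s_{i_m}$ identifies $\mathcal R_{u s_{i_m}, v'}^{>0}$ with $\mathcal R_{u,v}^{>0}$. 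If $m \in J^+$, then $g_m = y_{i_m}(p_m)$ and $u_{(m-1)} = u$; here the truncated PDS represents $u$ inside the shorter word, and the key computational ingredient is the $\SL_2$ identity $y_i(p) = x_i(1/p)\, \alpha_i^\vee(p)\, \dot s_i\, x_i(1/p)$ pushed forward via $\phi_i$. This identity lets me rewrite $g' \cdot y_{i_m}(p_m)$ as something that still sits in $B\dot v B/B$, still lies in $B^-\dot u B/B$ (using that $u$ did not change), and still belongs to $(G/B)_{\geq 0}$ because $y_{i_m}(p_m) \in U^-_{\geq 0}$.

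The main obstacle, as expected, is surjectivity: showing that every point of $\mathcal R_{u,v}^{>0}$ actually arises from some $g \in G_{\u_+,\v}^{>0}$. Injectivity I would handle via the uniqueness of the Deodhar-type factorization; explicitly, the $p_\ell$ can be recovered as ratios of generalized minors read off along the word $\v$, and this uniqueness also gives continuity of the inverse. For surjectivity, my plan is to use a dimension-counting argument together with Rietsch's theorem (cited in the text) that $\mathcal R_{u,v}^{>0}$ is a semi-algebraic cell of dimension $\ell(v)-\ell(u)$: my map is a continuous injection from the cell $\R_{>0}^{\ell(v)-\ell(u)}$ into $\mathcal R_{u,v}^{>0}$, so by invariance of domain its image is open in $\mathcal R_{u,v}^{>0}$; together with the inductive construction above and connectedness, this will force the image to exhaust $\mathcal R_{u,v}^{>0}$.
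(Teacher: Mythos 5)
First, note that the paper does not prove this statement; it is quoted verbatim from Marsh--Rietsch \cite{MR}, so there is no in-paper argument to compare against. Judged on its own terms, your proposal gets parts (i) and (ii) right (they are indeed immediate from Definition \ref{d:Js} and the count $|J^+_{\u_+}|=\ell(v)-\ell(u)$), and the inductive peeling-off of the last factor is the right skeleton for (iii). But there are two genuine gaps. The first is in the case $m\in J^+_{\u_+}$: you assert that $g'\,y_{i_m}(p_m)B$ "still lies in $B^-\dot u B/B$ (using that $u$ did not change)." Right multiplication by an element of $U^-$ does \emph{not} preserve $B^-$-strata: already in $\SL_2$, a point of $B^-B$ of the form $x_1(a)B$ with $1+ap=0$ is carried by $y_1(p)$ into $B^-\dot s_1 B$. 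The fact that the specific elements of $G^{>0}_{\u_+,\v}$ stay in $B^-\dot u B$ is exactly the content of the totally positive refinement of Deodhar's lemma and depends crucially on the positivity of all the parameters (and on the distinguished/positive conditions on $\u_+$); it cannot be waved through. A similar unproved assertion occurs in the $m\in J^\circ$ case for the $B^-$-stratum and for membership in $(G/B)_{\geq 0}$ (note $\dot s_{i_m}\notin U^-_{\geq 0}$, so nonnegativity of $g'\dot s_{i_m}B$ is not free either). Also, the $\SL_2$ identity should read $y_i(p)=x_i(p^{-1})\,\alpha_i^\vee(p^{-1})\,\dot s_i\,x_i(p^{-1})$; as written, with $\alpha_i^\vee(p)$, it is false.

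The second gap is surjectivity. Invariance of domain gives that the image of your injection is \emph{open} in the $d$-dimensional cell $\mathcal R^{>0}_{u,v}$; connectedness then forces surjectivity only if the image is also \emph{closed} in $\mathcal R^{>0}_{u,v}$, and nothing in your sketch supplies that. A continuous injection $\R_{>0}^d\hookrightarrow\R^d$ can perfectly well have proper open image. To close this you need either properness (i.e., that $g(p)B$ escapes every compact subset of $\mathcal R^{>0}_{u,v}$ as some $p_\ell\to 0$ or $\infty$), or -- better, and closer to what Marsh--Rietsch actually do -- you should promote the generalized-minor/chamber-ansatz formulas you already invoke for injectivity into a globally defined inverse: show the relevant minors are nonzero (indeed positive) at \emph{every} point of $\mathcal R^{>0}_{u,v}$, so that every such point factors as an element of $G^{>0}_{\u_+,\v}$. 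That is the actual engine of the cited proof, and your proposal leaves it idle.
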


\begin{remark}\label{rem:partial-parameterization}
Use the notation of Theorem \ref{t:parameterization},
and now assume additionally that $v\in W^J$.
Then Theorem \ref{t:parameterization} and Definition \ref{index} imply 
that the map $g \mapsto g  P_J$ defines an isomorphism 
\begin{align*}
G_{\u_+,\v}^{>0}&~\overset\sim\To ~\mathcal P_{u,v;>0}^J.
\end{align*}
\end{remark}

\begin{definition}
Let $T_{>0}$ denote the positive part of the torus, i.e. the subset of
$T$ generated by all elements
of the form
$\alpha_i^{\vee}(\ell) = \phi_i \left(
                   \begin{array}{cc}
                     \ell & 0 \\ 0 & \ell^{-1}\\
                   \end{array} \right) ,$
where $\ell \in \R_{>0}$.
\end{definition}

\begin{lemma}\label{pos-torus-preserves}
Let $t \in T_{>0}$ and $g P_J \in \mathcal P_{u,v;>0}^J.$
Then $tg B \in \mathcal P_{u,v;>0}^J$.
\end{lemma}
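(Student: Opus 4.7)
The plan is to use the explicit Marsh--Rietsch parameterization from Theorem \ref{t:parameterization} and Remark \ref{rem:partial-parameterization}, and to push $t$ through the parameterizing product from left to right. Fix a reduced expression $\v = s_{i_1} \cdots s_{i_m}$ for $v$ and let $\u_+$ be the associated PDS for $u$ in $\v$. By Remark \ref{rem:partial-parameterization}, every point of $\mathcal{P}_{u,v;>0}^J$ can be written uniquely as $gP_J$ with $g = g_1 g_2 \cdots g_m \in G_{\u_+,\v}^{>0}$ as in \eqref{eq:G}. So the task reduces to rewriting $tg$ in the form $g_1' g_2' \cdots g_m' \cdot t'$, where $g' := g_1' g_2' \cdots g_m' \in G_{\u_+,\v}^{>0}$ and $t' \in T \subset P_J$; then $tgP_J = g'P_J \in \mathcal{P}_{u,v;>0}^J$.

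The key ingredients are the two pinning identities
\[
t \cdot y_i(p) \;=\; y_i(\alpha_i(t)^{-1} p) \cdot t \qquad \text{and} \qquad t \cdot \dot{s}_i \;=\; \dot{s}_i \cdot (s_i \cdot t),
\]
together with two standard positivity facts: (i) every simple root, and more generally every character of $T$, takes strictly positive values on $T_{>0}$, so that when $t$ is slid past a factor $y_{i_\ell}(p_\ell)$ with $p_\ell > 0$, the new parameter $\alpha_{i_\ell}(t)^{-1} p_\ell$ remains strictly positive; and (ii) $T_{>0}$ is the image of $\mathfrak{t}_{\R}$ under the exponential map and is therefore stable under the Weyl group action, so that when $t$ is slid past a factor $\dot{s}_{i_\ell}$, the result $s_{i_\ell} \cdot t$ is again an element of $T_{>0}$.

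Applying these two identities inductively for $\ell = 1, 2, \dots, m$ sweeps $t$ past each factor of the product in turn, yielding an expression $g_1' g_2' \cdots g_m' \cdot t'$ of exactly the required shape, and thereby completes the argument. I do not anticipate a substantive obstacle: the proof is essentially a routine commutation calculation inside the pinning, and the only care needed is to track the positivity of the parameters at each step, which is guaranteed by (i) and (ii).
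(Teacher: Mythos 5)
Your proposal is correct and follows essentially the same route as the paper: both arguments use the Marsh--Rietsch parameterization of the cell and sweep $t$ rightward through the factors via the identities $t\,y_i(a)=y_i(\chi_i'(t)^{-1}a)\,t$ and $t\,\dot s_i=\dot s_i t'$, checking that positivity of the parameters and of the torus element is preserved at each step, until $t$ is absorbed into $P_J$. The only (immaterial) difference is in how Weyl-stability of $T_{>0}$ is justified — you use the exponential description of $T_{>0}$, while the paper argues via connectedness of $\dot s_i T_{>0}\dot s_i^{-1}$.
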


\begin{proof}
We claim that for any $t \in T_{>0}$ and $a\in \R_{>0}$, we have
$t \dot s_i = \dot s_i t'$ for some $t' \in T_{>0}$,
and also $t y_i(a) = y_i(a') t$ for some $a'\in \R_{>0}$.
If we can demonstrate this claim, then the lemma follows
from the parameterization of cells given in Theorem 
\ref{t:parameterization} and Remark \ref{rem:partial-parameterization}:
using the claim, we can simply factor the $t$ all the way to the right 
where it will get absorbed into the group $P_J$.

To prove the first part of the claim, note that since
$\dot s_i$ lies in the normalizer of the torus $N_G(T)$, 
for any $t \in T$ we have that $\dot s_i t \dot s_i^{-1} = t'$ for 
some $t'\in T$.  Moreover, if $t \in T_{>0}$ then also $t' \in T_{>0}$:
one way to see this is to use the fact that $T_{>0}$ is the 
connected component of $T$ containing $1$ \cite[5.10]{Lusztig3}.
Then since $\dot s_i T_{>0} \dot s_i^{-1}$ is also connected and 
contains $1$, its elements must all lie in $T_{>0}$.

To prove the second part of the claim, 
note that by  \cite[1.3 (b)]{Lusztig3}, we have
$t y_i(a) = y_i(\chi'_i(t)^{-1} a)t$ for any $i\in I$, $t\in T$, and 
$a\in \R$, where $\chi'_i:T \to \R^*$ is the simple root corresponding
to $i$.  When $t \in T_{>0}$ and $a\in \R_{>0}$, 
we have $\chi'_i(t)^{-1} a >0$.  
\end{proof}

Rietsch also showed that the closure of each 
cell of $(G/P_J)_{\geq 0}$ is a union of cells, and 
described when one cell of $(G/P_J)_{\geq 0}$
lies in 
the closure of another \cite{RietschClosure}.  Using this description,
it is easy to determine
the set of $0$-cells contained in 
the closure $\overline{\mathcal P_{u,v;>0}^J}$.
\begin{corollary}\label{cor:0-cells}
The $0$-cells in the closure of 
the cell $\mathcal P_{u,v;>0}^J$ of $(G/P_J)_{\geq 0}$
are in bijection with the cosets
$$\{z W_J \ \vert \ u \leq z \leq v \}.$$  More specifically,
those $0$-cells are precisely the cells of the form 
$\mathcal P_{\tilde{z},\tilde{z};>0}^J$, where 
$\tilde{z}$ is the minimal-length coset representative for 
$z$ in $W/W_J$, and $u \leq z \leq v$.
\end{corollary}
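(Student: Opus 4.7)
The plan is to transfer the question from $G/P_J$ up to $G/B$ via the projection $\pi^J$, where Rietsch's closure description is available in a simple form, and then push back down. First, Theorem \ref{t:parameterization} together with Remark \ref{rem:partial-parameterization} forces a $0$-cell of $(G/P_J)_{\geq 0}$ to have the form $\mathcal{P}_{w,w;>0}^J = \{\dot w P_J\}$ for a unique $w \in W^J$. Since $(G/B)_{\geq 0}$ is compact and $\pi^J$ is continuous, $\overline{\mathcal{P}_{u,v;>0}^J} = \pi^J\bigl(\overline{\mathcal{R}_{u,v;>0}}\bigr)$, and by Rietsch's closure description the $0$-cells of $\overline{\mathcal{R}_{u,v;>0}}$ are exactly $\{\dot z B : z \in [u,v]\}$.

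For the inclusion $\supseteq$, let $z \in [u,v]$ and write $z = \tilde z \, w_J$ with $w_J \in W_J$ and $\ell(z) = \ell(\tilde z) + \ell(w_J)$, so that $\dot z = \dot{\tilde z}\,\dot{w_J}$ with $\dot{w_J} \in P_J$. Then $\pi^J(\dot z B) = \dot z P_J = \dot{\tilde z}\,P_J$, giving $\mathcal{P}_{\tilde z, \tilde z;>0}^J \subseteq \overline{\mathcal{P}_{u,v;>0}^J}$, which exhausts the cosets $\{z W_J : z \in [u,v]\}$ as $z$ varies.

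For the inclusion $\subseteq$, let $\mathcal{P}_{w,w;>0}^J = \{\dot w P_J\}$ be any $0$-cell in $\overline{\mathcal{P}_{u,v;>0}^J}$ with $w \in W^J$, and pick a preimage $y \in \overline{\mathcal{R}_{u,v;>0}}$ of $\dot w P_J$. By Rietsch's closure description, $y$ lies in some cell $\mathcal{R}_{u',v';>0}$ with $u \leq u' \leq v' \leq v$. The set $K := \overline{\mathcal{R}_{u',v';>0}} \cap (\pi^J)^{-1}(\dot w P_J)$ is a nonempty compact $T_{>0}$-invariant subset of $G/B$, since the cell is $T_{>0}$-invariant (by the $G/B$ case of Lemma \ref{pos-torus-preserves}) and the fiber $(\pi^J)^{-1}(\dot w P_J)$ is $T$-invariant (as $\pi^J$ is $T$-equivariant and $\dot w P_J$ is $T$-fixed). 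Taking the limit of $y$ along a generic one-parameter subgroup of $T_{>0}$ yields a $T_{>0}$-fixed, hence $T$-fixed, point $\dot z B \in K$. Then $z \in [u',v'] \subseteq [u,v]$ (by Rietsch's closure description again) and $z \in w W_J$ (from the fiber condition), so $w = \tilde z$ for this $z \in [u,v]$, as desired.

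The main obstacle is the reverse inclusion: one must lift the $T$-fixed point $\dot w P_J$ to a $T$-fixed preimage inside $\overline{\mathcal{R}_{u,v;>0}}$. This is handled by the one-parameter subgroup limit argument, using that $T_{>0}$-fixed points of $G/B$ coincide with $T$-fixed points by Zariski density of $T_{>0}$ in the complex torus.
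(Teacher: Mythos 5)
Your proof is correct, but it takes a genuinely different route from the paper's. The paper gives essentially no argument for this corollary: it invokes Rietsch's closure description for the cell decomposition of $(G/P_J)_{\geq 0}$ itself --- i.e., the combinatorial criterion for when one cell $\mathcal{P}^J_{u',v';>0}$ lies in $\overline{\mathcal{P}^J_{u,v;>0}}$ --- and reads off the $0$-cells directly from that. You instead use only the simpler $G/B$-level closure description $\overline{\mathcal{R}_{u,v;>0}}=\bigsqcup_{u\le u'\le v'\le v}\mathcal{R}_{u',v';>0}$, together with the identity $\overline{\mathcal{P}^J_{u,v;>0}}=\pi^J\bigl(\overline{\mathcal{R}_{u,v;>0}}\bigr)$ (valid by compactness of $(G/B)_{\geq 0}$ and continuity of $\pi^J$) and a Bia{\l}ynicki-Birula-type limit along a generic one-parameter subgroup of $T_{>0}$ to manufacture a $T$-fixed preimage of $\dot{w}P_J$ inside the closure upstairs. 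What your approach buys is independence from the combinatorially more involved closure partial order on the cells of $(G/P_J)_{\geq 0}$ for general $P_J$; what it costs is extra geometric input: $T_{>0}$-invariance of the cells (the $G/B$ case of Lemma \ref{pos-torus-preserves}), and the standard fact that $\lim_{s\to 0}\lambda(s)\cdot y$ exists in the projective variety $G/B$, lands in the closed $T_{>0}$-invariant set $K$, and is fixed by $\lambda$, hence (for $\lambda$ regular, using Zariski density of $T_{>0}$ in $T$) by all of $T$. All of these inputs are legitimate, so the argument stands; as a small simplification, in the reverse inclusion you could take $K=\overline{\mathcal{R}_{u,v;>0}}\cap(\pi^J)^{-1}(\dot{w}P_J)$ directly and skip the intermediate cell $\mathcal{R}_{u',v';>0}$, since the limit point is automatically of the form $\dot{z}B$ with $z\in[u,v]$ by the $G/B$ closure description.
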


\begin{remark}
The $0$-cells in $\overline{\mathcal P_{u,v;>0}^J}$
are precisely the torus fixed points of $G/P_J$ that lie in 
$\overline{\mathcal P_{u,v;>0}^J}$.
\end{remark}

\subsection{Total positivity and canonical bases for simply laced $G$}\label{s:CanonBasis}

\bigskip

Assume that $G$ is simply laced.  Let $\bf U$ be the enveloping
algebra of the Lie algebra of $G$; this can be defined by generators
$e_i, h_i, f_i$ ($i\in I$) and the Serre relations.  For any
dominant weight
$\lambda\in \mathfrak{t}^*_{\R}$,
there is a finite-dimensional simple $\bf U$-module $V(\lambda)$
with a non-zero vector $\eta$ such that $e_i\cdot \eta = 0$
and $h_i \cdot\eta = \lambda(h_i) \eta$ for all $i \in I$.
The
pair $(V(\lambda), \eta)$ is determined up to unique isomorphism.

There is a unique $G$-module structure on $V(\lambda)$ such that
for any $i\in I, a\in \R$ we have
\begin{equation*}
x_i(a) = \exp(a e_i):V(\lambda) \to V(\lambda), \qquad
y_i(a) = \exp(a f_i): V(\lambda) \to V(\lambda).
\end{equation*}
Then
$x_i(a)\cdot\eta = \eta$ for all $i\in I$, $a\in \R$, and
$t\cdot \eta = \lambda(t) \eta$ for all $t \in T$.  Let $\B(\lambda)$
be the canonical basis of $V(\lambda)$ that contains $\eta$
\cite{Lus:CanonBasis}.
We now collect some useful facts about the canonical basis.

\begin{lemma}\label{lem:positive2} \cite[1.7(a)]{Lusztig2}.
For any $w\in W$, the vector $\dot w\cdot \eta$ is the unique element of
$\B(\lambda)$ which lies in the extremal weight space
$V(\lambda)^{w \cdot \lambda}$.  In particular, $\dot w\cdot \eta \in \B(\lambda)$.
\end{lemma}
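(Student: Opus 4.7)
The plan is to induct on $\ell(w)$, using Lusztig's result that the canonical basis $\B(\lambda)$ behaves well under restriction along the homomorphisms $\phi_i : \SL_2 \to G$.

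First I would verify the weight computation. For any $t \in T$, the identity $t\dot w = \dot w (\dot w^{-1}t\dot w)$ together with $\dot w^{-1}t\dot w \cdot \eta = \lambda(\dot w^{-1}t\dot w)\eta = (w\cdot \lambda)(t)\eta$ gives $t\cdot (\dot w \cdot \eta) = (w\cdot \lambda)(t)(\dot w\cdot\eta)$. Thus $\dot w\cdot \eta$ lies in $V(\lambda)^{w\cdot\lambda}$, and this extremal weight space is one-dimensional because $w\cdot \lambda$ is a Weyl translate of the highest weight $\lambda$. Also $\dot w\cdot \eta \neq 0$ since $\dot w$ acts invertibly on $V(\lambda)$. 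Consequently, once we show $\dot w\cdot \eta \in \B(\lambda)$, uniqueness in that weight space is automatic.

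For the inductive claim $\dot w\cdot \eta \in \B(\lambda)$, the base case $w = e$ is exactly the hypothesis $\eta \in \B(\lambda)$. For the step, pick a reduced factorization $w = s_i w'$ with $\ell(w) = \ell(w')+1$, and let $b := \dot{w'}\cdot \eta$; by induction $b \in \B(\lambda)$ has weight $w'\cdot\lambda$. The key input from Lusztig's theory is that restricting $V(\lambda)$ along $\phi_i : \SL_2 \to G$ decomposes it into a direct sum of $\SL_2$-irreducibles, and on each summand the elements of $\B(\lambda)$ restrict to the standard $\SL_2$-canonical basis (divided powers of $f_i$ applied to the $\SL_2$-highest weight vector). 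Since $\ell(s_iw') > \ell(w')$, the weight $w'\cdot \lambda$ pairs non-negatively with $\alpha_i^\vee$, so $b$ sits at the top of its $\SL_2$-string. Applying $\dot s_i = \phi_i\!\left(\begin{smallmatrix} 0 & -1 \\ 1 & 0\end{smallmatrix}\right)$ swaps $b$ with the basis vector at the bottom of the same string, which again lies in $\B(\lambda)$. Hence $\dot s_i \cdot b = \dot w\cdot \eta \in \B(\lambda)$.

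The main obstacle is the sign bookkeeping in this last $\SL_2$-step: one must confirm that the top-to-bottom swap yields exactly the canonical basis vector, not its negative, so that the inductive statement preserves the precise normalization. This reduces to an explicit computation inside each $\SL_2$-irreducible using the divided-power normalization of $\B$, and is the reason the argument requires the specific lift $\dot s_i$ from the pinning rather than an arbitrary representative in $N_G(T)/T$. The coherence of these signs along a reduced word is equivalent to the already-noted fact that $\dot w$ is independent of the reduced expression chosen (Section~\ref{sec:preliminaries}), so once the single-step sign is verified the induction goes through uniformly.
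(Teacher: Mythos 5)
The paper does not actually prove this lemma: it is quoted verbatim from Lusztig \cite{Lusztig2} (his 1.7(a)), so there is no internal proof to match yours against. Your reconstruction is the standard argument one would give, and its skeleton is sound: the weight computation, the one-dimensionality of extremal weight spaces (which makes uniqueness automatic once membership in $\B(\lambda)$ is known), and the induction on $\ell(w)$ via $\dot w = \dot s_i \dot w'$ with $\dot s_{i}$ acting on the string-top vector $b=\dot w'\cdot\eta$ as the full divided power $f_i^{(a)}$, $a=\langle w'\lambda,\alpha_i^\vee\rangle$ (this sign-free identity is exactly the content of the paper's Lemma \ref{lem:canonbasis2}, which the paper deduces \emph{from} the present lemma; you are running that derivation in the opposite direction, which is legitimate provided the $\SL_2$ facts are taken as independent inputs from Lusztig rather than from Lemma \ref{lem:canonbasis2} itself).

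One step is stated too strongly and would not survive scrutiny as written: it is \emph{not} true in general that $\B(\lambda)$ restricts to the union of the standard $\SL_2$-canonical bases of the irreducible summands under $\phi_i$; the canonical basis is compatible with $\SL_2$-restriction only up to a filtration (Lusztig's string decomposition), i.e. $f_i^{(n)}b$ equals a canonical basis element plus correction terms supported on other basis vectors of the same weight. What rescues your argument is the special situation at hand: $b$ is an extremal weight vector with $e_i b=0$ (you should also note why $w'\lambda+\alpha_i$ is not a weight, which is where non-negativity of $\langle w'\lambda,\alpha_i^\vee\rangle$ enters), and the target weight space $V(\lambda)^{s_i w'\lambda}$ is one-dimensional, so the correction terms have nowhere to live and $f_i^{(a)}b$ is exactly the unique canonical basis vector of that weight. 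If you replace the blanket restriction claim by this weaker, correct statement, the induction closes and the proof is complete.
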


We define $f_i^{(p)}$ to be $\frac{f_i^p}{p!}$.

\begin{lemma}\label{lem:canonbasis2}
Let $s_{i_1} \dots s_{i_n}$ be a reduced expression for $w\in W$.
Then there exists $a \in \N$ such that
$f_{i_1}^{(a)} \dot s_{i_2} \dot s_{i_3} \dots \dot s_{i_n} \cdot \eta = 
\dot s_{i_1} \dot s_{i_2} \dots \dot s_{i_n}\cdot \eta$. Moreover,
$f_{i_1}^{(a+1)} \dot s_{i_2} \dot s_{i_3} \dots \dot s_{i_n}\cdot \eta = 0$.
\end{lemma}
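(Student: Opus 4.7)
The plan is to reduce the statement to the classical $\mathfrak{sl}_2$-theory for the subalgebra $\langle e_{i_1},f_{i_1},h_{i_1}\rangle$ acting on $V(\lambda)$. Set $w':=s_{i_2}\cdots s_{i_n}$ and $v:=\dot{w}'\cdot\eta=\dot{s}_{i_2}\cdots\dot{s}_{i_n}\cdot\eta$; by Lemma \ref{lem:positive2} the vector $v$ lies in the weight space $V(\lambda)^{w'\cdot\lambda}$. Put $a:=\langle w'\cdot\lambda,\alpha_{i_1}^{\vee}\rangle=\langle\lambda,(w')^{-1}(\alpha_{i_1}^{\vee})\rangle$. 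Granting the two auxiliary statements (i) $a\geq 0$, and (ii) $e_{i_1}\cdot v=0$, the desired identities
\[
\dot{s}_{i_1}\cdot v \;=\; f_{i_1}^{(a)}\cdot v,\qquad f_{i_1}^{(a+1)}\cdot v \;=\; 0
\]
are the standard computation for a highest-weight vector in the $(a+1)$-dimensional irreducible $\mathfrak{sl}_2$-module generated by $v$, obtained by expanding the factorization $\dot{s}_{i_1}=\exp(-e_{i_1})\exp(f_{i_1})\exp(-e_{i_1})$ (which holds in $\phi_{i_1}(\SL_2)$).

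Step (i) is straightforward: reducedness of $s_{i_1}s_{i_2}\cdots s_{i_n}$ is equivalent to $\ell(s_{i_1}w')>\ell(w')$, hence to $(w')^{-1}(\alpha_{i_1})\in\Phi^{+}$; dominance of $\lambda$ then gives $a\geq 0$.

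The only genuine content is step (ii), and here I would use a weight-space obstruction. The vector $e_{i_1}\cdot v$ lies in the weight space $V(\lambda)^{w'\cdot\lambda+\alpha_{i_1}}$, and I claim this space is zero. Since the set of weights of $V(\lambda)$ is stable under $W$, $w'\cdot\lambda+\alpha_{i_1}$ is a weight of $V(\lambda)$ if and only if its $(w')^{-1}$-image $\lambda+(w')^{-1}(\alpha_{i_1})$ is. By step (i), $(w')^{-1}(\alpha_{i_1})$ is a positive root, so $\lambda+(w')^{-1}(\alpha_{i_1})$ strictly exceeds $\lambda$ in the standard partial order on $\mathfrak{t}^{*}_{\R}$; since every weight of $V(\lambda)$ is $\leq\lambda$, this is impossible. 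Thus $e_{i_1}\cdot v=0$.

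The main point of the proof is step (ii); once it is in hand, the ``three-exponentials'' expression $\dot{s}_{i_1}\cdot v$ collapses to a single divided power $f_{i_1}^{(a)}\cdot v$, and both the stated equality and the vanishing of $f_{i_1}^{(a+1)}\cdot v$ fall out simultaneously from the structure of the $\mathfrak{sl}_2$-string through $v$.
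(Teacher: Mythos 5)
Your proof is correct, and it takes a genuinely different route from the paper's. The paper disposes of this lemma in one line, citing Lemma \ref{lem:positive2} together with ``properties of the canonical basis'' via the proof of Lusztig's Proposition 28.1.4, where the compatibility of the braid-group operators $\dot s_i$ with the canonical basis does the work. You instead give a self-contained $\mathfrak{sl}_2$ argument: setting $w'=s_{i_2}\cdots s_{i_n}$ and $v=\dot w'\cdot\eta$, reducedness gives $(w')^{-1}(\alpha_{i_1})\in\Phi^{+}$, hence $a:=\langle\lambda,(w')^{-1}(\alpha_{i_1}^{\vee})\rangle\geq 0$ and, since $w'\cdot\lambda+\alpha_{i_1}$ would be $W$-conjugate to a weight strictly above $\lambda$, $e_{i_1}\cdot v=0$; then $v$ heads an $(a+1)$-dimensional $\mathfrak{sl}_2$-string, and the factorization $\dot s_{i_1}=\exp(-e_{i_1})\exp(f_{i_1})\exp(-e_{i_1})$ (which matches the paper's pinning conventions) collapses to $f_{i_1}^{(a)}$ on $v$ by a weight-space comparison, while $f_{i_1}^{(a+1)}\cdot v=0$ is the standard string-length bound. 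All steps check out; note that the only input you really take from Lemma \ref{lem:positive2} is the weight of $v$, which is already immediate from $\dot w'$ permuting weight spaces, so your argument does not actually need the canonical basis at all, and it also makes the exponent $a$ explicit and works verbatim outside the simply-laced setting assumed in this subsection. The trade-off is that the canonical-basis formulation the paper cites carries the extra positivity information that is genuinely needed in the application (Proposition \ref{prop:B}), but for the lemma as stated your elementary argument is complete.
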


\begin{proof}
This follows from Lemma \ref{lem:positive2} and properties of the
canonical basis, see e.g.\ the proof of \cite[Proposition 28.1.4]{Lus:Quantum}.
\end{proof}

\subsection{The moment map for $G/P$}

In this section we start by defining the moment
map for $G/P$ and describing some of its properties.  
We then give a result of 
Gelfand-Serganova \cite{GelfSerg} which gives
another description of their stratification of $G/P$ in terms
of the moment map.

Recall the notation of Section \ref{sec:Plucker}.
\begin{definition}
The moment map on $G/P$ is the map $\mu: G/P \to \ttt_{\R}^*$
defined by 
\begin{equation*}
\mu(X) = \frac{\sum_{\alpha \in \mathcal{A}}|p^{\alpha}(X)|^2 \alpha}{\sum_{\alpha \in \mathcal{A}} |p^{\alpha}(X)|^2},
\end{equation*}
where 
$$X = d \sum_{\alpha \in \mathcal{A}} p^{\alpha}(X) e_{\alpha}.$$
\end{definition}

Given $X \in G/P$, let $TX$ denote the orbit of $X$ under the action
of $T$, and $\overline{TX}$ its closure.

Theorem \ref{moment} 
follows from classical work of Atiyah \cite{A:82}
and Guillemin-Sternberg \cite{GuS}.
See also
\cite[Theorem 3.1]{GelfSerg}.

\begin{theorem}\cite[Theorem 3.1]{GelfSerg}\label{moment}
Let $X \in G/P$.  The image $\mu(\overline{TX})$ is a convex
polytope, and $\mu$ induces a one-to-one correspondence between
the set of orbits of $T$ in $\overline{TX}$ and the set of 
faces of the polytope $\mu(\overline{TX})$, whereby a 
$q$-dimensional orbit of $T$ is mapped onto an open $q$-dimensional
face of $\mu(\overline{TX})$.
\end{theorem}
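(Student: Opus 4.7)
The plan is to deduce the statement from the classical Atiyah--Guillemin--Sternberg convexity theorem together with the orbit--face correspondence for torus orbit closures in projective space. The starting point is the equivariant embedding $G/P \hookrightarrow \mathbb{P}(V_{\rho_J})$ via $gP \mapsto [g \cdot \eta_J]$. Pulling back the Fubini--Study form endows $G/P$ with a Kähler structure, and the $T$-action (induced from the weight space decomposition $V_{\rho_J} = \bigoplus_{\alpha \in \mathcal{A}} V^\alpha$) is Hamiltonian with moment map that in generalized Plücker coordinates is exactly the $\mu$ from the definition. So the problem reduces to a question about the moment map of a compact connected Hamiltonian $T$-space.

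First I would verify convexity. Since $\overline{TX} \subset G/P$ is a compact connected $T$-invariant subvariety, Atiyah--Guillemin--Sternberg gives that $\mu(\overline{TX})$ is a convex polytope, equal to the convex hull of $\mu(\overline{TX}^T)$. The $T$-fixed points in $\overline{TX}$ are isolated (they lie among the $T$-fixed points of $G/P$, which are the $[e_\alpha]$ for $\alpha \in W(\rho_J)$), and from the Plücker formula we see $\mu([e_\alpha]) = \alpha$. Thus the vertices of $\mu(\overline{TX})$ are precisely those extremal weights $\alpha$ for which $[e_\alpha] \in \overline{TX}$, i.e.\ those $\alpha \in L_X \cap W(\rho_J)$.

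Second I would establish the orbit--face bijection. The key observation is that $\overline{TX}$ is the closure of a single $T$-orbit, so after quotienting by the (connected) stabilizer $T_X$ of a generic point, it becomes a (possibly singular) projective toric variety for the quotient torus $T/T_X$. For projective toric varieties, the moment map restricts to a homeomorphism from the orbit space onto the moment polytope, with $q$-dimensional orbits mapping bijectively onto open $q$-dimensional faces; this can be checked locally on affine toric charts (where orbits correspond to faces of the defining cone) or deduced from the smooth case by normalization, which is $T$-equivariant and induces a homeomorphism on moment images. The dimension statement follows because the moment map has fibers consisting of $T_c$-orbits (where $T_c \subset T$ is the compact part), so real dimensions match up as expected.

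The main technical obstacle is handling the possible singularity of $\overline{TX}$ in establishing that the orbit--face correspondence is bijective in the sharp form stated (including the dimension-matching). The cleanest route is to use that the bijection is a statement about the image of $\mu$, which is unchanged under normalization, together with the explicit description of a projective toric variety as the disjoint union of $T$-orbits indexed by faces of its moment polytope. Alternatively, one can bypass toric-variety machinery by working directly in homogeneous coordinates: a point $Y \in \overline{TX}$ lies in $\mu^{-1}(F)$ for an open face $F$ iff the support $\{\alpha : p^\alpha(Y) \neq 0\}$ lies in the smallest affine subspace containing $F$, and using the $T_c$-action one checks that the fiber $\mu^{-1}(\mu(Y)) \cap \overline{TX}$ is a single $T_c$-orbit, giving the orbit--face matching with the correct dimensions.
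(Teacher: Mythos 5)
The paper does not actually prove this statement: Theorem \ref{moment} is quoted verbatim from Gelfand--Serganova \cite[Theorem 3.1]{GelfSerg} and attributed to the classical convexity results of Atiyah \cite{A:82} and Guillemin--Sternberg \cite{GuS}, so there is no in-paper argument to compare against. Your sketch is the standard proof from exactly those sources, and it is essentially correct: convexity and the identification of the vertices with the extremal weights in the list $L_X$ is Atiyah's treatment of torus orbit closures in $\mathbb{P}(V)$ (note that the $T$-fixed points of $G/P$, not of all of $\mathbb{P}(V_{\rho_J})$, are what guarantees isolated fixed points, since non-extremal weight spaces may have multiplicity), and the orbit--face bijection with matching dimensions is the toric-variety statement that the orbits of $T$ in $\overline{TX}$ are indexed by the faces of $\conv(\mathrm{supp}(X))$, with the orbit attached to a face $F$ being the orbit of the point whose support is $\mathrm{supp}(X)\cap F$. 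Two small points to tighten: the generic stabilizer $T_X$ need not be connected (quotient by its identity component, or observe that the component group is irrelevant to the image of $\mu$), and in your ``direct'' alternative the correct characterization is that $Y\in\overline{TX}$ has $\mathrm{supp}(Y)=\mathrm{supp}(X)\cap F$ for a unique face $F$, whence $\mu(Y)$ lies in the relative interior of $F$ because it is a strictly positive convex combination of the weights in $\mathrm{supp}(Y)$. With those adjustments your outline is a faithful reconstruction of the cited proof.
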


Gelfand and Serganova \cite{GelfSerg} characterized the vertices
of $\mu(\overline{TX})$.

\begin{proposition}\cite[Proposition 5.1]{GelfSerg}\label{prop:moment}
Let $X\in G/P$.  Then  the vertices of $\mu(\overline{TX})$ are the points
$\alpha$ for all $\alpha \in L_X$.
\end{proposition}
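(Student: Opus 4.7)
The strategy is to combine the convexity theorem (Theorem \ref{moment}) with a direct analysis of $T$-fixed points in $\overline{TX}$ using the generalized Pl\"ucker coordinates. By Theorem \ref{moment}, the vertices of $\mu(\overline{TX})$ correspond bijectively to the $0$-dimensional $T$-orbits in $\overline{TX}$, i.e., to the $T$-fixed points of $\overline{TX}$. So it suffices to identify which $T$-fixed points of $G/P$ belong to $\overline{TX}$, and to check that $\mu$ sends each such fixed point to its labeling weight.

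First I would observe that the $T$-fixed points of $G/P \subset \mathbb{P}(V_{\rho_J})$ are exactly the extremal weight lines $[e_\alpha]$ for $\alpha \in W(\rho_J)$. Indeed, by the Bruhat decomposition every $T$-fixed point of $G/P$ has the form $\dot{w}P$ for some $w \in W^J$, and $\dot{w}\cdot \eta_J$ lies in the one-dimensional weight space $V(\rho_J)^{w\cdot \rho_J}$ (cf.\ Lemma \ref{lem:positive2}), hence is proportional to $e_{w\cdot \rho_J}$. Evaluating $\mu$ at $[e_\alpha]$ yields $\alpha$ itself directly from the definition, since only the single generalized Pl\"ucker coordinate $p^\alpha$ is nonzero there.

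The heart of the argument is the equivalence, for $\alpha \in W(\rho_J)$, that $[e_\alpha] \in \overline{TX}$ if and only if $p^\alpha(X) \neq 0$. For the forward direction, $p^\alpha$ is a $T$-eigenvector in $V_{\rho_J}^*$, so its vanishing locus in $\mathbb{P}(V_{\rho_J})$ is a closed $T$-invariant hyperplane; if $p^\alpha(X) = 0$ this hyperplane contains $\overline{TX}$, yet $p^\alpha(e_\alpha) = 1$ shows $[e_\alpha]$ lies outside it. For the converse, I would use that $\alpha$ is a vertex of the weight polytope $\Delta_P$ to choose a cocharacter $\lambda$ of $T$ with $\langle \lambda, \alpha \rangle > \langle \lambda, \beta \rangle$ for every other weight $\beta \in \mathcal{A}$; then rescaling $\lambda(s)\cdot X$ by $s^{-\langle \lambda,\alpha\rangle}$ and letting $s \to \infty$ produces the limit $[p^\alpha(X) e_\alpha] = [e_\alpha]$ in $\mathbb{P}(V_{\rho_J})$, using $p^\alpha(X) \neq 0$.

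The main subtlety is ensuring that such a separating cocharacter exists, and this is precisely where we use that $W(\rho_J)$ is the vertex set of $\Delta_P = \conv(\mathcal{A})$ while no other weight in $\mathcal{A}$ is a vertex (as recorded in Section \ref{sec:Plucker}). Assembling these pieces, the vertices of $\mu(\overline{TX})$ are exactly the points $\alpha$ for $\alpha \in L_X$, as claimed.
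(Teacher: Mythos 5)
Your argument is correct. Note that the paper does not actually prove this statement --- it is quoted verbatim from Gelfand--Serganova \cite[Proposition 5.1]{GelfSerg} --- so there is no internal proof to compare against; what you have written is essentially the standard argument from that source. The three ingredients all check out: Theorem \ref{moment} reduces the problem to identifying the $T$-fixed points of $\overline{TX}$; the fixed points of $G/P$ are the extremal weight lines $[e_{w\cdot\rho_J}]$, $w\in W^J$, because each extremal weight space of $V_{\rho_J}$ is one-dimensional (which also guarantees that at $[e_\alpha]$ the only nonvanishing coordinate among all of $\mathcal{A}$ is $p^\alpha$, so $\mu([e_\alpha])=\alpha$ exactly); and your equivalence $[e_\alpha]\in\overline{TX}\iff p^\alpha(X)\neq 0$ is sound in both directions --- the vanishing locus of the $T$-eigenfunctional $p^\alpha$ is closed and $T$-invariant, and for the converse the vertex property of $\alpha$ in $\Delta_P$ supplies an integral cocharacter $\lambda$ strictly separating $\alpha$ from the rest of $\mathcal{A}$, so that $\lim_{s\to\infty}\lambda(s)\cdot X=[e_\alpha]$ whenever $p^\alpha(X)\neq 0$. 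The only point worth making explicit is that this limit is taken with $s\in\R_{>0}$, consistent with the paper's convention of working with real points and the real topology, and that $[e_\alpha]$ then lies in $\overline{TX}\subset G/P$ since $G/P$ is closed in $\mathbb{P}(V_{\rho_J})$.
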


\section{Gelfand-Serganova strata, total positivity, and 
Bruhat interval polytopes for $G/P$}\label{sec:genBIP}

In this section we show that each totally positive cell
of $(G/P)_{\geq 0}$ lies in a Gelfand-Serganova stratum, and we 
explicitly determine which one (i.e. we determine the list).\footnote{In
the $G/B$ case, this result was conjectured in 
Rietsch's thesis \cite{RietschThesis}.  Moreover 
Theorem \ref{th:MR} was partially proved in an unpublished
manuscript of Marsh and Rietsch \cite{MR3}.
The second author is grateful to Robert Marsh and Konni Rietsch for 
generously sharing their
ideas.}  We then define a Bruhat interval polytope for $G/P$,
and show that each face of a Bruhat interval polytope is a 
Bruhat interval polytope.  
Our proof of this result on faces 
uses tools from total positivity.  Allen Knutson has
informed us that he has a different proof of this result about faces, using
Frobenius splitting \cite{AK}.

\subsection{Gelfand-Serganova strata and total positivity}

Write $G/P = G/P_J$.  Our goal is to prove the following theorem.

\begin{theorem}\label{th:MR}
Let $u,v \in W$ with $v\in W^J$ and $u \leq v$.
If $X \in \PP_{u,v; >0}^J$, then the list $L_X$ of $X$
is the set 
$\{z \cdot \rho_J \in W(\rho_J) \ \vert \ u \leq z \leq v \}$.
In particular, 
$\PP_{u,v; >0}^J$ is entirely contained in one Gelfand-Serganova stratum.
\end{theorem}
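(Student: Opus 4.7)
The plan is to prove the displayed equality for $L_X$ by establishing both inclusions; the second assertion of the theorem then follows immediately, since the list is by definition constant on any Gelfand--Serganova stratum.

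For the inclusion $L_X\subseteq\{z\cdot\rho_J\mid u\le z\le v\}$, I would use the moment map. By Lemma \ref{pos-torus-preserves}, $T_{>0}X\subseteq\PP_{u,v;>0}^J$, and hence $\overline{T_{>0}X}\subseteq\overline{\PP_{u,v;>0}^J}$, whose torus-fixed points are precisely $\{\dot{\tilde z}P_J\mid u\le z\le v\}$ by Corollary \ref{cor:0-cells}. The key observation is that $\mu(\overline{T_{>0}X})=\mu(\overline{TX})$: the moment map depends only on the absolute values $|p^\alpha|$, so the finite sign subgroup of $T$ does not change $\mu$, and $T_{>0}$ already realizes all positive scalings $|\alpha(t)|$ of these absolute values; compactness of $G/P$ lets one commute $\mu$ with taking closures. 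Since $\mu$ is injective on the torus-fixed points of $G/P$, these two orbit closures share the same set of torus-fixed points, and by Proposition \ref{prop:moment} this set corresponds bijectively to $L_X$. This yields the desired containment.

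For the reverse inclusion, I would argue that for each $z\in[u,v]$, the Pl\"ucker coordinate $p^{\tilde z\cdot\rho_J}$, restricted to $\PP_{u,v;>0}^J$ via the Marsh--Rietsch parameterization (Theorem \ref{t:parameterization}, Remark \ref{rem:partial-parameterization}) as a function of $(p_\ell)\in\R_{>0}^{\ell(v)-\ell(u)}$, is a polynomial with nonnegative real coefficients. Granted this, the polynomial is not identically zero: by Corollary \ref{cor:0-cells}, $\dot{\tilde z}P_J\in\overline{\PP_{u,v;>0}^J}$, and $p^{\tilde z\cdot\rho_J}(\dot{\tilde z}P_J)\ne 0$, so continuity produces some $X_0\in\PP_{u,v;>0}^J$ at which $p^{\tilde z\cdot\rho_J}$ is nonzero; a nonzero polynomial with nonnegative coefficients is strictly positive on $\R_{>0}^{\ell(v)-\ell(u)}$, so $p^{\tilde z\cdot\rho_J}(X)>0$ for every $X\in\PP_{u,v;>0}^J$, completing the inclusion.

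To establish the nonnegativity, I would expand $g\cdot\eta_J$ (with $g=g_1\cdots g_m\in G_{\u_+,\v}^{>0}$) in the canonical basis $\B(\rho_J)$ by inductively applying the factors $g_m,g_{m-1},\ldots,g_1$. The $y_{i_\ell}(p_\ell)$ factors act via divided powers of $f_{i_\ell}$, which preserve positivity by Lusztig's theorem. Each $\dot s_{i_\ell}$ factor is handled via Lemmas \ref{lem:positive2} and \ref{lem:canonbasis2}: the PDS structure ensures that at each such step the argument is a nonnegative combination of extremal canonical basis vectors $\dot w\cdot\eta_J$, for which Lemma \ref{lem:canonbasis2} substitutes the action of $\dot s_{i_\ell}$ by that of a divided power $f_{i_\ell}^{(a_\ell)}$ that again acts nonnegatively. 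This positivity step is the main obstacle of the proof, since $\dot s_{i_\ell}$ does not preserve canonical basis positivity in isolation; the argument runs cleanly in the simply laced case treated in Section \ref{s:CanonBasis}, and the non-simply-laced case can be reduced to this via folding, or handled by Lusztig's more general canonical basis framework.
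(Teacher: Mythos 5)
Your first inclusion, $L_X\subseteq\{z\cdot\rho_J\mid u\le z\le v\}$, is essentially the paper's Proposition \ref{prop:A}: the same combination of Lemma \ref{lem:postorus}, Theorem \ref{moment}, Proposition \ref{prop:moment}, Lemma \ref{pos-torus-preserves} and Corollary \ref{cor:0-cells}. For the reverse inclusion your outer strategy is genuinely different from the paper's induction on $\ell(v)$: you want (i) each extremal Pl\"ucker coordinate to be a polynomial with nonnegative coefficients in the Marsh--Rietsch parameters, and (ii) non-vanishing of that polynomial somewhere on the cell, obtained from the torus fixed point $\dot{\tilde z}P_J\in\overline{\PP^J_{u,v;>0}}$ via Corollary \ref{cor:0-cells} and openness of $\{p^{\tilde z\cdot\rho_J}\neq 0\}$. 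Step (ii) is correct and is an attractive replacement for the paper's inductive bookkeeping.

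The gap is in step (i). Your mechanism for handling the $\dot s_{i_\ell}$ factors does not work: the assertion that at each such step the vector being acted on is a nonnegative combination of \emph{extremal} canonical basis vectors is false, because as soon as a single factor $y_i(p_\ell)=\exp(p_\ell f_i)$ has been applied the vector acquires components in non-extremal weight spaces, and in $G^{>0}_{\u_+,\v}$ the $\dot s$-factors and $y$-factors genuinely interleave (e.g.\ $g=\dot s_1\,y_2(p)$ for $u=s_1\le v=s_1s_2$, so a $\dot s$ must be applied \emph{after} a $y$). Lemma \ref{lem:canonbasis2} only identifies $\dot s_{i_1}$ with a divided power $f_{i_1}^{(a)}$ when acting on the one extremal vector $\dot s_{i_2}\cdots\dot s_{i_n}\cdot\eta$; it says nothing about $\dot s_i$ applied to a general nonnegative combination of canonical basis elements, and $\dot s_i$ does not preserve canonical-basis positivity on such vectors. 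So the factor-by-factor induction does not close, and the "polynomial with nonnegative coefficients" claim is left unproved. The repair is to replace step (i) by Proposition \ref{prop:embedding} (a quoted result), which gives pointwise nonnegativity of the canonical-basis coefficients of $\langle g\cdot\eta_J\rangle$ on $(G/P_J)_{\ge 0}$ \emph{and} that the support of the expansion is constant on each cell; since by Lemma \ref{lem:positive2} the extremal Pl\"ucker coordinates are among these coefficients, constancy of the support together with your step (ii) already yields $z\cdot\rho_J\in L_X$ for every $X$ in the cell, with no need for the formal positivity of the polynomial. (Your treatment of the non-simply-laced case by folding matches the paper's, but it inherits the same unproved positivity input.)
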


Theorem \ref{th:MR} immediately implies the following.
\begin{corollary}
Suppose that whenever $(u,v) \neq (u',v')$
(where $(u,v)$ and $(u',v')$ index cells of $(G/P)_{\geq 0}$),
we have that 
$\{z \cdot \rho_J \in W(\rho_J) \ \vert \ u \leq z \leq v \}
\neq \{z \cdot \rho_J \in W(\rho_J) \ \vert \ u \leq z \leq v \}$.
Then Rietsch's cell decomposition of $(G/P)_{\geq 0}$ is the restriction
of the Gelfand-Serganova stratification to $(G/P)_{\geq 0}$.
In particular, her cell decompositions of the totally nonnegative parts of 
the complete flag variety $(G/B)_{\geq 0}$ and of the Grassmannian
$(Gr_{k,n})_{\geq 0}$ are the restrictions of the Gelfand-Sergova
stratification to $(G/B)_{\geq 0}$ and $(Gr_{k,n})_{\geq 0}$,
respectively.
\end{corollary}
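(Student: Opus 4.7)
The plan is to compute the extremal generalized Pl\"ucker coordinates $p^{z\cdot\rho_J}(X)$ directly, by lifting $X\in\PP_{u,v;>0}^J$ to a vector in $V(\rho_J)$ via the Marsh--Rietsch parametrization and identifying $p^{z\cdot\rho_J}(X)$ (up to a fixed nonzero scalar) with the coefficient of the canonical basis element $\dot z\cdot\eta_J$. The theorem then reduces to two claims: first, $p^{z\cdot\rho_J}(X)=0$ whenever the coset $zW_J$ does not meet $[u,v]$ (vanishing); second, $p^{z\cdot\rho_J}(X)\neq 0$ whenever $zW_J$ meets $[u,v]$ (non-vanishing). The ``in particular'' clause then follows at once from the Gelfand--Serganova theorem that two points of $G/P$ share a Gelfand--Serganova stratum iff they share a list.

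For the vanishing claim, I would use that $\PP_{u,v;>0}^J=\pi^J(\mathcal{R}_{u,v;>0})$ and that the embedding $G/P_J\hookrightarrow\mathbb{P}(V_{\rho_J})$ factors through $G/B$, so $X$ admits a lift $\tilde X\in\mathcal{R}_{u,v}\subseteq\overline{B\dot v B/B}\cap\overline{B^-\dot u B/B}$ with the same extremal Pl\"ucker coordinates. The classical Pl\"ucker-coordinate description of Schubert and opposite Schubert varieties then forces $p^{z\cdot\rho_J}(X)=0$ unless some representative of $zW_J$ is simultaneously $\leq v$ and $\geq u$ in Bruhat order.

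For the non-vanishing claim I would work first in the simply-laced case (the general case can then be reduced to this one by folding, or handled by a parallel canonical-basis computation). Using Theorem \ref{t:parameterization} and Remark \ref{rem:partial-parameterization}, write $X=gP_J$ with $g=g_1\cdots g_m$, where $\v=s_{i_1}\cdots s_{i_m}$ is a reduced expression for $v$ and each $g_\ell$ is either $y_{i_\ell}(p_\ell)$ with $p_\ell>0$ (at free positions $\ell\in J^+_{\u_+}$) or $\dot s_{i_\ell}$ (at positions $\ell\in J^\circ_{\u_+}$ prescribed by the PDS $\u_+$ for $u$ in $\v$). Using the expansion $y_i(p)=\sum_{k\geq 0}p^k f_i^{(k)}$ together with Lemmas \ref{lem:positive2} and \ref{lem:canonbasis2} and Lusztig's positivity of $\B(\rho_J)$, I would expand $g\cdot\eta_J$ in $\B(\rho_J)$ by processing factors from right to left: each divided power $f_i^{(k)}$ acts by a non-negative integer matrix, so the coefficient of every canonical basis vector in $g\cdot\eta_J$ is a polynomial in the $p_\ell$ with non-negative coefficients. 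The non-vanishing then reduces to exhibiting, for each $z\in[u,v]$, at least one distinguished subexpression of $\v$ with product $z$ whose ``on'' positions include all forced positions in $J^\circ_{\u_+}$; any such subexpression contributes a strictly positive monomial to $p^{z\cdot\rho_J}(X)$ which cannot be cancelled.

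The main obstacles are two. First, the torus-normalizer elements $\dot s_{i_\ell}$ at forced positions do not act by non-negative matrices on $\B(\rho_J)$, so I would need to argue that the signs they introduce are absorbed by the canonical-basis normalization which makes every $\dot w\cdot\eta_J$ a canonical basis element with the correct sign (Lemma \ref{lem:positive2}); this amounts to showing that on the specific extremal vectors $\dot z\cdot\eta_J$ that we track, the Marsh--Rietsch product acts with the expected sign. Second, for intermediate $z\in(u,v)$ the required compatible distinguished subexpression must actually be constructed: the endpoints $z=u$ and $z=v$ are immediate (take $\u_+$ itself, or the full reduced expression $\v$, respectively), while for general $z$ I would induct on $\ell(z)-\ell(u)$, using the Lifting Property to extend a compatible subexpression for some $z'\lessdot z$ in $[u,v]$ by one additional simple reflection taken from $\v$, or alternatively invoke Deodhar's combinatorial stratification of the Richardson variety by distinguished subexpressions, which supplies the required subexpression by construction.
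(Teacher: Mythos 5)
Your top-level logic is the same as the paper's: once one knows that every $X\in\mathcal{P}^J_{u,v;>0}$ has list exactly $\{z\cdot\rho_J : u\le z\le v\}$ (Theorem \ref{th:MR}), the corollary follows in one line from the Gelfand--Serganova criterion that two points share a stratum iff they share a list, and that is precisely how the paper proves it. What you have actually undertaken is a re-proof of Theorem \ref{th:MR} itself, and there your vanishing step has a genuine gap when $P\neq B$. Lifting $X$ to $\tilde X\in\mathcal{R}_{u,v}\subset \overline{B\dot vB/B}\cap\overline{B^-\dot uB/B}$ and applying the Pl\"ucker-coordinate descriptions of the two Schubert varieties to the weight $z\cdot\rho_J$ only yields the two separate conditions ``some representative of $zW_J$ is $\le v$'' and ``some representative of $zW_J$ is $\ge u$''; these do \emph{not} combine to give a single representative in $[u,v]$. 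Concretely, take $W=S_3$, $W_J=\langle s_1\rangle$ (so $G/P_J=Gr_{2,3}$), $v=s_1s_2\in W^J$, $u=s_1$, so $[u,v]=\{s_1,s_1s_2\}$. The cell is parametrized by $g=\dot s_1 y_2(p)$, and $g\cdot(e_1\wedge e_2)=e_1\wedge e_2+p\,e_2\wedge e_3$, so the Pl\"ucker coordinate $p_{\{1,3\}}$ vanishes, as Theorem \ref{th:MR} predicts. Yet the coset $s_2W_J=\{s_2,\,s_2s_1\}$ has $s_2\le v$ and $s_2s_1\ge u$, so your criterion fails to exclude it. Repairing this requires the finer structure of \emph{projected} Richardson varieties; the paper gets it from the moment map together with Rietsch's description of the $0$-cells in cell closures (Corollary \ref{cor:0-cells}), which is exactly Proposition \ref{prop:A}.

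Two further points. First, your non-vanishing step is in the spirit of the paper's Proposition \ref{prop:B}, but the two ``obstacles'' you name (the signs introduced by the factors $\dot s_{i_\ell}$, and the existence of a distinguished subexpression for each $z\in[u,v]$ compatible with the forced positions of $\mathbf{u}_+$) are the actual mathematical content, and your sketch does not resolve them; the paper discharges the positivity globally via Proposition \ref{prop:embedding} (nonnegativity of canonical-basis coefficients on $(G/P_J)_{\ge 0}$, from \cite{RW-CW}) and replaces the subexpression combinatorics by an induction on $\ell(v)$ that peels off the first factor $g_1$ and uses Lemma \ref{lem:canonbasis2}. Second, the ``in particular'' clause does not follow ``at once'' from the Gelfand--Serganova theorem alone: one must also verify the hypothesis of the corollary for $G/B$ and for $Gr_{k,n}$, i.e.\ that distinct cells have distinct lists. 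For $G/B$ this is clear since a Bruhat interval determines its endpoints; for the Grassmannian it is the (true but nontrivial) statement that distinct positroid cells have distinct positroids, which deserves at least a citation.
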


\begin{remark}
In general, it is possible for two distinct cells to lie in the same  
Gelfand-Serganova stratum.
 For example, let $W = S_4 = S_{\{1,2,3,4\}}$ and $W_J = 
S_{\{1\}} \times S_{\{2,3\}} \times S_{\{4\}}$.  
Let $(u,v) = (e, 4231)$ and let $(u',v') = (1324, 4231)$.  Then 
the minimal-length coset representatives in $W/W_J$ of both 
$\{z \ \vert \ u \leq z \leq v\}$ and 
$\{z \ \vert \ u' \leq z \leq v'\}$ coincide and hence
$\{z \cdot \rho_J \in W(\rho_J) \ \vert \ u \leq z \leq v \}
= \{z \cdot \rho_J \in W(\rho_J) \ \vert \ u \leq z \leq v \}$.
It follows that the cells 
$\PP_{u,v; >0}^J$ and 
$\PP_{u',v'; >0}^J$ both lie in the same Gelfand-Serganova stratum.
\end{remark}

To prove Theorem \ref{th:MR}, we will prove 
Proposition \ref{prop:A} and 
Proposition \ref{prop:B} below.

\begin{proposition}\label{prop:A}
Let $u,v \in W$ with $v\in W^J$ and $u \leq v$.
If $X \in \PP_{u,v; >0}^J$, then 
the list $L_X$ is contained in $\{ z \cdot \rho_J \in W(\rho_J) \ \vert \ u \leq z \leq v \}$.
\end{proposition}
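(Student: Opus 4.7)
The plan is to translate the claim about $L_X$ into a claim about $T$-fixed points of $\overline{TX}$, and then to bound these using the standard $T$-fixed-point description of Richardson varieties.

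First I would note that $X\in \PP_{u,v;>0}^J=\pi^J(\mathcal R_{u,v;>0})\subseteq \pi^J(\mathcal R_{u,v})$, and $\pi^J(\mathcal R_{u,v})$ is $T$-invariant (since $\mathcal R_{u,v}$ is $T$-invariant and $\pi^J$ is $T$-equivariant). Hence $TX\subseteq \pi^J(\mathcal R_{u,v})$, and taking closures, using that $\pi^J:G/B\to G/P_J$ is proper, gives $\overline{TX}\subseteq \pi^J(\overline{\mathcal R_{u,v}})$. Combining Theorem \ref{moment} with Proposition \ref{prop:moment}, the moment map $\mu$ induces a bijection between the $T$-fixed points of $\overline{TX}$ and the vertices of $\mu(\overline{TX})$, which are exactly the elements of $L_X$. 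A direct computation from the formula for $\mu$ shows $\mu(wP_J)=w\cdot\rho_J$ for every $T$-fixed point $wP_J$ of $G/P_J$, since only the generalized Pl\"ucker coordinate of weight $w\cdot\rho_J$ is nonzero on $wP_J$. Thus this bijection takes $wP_J\mapsto w\cdot\rho_J$, identifying $L_X$ with the set of $T$-fixed points of $\overline{TX}$.

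Next I would determine which $T$-fixed points of $G/P_J$ can occur in $\pi^J(\overline{\mathcal R_{u,v}})$. Classically, the $T$-fixed points of $\overline{\mathcal R_{u,v}}$ in $G/B$ are exactly $\{zB:u\leq z\leq v\}$. For $w\in W^J$, the fiber $\pi^{J,-1}(wP_J)=wP_J/B$ is a projective $T$-variety with $T$-fixed points $\{wxB:x\in W_J\}$. If $wP_J\in \pi^J(\overline{\mathcal R_{u,v}})$, then $\overline{\mathcal R_{u,v}}\cap wP_J/B$ is a nonempty closed $T$-invariant subset of the projective variety $wP_J/B$, so by the Borel fixed-point theorem it contains some $wxB$ with $x\in W_J$; this forces $u\leq wx\leq v$. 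Consequently every $T$-fixed point of $\pi^J(\overline{\mathcal R_{u,v}})$ has the form $wP_J$ with $wW_J\cap[u,v]\neq\emptyset$, i.e.\ $wP_J=zP_J$ for some $z\in [u,v]$.

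Combining the two steps, every element of $L_X$ equals $w\cdot\rho_J$ for some $T$-fixed point $wP_J\in\overline{TX}\subseteq \pi^J(\overline{\mathcal R_{u,v}})$, and therefore equals $z\cdot\rho_J$ for some $u\leq z\leq v$, which is the desired inclusion. There is no serious obstacle here: each step is a standard application of the moment-map dictionary, $T$-equivariance, and the description of $T$-fixed points in Schubert/Richardson closures; the positivity hypothesis is only used through the inclusion $\PP_{u,v;>0}^J\subseteq \pi^J(\mathcal R_{u,v})$.
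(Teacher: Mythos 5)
Your argument is correct, but it takes a genuinely different route from the paper's. The paper stays entirely inside the totally nonnegative world: it uses Lemma \ref{lem:postorus} to replace $\mu(\overline{TX})$ by $\mu(\overline{T_{>0}X})$, Lemma \ref{pos-torus-preserves} to conclude $\overline{T_{>0}X}\subset\overline{\PP^J_{u,v;>0}}$, and then invokes Corollary \ref{cor:0-cells} --- which rests on Rietsch's description of closures of cells of $(G/P)_{\geq 0}$ --- to see that the only torus fixed points in that closure are the $\dot z P_J$ with $u\leq z\leq v$. You instead discard positivity at the outset (keeping only $X\in\pi^J(\mathcal R_{u,v})$), pass to the ordinary projected Richardson variety, and identify its $T$-fixed points by combining the classical fixed-point description of $\overline{\mathcal R_{u,v}}$ in $G/B$ with a Borel fixed-point argument on the fibers $wP_J/B$ of $\pi^J$; the moment-map dictionary (Theorem \ref{moment} plus Proposition \ref{prop:moment}) then converts this into the statement about $L_X$ exactly as in the paper. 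Your route avoids Rietsch's closure theorem entirely and in fact establishes the stronger statement that $L_X\subseteq\{z\cdot\rho_J \ \vert \ u\leq z\leq v\}$ for \emph{every} $X\in\pi^J(\mathcal R_{u,v})$, not just totally positive $X$ (consistent with Remark \ref{rem:Richardson}); the paper's route avoids properness of $\pi^J$ and the Borel fixed-point theorem but leans on the closure combinatorics of the cell decomposition. One small point of care in your version: the paper works with real points in the real topology, so to invoke the Borel fixed-point theorem and the closedness of $\pi^J(\overline{\mathcal R_{u,v}})$ you should pass to complex points (the real closure $\overline{TX}$ is contained in the complex Zariski closure, and the $T$-fixed points of $G/P_J$ are the same in both settings), after which the argument goes through unchanged.
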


Before proving Proposition \ref{prop:A}, we need a lemma about
the moment map image of 
\emph{positive} torus orbits.

\begin{lemma}\label{lem:postorus}
Let $X \in G/P$.  Recall that $T_{>0}$ denotes the positive part of the torus.
Then 
$\mu({TX}) = \mu({T_{>0}X})$ and 
$\mu(\overline{TX}) = \mu(\overline{T_{>0}X})$.
\end{lemma}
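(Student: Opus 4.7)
The plan is to exploit two observations: that the moment map formula depends on $X$ only through the squared moduli $|p^\alpha(X)|^2$, and that the torus $T$ differs from $T_{>0}$ only by a finite group of ``sign components.''

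First I would record the basic transformation rule. For any $t \in T$ and any weight $\alpha$, the action on the weight-basis vector $e_\alpha$ scales it by $\alpha(t)$, so if $X$ has generalized Pl\"ucker coordinates $p^\alpha(X)$, then $p^\alpha(tX) = \alpha(t)\, p^\alpha(X)$, and hence $|p^\alpha(tX)|^2 = \alpha(t)^2 |p^\alpha(X)|^2$. Plugging this into the definition of $\mu$,
\[
\mu(tX) \;=\; \frac{\sum_\alpha \alpha(t)^2\, |p^\alpha(X)|^2\, \alpha}{\sum_\alpha \alpha(t)^2\, |p^\alpha(X)|^2}.
\]
Since $G$ is split over $\R$, we have $T = T(\R) \cong (\R^*)^r$ and $T_{>0} \cong (\R_{>0})^r$ is the identity component, so the component group $T/T_{>0} \cong (\Z/2)^r$. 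I can therefore choose a set-theoretic lift $\Sigma \subset T$ of $T/T_{>0}$ whose elements $\epsilon$ satisfy $\epsilon^2 = 1$; for any character $\alpha$ this gives $\alpha(\epsilon)^2 = 1$.

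Next I would establish the first equality. Any $t \in T$ factors as $t = \epsilon\, t_+$ with $\epsilon \in \Sigma$ and $t_+ \in T_{>0}$, and by the preceding remark $\alpha(t)^2 = \alpha(\epsilon)^2\alpha(t_+)^2 = \alpha(t_+)^2$ for every weight $\alpha$. Substituting into the displayed formula gives $\mu(tX) = \mu(t_+ X)$, so $\mu(TX) \subseteq \mu(T_{>0}X)$; the reverse inclusion is trivial, yielding $\mu(TX) = \mu(T_{>0}X)$.

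Finally I would handle the closure. Because $\Sigma$ is finite and the $T$-action is continuous, $\overline{TX} = \bigcup_{\epsilon \in \Sigma} \epsilon\, \overline{T_{>0}X}$. Any $Y \in \overline{TX}$ can therefore be written as $Y = \epsilon Y'$ with $Y' \in \overline{T_{>0}X}$, and the same computation as above (using $\alpha(\epsilon)^2 = 1$) gives $\mu(Y) = \mu(Y')$. Hence $\mu(\overline{TX}) = \mu(\overline{T_{>0}X})$.

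I do not anticipate a genuine obstacle: the substance of the argument is the elementary fact that the moment map formula only sees $\alpha(t)^2$, which is insensitive to the component of $t$ in $T/T_{>0}$. The only thing to be slightly careful about is the choice of a multiplicative section $\Sigma$ and the verification that such sign lifts square to the identity, which is immediate since $T$ is split over $\R$.
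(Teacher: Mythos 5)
Your argument rests on the same key observation as the paper's: the moment map sees only the squared moduli $|p^\alpha(\cdot)|^2$, and every weight $\alpha$ is ``unimodular'' on the part of $T$ complementary to $T_{>0}$, so that part is invisible to $\mu$. Where you differ is in what you take $T$ to be. You treat $T$ as the real split torus $(\R^*)^r$, so that $T/T_{>0}\cong(\Z/2)^r$ is finite, lift it by a section $\Sigma$ of elements of order dividing $2$, and use finiteness of $\Sigma$ to commute the union with the closure; this is clean, correct, and consistent with the convention announced in Section \ref{sec:preliminaries} that $G$ and related spaces are identified with their real points. The paper's own proof, however --- and the way the lemma is consumed downstream, via the Atiyah/Gelfand--Serganova convexity theorem (Theorem \ref{moment}), which concerns closures of \emph{complex} torus orbits --- takes $T$ to be generated by the $\alpha_j^{\vee}(\ell)$ with $\ell\in\C^*$. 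In that setting $T/T_{>0}\cong (S^1)^r$ is not finite, and your step $\overline{TX}=\bigcup_{\epsilon\in\Sigma}\epsilon\,\overline{T_{>0}X}$ does not apply as written. The repair is routine and keeps your structure intact: factor $t=s\,t_+$ with $s$ in the compact torus $K\cong(S^1)^r$ and $t_+\in T_{>0}$, note that $|\alpha(s)|=1$ for every weight $\alpha$ (the paper deduces this from density of the roots of unity in $S^1$; it is also immediate since $S^1$ is compact and $\alpha$ is a homomorphism), and use compactness of $K$ to get $\overline{TX}=K\cdot\overline{T_{>0}X}$, after which your computation with $|\alpha(s)|^2=1$ goes through verbatim. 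With that adjustment the proof covers the case actually needed in Propositions \ref{prop:A} and \ref{prop:closure}.
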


\begin{proof}
Recall that the torus $T$ acts on the highest weight vector $\eta_J$ of
$V_{\rho_J}$ by  $t \eta_J = \rho_J(t) \eta_J$ for all $t\in T$.
So the action of $t\in T$ on $X\in G/P$ will scale the Pl\"ucker coordinates of $X$
by $\rho_J(t)$.

Since the elements $\alpha_j^{\vee}(\ell)$ for $\ell \in \C^*$ generate $T$,
and we can write any $\ell \in \C^*$ in the form 
$r e^{i\theta}$ with $r \in \R_{>0}$ and $\theta \in \R$,
to prove the lemma, it suffices to show that for any 
positive $r$ and real $\theta$,
\begin{equation}\label{identity}
\mu(\alpha_j^{\vee}(r e^{i \theta}) X) = \mu(\alpha_j^{\vee}(r) X).
\end{equation}

First suppose that $e^{i \theta}$ has finite order in the group of complex numbers
of norm $1$.  Then $\alpha_j^{\vee}(e^{i \theta})$ has finite order,
and hence $|\rho_J( \alpha_j^{\vee}(e^{i \theta}))| = 1$.
But now within the group of unit complex numbers, the elements of finite order are
dense.  Therefore for any unit complex number $e^{i \theta}$, 
we have $|\rho_J( \alpha_j^{\vee}(e^{i \theta}))| = 1$.

Now note that since $\rho_J$ and $\phi_j$ are homomorphisms, we have
$$|\rho_J(\alpha_j^{\vee}(r e^{i \theta}))| = 
|\rho_J(\alpha_j^{\vee}(r)) \rho_J( \alpha_j^{\vee}(e^{i \theta}))| = 
|\rho_J(\alpha_j^{\vee}(r))|.$$
And since the moment map depends only on the absolute value of the 
Pl\"ucker coordinates, it follows that 
\eqref{identity} holds.
\end{proof}

We now turn to the proof of Proposition \ref{prop:A}.
\begin{proof}[Proof of Proposition \ref{prop:A}]
Consider $ z \cdot \rho_J \in L_X$.  
Since the extremal weight vectors are in bijection with cosets
$W/W_J$, we may assume that $z \in W^J$.
Proposition \ref{prop:moment} implies that $ z \cdot \rho_J$ is 
a vertex of $\mu(\overline{TX})$, and 
Lemma \ref{lem:postorus} therefore implies that $ z \cdot \rho_J \in
\mu(\overline{T_{>0}X})$.  Choose
$X'\in \overline{T_{>0}X}$ such that $\mu(X')= z \cdot \rho_J$.
Since $z \cdot \rho_J$ is a vertex of $\mu(\overline{TX})$,
Theorem \ref{moment} implies that $z \cdot \rho_J$
is the image of a torus fixed point of $\overline{TX}$.
Therefore $X'$ is a torus fixed point of $G/P$ and must necessarily
be the point $\dot z P$.

By Lemma \ref{pos-torus-preserves}, $\overline{T_{>0}X} \subset
\overline{\PP_{u,v; >0}^J}$.
Therefore $X' = \dot zP \in \overline{\PP_{u,v; >0}^J}$.  
It follows that $\dot zP$ is a $0$-cell in the closure of 
$\PP_{u,v;>0}^J$, so by Corollary \ref{cor:0-cells},
we must have $u \leq z \leq v$.
\end{proof}

\begin{proposition}\label{prop:B}
Let $u,v \in W$ with $v\in W^J$ and $u \leq v$.
If $X \in \PP_{u,v; >0}^J$, then 
the list $L_X$ contains $\{ z \cdot \rho_J \in W(\eta_J) \ \vert \ u \leq z \leq v \}$.
\end{proposition}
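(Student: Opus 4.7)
The plan is to show, for each $z \in W^J$ with $u \leq z \leq v$, that the generalized Plücker coordinate $p^{z \cdot \rho_J}(X)$ is nonzero, which will yield $z \cdot \rho_J \in L_X$ by definition. Using Theorem \ref{t:parameterization} together with Remark \ref{rem:partial-parameterization}, write $X = gP_J$ with $g = g_1 g_2 \cdots g_m \in G_{\u_+,\v}^{>0}$, so that each $g_\ell$ equals $y_{i_\ell}(p_\ell)$ (for $\ell \in J^+_{\u_+}$, with $p_\ell > 0$) or $\dot s_{i_\ell}$ (for $\ell \in J^\circ_{\u_+}$). Choosing the lift of $X$ to $V_{\rho_J}$ to be $g \cdot \eta_J$, the Plücker coordinate $p^{z \cdot \rho_J}(X)$ becomes the coefficient $c_z(p_1,\dots,p_k)$ of the extremal canonical basis vector $\dot z \cdot \eta_J$ (which lies in $\B(\rho_J)$ by Lemma \ref{lem:positive2}) in the canonical basis expansion of $g \cdot \eta_J$. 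As a function of the parameters $p_\ell$, this coefficient is a polynomial, and the proof reduces to showing that this polynomial does not vanish on $\R_{>0}^k$.

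The argument then proceeds in two steps. First (positivity): after reducing to the simply-laced case via folding, show that $c_z$ has nonnegative real coefficients in the $p_\ell$. This invokes Lusztig's positivity of canonical bases \cite{Lus:CanonBasis, Lus:Quantum}: each factor $y_i(p) = \sum_{k \geq 0} p^k f_i^{(k)}$ acts with nonnegative integer coefficients on the canonical basis when $p > 0$, and the $\dot s_i$ factors at positions in $J^\circ_{\u_+}$ are controlled using Lemmas \ref{lem:positive2} and \ref{lem:canonbasis2}, which guarantee that the rightward partial products $\dot s_{i_\ell} \dot s_{i_{\ell+1}} \cdots \dot s_{i_m} \cdot \eta_J$ remain in the canonical basis. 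Second (nonvanishing): show that $c_z$ is not identically zero as a polynomial. By Corollary \ref{cor:0-cells}, the torus fixed point $\dot z P_J$ lies in $\overline{\PP_{u,v;>0}^J}$, so choose a sequence $X_n = g_n P_J \in \PP_{u,v;>0}^J$ with $X_n \to \dot z P_J$ in $G/P_J$. Via the embedding $G/P_J \hookrightarrow \mathbb{P}(V_{\rho_J})$, there exist scalars $\lambda_n \in \C^*$ with $\lambda_n \, g_n \cdot \eta_J \to \dot z \cdot \eta_J$ in $V_{\rho_J}$; taking the coefficient of $\dot z \cdot \eta_J$ yields $\lambda_n \, c_z(g_n) \to 1$, so $c_z(g_n) \neq 0$ for $n$ sufficiently large, and hence $c_z$ is not the zero polynomial.

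Combining the two steps, $c_z$ is a nonnegative polynomial that is not identically zero, so it takes strictly positive values on $\R_{>0}^k$. Therefore $p^{z \cdot \rho_J}(X) > 0$ for every $X \in \PP_{u,v;>0}^J$, giving $z \cdot \rho_J \in L_X$, as required. The main obstacle is the first step: the action of $\dot s_i$ does not in general preserve nonnegativity of canonical basis coefficients (it can introduce signs), so rigorous verification requires careful bookkeeping of the alternating $y$- and $\dot s$-factors in the Marsh-Rietsch product, leveraging Lemma \ref{lem:canonbasis2} to propagate positivity through the product and obtain the desired nonnegative polynomial expression for $c_z$. By contrast, the second step is a soft continuity argument relying only on the known closure structure of the cells in $(G/P)_{\geq 0}$.
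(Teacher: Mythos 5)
Your overall strategy is sound and genuinely different from the paper's. The paper proves Proposition \ref{prop:B} by induction on $\ell(v)$, peeling off the first factor $g_1$ of the Marsh--Rietsch parameterization: it splits into the cases $g_1=y_{i_1}(p_1)$ and $g_1=\dot s_{i_1}$, uses the induction hypothesis for $X'=g_2\cdots g_m P_J$, and then uses Proposition \ref{prop:embedding} (nonnegativity of canonical-basis coefficients on $(G/P)_{\geq 0}$) to rule out cancellation when the leftover factor is applied. You instead avoid the induction entirely: your ``nonvanishing'' step is a soft continuity argument from Corollary \ref{cor:0-cells} (the torus fixed point $\dot z P_J$ lies in $\overline{\PP^J_{u,v;>0}}$, so the coefficient of $\dot z\cdot\eta_J$ is nonzero somewhere on the cell), and positivity then propagates to every point of the cell. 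That step is correct, it is in the same spirit as the paper's proof of Proposition \ref{prop:A}, and combined with an appropriate positivity statement it yields a cleaner proof than the paper's. One cosmetic point: you should run the argument over cosets $zW_J$ meeting $[u,v]$ rather than over $z\in W^J\cap[u,v]$, since for $z\in[u,v]$ the minimal coset representative $\tilde z$ need not satisfy $u\leq\tilde z$; but $z\cdot\rho_J=\tilde z\cdot\rho_J$ and Corollary \ref{cor:0-cells} is stated at the level of cosets, so nothing breaks.

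The genuine gap is your Step 1, and you have flagged it yourself. To conclude from ``nonzero at one point of the cell'' to ``nonzero at the given $X$'' you need either (i) that $c_z$ is a polynomial in the $p_\ell$ with nonnegative coefficients, or (ii) that the coefficients of $g\cdot\eta_J$ in $\B(\rho_J)$ are nonnegative at every point of the cell \emph{and} that their support is constant on the cell. The route you sketch for (i) does not work as stated: Lemma \ref{lem:canonbasis2} only controls the action of $f_{i_1}^{(a)}$ on the single extremal vector $\dot s_{i_2}\cdots\dot s_{i_m}\cdot\eta_J$, not on an arbitrary nonnegative combination of canonical basis vectors, and the factors $\dot s_{i_\ell}$ with $\ell\in J^\circ_{\u_+}$ genuinely introduce signs, so ``careful bookkeeping'' of the alternating product is not a proof. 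The correct way to close the gap is to invoke Proposition \ref{prop:embedding} (that is, \cite[Lemma 6.1]{RW-CW}), which supplies exactly (ii): nonnegativity of the coefficients together with constancy of the support on each cell. With that substitution (and the folding argument for non--simply-laced $G$, which you would still need to carry out as in the paper), your two-step argument is complete.
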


To prove Proposition \ref{prop:B}, we 
will need Proposition \ref{prop:embedding} below, 
which follows from 
\cite[Lemma 6.1]{RW-CW}.
In fact the statements in \cite[Section 6]{RW-CW} used the 
$\rho$-representation $V_{\rho}$ of $G$, but the arguments apply unchanged
when one uses $V_{\rho_J}$ in place of $V_{\rho}$.

\begin{proposition}\label{prop:embedding}
Consider $G/P_J$ where $G$ is simply laced. 
Let $\eta_J$ be a highest weight vector of $V_{\rho_J}$
and let $\mathcal{B} = \mathcal{B}(\rho_J)$ be the canonical basis
of $V_{\rho_J}$
\cite{Lus:CanonBasis} which contains $\eta_J$.
Consider the embedding
$$G/P_J  \hookrightarrow \mathbb{P}(V_{\rho_J})$$  where
$$gP_J \mapsto \langle g \cdot \eta_J \rangle.$$
Then if $gP_J \in (G/P_J)_{\geq 0}$, 
the line $\langle g \cdot \eta_J \rangle$, when expanded in 
$\mathcal{B}$, has non-negative coefficients.
Moreover, the set of coefficients which are positive (respectively, zero)
depends only on which cell of $(G/P_J)_{\geq 0}$ the element 
$gP_J$ lies in.
\end{proposition}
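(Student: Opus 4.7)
The plan is to combine the Marsh-Rietsch parameterization of cells (Theorem \ref{t:parameterization} and Remark \ref{rem:partial-parameterization}) with the positivity properties of Lusztig's canonical basis in the simply-laced case. Fix a reduced expression $\v = s_{i_1}\cdots s_{i_m}$ for $v$ and let $\u_+$ be the unique positive distinguished subexpression for $u$ inside $\v$. Any $gP_J \in \PP^J_{u,v;>0}$ has a representative $g = g_1 g_2 \cdots g_m \in G^{>0}_{\u_+,\v}$, where each $g_\ell$ equals either $y_{i_\ell}(p_\ell)$ with $p_\ell \in \R_{>0}$ or the lift $\dot{s}_{i_\ell}$.

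I would then compute $g \cdot \eta_J$ by applying these factors to $\eta_J$ from right to left. Each $y_{i_\ell}(p_\ell)$ expands as $\sum_{n\geq 0} p_\ell^n f_{i_\ell}^{(n)}$, and each $\dot{s}_{i_\ell}$ can be replaced by a divided power of $f_{i_\ell}$ using Lemma \ref{lem:canonbasis2}: if the partial vector already built has the form $f_{j_1}^{(b_1)} \cdots f_{j_r}^{(b_r)} \cdot \eta_J$ for a reduced word $s_{j_1} \cdots s_{j_r}$, then $\dot{s}_{i_\ell}$ acting on it equals $f_{i_\ell}^{(a)}$ applied to it for a suitable $a \in \N$. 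Proceeding inductively yields an expansion
\[
g \cdot \eta_J \;=\; \sum_{\mathbf{a}} c_{\mathbf{a}}(p_1,\dots,p_m) \, f_{i_1}^{(a_1)} f_{i_2}^{(a_2)} \cdots f_{i_m}^{(a_m)} \cdot \eta_J,
\]
where each $c_{\mathbf{a}}$ is a monomial in the $p_\ell$'s with non-negative real coefficient (only the $p_\ell$'s with $\ell \in J^{+}_{\u_+}$ appear as variables). Invoking Lusztig's positivity theorem, which in simply-laced type asserts that each divided power $f_i^{(n)}$ acts on $\mathcal{B}(\rho_J)$ with non-negative integer structure constants, shows that $g \cdot \eta_J$ expands in $\mathcal{B}(\rho_J)$ with non-negative real coefficients, which proves the first assertion.

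For the second assertion, each coefficient of a basis vector $b \in \mathcal{B}(\rho_J)$ in the expansion of $g \cdot \eta_J$ is a polynomial in the $p_\ell$'s with non-negative real coefficients, obtained by summing the non-negative contributions $c_{\mathbf{a}}(p_1,\dots,p_m)$ over all tuples $\mathbf{a}$ for which the canonical basis element $b$ appears in the expansion of $f_{i_1}^{(a_1)}\cdots f_{i_m}^{(a_m)}\cdot\eta_J$. Since every $p_\ell$ is positive, such a polynomial takes a positive value at $(p_1,\dots,p_m)$ if and only if it is not identically zero, and whether it is identically zero is determined by the combinatorial data $(\u_+,\v)$, hence by the cell indexed by $(u,v)$.

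The main obstacle is the bookkeeping in the inductive rewriting: one must check that at each stage of the right-to-left evaluation, the intermediate vector can indeed be written as a non-negative real combination of vectors of the form $f_{j_1}^{(b_1)} \cdots f_{j_r}^{(b_r)} \cdot \eta_J$ with $s_{j_1}\cdots s_{j_r}$ a reduced word, so that Lemma \ref{lem:canonbasis2} continues to apply to the next $\dot{s}_{i_\ell}$ encountered. The PDS property of $\u_+$ and the definitions of $J^\circ_{\u_+}$ and $J^+_{\u_+}$ ensure that the accumulated Weyl group elements at each stage are reduced, making the inductive step go through. Once this setup is in place, both claims follow from Lusztig's positivity theorem and the elementary observation that a non-zero polynomial with non-negative real coefficients is positive on positive arguments.
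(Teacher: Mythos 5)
The paper does not actually prove this proposition; it invokes \cite[Lemma 6.1]{RW-CW} and remarks that the argument there carries over from $V_{\rho}$ to $V_{\rho_J}$. So your attempt to give a self-contained proof via the Marsh--Rietsch parameterization is a reasonable thing to try, and the parts of your argument that concern the factors $y_{i_\ell}(p_\ell)=\sum_n p_\ell^n f_{i_\ell}^{(n)}$ are fine: Lusztig's positivity theorem for simply-laced canonical bases does give non-negative structure constants for the divided powers $f_i^{(n)}$, and your ``a polynomial with non-negative coefficients is positive at a positive point iff it is not identically zero'' argument correctly handles the cell-dependence of the support \emph{once} a positive expansion of the required form is in hand.

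The gap is in your treatment of the factors $\dot{s}_{i_\ell}$ with $\ell\in J^{\circ}_{\u_+}$. Lemma \ref{lem:canonbasis2} only asserts $\dot s_{i_1}\dot s_{i_2}\cdots \dot s_{i_n}\cdot\eta=f_{i_1}^{(a)}\,\dot s_{i_2}\cdots \dot s_{i_n}\cdot\eta$, i.e.\ it controls $\dot s_i$ acting on an \emph{extremal weight vector}, which is the $e_i$-highest vector of its $\alpha_i$-string. It says nothing about $\dot s_i$ acting on a general vector of the form $f_{j_1}^{(b_1)}\cdots f_{j_r}^{(b_r)}\cdot\eta_J$, and indeed no identity $\dot s_i\cdot\xi=f_i^{(a)}\cdot\xi$ can hold there: on an $\alpha_i$-string of length $N$ one has $\dot s_i\cdot v_k=(-1)^k v_{N-k}$ (already for $SL_2$, $\dot s\cdot(f\eta)=-\eta$ in the standard representation, and $\dot s\cdot(f\eta)=-f\eta$ in $V_{2\omega}$), so $\dot s_i$ genuinely introduces negative signs on vectors that are not $e_i$-highest. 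The intermediate vectors $g_{\ell+1}\cdots g_m\cdot\eta_J$ in your right-to-left evaluation are sums of many canonical basis elements, not extremal weight vectors, so your rewriting step is unjustified; and the condition you propose to verify (that the accumulated Weyl group words are reduced) does not address the actual issue. What the argument needs at each $\ell\in J^{\circ}_{\u_+}$ is that the intermediate vector is annihilated by $e_{i_\ell}$, so that $\dot s_{i_\ell}$ acts on each of its $\alpha_{i_\ell}$-weight components as a divided power of $f_{i_\ell}$ with a plus sign. This is a genuine (and not obvious) property of the positive distinguished subexpression --- note for instance that $e_1\bigl(y_2(p_1)y_1(p_2)\cdot\eta\bigr)\neq 0$ in $SL_3$, which is exactly why $(s_1,1,1)$ fails to be distinguished in $s_1s_2s_1$ --- and proving it is the real content hiding behind the citation to \cite{RW-CW}. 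Without it, neither the non-negativity claim nor the support claim follows from your computation.
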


We are now ready to prove Proposition \ref{prop:B}.  We will first
prove it in the simply-laced case, using properties of the canonical 
basis, following Marsh and Rietsch,
and then prove it in the general case, using folding.

\begin{proof}\cite{MR3}[Proof of Proposition \ref{prop:B} when $G$ is simply-laced.]
Let $z \in [u,v]$.  We will use induction on $\ell(v)$ to show that 
$ z \cdot \rho_J$ is in the list $L_X$.

By Remark \ref{rem:partial-parameterization}, we can write 
$X = g P_J$ for $g = g_1 \dots g_n \in G_{\u_+,\v}^{>0}$,
where $\v = s_{i_1} \dots s_{i_n}$ is a reduced expression of $v$.
Define $\v' = s_{i_2} \dots s_{i_n}$, 
$g' = g_2 \dots g_n$, and  $X' = g' P_J$.
We need to consider two cases: that $g_1 = y_{i_1}(p_1)$,
and $g_1 = \dot s_{i_1}$.

In the first case, we have $X' \in \PP_{u,v';>0}^J$.
So the induction hypothesis implies that 
$$L_{X'} = \{ z \cdot \rho_J \ \vert \ u \leq z \leq v' \}.$$
Here we must have $L_{X'} \subset L_X$, since 
$\PP_{u,v';>0}^J \subset \overline{\PP_{u,v;>0}^J}$.
So we are done if $u \leq z \leq v'$.
Otherwise, $u \leq z \leq v$ but $z \nleq v'$.
Then any subexpression for $z$ within $\v$ must use the $s_{i_1}$,
and so $u \leq s_{i_1} z \leq v'$.
And now by induction, we have $ s_{i_1} z \cdot \rho_J \in L_{X'}$.

By Proposition \ref{prop:embedding},
the line  $\langle X' \cdot \eta_j \rangle = \langle g' \cdot \eta_J \rangle$ is spanned by a vector 
$\xi$ which is a non-negative linear combination of 
canonical basis elements.
Since $s_{i_1}  z \cdot \rho_J \in L_{X'}$, 
we have that $\xi = c \dot s_{i_1} \dot z \cdot \eta_J + 
\text{other terms},$ where $c$ is positive.
By Lemma \ref{lem:canonbasis2},
when we apply $y_{i_1}$ to $\xi$ we see that
$\langle X \cdot \eta_J \rangle = \langle c' \dot z \cdot \eta_J + \text{other terms}\rangle,$ where $c' \neq 0$.
Therefore $z \cdot \rho_J \in L_X$.

In the second case, we have that $g_1 = \dot s_{i_1}$,
so $u':=s_{i_1} u \lessdot u$ and 
$v':=s_{i_1} v \lessdot v$.  By the induction hypothesis,
$$L_{X'} = \{ z \cdot \rho_J \ \vert \ u' \leq z \leq v'\}.$$
Consider again $u \leq z \leq v$.
Since the positive subexpression $\u_+$ for $u$ in $\v$
begins with $s_{i_1}$, we must have 
$u \nleq v'$.  
But then $z \nleq v'$.

Now $z \leq v$ and $z \nleq v'$ implies that any reduced expression
for $z$ in $\v$ must use the $s_{i_1}$.  So if we let 
$z':= s_{i_1} z$, then 
$u' \leq z' \leq v'$.
Therefore by the induction hypothesis, $ z' \cdot \rho_J \in L_{X'}$,
i.e.
$\langle X' \cdot \eta_J \rangle = 
\langle c \dot z' \cdot \eta_J + \text{other terms} \rangle$
where $c \neq 0$.
But now $\langle X \cdot \eta_J \rangle = 
\langle \dot s_{i_1} X' \cdot \eta_J \rangle = 
\langle c \dot s_{i_1} \dot z' \cdot \eta_J + \text{other terms} \rangle = 
\langle c \dot z \cdot \eta_J + \text{other terms} \rangle.$
Therefore $z \cdot \rho_J \in L_X$.
\end{proof}

Before proving Proposition \ref{prop:B} in the general case, 
we give a brief overview
of  how one can view each $\overline G$ which is not simply laced
in terms of a simply laced group $G$ by ``folding."
For a detailed explanation of how folding works, see \cite{Stembridge}.

If $\overline G$ is not simply laced, then one can
construct a simply laced group $G$ and an automorphism
$\tau$ of $G$ defined over $\R$, such that there is
an isomorphism, also defined over $\R$, between $\overline G$ and
 the fixed point subset $G^\tau$ of $G$. Moreover
 the groups $\overline G$ and $G$ have compatible pinnings.  Explicitly
 we have the following.

Let $G$ be simply connected and simply laced.  Choose a pinning
$(T, B^+, B^-,  x_i, y_i, i\in  I)$ of $G$.  Here
 $I$ may be identified with the vertex set of the Dynkin
diagram of $G$.
Let $\sigma$ be a permutation of $I$ preserving
connected components of the Dynkin diagram, such that,
if $j$ and $j'$ lie in  the same orbit under $\sigma$
then they are {\it not} connected by an edge.
Then $\sigma$ determines an automorphism $\tau$ of $G$
such that
$\tau(T)= T$; and 
for all
$i\in  I$ and $m\in \R$, we have
$\tau(x_i(m))=x_{\sigma(i)}(m)$ and $\tau(y_i(m))=y_{\sigma(i)}(m)$. 
In particular $\tau$ also preserves $ B^+, B^-$. Let $\overline I$ denote
the set of $\sigma$-orbits in $I$, and for $\overline i\in \overline I$, let
\begin{equation*}
x_{\overline i}(m):=\prod_{i\in \overline i}\ x_i(m), \text{ and }
y_{\overline i}(m):=\prod_{i\in \overline i}\ y_i(m). 
\end{equation*}
We also let $s_{\overline i} = \prod_{i\in \overline i} s_i$, 
and $\alpha_{\overline i} = \sum_{i \in \overline i} \alpha_i$,
where $\{\alpha_i \ \vert \ i\in I\}$ is the set of simple roots for $G$.

Then the fixed
point group $G^{\tau}$ is a simply connected algebraic group with  pinning
$({T}^\tau, B^{+ \tau}, B^{-\, \tau},  x_{\overline i}, y_{\overline i}, \overline i\in \overline I)$.
There exists, and we choose, $G$ and $\tau$ such that  $G^{\tau}$
is isomorphic to our group $\overline G$ via an isomorphism compatible with the pinnings.
The set $\{\alpha_{\overline i} \ \vert \ \overline i \in \overline I\}$
is the set of simple roots  for $\overline G$,
and $\overline W:= \langle s_{\overline i} \ \vert \ \overline i \in \overline I \rangle$
is the Weyl group for $\overline G$.  Note that 
$\overline W \subset W$, where $W$ is the Weyl group for $G$.
Moreover, any 
reduced expression $\overline {\mathbf v}=(\overline i_1,\overline i_2,\dotsc, \overline i_m)$ in 
$\overline W$ gives rise to a reduced
expression ${\mathbf v}$ in $W$ of length
$\sum_{k=1}^m |\overline i_k|$, which is determined
uniquely up to commuting elements \cite[Prop. 3.3]{Nanba}.
To a subexpression $\overline {\mathbf u}$ of $\overline {\mathbf v}$
we can then associate a unique subexpression
${\mathbf u}$ of ${\mathbf v}$ in
the obvious way.

\begin{proof}[Proof of Proposition \ref{prop:B} in the general case.]
Let $\overline G$ 
be a group which is not simply laced and use all the notation above.
We have that  $\overline G$ is isomorphic to  $G^{\tau}$
via an isomorphism compatible with the pinnings.
Let 
$\overline P_{\overline J}$ be the parabolic subgroup 
of $\overline G$ determined by the subset $\overline J \subset \overline I$.
This gives rise to a subset $J \subset I$ defined by 
$$J = \bigcup_{\overline i \in \overline J} \overline i.$$

Now note that 
$$\rho_{\overline J} = \sum_{\overline i \in \overline J} \omega_{\overline i} = 
\sum_{i \in  J} \omega_{i} = \rho_{J}.$$  Therefore 
the highest weight vector $\eta_J$ of the $G$-representation
$V_{\rho_J}$ can also be viewed as a highest weight vector
for the $\overline G$-representation $V_{\rho_{\overline J}}$.

It follows from  \cite[Lemma 6.3]{RW-CW} that
we have an inclusion $\overline G^{>0}_{\overline \u^+, \overline \v} \subset
G^{>0}_{\u^+, \v}$.  
Therefore we have an embedding
\begin{equation}\label{eq:embedding}
\mathcal P^{\overline J}_{u,v;>0}\hookrightarrow {\mathcal P}^J_{u,v;>0}
\hookrightarrow \mathbb P(V_{\rho_J})
\end{equation}
defined by 
$$g P_{\overline J} \mapsto g P_{J} \mapsto g \cdot \eta_J,$$
where the $u$ and $v$ in 
$\mathcal P^{\overline J}_{u,v;>0}$ are viewed as elements of 
$\overline W \subset W$, while the $u$ and $v$ in 
${\mathcal P}^J_{u,v;>0}$ are viewed as elements of $W$.

Let $z \in [u,v] \subset  \overline W \subset W$.
Thanks to the embedding \eqref{eq:embedding}
and the fact that Proposition \ref{prop:B} holds in the simply laced case,
we have that for any $X = gP_{\overline J} \in 
\mathcal P^{\overline J}_{u,v;>0}$, 
$\langle g \cdot \eta_J \rangle = 
\langle c z \cdot \eta_J + \text{ other terms } \rangle$,
where $c >0$.  
Therefore $z \cdot \rho_J$ is in the list $L_X$.
\end{proof}

\subsection{Bruhat interval polytopes for $G/P$}

\begin{definition}
Choose a partial flag variety $G/P = G/P_J$.
Recall that $\rho_J$ is the sum of fundamental weights 
$\sum_{j\in J} \omega_j$.
Let $u,v\in W$ with $v\in W^J$ and $u \leq v$.
The \emph{Bruhat interval polytope} $\mathsf{Q}_{u,v}^J$ for $G/P$
is the convex hull 
$$\conv\{z \cdot \rho_J \ \vert u \leq z \leq v\} \subset \mathfrak{t}^*_{\R}$$
\end{definition}

\begin{lemma}\label{lem:torusorbit}
For any $X \in \mathcal{P}_{u,v; >0}^J$, 
$\mu(TX) = \Int (\mathsf{Q}_{u,v}^J)$ and 
$\mu(\overline{TX}) = \mathsf{Q}_{u,v}^J$, where 
$\Int$ denotes the interior.
\end{lemma}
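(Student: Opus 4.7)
The plan is to assemble this lemma by combining Theorem \ref{th:MR} (the identification of the list $L_X$) with the general moment-map facts recorded in Theorem \ref{moment} and Proposition \ref{prop:moment}. All of the substantive work lies in Theorem \ref{th:MR}; what remains is essentially a bookkeeping step, so I do not expect a real obstacle, only the need to correctly combine vertex-description with a density argument.

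First I will handle the equality $\mu(\overline{TX}) = \mathsf{Q}_{u,v}^J$. Theorem \ref{th:MR} gives
$$L_X = \{z \cdot \rho_J \in W(\rho_J) \ \vert \ u \leq z \leq v\}.$$
By Proposition \ref{prop:moment}, the vertices of the polytope $\mu(\overline{TX})$ are exactly the points of $L_X$. Since Theorem \ref{moment} guarantees that $\mu(\overline{TX})$ is a convex polytope, and any convex polytope coincides with the convex hull of its vertex set, this identifies $\mu(\overline{TX})$ with $\conv\{z \cdot \rho_J \ \vert \ u \leq z \leq v\} = \mathsf{Q}_{u,v}^J$.

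For the first equality $\mu(TX) = \Int(\mathsf{Q}_{u,v}^J)$, I will argue by density. By the definition of closure, $TX$ is dense in $\overline{TX}$; since $\mu$ is continuous and $\overline{TX}$ is compact, $\mu(\overline{TX})$ is the closure of $\mu(TX)$, so $\mu(TX)$ is dense in $\mathsf{Q}_{u,v}^J$. On the other hand, Theorem \ref{moment} states that $\mu$ restricted to the $T$-orbits in $\overline{TX}$ gives a bijection onto the open faces of $\mu(\overline{TX})$, whereby a $q$-dimensional orbit is sent onto an open $q$-dimensional face. Hence $\mu(TX)$ is some open face of $\mathsf{Q}_{u,v}^J$. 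The only open face of a polytope that is dense in the whole polytope is the relative interior, so $\mu(TX) = \Int(\mathsf{Q}_{u,v}^J)$, completing the proof.
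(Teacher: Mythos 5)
Your proposal is correct and follows essentially the same route as the paper: Theorem \ref{th:MR} plus Proposition \ref{prop:moment} identify the vertices of $\mu(\overline{TX})$ with $\{z\cdot\rho_J \mid u\le z\le v\}$, giving the second equality, and Theorem \ref{moment} gives the first. Your density argument for why $\mu(TX)$ must be the top open face is a reasonable way of spelling out the step the paper dispatches with a single citation of Theorem \ref{moment}.
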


\begin{proof}
By Theorem \ref{th:MR}, the list of $X$ is precisely the set
$\{z \cdot \rho_J \ \vert u \leq z \leq v\}.$
And by Proposition \ref{prop:moment}, the vertices of 
$\mu(\overline{TX})$ are precisely the elements of the list.
Therefore $\mu(\overline{TX}) = \mathsf{Q}_{u,v}^J$.
Finally, Theorem \ref{moment} implies that 
$\mu(TX)$ maps onto the interior of $\mathsf{Q}_{u,v}^J$.
\end{proof}

\begin{proposition}\label{prop:closure}
We have that $\mu(\PP_{u,v;>0}^J) = \Int(\mathsf{Q}_{u,v}^J)$ and 
$\mu(\overline{\PP_{u,v; >0}^J})  =
\mathsf{Q}_{u,v}^J$.
\end{proposition}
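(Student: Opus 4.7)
The plan is to deduce both equalities from Lemma \ref{lem:torusorbit} by combining the action of the positive torus on $\PP_{u,v;>0}^J$ (which preserves the cell by Lemma \ref{pos-torus-preserves}) with a compactness/continuity argument. The structural input has already been assembled: Theorem \ref{th:MR} pins down the list $L_Y$ for every $Y \in \PP_{u,v;>0}^J$, Proposition \ref{prop:moment} identifies that list with the vertices of $\mu(\overline{TY})$, and Theorem \ref{moment} says that each $T$-orbit in $\overline{TY}$ is sent bijectively onto an open face of this polytope of matching dimension.

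First I would prove $\mu(\PP_{u,v;>0}^J) = \Int(\mathsf{Q}_{u,v}^J)$. For the inclusion $\supseteq$, fix any $X \in \PP_{u,v;>0}^J$. By Lemma \ref{pos-torus-preserves}, $T_{>0} X \subset \PP_{u,v;>0}^J$, and by Lemma \ref{lem:postorus} combined with Lemma \ref{lem:torusorbit} we have $\mu(T_{>0} X) = \mu(TX) = \Int(\mathsf{Q}_{u,v}^J)$, giving $\Int(\mathsf{Q}_{u,v}^J) \subseteq \mu(\PP_{u,v;>0}^J)$. For $\subseteq$, take any $Y \in \PP_{u,v;>0}^J$. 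By Theorem \ref{th:MR}, $L_Y = \{z \cdot \rho_J : u \leq z \leq v\}$, so Proposition \ref{prop:moment} gives that the vertices of $\mu(\overline{TY})$ are exactly these points, hence $\mu(\overline{TY}) = \mathsf{Q}_{u,v}^J$. Since $\overline{TY}$ is the disjoint union of $TY$ with lower-dimensional $T$-orbits (the boundary orbits), $TY$ is the unique top-dimensional $T$-orbit in $\overline{TY}$; under the correspondence of Theorem \ref{moment} it therefore maps to the unique open face of matching dimension, namely $\Int(\mathsf{Q}_{u,v}^J)$. Thus $\mu(Y) \in \mu(TY) = \Int(\mathsf{Q}_{u,v}^J)$.

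The second equality then follows formally. Since $G/P$ is projective and hence compact, $\overline{\PP_{u,v;>0}^J}$ is compact, so $\mu(\overline{\PP_{u,v;>0}^J})$ is compact and in particular closed in $\mathfrak{t}^*_{\R}$. Continuity of $\mu$ gives
\[
\mu(\overline{\PP_{u,v;>0}^J}) \subseteq \overline{\mu(\PP_{u,v;>0}^J)} = \overline{\Int(\mathsf{Q}_{u,v}^J)} = \mathsf{Q}_{u,v}^J.
\]
Conversely, the closed set $\mu(\overline{\PP_{u,v;>0}^J})$ contains $\mu(\PP_{u,v;>0}^J) = \Int(\mathsf{Q}_{u,v}^J)$, so it contains the closure $\mathsf{Q}_{u,v}^J$ of this set. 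This yields the desired equality.

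The only subtle point I anticipate is the identification of $TY$ as the top-dimensional orbit inside $\overline{TY}$, which is needed to invoke Theorem \ref{moment} to conclude that $\mu(TY)$ is the (relative) interior of $\mathsf{Q}_{u,v}^J$ rather than some lower-dimensional face. This is standard for orbit closures of algebraic group actions, but worth flagging explicitly since the rest of the argument is essentially a formal manipulation of the inputs already provided.
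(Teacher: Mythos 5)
Your proof is correct and follows essentially the same route as the paper: both equalities are deduced from Lemmas \ref{lem:torusorbit}, \ref{lem:postorus}, and \ref{pos-torus-preserves} together with continuity of $\mu$. The only differences are cosmetic: for the inclusion $\mu(\PP_{u,v;>0}^J)\subseteq\Int(\mathsf{Q}_{u,v}^J)$ you re-derive the content of Lemma \ref{lem:torusorbit} at the point $Y$ (the paper just applies that lemma to $Y$ and uses $\mu(Y)\in\mu(T_{>0}Y)$), and for the inclusion $\mathsf{Q}_{u,v}^J\subseteq\mu(\overline{\PP_{u,v;>0}^J})$ you invoke compactness of $G/P$ to see the image is closed, whereas the paper simply observes that $\overline{TX}\subseteq\overline{\PP_{u,v;>0}^J}$ and $\mu(\overline{TX})=\mathsf{Q}_{u,v}^J$.
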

\begin{proof}
By Lemma \ref{lem:postorus} and Lemma \ref{lem:torusorbit}, 
for each $X \in \PP_{u,v;>0}^J$, we have
$\mu(T_{>0} X) = \Int(\mathsf{Q}_{u,v}^J)$.
Now using Lemma \ref{pos-torus-preserves}, we have that
$T_{>0} X \subset \PP_{u,v;>0}^J$.  It follows that 
$\mu(\PP_{u,v;>0}^J) = \Int(\mathsf{Q}_{u,v}^J)$.

To prove the second statement of the proposition,
note that since $\mu$ is continuous,
$\mu(\overline{\PP_{u,v;>0}^J}) \subset \overline{\mu(\PP_{u,v;>0}^J)}$,
and hence $\mu(\overline{\PP_{u,v;>0}^J}) \subset \mathsf{Q}_{u,v}^J$.
But now by Lemma \ref{lem:torusorbit}, 
for any $X \in \PP_{u,v;>0}^J$,
$\mu(\overline{TX}) = 
\mathsf{Q}_{u,v}^J$.  Since $\overline{TX} \subset
\overline{\PP_{u,v;>0}^J}$, we obtain
$\mu(\overline{\PP_{u,v;>0}^J}) = \mathsf{Q}_{u,v}^J$.
\end{proof}

\begin{remark}\label{rem:Richardson}
Forgetting about total positivity, one 
can also consider the moment map images of 
Richardson varieties $\mathcal{R}_{u,v}$ and 
projected Richardson varieties $\PP_{u,v}^J = \pi_J(\mathcal{R}_{u,v})$,
for $u \leq v$. 
Using Proposition \ref{prop:closure} and the fact that the torus
fixed points of ${\mathcal{R}_{u,v;>0}}$ and $\mathcal{R}_{u,v}$
agree, 
it follows that 
$\mu(\overline{\mathcal{R}_{u,v}})  =
\mathsf{Q}_{u,v}$.  We similarly have that for $u \leq v$ with 
$v\in W^J$,
$\mu(\overline{\PP_{u,v}^J})  =
\mathsf{Q}_{u,v}^J$.
\end{remark}

Using Remark \ref{rem:Richardson}, we can prove the following.
\begin{proposition}\label{prop:toric}
The Richardson variety 
$\mathcal{R}_{u,v}$ is a toric variety if and only if
$\dim \mathsf{Q}_{u,v} = \ell(v)-\ell(u)$.  Similarly,
if $u \leq v$ and $v\in W^J$, the projected Richardson variety
$\PP_{u,v}^J$ is a toric variety if and only if 
$\dim \mathsf{Q}_{u,v}^J = \ell(v)-\ell(u)$.
\end{proposition}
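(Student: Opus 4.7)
The plan is to combine Remark~\ref{rem:Richardson}, which identifies $\mathsf{Q}_{u,v}$ with the moment--map image $\mu(\overline{\mathcal{R}_{u,v}})$, with Theorem~\ref{moment} and Lemma~\ref{lem:torusorbit}, reducing the claim to the standard fact that a $T$-invariant irreducible variety is toric (for the induced action of $T$ modulo the kernel of the action) if and only if it contains an open dense $T$-orbit.

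First I would recall that the Richardson variety $\mathcal{R}_{u,v}$ is a $T$-invariant, irreducible, locally closed subvariety of $G/B$ of dimension $\ell(v)-\ell(u)$. By definition, $\mathcal{R}_{u,v}$ is a toric variety (for the action of $T$ modulo the kernel $K$ of the action) precisely when $T$ has an open dense orbit in $\mathcal{R}_{u,v}$, equivalently when
\[
\max_{X\in\mathcal{R}_{u,v}} \dim(TX) \;=\; \ell(v)-\ell(u).
\]

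Next, I would translate this condition into one about the moment polytope. By Theorem~\ref{moment}, $\dim\mu(\overline{TX})=\dim(TX)$ for every $X\in G/B$; applied to orbits inside $\overline{\mathcal{R}_{u,v}}$, this immediately yields the easy upper bound
\[
\dim\mathsf{Q}_{u,v}\;=\;\dim\mu(\overline{\mathcal{R}_{u,v}})\;\leq\;\max_{X\in\overline{\mathcal{R}_{u,v}}}\dim(TX)\;\leq\;\ell(v)-\ell(u).
\]
For the matching lower bound, pick any $X\in\mathcal{R}_{u,v;>0}$. By Lemma~\ref{lem:torusorbit} (applied with $P=B$, or equivalently by Proposition~\ref{prop:closure}), $\mu(\overline{TX})=\mathsf{Q}_{u,v}$, so Theorem~\ref{moment} gives
\[
\dim\mathsf{Q}_{u,v}\;=\;\dim\mu(\overline{TX})\;=\;\dim(TX).
\]
Hence this particular $X$ realises a $T$-orbit in $\mathcal{R}_{u,v}$ of dimension $\dim\mathsf{Q}_{u,v}$, and together with the previous bound we obtain $\dim\mathsf{Q}_{u,v}=\max_{X\in\mathcal{R}_{u,v}}\dim(TX)$.

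Combining these two observations yields the desired equivalence: $\dim\mathsf{Q}_{u,v}=\ell(v)-\ell(u)$ if and only if some $T$-orbit in $\mathcal{R}_{u,v}$ has dimension $\ell(v)-\ell(u)$, if and only if $\mathcal{R}_{u,v}$ is a toric variety. The statement for the projected Richardson variety $\PP_{u,v}^J$ is proved verbatim in the same way, using instead the companion identification $\mu(\overline{\PP_{u,v}^J})=\mathsf{Q}_{u,v}^J$ from Remark~\ref{rem:Richardson} together with Lemma~\ref{lem:torusorbit} in its general $J$ form. I expect the only subtle point to be pinning down the quotient torus $T/K$ that acts effectively on $\mathcal{R}_{u,v}$ (respectively on $\PP_{u,v}^J$); once this is done, every dimension comparison in the argument is immediate from Theorem~\ref{moment}.
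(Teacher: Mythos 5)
Your proof is correct and follows essentially the same route as the paper's: both reduce ``toric'' to the existence of a dense $T$-orbit and then translate orbit dimension into polytope dimension via Theorem \ref{moment}, the identification $\mu(\overline{\mathcal{R}_{u,v}})=\mathsf{Q}_{u,v}$ from Remark \ref{rem:Richardson} (equivalently Lemma \ref{lem:torusorbit} at a totally positive point), and $\dim\mathcal{R}_{u,v}=\ell(v)-\ell(u)$, with the projected case handled by the isomorphism $\pi_J\colon\mathcal{R}_{u,v}\to\PP_{u,v}^J$. Your version merely makes explicit the step the paper leaves implicit, namely that the orbit of a point of $\mathcal{R}_{u,v;>0}$ realizes the maximal orbit dimension.
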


\begin{proof}
The Richardson variety $\mathcal{R}_{u,v}$ is a toric variety 
if and only if it contains a dense torus.  By Theorem
\ref{moment}, 
$\mathcal{R}_{u,v}$ contains a dense torus if and only if 
$\dim \mu(\overline{\mathcal{R}_{u,v}}) = \dim 
\mathcal{R}_{u,v}$.  
By Remark \ref{rem:Richardson},
$\mu(\overline{\mathcal{R}_{u,v}}) = \mathsf{Q}_{u,v}$.
Also, by \cite{KL}, we have  that 
$\dim \mathcal{R}_{u,v} = \ell(v)-\ell(u)$.
Therefore $\mathcal{R}_{u,v}$ is a toric variety if and only if 
$\dim \mathsf{Q}_{u,v} = \ell(v)-\ell(u)$.
The second statement of the proposition
follows from the first, using the fact that the projection
map $\pi_J$ is an isomorphism from 
$\mathcal{R}_{u,v}$ to $\PP_{u,v}^J$
whenever $v \in W^J$.
\end{proof}

We are now ready to prove Theorem \ref{th:face}.

\begin{theorem}\label{th:face}
The face of a Bruhat interval polytope for $G/P$ 
is a Bruhat interval polytope for $G/P$.
\end{theorem}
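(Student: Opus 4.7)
The plan is to realize each face of $\mathsf{Q}_{u,v}^J$ as the moment map image of a smaller positive cell, using the bijection between torus orbits and faces (Theorem \ref{moment}) together with Rietsch's cell decomposition.

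First I would fix any point $X \in \PP_{u,v;>0}^J$. By Lemma \ref{lem:torusorbit}, the full polytope $\mathsf{Q}_{u,v}^J$ is the moment image $\mu(\overline{TX})$, and by Theorem \ref{moment} the faces of this polytope are in bijection with the $T$-orbits in $\overline{TX}$, via $TY \mapsto \mu(TY) = \mathrm{relint}(F)$. Given a face $F$ of $\mathsf{Q}_{u,v}^J$, I would pick a point $p$ in the relative interior of $F$. Using the equality $\mu(\overline{T_{>0}X}) = \mu(\overline{TX}) = \mathsf{Q}_{u,v}^J$ from Lemma \ref{lem:postorus}, I can choose a preimage $Y \in \overline{T_{>0} X}$ of $p$. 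By Lemma \ref{pos-torus-preserves}, $T_{>0}X \subset \PP_{u,v;>0}^J$, so taking closures gives $Y \in \overline{\PP_{u,v;>0}^J}$. By Rietsch's theorem, this closure is a disjoint union of cells $\PP_{u',v';>0}^J$, and so $Y$ lies in some such cell.

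The rest is straightforward bookkeeping. Applying Lemma \ref{lem:torusorbit} to the cell containing $Y$, I get $\mu(\overline{TY}) = \mathsf{Q}_{u',v'}^J$. On the other hand, $Y \in \overline{TX}$, so Theorem \ref{moment} identifies the $T$-orbit $TY$ with the unique face of $\mathsf{Q}_{u,v}^J$ whose relative interior contains $\mu(Y) = p$; that face is $F$, and taking closures of both sides of $\mu(TY) = \mathrm{relint}(F)$ gives $\mu(\overline{TY}) = F$. Combining the two expressions for $\mu(\overline{TY})$ yields $F = \mathsf{Q}_{u',v'}^J$, which is a Bruhat interval polytope for $G/P$.

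The main delicacy is step where $Y$ is chosen. One needs $Y$ to simultaneously (a) lie in $\overline{\PP_{u,v;>0}^J}$ so that Rietsch's cell decomposition can be invoked, and (b) map to the relative interior of $F$ so that the orbit $TY$ sees all of $F$ rather than a subface. Both requirements are met precisely because the positive torus orbit and the full torus orbit of $X$ have the same moment image (Lemma \ref{lem:postorus}); without this coincidence one would only know that $\overline{T_{>0}X}$ maps into $\mathsf{Q}_{u,v}^J$, not onto it, and the argument would stall at lower-dimensional faces. Everything else follows formally from results already established earlier in the paper.
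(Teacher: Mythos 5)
Your proposal is correct and follows essentially the same route as the paper's proof: realize $\mathsf{Q}_{u,v}^J$ as $\mu(\overline{TX})$ for $X\in\PP_{u,v;>0}^J$, use Lemma \ref{lem:postorus} to find a preimage $Y$ of (the relative interior of) the face inside $\overline{T_{>0}X}\subset\overline{\PP_{u,v;>0}^J}$, locate $Y$ in a cell $\PP_{u',v';>0}^J$ via Rietsch's closure result, and conclude $F=\mu(\overline{TY})=\mathsf{Q}_{u',v'}^J$ by Lemma \ref{lem:torusorbit}. Your explicit point-by-point unpacking of why $TY$ corresponds to $F$ (rather than a subface) is a slightly more careful rendering of the same argument.
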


\begin{proof}
Consider a Bruhat interval polytope $\mathsf{Q}_{u,v}^J$ for $G/P_J = G/P$.
By Lemma \ref{lem:torusorbit},
we can express $\mathsf{Q}_{u,v}^J = \mu(\overline{TX})$ for 
some  $X \in \PP_{u,v; >0}^J$.

Now let $F$ be a face of $\mathsf{Q}_{u,v}^J$.
By Theorem \ref{moment}, the interior of $F$
is the image of a $T$-orbit of some point 
$Y \in \overline{TX}$.  By Lemma \ref{lem:postorus},
the interior of $F$ is therefore also the image of a 
$T_{>0}$-orbit of some point $Y \in \overline{T_{>0}X}$.
Therefore $Y \in \overline{\PP_{u,v;>0}^J}$, 
and hence $Y$ lies in some cell $\PP_{a,b;>0}^J$ 
with $a,b\in W$, $b\in W^J$, $a \leq b$.

But then $F = \mu(\overline{TY})$ for 
$Y \in {\PP_{a,b;>0}^J}$ and hence
by Lemma \ref{lem:torusorbit},
$F$ is a Bruhat interval polytope.
\end{proof}

\begin{corollary}
Every edge of a Bruhat interval polytope corresponds to a cover
relation in the (strong) Bruhat order.
\end{corollary}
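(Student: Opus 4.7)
The plan is to invoke Theorem \ref{th:face} and then identify the vertices of the resulting Bruhat interval polytope. By Theorem \ref{th:face}, an edge $E$ of a Bruhat interval polytope $\mathsf{Q}_{u,v}^J$ is itself of the form $E = \mathsf{Q}_{a,b}^J$ for some $a \leq b$ in $W$ with $b\in W^J$. An edge has exactly two vertices, so the set of distinct points $\{z\cdot\rho_J \mid a \leq z \leq b\}$ has cardinality two.

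I would next observe that, since $W_J$ is precisely the stabilizer of the weight $\rho_J$, these two vertices are in bijection with the two distinct cosets in $\{zW_J \mid a \leq z \leq b\} \subseteq W/W_J$. Because $a\cdot\rho_J$ and $b\cdot\rho_J$ are themselves among the vertices of $\mathsf{Q}_{a,b}^J$, these two cosets must be $aW_J$ and $bW_J$, and in particular $aW_J \neq bW_J$.

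It then remains to show that $aW_J \lessdot bW_J$ in the Bruhat order on $W^J$ (equivalently on $W/W_J$). Assuming for contradiction that there were an intermediate coset with $aW_J < cW_J < bW_J$, I would apply the standard lift property for Bruhat order on parabolic quotients (together with the hypothesis $b \in W^J$) to produce a representative $c^* \in cW_J$ with $a \leq c^* \leq b$ in $W$. This yields a third vertex $c^* \cdot \rho_J$ of $E$, contradicting the fact that $E$ is an edge. By the standard correspondence between covers in $W^J$ and covers in $W$, this cover lifts to a cover $a' \lessdot b'$ in $W$ with $a'\in aW_J$, $b' \in bW_J$, giving the desired cover relation in the (strong) Bruhat order.

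The main obstacle is the lift step in the last paragraph; it relies on a well-known but slightly technical property of the parabolic Bruhat order, and the assumption $b \in W^J$ is essential. In the $G/B$ case (or the symmetric group case of Theorem \ref{bipFaceisBIP}), where $W_J$ is trivial, the lift step is vacuous: the vertices of $\mathsf{Q}_{a,b}$ correspond bijectively to the elements of $[a,b]\subseteq W$, so $|[a,b]|=2$ forces $a \lessdot b$ immediately.
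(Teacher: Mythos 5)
Your setup is fine through the identification of the two vertices with the cosets $aW_J$ and $bW_J$, and your remark that the $G/B$ case is immediate is correct. But for general $G/P$ the step you yourself flag as the main obstacle is a genuine gap: the ``lift property'' you invoke is false, and so is the intermediate conclusion that the two vertices form a cover in the quotient order on $W/W_J$. Concretely, take $G=\SL_3$, $G/P=\mathbb{P}^2$, $\rho_J=\omega_1$, so that $W_J=\mathrm{Stab}_W(\rho_J)=\langle s_2\rangle$ and $W^J=\{e,\,s_1,\,s_2s_1\}$ is the chain $e<s_1<s_2s_1$. The full moment polytope $\mathsf{Q}^J_{e,\,s_2s_1}$ is the triangle with vertices $e\cdot\rho_J$, $s_1\cdot\rho_J$, $s_2s_1\cdot\rho_J$, and its edge joining $e\cdot\rho_J$ to $s_2s_1\cdot\rho_J$ is $\mathsf{Q}^J_{a,b}$ with $(a,b)=(s_2,\,s_2s_1)$: indeed $[s_2,\,s_2s_1]=\{s_2,\,s_2s_1\}$ meets exactly the two cosets $eW_J$ and $s_2s_1W_J$. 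These two cosets do \emph{not} form a cover in $W/W_J$, since $s_1W_J$ lies strictly between them; yet neither representative $s_1$ nor $s_1s_2$ of that intermediate coset lies in $[s_2,\,s_2s_1]$. So your contradiction never materializes, the claim $aW_J\lessdot bW_J$ is simply false for this edge, and the final ``lift a quotient cover to a cover in $W$'' step has nothing to act on. The underlying issue is that $a$ need not be the minimal representative of its coset, and $[a,b]$ can skip cosets lying between $a^JW_J$ and $bW_J$ in the quotient order.

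The statement is nevertheless true, and the cover relation to be exhibited is $a\lessdot b$ in $W$ itself (here $s_2\lessdot s_2s_1$), not a cover between minimal coset representatives. This is how the paper argues: since the extremal weight vectors $W(\rho_J)$ are the vertex set of a polytope, none lies in the convex hull of the others, so every distinct point $z\cdot\rho_J$ with $z\in[a,b]$ is a vertex of $\mathsf{Q}^J_{a,b}$; an edge therefore forces $[a,b]$ to meet exactly the two cosets $aW_J\neq bW_J$, and the paper concludes that such an interval must be a cover relation. If you want to flesh that last implication out, a useful observation is that every $z\in[a,b]$ with $z\neq b$ must lie in $aW_J$, because an element of $bW_J$ strictly below $b$ would have length less than $\ell(b)=\min\{\ell(w):w\in bW_J\}$ (using $b\in W^J$); the argument should then bound $\ell(b)-\ell(a)$ directly, without ever passing through covers in $W/W_J$.
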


\begin{proof}
Every edge is itself a face of the polytope,
so by Theorem \ref{th:face}, it must come from an interval in Bruhat
order.  Since the elements of $W(\rho_J)$ are the vertices of a polytope,
none lies in the convex hull of any of the others.  So a Bruhat interval
polytope with precisely two vertices must come from a cover relation in 
Bruhat order.
\end{proof}

\begin{example}
When $G = \Sl_n$ and $P=B$, a Bruhat interval polytope for $G/P$
is precisely a Bruhat interval polytope as defined in 
Definition \ref{def:BIP}.
\end{example}

\begin{example}
When $G = \Sl_n$ and $P$ is a maximal parabolic subgroup, $G/P$
is a Grassmannian, $(G/P)_{\geq 0}$ is the totally non-negative
part of the Grassmannian, and the cells $\mathcal{P}_{u,v;>0}^J$
(for $u \leq v$, $u\in S_n$, and $v\in W^J$) are called 
\emph{positroid cells}.  In this case the moment map images of closures
of torus orbits are a special family of matroid polytopes called 
\emph{positroid polytopes}.  These polytopes were studied
in \cite{ARW}; in particular, it was shown there (by a different method)
that a face of a positroid polytope is a positroid polytope.
\end{example}

\bibliographystyle{alpha}
\bibliography{bibliography}



\end{document}